\crefname{construction}{Construction}{Constructions}
\pgfplotsset{compat=1.18}
\DeclareMathOperator{\add}{\mathsf{add}}
\DeclareMathOperator{\Aut}{Aut}
\DeclareMathOperator{\be}{\mathsf{e}}
\DeclareMathOperator{\Char}{char}
\DeclareMathOperator{\Coh}{\mathsf{Coh}}
\DeclareMathOperator{\coker}{coker}
\DeclareMathOperator{\dimu}{\underline{dim}}
\DeclareMathOperator{\EKP}{\mathsf{EKP}}
\DeclareMathOperator{\End}{End}
\DeclareMathOperator{\esp}{esp}
\DeclareMathOperator{\ext}{ext}
\DeclareMathOperator{\Ext}{Ext}
\DeclareMathOperator{\fv}{\mathfrak{v}}
\DeclareMathOperator{\gen}{\mathsf{gen}}
\DeclareMathOperator{\GL}{GL}
\DeclareMathOperator{\Gr}{Gr}
\DeclareMathOperator{\Hom}{Hom}
\DeclareMathOperator{\id}{id}
\DeclareMathOperator{\im}{im}
\DeclareMathOperator{\Inj}{Inj}
\DeclareMathOperator{\Iso}{Iso}
\DeclareMathOperator{\Mat}{Mat}
\DeclareMathOperator{\modd}{mod}
\DeclareMathOperator{\PGL}{PGL}
\DeclareMathOperator{\proj}{proj}
\DeclareMathOperator{\ql}{q\ell}
\DeclareMathOperator{\Rad}{Rad}
\DeclareMathOperator{\reg}{reg}
\DeclareMathOperator{\rep}{rep}
\DeclareMathOperator{\repp}{rep_{proj}}
\DeclareMathOperator{\res}{res}
\DeclareMathOperator{\rk}{rk}
\DeclareMathOperator{\Stab}{Stab}
\DeclareMathOperator{\StVect}{\mathsf{StVect}}
\DeclareMathOperator{\supp}{supp}
\DeclareMathOperator{\TilTheta}{\widetilde{\Theta}}
\DeclareMathOperator{\tr}{tr}
\DeclareMathOperator{\udim}{\underline{\dim}}
\DeclareMathOperator{\Vect}{\mathsf{Vect}}
\DeclareMathOperator{\pinj}{pinj}
\DeclareMathOperator{\pproj}{pproj}
\newcommand{\bmk}{\mathbbm{k}}
\newcommand{\cB}{\mathcal{B}}
\newcommand{\CC}{\mathbb{C}}
\newcommand{\cC}{\mathcal{C}}
\newcommand{\cD}{\mathcal{D}}
\newcommand{\cE}{\mathcal{E}}
\newcommand{\cF}{\mathcal{F}}
\newcommand{\cG}{\mathcal{G}}
\newcommand{\cI}{\mathcal{I}}
\newcommand{\cJ}{\mathcal{J}}
\newcommand{\cK}{\mathcal{K}}
\newcommand{\cL}{\mathcal{L}}
\newcommand{\cO}{\mathcal{O}}
\newcommand{\cP}{\mathcal{P}}
\newcommand{\cR}{\mathcal{R}}
\newcommand{\cT}{\mathcal{T}}
\newcommand{\cV}{\mathcal{V}}
\newcommand{\cX}{\mathcal{X}}
\newcommand{\fm}{\mathfrak{m}}
\newcommand{\fu}{\mathfrak{u}}
\newcommand{\fw}{\mathfrak{w}}
\newcommand{\KK}{\mathbbm{k}}
\newcommand{\lra}{\longrightarrow}
\newcommand{\NN}{\mathbb{N}}
\newcommand{\PP}{\mathbb{P}}
\newcommand{\ZZ}{\mathbb{Z}}
\colorlet{colorDaniel}{blue}
\let\emptyset\varnothing
\theoremstyle{plain}%
\newtheorem{proposition}{Proposition}[subsection]
\newtheorem{Theorem}[proposition]{Theorem}
\newtheorem{Lemma}[proposition]{Lemma}
\newtheorem{corollary}[proposition]{Corollary}
\newtheorem{TheoremA}{Theorem}
\theoremstyle{definition}
\newtheorem{Definition}[proposition]{Definition}
\newtheorem{example}[proposition]{Example}
\theoremstyle{remark}
\newtheorem{Remarks}[proposition]{Remark}
\newtheorem{Remark}[proposition]{Remark}
\newlist{enumerate2}{enumerate}{10}
\setlist[enumerate2]{label={({\alph*})}}
\setlist[enumerate]{label={({\arabic*})}}
\begin{document}

\title{Uniform Steiner bundles on $\PP^n$ and reflection functors}
\author{Daniel Bissinger}
\thanks{The author is supported by DFG Grant no. 548677842.}

\begin{abstract}
Let $n \in \NN_{\geq 2}$. We prove that for every $k \geq 4$ there exist uniform but non-homogeneous Steiner bundles on $\PP^n$ of $k$-type with disconnected splitting type, and we further investigate almost-uniform Steiner bundles.
Our approach relies on interpreting Steiner bundles as relative projective Kronecker representations and applying adjoint pairs arising from restriction, inflation, and reflection functors.
\end{abstract}
\maketitle

\section*{Introduction}

Let $\KK$ be an algebraically closed field and $n \in \NN_{\geq 2}$. Restricting vector bundles on $\PP^n$ along linear embeddings $\PP^d \hookrightarrow \PP^n$ for $1 \leq d \leq n$ is a frequently used technique in algebraic geometry, since restrictions of a given vector bundle $\cF \in \Vect(\PP^n)$ often allow conclusions about the bundle itself. For example, by Horrocks' splitting principle  \cite[(2.3.1),(2.3.2)]{OSS80} a vector bundle $\cF \in \Vect(\PP^n)$ splits (as a direct sum of line bundles) if and only if a restriction of $\cF$ to some plane $\PP^2$ splits.

By Grothendieck's Theorem \cite[(2.1.1)]{OSS80}, every vector bundle on $\PP^1$ splits. More precisely, any vector bundle $\cG$ on $\PP^1$ admits a decomposition
\[
\cG \cong \bigoplus_{i \in \ZZ} b_i \, \cO_{\PP^1}(i)
\]
into Serre twists of the structure sheaf $\cO_{\PP^1}$, which is unique up to isomorphism.
As a consequence, the restriction of a bundle $\cF \in \Vect(\PP^n)$ to a line $\PP^1 \cong L \subseteq \PP^n$ splits and yields a decomposition
\[ \cF|_{L} \cong \bigoplus_{i \in \ZZ} b_i(L) \cO_{L}(i).\]
Moreover, there exists a uniquely determined sequence $(b_i(\cF))_{i \in \ZZ}$ of natural numbers such that
\[ O_{\cF} \coloneqq  \{ L \subseteq \PP^n \mid L \ \text{line}, \forall i \in \ZZ \colon  b_i(L) = b_i(\cF) \}\] is an open subset of the Grassmannian $G(\PP^n)$ of lines in $\PP^n$. The corresponding subset 
\[\supp(\cF) \coloneqq \{ i \in \ZZ \mid b_i(\cF) \neq 0\}\] is called \textit{support} of $\cF$. 

The $\cF$ vector bundle is referred to as \textit{uniform} if $O_{\cF} = G(\PP^n)$, and \textit{homogeneous}, provided
\[ t^\ast(\cF) \cong \cF\]
for every projective transformation $t \in \PGL_{n+1}(\KK)$, where $t^\ast(\cF)$ denotes the pullback of $\cF$ along $t$. Since $\PGL_{n+1}(\KK)$ acts transitively on lines, homogeneous vector bundles are always uniform.

In 1961 Schwarzenberger \cite[(1.2)]{Sch61b} raised the question, whether all uniform vector bundles on $\PP^n$ are homogeneous. The first counterexample was given by Elencwajg \cite{Ele79} in 1979. He constructed a uniform but non-homogeneous bundle of rank $4$ on $\PP^2$ over $\CC$. Dr\'ezet  \cite{Dre80} generalized this result by proving the existence of uniform but non-homogeneous vector bundles on $\PP^n$ of rank $s$ for every $n \geq 2$ and $s \geq 2n$. 
It is further conjectured that, in characteristic zero, Dr\'ezet’s bundles are of minimal rank in this context; that is, every uniform vector bundle on $\PP^n$ of rank $< 2n$ is already homogeneous (see \cite[\S 3]{EM16})\footnote{The conjecture has been proven for $n \in \{2,3\}$ \cite{BE83,Ele77}. In positive characteristic, however, the situation is quite different. In this case, Xin has proven \cite{Xin18} the existence of uniform but non-homogeneous bundles of rank $n+1$ on $\PP^n$.}. 

Recently, Marchesi and Mir\'o-Roig \cite{MMR21} initiated the study of uniformity and homogeneity in the category $\StVect(\PP^n) \subseteq \Vect(\PP^n)$ of \textit{Steiner bundles} and proved, by giving explicit defining matrices, that uniform but non-homogeneous vector bundles of rank $s \geq 2(n+1)$ already exist in $\StVect(\PP^n)$. 
Moreover, they initiated a systematic study of the possible supports that can be realized by Steiner bundles.

Since $\cF \in \StVect(\PP(A_r))$ is globally generated, its support satisfies $\supp(\cF) \subseteq \NN_0$ and the element $k \coloneqq \max \supp(\cF)$ defines its \textit{type}. 
The constructed counterexamples in \cite{MMR21} are of $1$-type with support $\{0,1\}$ and the method used for their construction also entails the existence of new uniform $k$-type Steiner bundles for every $k \in \NN_{\geq 2}$ with support $\{0,1,\ldots,k\}$  (see \cite[(4.1)]{MMR21}). 

In this article, we further develop the study of uniform and homogeneous Steiner bundles.
The exact equivalence
\[
\TilTheta \colon \cX \lra \StVect(\PP^{n}),
\]
established in \cite{BF24} between a full subcategory $\cX$ of the category $\rep(K_{n+1})$ of representations of the generalized Kronecker quiver with $n+1$ arrows and the category $\StVect(\PP^{n})$ of Steiner bundles, enables us to study such bundles within the hereditary category $\rep(K_{n+1})$. 
This, in turn, naturally leads to the consideration of \textit{homogeneous} and \textit{uniform} representations, and their \textit{supports} in $\cX$.

The representations in $\cX$ have been characterized (see \cite[(2.1.5)]{BF24}) as being right $\Hom$-orthogonal to a certain algebraic family of representations, referred to as \textit{test representations}. Characterizing a category using such test representations is a well-established technique that has been effectively employed several times in the study of path algebras (cf. \cite{HU91,Wor13a,Bis20}).
We present a new construction of such families, unifying the ad hoc approaches in loc.\ cit., and naturally producing a family of test representations for $1 \leq d \leq n$ and every indecomposable homogeneous $K_d$-representation.

The main ingredients in the construction are adjoint pairs of functors between $\rep(K_{d+1})$ for $1 \leq d \leq n$ and $\rep(K_{n+1})$\footnote{The statement holds in more generality. We refer the interested reader to \cite{Bis25b}.} that are defined as compositions of restriction, inflation, and shift functors (see \cref{S:3} for the definitions):

\bigskip

\begin{TheoremA}\label{ThmA} \phantom{}
Let $1 \leq d \leq n$.
\begin{enumerate}
\item The functor $\sigma_{K_{n+1}}^{-1} \circ \inf \colon \rep(K_{d+1}) \lra \rep(K_{n+1})$ is left adjoint to $\sigma_{K_{d+1}} \circ \res \colon \rep(K_{n+1}) \lra \rep(K_{d+1})$. 
\item The functor $\sigma_{K_{d+1}}^{-1} \circ \res \colon \rep(K_{n+1}) \lra \rep(K_{d+1})$
is left adjoint to
$\sigma_{K_{n+1}} \circ \inf \colon \rep(K_{d+1}) \lra \rep(K_{n+1})$.
\end{enumerate}
\end{TheoremA}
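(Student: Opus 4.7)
The two statements of Theorem~A are parallel, so my plan is to handle them in tandem by constructing explicit natural bijections at the level of $\Hom$-sets.

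First I would unpack the definitions of $\res$, $\inf$, and $\sigma_{K_{m+1}}^{\pm 1}$ laid out in Section~3: a representation of $K_{m+1}$ is a triple $(V_s, V_t, (\alpha_i)_{i=1}^{m+1})$; restriction forgets $n-d$ of the structure maps, inflation adjoins $n-d$ zero maps, and the reflection functors are defined by explicit kernel/cokernel exact sequences at the two vertices. Making these presentations concrete lets me describe both composites appearing in the theorem as functors with well-understood universal properties.

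For part~(1) I would then construct a natural $\KK$-linear map
\[
\Phi_{M,N} \colon \Hom_{K_{n+1}}\!\bigl(\sigma_{K_{n+1}}^{-1}\inf M,\, N\bigr) \longrightarrow \Hom_{K_{d+1}}\!\bigl(M,\, \sigma_{K_{d+1}}\res N\bigr)
\]
as follows. Given $\varphi$ on the left, the defining exact sequence of $\sigma_{K_{n+1}}^{-1}\inf M$ lifts $\varphi$ to compatible vertex data on $\inf M$; since the $n-d$ additional structure maps of $\inf M$ vanish, this data factors through the $d+1$ arrows retained by $\res$, and the universal property of $\sigma_{K_{d+1}}\res N$ converts it into a morphism $\Phi_{M,N}(\varphi)$. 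The inverse is obtained by running the same construction with kernels and cokernels swapped, and naturality in $M$ and $N$ is a direct diagram chase. Part~(2) follows by the symmetric construction, producing a natural bijection
\[
\Psi_{N,M} \colon \Hom_{K_{d+1}}\!\bigl(\sigma_{K_{d+1}}^{-1}\res N,\, M\bigr) \longrightarrow \Hom_{K_{n+1}}\!\bigl(N,\, \sigma_{K_{n+1}}\inf M\bigr)
\]
by exactly the same mechanism, with the roles of $\sigma$ and $\sigma^{-1}$ and of $\res$ and $\inf$ interchanged.

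The main obstacle is the vertex-wise bookkeeping: one must verify that the defining exact sequence of $\sigma_{K_{n+1}}^{\pm 1}\inf M$ is compatible, vertex by vertex, with that of $\sigma_{K_{d+1}}^{\mp 1}\res N$, so that the lifts used to define $\Phi$ and $\Psi$ are independent of the chosen factorisations and genuinely natural in both arguments. The subtle point here is that the $n-d$ zero arrows introduced by $\inf$ have to be absorbed cleanly by the reflection functor $\sigma_{K_{n+1}}^{\pm 1}$, so that the resulting universal object matches what $\sigma_{K_{d+1}}^{\mp 1}$ sees after restriction. Once this compatibility is pinned down, the triangle identities are automatic from the universal properties of kernels and cokernels, and both adjoint pairs of Theorem~A fall out together.
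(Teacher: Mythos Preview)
Your plan for part~(1) is essentially what the paper does: it writes down an explicit $\KK$-linear map
\[
\tau_{X,M} \colon \Hom_{K_r}\bigl((\sigma_{K_r}^{-1}\circ\inf)(X),M\bigr)\longrightarrow \Hom_{K_d}\bigl(X,(\sigma_{K_d}\circ\res)(M)\bigr),\qquad (f_1,f_2)\mapsto \bigl((\id_{A_d}\otimes f_1)\circ\eta_X,\,f_1\bigr),
\]
and then checks injectivity, surjectivity, and naturality in each argument by the kind of vertex-wise bookkeeping you anticipate. So your outline for~(1) is on target, though the paper commits to a concrete formula rather than appealing to abstract universal properties, which makes the verifications shorter than your ``lift and factor'' description suggests.

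Where you diverge is in part~(2). You propose to redo the entire construction symmetrically, swapping $\sigma$ with $\sigma^{-1}$ and $\res$ with $\inf$. That would work, but the paper does something much more economical: it derives~(2) from~(1) in a few lines by applying the standard duality $D_{K_r}$, using the compatibilities $D_{K_r}\circ\inf\cong\inf\circ D_{K_d}$, $D_{K_d}\circ\res\cong\res\circ D_{K_r}$, and $D\circ\sigma\cong\sigma^{-1}\circ D$. This avoids repeating all of the explicit calculations a second time. Your symmetric approach buys self-containedness (no appeal to duality), while the paper's buys brevity and makes the relationship between the two adjunctions transparent. Either route is valid; the paper's is the one to reach for if you want to minimise bookkeeping.
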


\bigskip
Combining families of test representations, obtained from preprojective indecomposable $K_2$-representations, with an adapted version of Reineke’s result \cite{Rei24,Bis25} on general subrepresentations, we prove the existence of a large class of Kronecker representations in $\mathcal{X}$ corresponding to uniform but non-homogeneous Steiner bundles.
Transporting our findings to the category of Steiner bundles, we arrive at:

\bigskip

\begin{TheoremA}\label{ThmB}
Let $n \in \NN_{\geq 2}$ and $k \in \NN$.
\begin{enumerate}
    \item There exists a simple and uniform Steiner bundle on $\PP^n$ of $k$-type with support $\{k-1,k\}$ that is not homogeneous.
    \item There exists an indecomposable and uniform Steiner bundle on $\PP^n$ of $k$-type with support $\{0,1,k-1,k\}$ that is not homogeneous.
    % \item Let $s \in \NN$ such that $s \geq 2k^2n$ with $s = 2k^2 n + \ell$ and $....$. Then the generic Steiner bundle on $\PP^n$ with first Chern class $c$ of rank $s$ is simple, uniform of $\KK$-type with support $\{k,k+1\}$ and not homogeneous.
    \item Let $k \in \NN_{\geq 2}$, $s \geq 2k^2(n-1)$ and $\ell \coloneqq s -2k^2(n-1) \in \NN_0$. For each $c \in \NN_0$ satisfying  
    \[ (k-1)(k(n-1)+s) \leq c \leq (k-1)(k(n-1)+s)+\ell,\]
    the general Steiner bundle on $\PP^n$ with first Chern class $c$ and of rank $s$ is simple and uniform of $k$-type with support $\{k-1,k\}$, and not homogeneous.
\end{enumerate}
\end{TheoremA}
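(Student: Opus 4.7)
The strategy is to transport the problem to $\rep(K_{n+1})$ via the equivalence $\TilTheta\colon \cX \to \StVect(\PP^n)$, construct the desired representations using test families produced with the help of \cref{ThmA}, and transport back. Under this equivalence, restricting a Steiner bundle $\cF = \TilTheta(X)$ to a line $L$ corresponds, by \cref{ThmA}(2) applied with $d = 1$, to decomposing the $K_2$-representation $\sigma_{K_2}^{-1}(\res(X))$, where $L$ parametrises the choice of two-arrow subfamily of $K_{n+1}$. Uniformity of $\cF$ thus becomes constancy of this decomposition over $G(\PP^n)$, while homogeneity becomes $\PGL_{n+1}$-invariance of $X$---a much more rigid structural condition.

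To pin down the support, I would apply \cref{ThmA}(1) to each preprojective indecomposable $W \in \rep(K_2)$ that encodes a ``forbidden splitting'' on a line, obtaining a test representation
\[
T_W \;\coloneqq\; \sigma_{K_{n+1}}^{-1}(\inf(W)) \;\in\; \rep(K_{n+1}).
\]
The adjunction rewrites
\[
\Hom_{K_{n+1}}(T_W, X) \;\cong\; \Hom_{K_2}\bigl(W,\; \sigma_{K_2}(\res(X))\bigr),
\]
so the simultaneous vanishing of $\Hom(T_W, X)$ over all choices of $\res$ (i.e.\ all lines) and all suitable $W$ enforces both membership of $X$ in $\cX$, via the characterisation of \cite{BF24}, and the prescribed support, either $\{k-1, k\}$ or $\{0, 1, k-1, k\}$.

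For part (1), I would exhibit an explicit representation $X_k \in \rep(K_{n+1})$ whose dimension vector is minimal among those realising support $\{k-1,k\}$, and verify by direct linear algebra on the arrow matrices that the required orthogonalities hold and that $\End_{K_{n+1}}(X_k) = \KK$. This yields membership in $\cX$, simplicity and uniformity in one stroke. Non-homogeneity then follows by contradiction: any homogeneous representation in $\cX$ with support $\{k-1, k\}$ is constrained by $\PGL_{n+1}$-equivariance into a narrow list of candidates that can be distinguished from $X_k$ by their numerical invariants. For part (2), I would build the required indecomposable as a non-split extension of $X_k$ by a representation whose image under $\TilTheta$ has support $\{0, 1\}$; non-vanishing of $\Ext^1$ yields indecomposability, and the orthogonality conditions for uniformity survive extensions because the test-representation functors are additive.

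For part (3), I would identify the Steiner bundles on $\PP^n$ of rank $s$ and first Chern class $c$ with the points of an appropriate moduli variety of $K_{n+1}$-representations of dimension vector determined by $(s, c)$, and then invoke the adapted Reineke theorem \cite{Rei24,Bis25} on general subrepresentations. The numerical bounds $s \geq 2k^2(n-1)$ and $(k-1)(k(n-1)+s) \leq c \leq (k-1)(k(n-1)+s)+\ell$ are calibrated precisely so that the generic representation admits a general subrepresentation whose dimension vector matches that of the relevant test representations from the previous step, forcing the required $\Hom$-vanishings. The main obstacle will be verifying that uniformity---a condition demanding the same splitting type on \emph{every} line, not only on a generic one---survives to a non-empty Zariski open locus in the moduli; this is where one combines Reineke's genericity statement with an upper-semicontinuity argument for $\Hom$-dimensions over the Grassmannian $G(\PP^n)$, supplemented by a dimension count showing that the $\PGL_{n+1}$-orbits of homogeneous bundles sweep out only a proper closed subset of the moduli component.
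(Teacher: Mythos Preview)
Your overall architecture is right---transport to $\rep(K_{n+1})$, use test representations built via \cref{ThmA}, invoke Reineke-type genericity---but several steps are either wrong or missing the mechanism the paper actually uses.

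\textbf{One-sided use of the adjunctions.} You only deploy \cref{ThmA}(1), producing the family $T_W = (\sigma_{K_{n+1}}^{-1}\circ\inf)(W)$ and the vanishing $\Hom(T_W,X)=0$. But this controls only an \emph{upper} bound on the support: $\Hom(P_{k}^{-}(\fv),M)=0$ forces $M|_{\fv}\in\add(P_0(2),\ldots,P_k(2))$. To pin the support down to $\{k-1,k\}$ you also need a \emph{lower} bound, and that comes from the \emph{other} adjunction \cref{ThmA}(2): the family $P_{k-1}^{+}(\fv)=(\sigma_{K_{n+1}}\circ\inf)(P_{k-1}(2))$ satisfies $\Hom(M,P_{k-1}^{+}(\fv))=0 \Leftrightarrow M|_{\fv,\pproj}\in\add(\{P_i(2):i\geq k-1\})$. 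The paper's \cref{4.3.5} makes this two-sided characterisation explicit. Without the second family your argument cannot exclude summands $\cO_{\PP(\fv)}(i)$ with $i<k-1$.

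\textbf{Reineke's result is applied in the opposite direction.} You write that the generic representation ``admits a general subrepresentation whose dimension vector matches that of the relevant test representations''. What is needed is the opposite: the generic $M$ has \emph{no} subrepresentation of dimension $\dimu P_{k}^{-}(\fv)$ and \emph{no} quotient of dimension $\dimu P_{l}^{+}(\fv)$ for $1\le l\le k-1$; see Propositions~\ref{5.2.1} and~\ref{5.5.2}. The crucial point---which you flag as the ``main obstacle'' but do not resolve---is that these non-embedding/non-quotient conditions are a finite intersection of open sets in $\cV(K_r;V_1,V_2)$ that, once satisfied, force the required $\Hom$-vanishing for \emph{every} $\fv\in\Gr_2(A_r)$ simultaneously (because any nonzero morphism $P_k^-(\fv)\to M$ or $M\to P_l^+(\fv)$ would produce a forbidden sub or quotient of the fixed numerical type). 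This is how uniformity on all lines follows from a single open condition on $M$; upper semicontinuity over the Grassmannian is not the mechanism.

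\textbf{Non-homogeneity.} Your plan---contradict against a ``narrow list of candidates'' for homogeneous bundles---is not what the paper does and is not obviously feasible. The paper instead proves (Theorem~\ref{2.2.2}) that for any regular dimension vector the locus of non-homogeneous bricks is open and non-empty in $\cV(K_r;V_1,V_2)$, via a stabiliser-dimension estimate (Proposition~\ref{2.2.3}) combined with Chen's explicit non-homogeneous bricks (Proposition~\ref{2.2.6}). One then intersects this open set with the uniformity open set above. No classification of homogeneous bundles is invoked.

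\textbf{Part (2).} Your extension idea is close; the paper realises it concretely via Auslander--Reiten theory: take the quasi-simple brick $M$ from part~(1), set $E=M_{[\ell]}$ for $\ell\ge 2$, and use the short exact sequence $0\to M\to E\to X\to 0$ with $X=\tau^{-1}(M_{[\ell-1]})\in\rep_{\proj}(K_r,2)$, so that $X|_\fv$ is projective and the sequence splits after restriction to any line. Non-homogeneity of $E$ follows because homogeneity is constant on a regular component (Lemma~\ref{2.2.1}(3)).
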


\bigskip

As a consequence of \cref{ThmB}, we obtain for $k \geq 4$ the existence of indecomposable and uniform Steiner bundles that do not have a \qq{connected} support.

We also apply our approach to the study of \textit{almost-uniform} vector bundles in $\StVect(\PP^n)$. Almost-uniform bundles were introduced by Ellia \cite{Ell17} and are defined by the property that the set of \textit{jumping lines} 
\[
\cJ_{\cF} \coloneqq G(\PP^n) \setminus O_{\cF}
\]  
is finite and non-empty. A main result of \cite{Ell17} asserts that there exists an almost-uniform vector bundle on $\PP^n$ of rank $2n - 1$ with exactly one jumping line. The given examples are, in fact, Steiner bundles. We show how these bundles arise naturally from \cref{ThmA} and prove the following statement, which implies that there is a rich supply of almost-uniform Steiner bundles.

\bigskip

\begin{TheoremA}\label{ThmC}
Let $\emptyset \neq \cL \subseteq G(\PP^n)$ be a finite set of lines. There exists a full subcategory of $\StVect(\PP^n)$ consisting of almost-uniform Steiner bundles with set of jumping lines $\cL$, that corresponds under $\TilTheta$ to a wild subcategory of $\rep({K_{n+1}})$.
\end{TheoremA}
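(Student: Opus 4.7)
The plan is to transport the problem through the equivalence $\TilTheta$ into an equivalent statement inside $\cX \subseteq \rep(K_{n+1})$. Every line $L \subseteq \PP^n$ is determined by a two-dimensional subspace $V_L \subseteq A_{n+1}$ and therefore gives rise to functors $\inf_L \colon \rep(K_2) \lra \rep(K_{n+1})$ and $\res_L \colon \rep(K_{n+1}) \lra \rep(K_2)$ of the shape used in \cref{ThmA}. Since the Grothendieck splitting of $\res_L(M)$ determines the splitting type of $\TilTheta(M)$ along $L$, it suffices to construct a wild full subcategory of $\cX$ all of whose objects have generic $\res_L$ for $L \notin \cL$ and non-generic $\res_L$ exactly for $L \in \cL$.

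For each $L_i \in \cL$ I would take a preprojective indecomposable $P_i \in \rep(K_2)$ of appropriate dimension vector and form $N_i \coloneqq \sigma_{K_{n+1}}^{-1} \circ \inf_{L_i}(P_i)$, i.e.\ apply the left adjoint of \cref{ThmA}(1) attached to the embedding $V_{L_i} \hookrightarrow A_{n+1}$. Using the adjunction in \cref{ThmA} together with the explicit description of $\sigma_{K_2} \circ \res_L$, the composition $\res_L(N_i)$ reduces to a base-change between $V_L$ and $V_{L_i}$ inside $A_{n+1}$: the map is invertible and yields the generic splitting type for every $L \neq L_i$, but degenerates precisely at $L = L_i$, introducing a single jumping summand. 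The defect object $H_\cL \coloneqq \bigoplus_{i=1}^m N_i \in \cX$ is therefore almost-uniform with jumping set exactly $\cL$, and recovers Ellia's examples from \cite{Ell17} when $|\cL| = 1$.

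To upgrade this construction to a wild full subcategory, I would exhibit a fully faithful functor from a wild representation category (for instance $\rep(K_3)$) into the locus of $\cX$-objects with jumping set $\cL$. A natural candidate is to twist a wild family of uniform $\cX$-objects $\cU$---which exists for $n \geq 2$ since $\rep(K_{n+1})$ is wild and the families underlying \cref{ThmB} provide many uniform ones---by extensions with $H_\cL$; additivity of $\res_L$ keeps the jumping set equal to $\cL$, and a careful choice of $P_i$ ensuring $\Hom(U, H_\cL) = \Hom(H_\cL, U) = 0$ for all $U \in \cU$ preserves wildness after the twist. The main obstacle is the linear-algebra verification that the base-change map between $V_L$ and $V_{L_i}$ in $A_{n+1}$ is invertible off the single point $L_i \in G(\PP^n)$, including the case when $L$ meets $L_i$; once this is established, containment in $\cX$ follows from \cite[(2.1.5)]{BF24} and the adjunction in \cref{ThmA}.
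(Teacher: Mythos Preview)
Your building blocks are right: for $P_i = P_1(2)$ your $N_i$ is exactly the test representation $P_1^-(\fv_i)$, and the paper confirms (Proposition~6.5.3) that $\TilTheta(P_1^-(\fv_i))$ is almost-uniform with single jumping line $\fv_i$; the direct sum $H_\cL$ therefore has jumping set precisely $\cL$. But from this point on the paper proceeds very differently, and your proposed route to wildness has a genuine gap.

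The paper does \emph{not} take a direct sum and then try to extend by a separate wild uniform family. Instead it first builds a \emph{single regular brick} $E_X$ with $\cV(K_r,2)_{E_X}=X$ via Bongartz' universal extension: one picks an auxiliary $\fu\notin X$, sets $X_i=P_1^-(\fv_i)$, $Y=P_1^-(\fu)$, and forms the universal short exact sequence $0\to\bigoplus X_i^{s_i}\to E_X\to Y\to 0$ with $s_i=\dim\Ext^1(Y,X_i)$. The crucial input is that the $P_1^-(\fv)$ are elementary of equal dimension vector, hence pairwise $\Hom$-orthogonal by \cite{KL96}; Corollary~6.5.5 then gives $\Hom(P_1^-(\fw),E_X)=0$ for all $\fw\notin X$ and $\neq 0$ for $\fw\in X$, so the jumping set is exactly $X$. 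Wildness is then automatic: $E_X$ is a regular brick with $q_r(\dimu E_X)<0$, so $\dim\Ext^1(E_X,E_X)\ge 2$ and Ringel's simplification category $\cE(\{E_X\})$ is wild. Every object there is filtered by copies of $E_X$, and the rank variety is stable under filtrations, so every object has jumping set $X$.

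Your step ``twist a wild uniform family $\cU$ by extensions with $H_\cL$'' is where the argument breaks. You do not specify what the resulting full subcategory is, nor the fully faithful functor into it. Even granting the $\Hom$-orthogonality $\Hom(U,H_\cL)=\Hom(H_\cL,U)=0$ for all $U\in\cU$ (which you assert but do not justify---orthogonality to an entire wild family is a strong constraint, and varying the $P_i$ does not obviously achieve it), neither $U\mapsto H_\cL\oplus U$ nor ``all extensions of $U$ by $H_\cL$'' yields a full subcategory on which the induced functor from $\cU$ is full. Your ``main obstacle'' is also misidentified: there is no single base-change map between two $2$-planes in $A_{n+1}$ whose invertibility decides the splitting; the correct mechanism is the $\Hom$-orthogonality of the $P_1^-(\fv)$ and the characterisation $\cV(K_r,2)_M=\{\fv:\Hom(P_1^-(\fv),M)\neq 0\}$.
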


\bigskip

This paper is organized as follows. In Section 1, we recall essential background from the representation theory of algebras and the theory of vector bundles on $\PP^n$. We also explain the connection between Kronecker representations and Steiner bundles, as established in \cite{BF24}. In Section 2, we analyze how uniform and homogeneous Kronecker representations behave differently in terms of Auslander–Reiten theory and within the variety of representations.

Section 3 is devoted to the proof of \cref{ThmA}. Finally, in Sections 4 and 5, we draw consequences from \cref{ThmA} and establish \cref{ThmB} and \cref{ThmC}.

\bigskip

\section{Preliminaries}

Throughout this work, $\KK$ denotes an algebraically closed field and, if not stated otherwise, $\KK$ is of arbitrary characteristic. Moreover, we denote by $\NN$ the set of natural numbers $\{1,2,3,\ldots\}$ and define $\NN_0 \coloneqq \NN \cup \{0\}$. Given a vector space $V \neq \{0\}$ and an integer $1 \leq d \leq \dim_{\KK} V$ we denote by
\[ \Gr_d(V) := \{ \fv \subseteq V \mid \dim_{\KK} \fv = d\} \]
the \textit{Grassmann variety} of $d$-dimensional subspaces of $V$, 
where the case $d = 1$ yields the full projective space $\PP(V) := \Gr_1(V)$ of dimension $\dim_{\KK} V - 1$.
For $r \in \NN$, we denote by $K_r$ the (generalized) \textit{Kronecker quiver} 
\[ \xymatrix{
K_r =& 1 \ar@/^1.4pc/^{\gamma_1}[r] \ar@/^/^{\gamma_2}[r]\ar@/_1pc/_{\gamma_r}[r] \ar@{}[r]|{\vdots} &  2
}
\]
with vertices $1$ and $2$, $r$ arrows $\gamma_i \colon 1 \lra 2$. The $r$-dimensional \textit{arrow space} is denoted by $A_r \coloneqq \bigoplus^r_{i=1}\KK \gamma_i$.

In this section, we fix notations and explain the relationship between Kronecker representations and Steiner bundles on projective space $\PP(A_r)$, focusing on the properties of uniformity and homogeneity for representations and vector bundles. For more details we refer the reader to \cite{BF24}.

\subsection{Homogeneous and uniform vector bundles}\label{S:1.1}

Given a projective variety $X$, we denote $\cO_X$ its structure sheaf and let $\Coh(X)$ be the category of coherent $\cO_{X}$-modules and $\Vect(X)$ be the category of vector bundles, i.e., locally free sheaves of finite rank on $X$. Both $\Coh(X)$ and $\Vect(X)$ are Krull-Schmidt categories (see \cite{Ati56}), while $\Coh(X)$ is also an abelian category \cite[(7.46)]{GW10}.

Given $\KK$-vector spaces $V$ and $W$, we define
\[ \Inj_{\KK}(V,W) := \{ \alpha \in \Hom_{\KK}(V,W) \mid \alpha \ \text{is injective}\}.\]
Every element $\alpha \in \Inj_{\KK}(V,W)$ induces an injective morphism $\hat{\alpha} : \PP(V) \lra \PP(W)$ of varieties, which gives rise to the inverse image functor 
\[ \hat{\alpha}^\ast : \Coh(\PP(W)) \lra \Coh(\PP(V)).\]
The inverse image functor is known to be right exact (cf.\ \cite[(7.11)]{GW10}) and sends vector bundles to vector bundles. Moreover, $\hat{\alpha}^\ast$ commutes with direct sums and the map that sends a fraction $\frac{f}{g}$ of homogeneous polynomials to 
$\frac{f\circ\alpha}{g\circ\alpha}$ induces isomorphisms $\hat{\alpha}^\ast(\cO_{\PP(W)}(j)) \cong \cO_{\PP(V)}(j)$ for every $j \in \ZZ$, where $\cO_{\PP(W)}(j)$ and $\cO_{\PP(V)}(j)$ denote the $j$-th \textit{Serre twisting sheaf} on $\PP(W)$ and $\PP(V)$, respectively.

The general linear group $\GL(A_r)$ acts on $\PP(A_r)$ via automorphisms and hence on $\Coh(\PP(A_r))$: Given $g \in \GL(A_r)$ and $\cF \in \Coh(\PP(A_r))$, we define
\[ (g^\ast.\cF)(U) \coloneqq \cF(g^{-1}.U)\]
for every open subset $U \subseteq \PP(A_r)$. By definition (see \cite[\S 7.8]{GW10}) we have
\[ g^\ast.\cF=\widehat{g^{-1}}^\ast(\cF)\]
for all $g \in \GL(A_r) = \Inj_{\KK}(A_r,A_r)$. 

\bigskip

\begin{Definition}\label{1.1.1}
A vector bundle $\cF \in \Vect(\PP(A_r))$ is called 
\begin{enumerate}
     \item \textit{uniform}, if  $\hat{\alpha}^\ast(\cF) \cong \hat{\beta}^\ast(\cF)$ for all $\alpha,\beta \in \Inj_{\KK}(A_{2},A_r)$.
    \item \textit{homogeneous}, provided $g^\ast.\cF \cong \cF$ for all $g \in \GL(A_r)$.
\end{enumerate}
\end{Definition}

\bigskip

Since $\GL(A_r)$ acts transitively on $\Inj_{\KK}(A_2, A_r) $ via $ g \cdot \alpha := g \circ \alpha$ for all $ g \in \GL(A_r)$ and $ \alpha \in \Inj_{\KK}(A_2, A_r)$, every homogeneous vector bundle on $ \PP(A_r)$ is uniform. However, for $ r \geq 3$, there exist uniform but non-homogeneous vector bundles on $ \PP(A_r)$ \cite{Ele79}.

On $ \PP(A_2) \cong \PP^1$, every vector bundle $ \cG \in \Vect(\PP(A_2))$ is homogeneous. This follows from Grothendieck’s Theorem \cite[(Theorem 2.1.1)]{OSS80}, which asserts that every vector bundle on $ \PP(A_2)$ admits a decomposition as a direct sum of line bundles 
\[
\cG \cong \bigoplus_{i \in \ZZ} b_i \, \cO_{\PP(A_2)}(i),
\]
and the fact that all line bundles $\cO_{\PP(A_2)}(j)$ are homogeneous.

As a consequence of Grothendieck's Theorem, we obtain for every $\cF \in \Vect(\PP(A_r))$ and every $\alpha\in \Inj_{\KK}(A_2,A_r)$ a unique decomposition
\[ \hat{\alpha}^\ast(\cF) \cong \bigoplus_{i \in \ZZ} b_i(\alpha,\cF) \cO_{\PP(A_2)}(i).\]
By the same token, we obtain for every $2$-dimensional subspace $\fv \in \Gr_2(A_r)$ a decomposition 
\[ \bigoplus_{i \in \ZZ} b_i(\fv,\cF) \cO_{\PP(\fv)}(i)\]
of the vector bundle $\cF|_{\fv} \coloneqq \hat{\iota}^\ast(\cF) \in \Vect(\PP(\fv))$, where $\iota \colon \fv \lra A_r$ is the inclusion morphism and $\hat{\iota}^\ast \colon \Coh(\PP(A_r)) \lra \Coh(\PP(\fv))$ the corresponding inverse image functor. 
Let $\alpha_0 \colon A_2 \lra \fv$ be an isomorphism and set $\alpha := \iota \circ \alpha_0 \in \Inj_{\KK}(A_2,A_r)$. By \cite[(7.8.9)]{GW10} we have $\hat{\alpha}^\ast(\cF)\!=\!\widehat{\iota \circ \alpha_0}^\ast(\cF)\!\cong\! \hat{\alpha_0}^\ast(\hat{\iota}^\ast(\cF))$ and conclude
\[ \bigoplus_{i \in \ZZ} b_i(\alpha,\cF) \cO_{\PP(A_2)}(i)\!\cong\! \hat{\alpha}^\ast(\cF)\!\cong\! \hat{\alpha_0}^\ast(\hat{\iota}^\ast(\cF))\!\cong\! \hat{\alpha_0}^\ast(\bigoplus_{i \in \ZZ} b_i(\fv,\cF) \cO_{\PP(\fv)}(i))\!\cong\!\bigoplus_{i \in \ZZ} b_i(\fv,\cF)\cO_{\PP(A_2)}(i).\]
Hence, $b_i(\alpha,\cF) = b_i(\fv,\cF)$ for all $i \in \ZZ$. As a direct consequence we obtain the following result.

\bigskip

\begin{Lemma}\label{1.1.2}
Let $r \geq 3$ and $\cF \in \Vect(\PP(A_r))$. The following statements are equivalent.
\begin{enumerate}
    \item $\cF$ is uniform.
    \item We have $b_i(\alpha,\cF) = b_i(\beta,\cF)$ for all $\alpha,\beta \in \Inj_{\KK}(A_2,A_r)$ and all $i \in \ZZ$.
    \item We have $b_i(\fv,\cF) = b_i(\fw,\cF)$ for all $\fv,\fw \in \Gr_2(A_r)$ and all $i \in \ZZ$.
\end{enumerate}
\end{Lemma}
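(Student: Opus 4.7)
The plan is to establish the three equivalences by combining two ingredients that are essentially already in hand from the preceding discussion: the uniqueness in Grothendieck's Theorem on $\PP(A_2)$, and the factorization identity $b_i(\alpha,\cF) = b_i(\fv,\cF)$ (for $\fv = \alpha(A_2)$) that has just been derived from functoriality of the inverse image.

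For the equivalence (1)$\Leftrightarrow$(2), I would note that Grothendieck's Theorem produces for each $\alpha \in \Inj_{\KK}(A_2,A_r)$ the decomposition
\[
\hat{\alpha}^\ast(\cF) \cong \bigoplus_{i \in \ZZ} b_i(\alpha,\cF)\,\cO_{\PP(A_2)}(i),
\]
which is unique up to isomorphism by Krull-Schmidt on $\Coh(\PP(A_2))$. Hence $\hat{\alpha}^\ast(\cF) \cong \hat{\beta}^\ast(\cF)$ holds if and only if $b_i(\alpha,\cF) = b_i(\beta,\cF)$ for every $i \in \ZZ$, yielding (1)$\Leftrightarrow$(2) immediately.

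For (2)$\Leftrightarrow$(3), I would pass back and forth between elements of $\Inj_{\KK}(A_2,A_r)$ and elements of $\Gr_2(A_r)$ via image and lifting. For (2)$\Rightarrow$(3), given $\fv,\fw \in \Gr_2(A_r)$, I choose isomorphisms $\alpha_0 \colon A_2 \lra \fv$ and $\beta_0 \colon A_2 \lra \fw$ and set $\alpha := \iota_{\fv} \circ \alpha_0$, $\beta := \iota_{\fw} \circ \beta_0$, both of which lie in $\Inj_{\KK}(A_2, A_r)$. The identity $b_i(\alpha,\cF) = b_i(\fv,\cF)$ (and analogously for $\beta,\fw$) established in the paragraph just above the statement then gives $b_i(\fv,\cF) = b_i(\alpha,\cF) = b_i(\beta,\cF) = b_i(\fw,\cF)$. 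Conversely, for (3)$\Rightarrow$(2), I take $\alpha,\beta \in \Inj_{\KK}(A_2,A_r)$ and apply the same identity to $\fv := \alpha(A_2)$ and $\fw := \beta(A_2)$.

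I do not anticipate any genuine obstacle: the only subtlety is bookkeeping between the parametrizations by injective linear maps and by $2$-planes. All nontrivial content (Grothendieck's splitting, uniqueness of decomposition, and the compatibility $\hat{\alpha}^\ast \cong \hat{\alpha_0}^\ast \circ \hat{\iota}^\ast$) has already been supplied in the preceding text, so the proof should amount to assembling these pieces in order.
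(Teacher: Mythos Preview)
Your proposal is correct and matches the paper's approach exactly: the paper states the lemma with the remark ``As a direct consequence we obtain the following result,'' leaving the proof implicit, and what you have written is precisely the unpacking of that consequence using Grothendieck's uniqueness for (1)$\Leftrightarrow$(2) and the identity $b_i(\alpha,\cF)=b_i(\im\alpha,\cF)$ derived just above for (2)$\Leftrightarrow$(3).
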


\bigskip

Let $\cF \in \Vect(\PP(A_r))$ with $r \geq 3$. By \cite[(2.2.3)]{OSS80}, there exists a (uniquely determined) sequence $(b_i(\cF))_{i \in \ZZ} \in \NN_{0}^{\ZZ}$ such that 
    \[ O_{\cF} \coloneqq \{ \fv \in \Gr_2(A_r) \mid \forall i \in \NN_0 \colon b_i(\fv,\cF) = b_i(\cF) \}\]
is a dense open subset of $\Gr_2(A_r)$. We call
    \[ \cF_{\gen} \coloneqq \bigoplus_{i \in \ZZ} b_i(\cF) \cO_{\PP(A_2)}(i)\]
    the \textit{generic decomposition} or \textit{splitting type} of $\cF$. The closed subset $\cJ_{\cF} \coloneqq \Gr_2(A_r) \setminus \cO_{\cF}$ is called set of \textit{jumping lines} of $\cF$. If $\cF$ is uniform, we have $\cO_{\cF}= \Gr_2(A_r)$ and write in this case
    \[ \cF|_{\PP(A_2)} = \bigoplus_{i \in \ZZ} b_i(\cF) \cO_{\PP(A_2)}(i)\]
    to indicate that $b_i(\fv,\cF) = b_i(\cF)$ for every $\fv \in \Gr_2(A_r)$. Moreover, we call $\supp(\cF) \coloneqq \{ i \in \NN_0 \mid b_i(\cF) \neq 0\}$ the \textit{support} of $\cF$.

\bigskip

\subsection{Subcategories of Kronecker representations}

Let $r \geq 1$. We denote by $\rep(K_r)$ the category of representation of the Kronecker quiver $K_r$. Moreover, we let be $\cK_r$ the category whose objects are triples 
\[M = (M_1,M_2,\psi_M \colon A_r \otimes_{\KK} M_1 \lra M_2),\]
where $M_1,M_2$ are finite dimensional $\KK$-vector spaces and $\psi_M$ is a $\bmk$-linear map, called \textit{structure map} of $M$. A morphism $f \colon M \lra N$ in the category $\cK_r$ is a pair of $\bmk$-linear maps $f_i \colon M_i \lra N_i$, $i \in \{1,2\}$ such that the diagram
\[ 
\xymatrix{
A_r \otimes_{\KK} M_1 \ar^{\psi_M}[r] \ar[d]_{\id_{A_r} \otimes f_1} & M_2 \ar[d]^{f_2}\\
A_r \otimes_{\KK} N_1 \ar^{\psi_N}[r] & N_2}
\]
commutes. 
We have an equivalence of categories
\[ \rep(K_r) \lra \cK_r \ \ ; \ \ (M_1,M_2,(M(\gamma_i))_{1\leq i \leq r}) \mapsto (M_1,M_2,\psi_M)\]
that is the identity on morphisms and on the level objects is given by $\psi_M(\gamma_i \otimes m) = M(\gamma_i)(m)$ for all $i \in \{1,\ldots,r\}$ and all $m \in M_1$, where $M(\gamma_i) \colon M_1 \lra M_2$ is the $\KK$-linear map attached to the arrow $\gamma_i$. From now on we identify $\cK_r$ and $\rep(K_r)$.

The category $\rep(K_r)$ has two simple objects (representations) $S(1),S(2)$ with dimension vectors $(1,0)$ and $(0,1)$, respectively. The representation $S(1)$ is injective, while $S(2)$ is projective. We define $P_0(r) \coloneqq S(2)$ and denote by $P_1(r)$ the projective cover of $S(1)$ with dimension vector $(1,r)$ and structure map
\[ \psi_{P_1(r)} \colon A_r\otimes_{\KK} \KK \lra A_r \ ; \ \gamma_i \otimes \lambda \mapsto \lambda \gamma_i.\]
The representations $P_0(r)$ and $P_1(r)$ are a complete representation system of the projective indecomposable representations in $\rep(K_r)$.

Let $1 \leq d \leq r$. For $\alpha \in \Inj_{\KK}(A_d,A_r)$ and $M \in \rep(K_r)$ we define
\[ \psi_{\alpha^\ast(M)} \coloneqq  \psi_M \circ (\alpha \otimes \id_{M_1}) \colon A_d \otimes_{\KK} M_1 \lra M_2.\] 
The representation
\[\alpha^\ast(M) := (M_1,M_2,\psi_{\alpha^\ast(M)}) \in \rep(K_d)\] is referred to as \textit{restriction of $M$ along $\alpha$}.
According to \cite[(2.1.1)]{BF24}, $\alpha^\ast(M)$ is projective if and only if $\beta^\ast(M)$ is projective for all $\beta \in \Inj_{\KK}(A_d,A_r)$ with $\im \beta = \im \alpha$. Hence, we have
\begin{align*}
     \cV(K_r,d)_M \coloneqq & \{ \fv \in \Gr_d(A_r) \mid \alpha^{\ast}(M) \ \text{is not projective for all} \ \alpha \in \Inj_{\KK}(A_d,A_r) \ \text{with} \ \im \alpha = \fv\}\\
=& \{ \fv \in \Gr_d(A_r) \mid \alpha^{\ast}(M) \ \text{is not projective for some} \ \alpha \in \Inj_{\KK}(A_d,A_r) \ \text{with} \ \im \alpha = \fv\}.
\end{align*}
Moreover, for $ \fv \in \Gr_d(A_r)$, we define
\[
\psi_{M,\fv} \coloneqq \psi_{M}|_{\fv \otimes_{\KK} M_1} \colon \fv \otimes_{\KK} M_1 \longrightarrow M_2,
\]
as well as
\begin{align*}
    \cR(K_r,d)_M := \{ \fv \in \Gr_d(A_r) \mid \rk(\psi_{M,\fv}) < d \cdot \dim_{\KK} M_1 \} 
    = \{ \fv \in \Gr_d(A_r) \mid \ker \psi_{M,\fv} \neq \{0\} \},
\end{align*}
and refer to $ \cR(K_r,d)_M$ as the \textit{$ d$-th rank variety}\footnote{In fact, $ \cV(K_r,d)_M = \cR(K_r,d)_M$ is a closed subset of $ \Gr_d(A_r)$; see \cite[(2.1.1)]{BF24}.} of $ M$.
It follows from \cite[(2.1.5)]{BF24} that the two notions agree, that is, 
\[
\cR(K_r,d)_M = \cV(K_r,d)_M.
\]
Given $ \alpha \in \Inj_{\KK}(A_d, A_r)$, the map  
\[
\alpha^{-1} \otimes \id_{M_1} \colon \im \alpha \otimes_{\KK} M_1 \longrightarrow A_d \otimes_{\KK} M_1
\]
is a vector space isomorphism, and therefore
\[
\rk(\psi_{\alpha^\ast(M)}) 
= \rk\left(\psi_M \circ (\alpha \otimes \id_{M_1})\right) 
= \rk\left(\psi_M \circ (\alpha \otimes \id_{M_1}) \circ (\alpha^{-1} \otimes \id_{M_1})\right) 
= \rk(\psi_{M, \im \alpha}).
\]
Hence, we obtain $ \rk(\psi_{M, \im \alpha}) = \rk(\psi_{\alpha^\ast(M)})$, and in particular,
\[
\rk(\psi_{\alpha^\ast(M)}) = \rk(\psi_{\beta^\ast(M)})
\]
for all $ \alpha, \beta \in \Inj_{\KK}(A_d, A_r)$ such that $ \im \alpha = \im \beta$. In view of the fact that $ \dimu P_0(d) = (0,1)$ and $ \dimu P_1(d) = (1,d)$, we arrive at the following result; see also \cite[\S 2.2]{BF24}.

\bigskip

\begin{proposition}\label{1.2.1}
Let $1 \leq d < r$. Let $M \in \rep(K_r)$ be a representation and $\fv \in \Gr_d(A_r)$.
The following statements are equivalent.
    \begin{enumerate}
        \item[(i)] $\rk(\psi_{M,\fv}) = d \dim_{\KK} M_1$, i.e., $\psi_{M,\fv}$ is injective.
        \item[(ii)] $\fv \not\in \cV(K_r,d)_M = \cR(K_r,d)_M$.
        \item[(iii)] $\alpha^{\ast}(M) \cong (\dim_{\KK} M_2\!-\!d\dim_{\KK} M_1)P_0(d)\!\oplus\!(\dim_{\KK} M_1)P_1(d)$ for all $\alpha \in \Inj_{\KK}(A_d,A_r)$ with $\im \alpha = \fv$.   
    \end{enumerate}
\end{proposition}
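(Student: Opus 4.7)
The plan is to handle the three equivalences in two steps: (i) $\Leftrightarrow$ (ii) is immediate from the definitions, while (i) $\Leftrightarrow$ (iii) is reduced, via the rank identity established just above the statement, to a classification of $K_d$-representations with injective structure map. For the first equivalence, the definition of $\cR(K_r,d)_M$ reads literally as $\fv \in \cR(K_r,d)_M$ iff $\ker \psi_{M,\fv} \neq \{0\}$, i.e., iff $\psi_{M,\fv}$ fails to be injective; the identification $\cV(K_r,d)_M = \cR(K_r,d)_M$ cited from \cite[(2.1.5)]{BF24} then closes the loop with (ii).

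For (i) $\Rightarrow$ (iii), fix $\alpha \in \Inj_\KK(A_d, A_r)$ with $\im \alpha = \fv$ and abbreviate $a := \dim_\KK M_1$, $b := \dim_\KK M_2$. By the rank identity $\rk(\psi_{\alpha^\ast(M)}) = \rk(\psi_{M, \im \alpha})$ recorded in the excerpt, (i) is equivalent to the injectivity of $\psi := \psi_{\alpha^\ast(M)} \colon A_d \otimes_\KK M_1 \lra M_2$. Choose a complement $U \subseteq M_2$ with $M_2 = \im \psi \oplus U$, so that $\dim_\KK U = b - da$. Then $(0, U)$ is a direct summand of $\alpha^\ast(M)$ isomorphic to $(b - da)\, P_0(d)$, and its complement $N := (M_1, \im \psi, \psi)$ has dimension vector $(a, da)$ with bijective structure map. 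Fixing a basis of $M_1$ identifies $A_d \otimes_\KK M_1$ with $A_d^{\,a}$, and then $\psi$ itself serves as the $M_2$-component of an isomorphism $N \cong a\, P_1(d)$, since the structure map of $P_1(d)$ is by definition the canonical isomorphism $A_d \otimes_\KK \KK \to A_d$. This produces the decomposition in (iii) for the chosen $\alpha$, and since $\alpha$ was arbitrary subject to $\im \alpha = \fv$, (iii) holds in full.

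The reverse direction (iii) $\Rightarrow$ (i) is short: the structure map of $P_0(d)$ has source $0$ and is trivially injective, and $\psi_{P_1(d)}$ is a $\KK$-linear isomorphism; hence the structure map of any finite direct sum of copies of $P_0(d)$ and $P_1(d)$ is injective, so $\rk(\psi_{\alpha^\ast(M)}) = da$ under (iii). Invoking the rank identity once more transports this back to $\rk(\psi_{M,\fv}) = da$, giving (i). The main point to be careful about is the normalization step in (i) $\Rightarrow$ (iii), namely the assertion that any representation with dimension vector $(a, da)$ and bijective structure map is isomorphic to $a\, P_1(d)$. This amounts to the transitivity of the $\GL(M_1) \times \GL(M_2)$-action on bijective $\KK$-linear maps $A_d \otimes_\KK M_1 \to M_2$, which is elementary but is the one place where a small explicit computation is genuinely needed.
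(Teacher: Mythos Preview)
Your proof is correct and follows essentially the same route as the paper's (implicit) argument: the equivalence (i)\,$\Leftrightarrow$\,(ii) is unpacked from the definition of $\cR(K_r,d)_M$ together with the cited identity $\cV(K_r,d)_M=\cR(K_r,d)_M$, and (i)\,$\Leftrightarrow$\,(iii) is obtained from the rank identity and a dimension count. The only structural difference is that the paper's one-line justification leans on the definition of $\cV(K_r,d)_M$ via projectivity of $\alpha^\ast(M)$ (so (ii) says $\alpha^\ast(M)$ is projective, hence a direct sum of copies of $P_0(d)$ and $P_1(d)$, with multiplicities fixed by $\dimu P_0(d)=(0,1)$ and $\dimu P_1(d)=(1,d)$), whereas you bypass the word ``projective'' and exhibit the decomposition directly from injectivity of the structure map; these are equivalent viewpoints and your version simply spells out the details the paper leaves to the reader and to \cite{BF24}.
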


\bigskip

\noindent \cref{1.2.1} motivates the following definition.

\bigskip

\begin{Definition}\label{1.2.2} We say that $M \in \rep(K_r)$ is \textit{relative $d$-projective} if 
$\cV(K_r,d)_M = \emptyset$, and denote by $\rep_{\proj}(K_r,d)$ the full subcategory of $\rep(K_r)$ consisting of relative $d$-projective representations.
\end{Definition}

\bigskip
\noindent Let $1 \leq d \leq r$. For $(x,y) \in \NN_0$, we set $\Delta_{(x,y)} \coloneqq y - x$ and define more generally
\[ \Delta_{(x,y)}(d) \coloneqq y - dx.\]
Given a representation $M \in \rep(K_r)$, or vector spaces $M_1,M_2 \in \modd \KK$, we define
\[ \Delta_M(d) \coloneqq \Delta_{(M_1,M_2)}(d) \coloneqq \Delta_{(\dim_{\KK} M_1,\dim_{\KK} M_2)}(d).\]
\noindent As a consequence of \cref{1.2.1} we have 
 \begin{align*}
\rep_{\proj}(K_r,d) &= \{ M \in \rep(K_r) \mid \forall \alpha \in \Inj_{\KK}(A_d,A_r) \colon \alpha^\ast(M) \in \rep(K_d) \ \text{is projective}\} \\
&= \{ M \in \rep(K_r) \mid \forall \alpha \in \Inj_{\KK}(A_d,A_r) \colon \alpha^\ast(M) \cong \Delta_M(d) P_0(d) \oplus (\dim_{\KK} M_1) P_1(d)\} \\
&= \{ M \in \rep(K_r) \mid \forall \alpha \in \Inj_{\KK}(A_d,A_r) \colon \ker \psi_{\alpha^\ast(M)} = \{0\}\}\\
&=\{ M \in \rep(K_r) \mid \forall \fv \in \Gr_d(A_r) \colon \ker \psi_{M,\fv} = \{0\}\}
\end{align*}
for each $d \in \{1,\ldots,r-1\}$ and obtain a nested sequence 
\[ \rep_{\proj}(K_r,r-1) \subseteq \rep_{\proj}(K_r,r-2) \subseteq \cdots \subseteq \rep_{\proj}(K_r,1).\]
Moreover, this description shows that $ \rep_{\proj}(K_r,1)$ coincides with the category $ \EKP(K_r)$ of \textit{equal kernels representations}. The latter was introduced in \cite{Wor13a}, building on the framework developed in \cite{CFS11}.

\bigskip

\subsection{Steiner bundles on projective space}

Steiner bundles on projective space were first systematically studied in the foundational work of Dolgachev and Kapranov \cite{DK93}, where their connections to hyperplane arrangements were explored. Since then, they have been the subject of extensive study \cite{Bra04,CHS22,MMR21}, and the definition has been generalized to arbitrary smooth irreducible varieties $X$; see, for example, \cite{MS09}.

\bigskip

\begin{Definition}\label{1.3.1} A vector bundle $\cF \in \Vect(\PP(A_r))$ is referred to as a \textit{Steiner bundle} if there exist vector spaces $V_1,V_2$ and an exact sequence
\[ 0\lra V_1 \otimes_{\KK} \cO_{\PP(A_r)}(-1) \lra V_2 \otimes_{\KK}\cO_{\PP(A_r)} \lra \cF \lra 0.\] 
We denote by $\StVect(\PP(A_r))$ the full subcategory of $\Vect(\PP(A_r))$ whose objects are Steiner bundles.
\end{Definition}

\bigskip

\begin{Remark}\label{1.3.2}
It follows from \cite[(3.2)]{DK93} that
$\StVect(\PP(A_r))$ is closed under extensions, and that for every exact sequence in $\Vect(\PP(A_r))$ \[ 0 \lra \cF_1 \lra \cF_2 \lra \cF_3 \lra 0\]
 with $\cF_1,\cF_2 \in \StVect(\PP(A_r))$ one also has $\cF_3 \in \StVect(\PP(A_r))$.
\end{Remark}

\bigskip

Connections between Kronecker representations and Steiner bundles on $\PP(A_r)$ have been known for some time (see \cite{Bra05}, \cite{Hul80}). A categorical equivalence between a suitable subcategory of $\rep(K_r)$ and the category of Steiner bundles - along with a generalized version for  Steiner bundles on Grassmannians $\Gr_d(A_r)$ - can be found in \cite[(3.2.3)]{BF24}. We only recall the result for $\PP(A_r) = \Gr_1(A_r)$.

\bigskip

\begin{Theorem} \label{1.3.3}   There exists a right exact functor
$\TilTheta : \rep(K_r) \lra \Coh(\PP(A_r))$
such that the following statements hold.
\begin{enumerate}
\item A vector bundle $\cF \in \Vect(\PP(A_r))$ is a Steiner bundle if and only if there is $M \in \repp(K_r,1)$ such that $\cF \cong \TilTheta(M)$, and $\TilTheta : \repp(K_r,1) \lra \StVect(\PP(A_r))$ is an equivalence of categories.
\item Let $M \in \repp(K_r,1)$. Then $\rk(\TilTheta(M))= \Delta_M$ and  $c_1(\TilTheta(M)) = \dim_\KK M_1$,
where $c_1(\TilTheta(M))$ denotes the first Chern class of  $\TilTheta(M)$. 
\end{enumerate}
\end{Theorem}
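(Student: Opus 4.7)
The plan is to construct $\TilTheta$ as the cokernel of a canonical sheaf morphism built from the structure map $\psi_M$, and then to recognize Steiner bundles as exactly the images of relative $1$-projective representations via a fibrewise injectivity criterion.

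First, I would construct $\TilTheta$. Twisting the tautological inclusion $\cO_{\PP(A_r)}(-1) \hookrightarrow A_r \otimes_\KK \cO_{\PP(A_r)}$ by $M_1$ and postcomposing with $\psi_M \otimes_\KK \id_{\cO_{\PP(A_r)}}$ produces a natural sheaf morphism
\[ \Psi_M \colon M_1 \otimes_\KK \cO_{\PP(A_r)}(-1) \lra M_2 \otimes_\KK \cO_{\PP(A_r)}, \]
and I would set $\TilTheta(M) := \coker(\Psi_M)$. A morphism $f = (f_1,f_2) \colon M \lra N$ in $\rep(K_r)$ induces a commutative square between the corresponding $\Psi$'s, giving $\TilTheta(f)$ on cokernels and making $\TilTheta$ a functor. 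Right exactness is automatic: the functors $V \mapsto V \otimes_\KK \cO_{\PP(A_r)}(j)$ are exact in $V$, and $\coker$ is right exact.

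Second, I would analyze $\Psi_M$ fibrewise. The fiber of $\cO_{\PP(A_r)}(-1)$ at $[\fv] \in \PP(A_r)$ is canonically $\fv$, so the stalk of $\Psi_M$ at $[\fv]$ identifies with the map $\psi_{M,\fv} \colon \fv \otimes_\KK M_1 \lra M_2$ from \cref{1.2.1}. Hence $\Psi_M$ is injective on every fiber if and only if $M \in \repp(K_r,1)$; in this case it is a subbundle inclusion of locally free sheaves, $\TilTheta(M)$ is locally free, and sits in the short exact sequence
\[ 0 \lra M_1 \otimes_\KK \cO_{\PP(A_r)}(-1) \lra M_2 \otimes_\KK \cO_{\PP(A_r)} \lra \TilTheta(M) \lra 0, \]
i.e.\ $\TilTheta(M) \in \StVect(\PP(A_r))$. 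Additivity of $\rk$ and $c_1$ on this sequence yields $\rk(\TilTheta(M)) = \dim_\KK M_2 - \dim_\KK M_1 = \Delta_M$ and $c_1(\TilTheta(M)) = \dim_\KK M_1$, giving (2). The converse half of (1) is analogous: starting from any Steiner bundle $\cF$ with resolution $0 \to V_1 \otimes \cO(-1) \to V_2 \otimes \cO \to \cF \to 0$, the defining inclusion corresponds via
\[ \Hom_{\cO_{\PP(A_r)}}(V_1 \otimes_\KK \cO(-1), V_2 \otimes_\KK \cO) \cong V_1^\ast \otimes_\KK V_2 \otimes_\KK A_r^\ast \cong \Hom_\KK(A_r \otimes_\KK V_1, V_2) \]
to a structure map $\psi_M$, producing $M \in \rep(K_r)$ with $\TilTheta(M) \cong \cF$, and local freeness of $\cF$ forces $M \in \repp(K_r,1)$.

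The main obstacle is showing that $\TilTheta \colon \repp(K_r,1) \lra \StVect(\PP(A_r))$ is fully faithful. Faithfulness follows quickly: since $M_2 \otimes \cO_{\PP(A_r)} \twoheadrightarrow \TilTheta(M)$ is surjective, a morphism $f$ with $\TilTheta(f)=0$ must have $f_2 = 0$, and fibrewise injectivity of $\Psi_N$ then forces $f_1 = 0$. For fullness, I would apply $\Hom_{\cO_{\PP(A_r)}}(-, \TilTheta(N))$ to the Steiner resolution of $M$ and combine the standard identifications $\Hom_{\cO}(V \otimes \cO(-1), W \otimes \cO) \cong V^\ast \otimes W \otimes A_r^\ast$ and $\Hom_{\cO}(V \otimes \cO(i), W \otimes \cO(i)) \cong V^\ast \otimes W$ with the cohomological vanishings $H^1(\PP^{r-1}, \cO) = 0 = H^1(\PP^{r-1}, \cO(1))$. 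The resulting long exact sequence should identify $\Hom_{\cO_{\PP(A_r)}}(\TilTheta(M), \TilTheta(N))$ with the space of compatible pairs $(f_1,f_2)$, which is exactly $\Hom_{\rep(K_r)}(M,N)$. Carrying out this $\Hom$-$\Ext$ bookkeeping carefully is the technical crux of the argument.
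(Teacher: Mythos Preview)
The paper does not prove this theorem; it is quoted from \cite[(3.2.3)]{BF24}, so there is no in-paper proof to compare against. Your construction of $\TilTheta$ as $\coker(\Psi_M)$ built from the tautological inclusion, the fibrewise identification of $\Psi_M$ at $[\fv]$ with $\psi_{M,\fv}$, and the rank/$c_1$ computation are all correct and constitute the standard argument (and essentially what \cite{BF24} does in the Grassmannian setting).

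One small gap: your faithfulness step as written is not quite a proof. From $\TilTheta(f)=0$ and surjectivity of $M_2\otimes\cO\twoheadrightarrow\TilTheta(M)$ you only get that $f_2\otimes\id$ lands in $\ker(N_2\otimes\cO\to\TilTheta(N))=\im\Psi_N$, i.e.\ it factors through $N_1\otimes\cO(-1)$; to conclude $f_2=0$ you must invoke $\Hom_{\cO}(\cO,\cO(-1))=H^0(\cO(-1))=0$. After that, injectivity of $\Psi_N$ gives $f_1=0$ as you say. The same vanishings $H^0(\cO(-1))=H^1(\cO(-1))=H^1(\cO)=0$ (equivalently, the fact that $(\cO(-1),\cO)$ is a strongly exceptional pair) are exactly what makes your fullness sketch go through, so you should state them explicitly rather than only $H^1(\cO)=H^1(\cO(1))=0$.
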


\bigskip

\begin{example}\label{1.3.4}
The \textit{Euler sequence} $0 \lra \cO_{\PP(A_r)}(-1) \lra \cO_{\PP(A_r)}^r \lra \cT_{\PP(A_r)}(-1) \lra 0$  (cf. \cite[(6.4.2)]{Ben16}) shows that the $-1$-twist of the \textit{tangent bundle} $\cT_{\PP(A_r)}$ is a Steiner bundle of rank $r-1$ with first Chern class $1$. According to \cref{1.3.3}, there exists an indecomposable representation $X \in \rep_{\proj}(K_r,1)$ with $\dimu X = (1,r)$ such that $\widetilde{\Theta}(X)  \cong \cT_{\PP(A_r)}(-1)$. A straightforward computation now shows $X \cong P_1(r)$.
\end{example}

\bigskip

\subsection{Auslander-Reiten theory}

Let $r \geq 2$. We denote by $\Gamma(K_r)$ the \textit{Auslander-Reiten quiver} of $K_r$. Its vertices correspond to the isomorphism classes of indecomposable representations, and the arrows represent the so-called \textit{irreducible} morphisms. By abuse of notation, we write $M$ instead of $[M]$ for an indecomposable $M \in \rep(K_r)$, and identify $M$ with its isomorphism class.

In the following, we recall the basic definitions and results needed later, and refer the reader to \cite{Ker94} for further details and unexplained terminology.

Let $r \geq 2$. We denote by $\tau_{K_r} \colon \rep(K_r) \lra \rep(K_r)$ the \textit{Auslander-Reiten translation}. The Auslander-Reiten quiver $\Gamma(K_r)$ of $K_r$ consists of infinitely many components. We denote by $\cP$ and $\cI$ the uniquely determined components of the Auslander-Reiten quiver $\Gamma(K_r)$ containing $S(2) = P_0(r)$ and $S(1)$, respectively. The components $\cP,\cI$ are called \textit{preprojective} and  \textit{preinjective} component, respectively. All other components are \textit{regular}. The (indecomposable) representations in $\cP$ and $\cI$ are called \textit{preprojective} and $\cI$ \textit{preinjective}, respectively, while representations in a regular component are called \textit{regular}. An arbitrary non-zero representation $M \in \rep(K_r)$ is called \textit{preprojective, preinjective} or \textit{regular}, provided all its indecomposable direct summands are preprojective, preinjective or regular, respectively. By definition, the zero representation is preprojective, preinjective and regular.
Recall that the representations
\[ P_{2\ell}(r) \coloneqq \tau^{-\ell}_{K_r}(P_0(r)) \ \text{and} \  P_{2\ell+1}(r) \coloneqq \tau^{-\ell}_{K_r}(P_1(r))\]
for all $\ell \in \NN_0$ form a complete list of representatives of the isomorphism classes of indecomposable preprojective Kronecker representations and for every $i \in \NN_0$ there is an almost split sequence
\[ 0 \lra P_i(r) \lra rP_{i+1}(r) \lra P_{i+2}(r) \lra 0.\]

By the same token, a complete list of representatives of the isomorphism classes of indecomposable preinjective representations is given by $I_i(r) \coloneqq D_{K_r}(P_i(r))$, $i \in \NN_0$, where $D_{K_r} \colon \rep(K_r) \lra \rep(K_r)$ is the standard duality, defined by $D_{K_r}(M) = (M_2^\ast,M_1^\ast,\psi_{D_{K_r}(M)})$ with structure map
\[ \psi_{D_{K_r}(M)}(a \otimes h) \coloneqq h \circ \psi_M(a \otimes -) \colon M_1 \lra \KK \]
for all $a \in A_r$ and $h \in M_2^\ast$. We obtain almost split sequences
\[ 0 \lra I_{i+2}(r) \lra rI_{i+1}(r) \lra I_{i}(r) \lra 0\]
for all $i \in \NN_0$. Note that $I_0(r) = S(1)$ and $I_1(r)$ is the injective hull of $P_0(r) = S(2)$. 
Figure \ref{Fig:1} illustrates the Auslander-Reiten quiver of $K_r$.
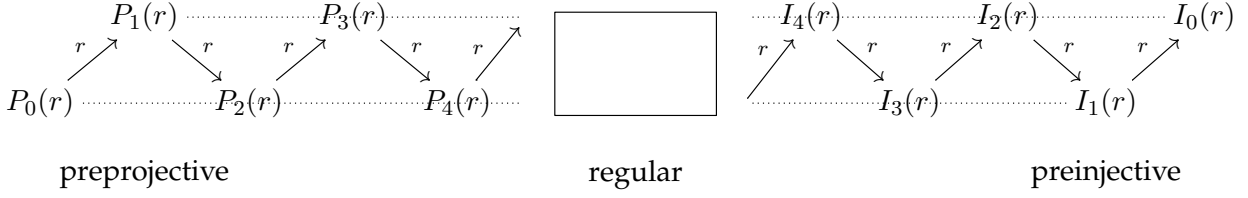
\begin{figure}[ht]
    \centering
  \[ 
\xymatrix @C=7pt@R=15pt{
& P_1(r) \ar^{r}[dr] \ar@{..}[rrrr]&  & P_3(r) \ar^{r}[dr] &   & &   & &  &  &  & \ar@{..}[rrrrr] & I_4(r) \ar^{r}[dr]  & & I_2(r) \ar^{r}[dr]& &  I_0(r)\\
P_0(r)   \ar@{..}[rrrrr] \ar^{r}[ur]&  & P_2(r) \ar^{r}[ur] & & P_4(r) \ar^{r}[ur]  & &  & & & & & \ar^{r}[ur] \ar@{..}[rrrr]  & & I_3(r) \ar^{r}[ur] & & I_1(r) \ar^{r}[ur] \\
& & & & & & & & & & & & & & & &
\save "1,7"."2,11"*[F]\frm{} \restore  
\save "3,2"."3,7" *\txt{preprojective} \restore
\save "3,9"."3,10" *\txt{regular} \restore
\save "3,16"."3,17" *\txt{preinjective} \restore}
\] 
    \caption{Auslander-Reiten quiver $\Gamma(K_r)$ for $r \geq 2$.}
    \label{Fig:1}
\end{figure}

Recall from \cite[(VIII.2.7)]{ASS06} that every indecomposable preprojective (preinjective) representation $M \in \rep(K_r)$ is a \textit{brick}\footnote{The notion \textit{brick} is special to the representation theory of finite dimensional algebras. A more common notion in other fields of algebra is that of a \textit{Schurian} representation. A vector bundle with the corresponding property is called \textit{simple}.}, i.e., $\End_{K_r}(M) = \KK \id_{M}$. 

For $r = 2$, every regular component is a homogeneous tube \cite[(VIII.7)]{ARS95},  
whereas for $r \geq 3$ regular components are of type $\ZZ A_{\infty}$ by \cite{Rin78}.  
The shape of such a component is illustrated in Figure~\ref{Fig:2}. Note that these components are only bounded at the "bottom".

\begin{figure*}[!ht]
\centering 
\begin{tikzpicture}[very thick, scale=1]
                    [every node/.style={fill, circle, inner sep = 1pt}]

\def \n {9} % #Knoten Reihe  - 1
\def \m {2} % #Knoten Spalte - 1
\def \translation {1} % 1 Für Translation

\def \ab {0.15} % Abstand Pfeil und Knoten
\def \Pab {0.6} % Halber Abstand Horizontal

\def \rcone {0} % 1 für rechten Kegel
\def \rdist {2} % Anzahl der quasi-einfachen die eingeschlossen werden - 1
\def \rcolor {gray} %  white für keine Farbe

\def \rrcone {0} %1 für einen zweiten rechten Kegel links von rcone
\def \rrdist {3} % Anzahl der quasi-einfachen die eingeschlossen werden - 1

\foreach \a in {0,...,\n}{
\foreach \b in {0,...,\m}{
  
   \ifthenelse{\a = \n \and \b < \m}{
   \node[color=black] ({\a,\b,5})at ({\a*2*\Pab},{\b*2*\Pab}) {$\circ$};
     }
     {
      \ifthenelse{\b = \m \and \a < \n}{
      \node[color=black] ({\a,\b}) at ({\a*2*\Pab+\Pab},{\b*2*\Pab+\Pab}) {$\circ$};
      \node[color=black] ({\a,\b,5})at ({\a*2*\Pab},{\b*2*\Pab}) {$\circ$};
      }
      {
    
     \ifthenelse{\b = \m \and \a = \n}
     {\node[color=black] ({\a,\b,5})at ({\a*2*\Pab},{\b*2*\Pab}) {$\circ$};}
    {\node[color=black] ({\a,\b}) at ({\a*2*\Pab+\Pab},{\b*2*\Pab+\Pab}) {$\circ$};
    \node[color=black] ({\a,\b,5})at ({\a*2*\Pab},{\b*2*\Pab}) {$\circ$};

      }
      }
      }
    }
    }

\foreach \s in {0,...,\n}{
\foreach \t in {0,...,\m}
{  
 \ifthenelse{\t = \m \and \s < \n}{
    \draw[->] (\s*2*\Pab+\ab,\t*2*\Pab+\ab) to (\s*2*\Pab+\Pab-\ab,\t*2*\Pab+\Pab-\ab); 
    \draw[->] (\s*2*\Pab+\Pab+\ab,\t*2*\Pab+\Pab-\ab) to (\s*2*\Pab+2*\Pab-\ab,\t*2*\Pab+\ab); 

  }{
  
  \ifthenelse{\s = \n \and \t < \m}{
  
  }
  {
  \ifthenelse{\s = \n \and \t = \m}{
   
  }{
   \draw[->] (\s*2*\Pab+\ab,\t*2*\Pab+\ab) to (\s*2*\Pab+\Pab-\ab,\t*2*\Pab+\Pab-\ab); 
   \draw[->] (\s*2*\Pab+\Pab+\ab,\t*2*\Pab+\Pab+\ab) to (\s*2*\Pab+2*\Pab-\ab,\t*2*\Pab+2*\Pab-\ab);
   \draw[->] (\s*2*\Pab+\ab,\t*2*\Pab+2*\Pab-\ab) to (\s*2*\Pab+\Pab-\ab,\t*2*\Pab+\Pab+\ab); 
   \draw[->] (\s*2*\Pab+\Pab+\ab,\t*2*\Pab+\Pab-\ab) to (\s*2*\Pab+2*\Pab-\ab,\t*2*\Pab+\ab);    
   }
   }
  
    }
    }
    }

\ifthenelse{\isodd{\m}}
%% IF
 { 
  \node[color=black] (Dots1) at (0,\m*\Pab+2*\Pab) {$\cdots$};
  \node[color=black] (Dots2) at (\n*2*\Pab,\m*\Pab+2*\Pab) {$\cdots$};
   \ifthenelse{\isodd{\n}}{
  \node[color=black] (Dots3) at (0.5*\n*2*\Pab,2*\m*\Pab+2*\Pab) {$\vdots$};}
  {\node[color=black] (Dots3) at (0.5*\n*2*\Pab,2*\m*\Pab+\Pab) {$\vdots$};} 
  }
%% Else
  {
  \node[color=black] (Dots1) at (0,\m*\Pab+\Pab) {$\cdots$};
  \node[color=black] (Dots2) at (\n*2*\Pab,\m*\Pab+\Pab) {$\cdots$};
  \ifthenelse{\isodd{\n}}{
  \node[color=black] (Dots3) at (0.5*\n*2*\Pab,2*\m*\Pab+2*\Pab) {$\vdots$};}
  {\node[color=black] (Dots3) at (0.5*\n*2*\Pab,2*\m*\Pab+\Pab) {$\vdots$};}
  }
 
\ifthenelse{\translation = 1}{
   \foreach \s in {0,...,\n}{
   \foreach \t in {0,...,\m}{ 
   \ifthenelse{\s = 0}{}{
      \ifthenelse{\s = \n}{\draw[->,dotted,thin] (\s*2*\Pab-\ab,\t*2*\Pab) to (\s*2*\Pab-2*\Pab+\ab,\t*2*\Pab); }{
   \draw[->,dotted,thin] (\s*2*\Pab-\ab,\t*2*\Pab) to (\s*2*\Pab-2*\Pab+\ab,\t*2*\Pab); 
   \draw[->,dotted,thin] (\s*2*\Pab-\ab+\Pab,\t*2*\Pab+\Pab) to (\s*2*\Pab-2*\Pab+\Pab+\ab,\t*2*\Pab+\Pab); 
   }
   }}
}}
{}  %ELSE

\begin{scope}[on background layer]

\ifthenelse{\rrcone = 1}{
        \draw[fill = \rcolor!20] (\n*\Pab*2+\ab,-\ab) node[anchor=north]{}
  -- (\n*\Pab*2-\rrdist*\Pab*2-0.7*\Pab,-\ab) node[anchor=north]{}
  -- (\n*\Pab*2+\ab,\rrdist*\Pab*2+0.7*\Pab) node[anchor=south]{};
    }
  {}
\ifthenelse{\rcone = 1}{
        \draw[fill= \rcolor!40](\n*\Pab*2+\ab,-\ab) node[anchor=north]{}
  -- (\n*\Pab*2-\rdist*\Pab*2-0.7*\Pab,-\ab) node[anchor=north]{}
  -- (\n*\Pab*2+\ab,\rdist*\Pab*2+0.7*\Pab) node[anchor=south]{};
    }
  {}   
  
\end{scope}
\end{tikzpicture}
\caption{Regular component of $\Gamma(K_r)$ for $r \geq 3$.}
\label{Fig:2}
\end{figure*}
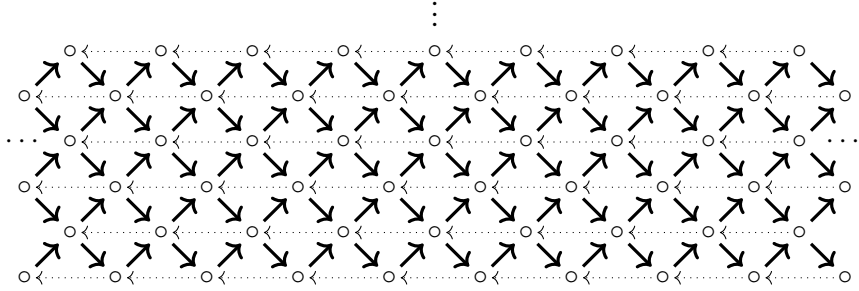

A representation $M$ in a component $\cC$ of type $\ZZ A_{\infty}$ is called \textit{quasi-simple} whenever it has precisely one direct predecessor.  
Equivalently, the quasi-simple representations are those located at the bottom of the component.

The irreducible morphism given by an arrow $M \to N$ in a regular component is injective if the corresponding arrow is uprising and surjective otherwise. 
For each $M \in \cC$, there is therefore a unique directed path
\[ M_{[1]} \rightarrow M_{[2]} \rightarrow \cdots \rightarrow M_{[n-1]} \rightarrow M_{[n]}=M\]
in $\cC$ such that $M_{[1]}$ is quasi-simple and for every $i \in \{1,\ldots,n-1\}$ the irreducible morphism $M_{[i]} \lra M_{[i+1]}$ is injective. In this case, $\ql(M)\coloneqq n$ is called the \textit{quasi-length} of $M$.

\bigskip

\subsection{Homogeneous and uniform Steiner bundles}

\label{S:1.5}
Given $g \in \GL(A_r)$ and $M \in \rep(K_r)$, we define the representation $g.M = (M_1,M_2,\psi_{g.M}) \in \rep(K_r)$ with structure map
\[ \psi_{g.M} \coloneqq   \psi_M \circ (g^{-1} \otimes \id_{M_1}) \colon A_r \otimes_{\KK} M_1 \lra M_2.\]
Moreover, we define $g.f \coloneqq  f$ for every morphism $f \in \Hom_{K_r}(M,N)$. The group $\GL(A_r)$ acts in this way on $\rep(K_r)$  via auto-equivalences. 

\bigskip

\begin{Definition}\label{1.5.1}
We call $M \in \rep(K_r)$
\begin{enumerate}
    \item \textit{uniform}, provided $\alpha^\ast(M) \cong \beta^\ast(M)$ for all $\alpha,\beta \in \Inj_{\KK}(A_2,A_r)$.
    \item \textit{homogeneous}, provided $g.M \cong M$ for all $g \in \GL(A_r)$.
\end{enumerate}
\end{Definition}

\bigskip

\begin{Remarks}\label{1.5.2} \phantom{.}
\begin{enumerate}
\item For $g \in \GL(A_r) = \Inj_{\KK}(A_r,A_r)$ we have an isomorphism $g.M \cong (g^{-1})^\ast(M)$.
\item Since $\GL(A_r)$ acts transitively on $\Gr_d(A_r)$, every homogeneous representation satisfies $\rk(\psi_{M,\fv}) = \rk(\psi_{M,\fw})$ for all $1 \leq d < r$ and all $\fv,\fw\in \Gr_d(A_r)$. In particular, the rank of $\sum^r_{i=1} \alpha_i M(\gamma_i) \colon M_1 \lra M_2$ does not depend on $\alpha \in \KK^r\setminus \{0\}$.
\item If $M$ is uniform, then we either have $\cV(K_r,2)_M=\emptyset$ or $\cV(K_r,2)_M=\Gr_2(A_r)$.
\end{enumerate}
\end{Remarks}

\bigskip

\begin{Lemma} Let $M \in \rep(K_r)$ be a representation. \label{1.5.3}
\begin{enumerate}
    \item For $\alpha \in \Inj_\KK(A_2,A_r)$ we have $\TilTheta(\alpha^\ast(M)) \cong \hat{\alpha}^\ast(\TilTheta(M))$\footnote{Note that $\widetilde{\Theta}$ has two different meanings.}. 
\item Let $M \in \rep_{\proj}(K_r,1)$.
\begin{enumerate}
    \item[(i)] We have $\alpha^\ast(M) \cong \beta^\ast(M)$ for all $\alpha,\beta \in \Inj_{\KK}(A_2,A_r)$ such that $\im \alpha = \im \beta$.
    \item[(ii)] The representation $M$ is homogeneous if and only if $\TilTheta(M) \in \StVect(\PP(A_r))$ is homogeneous.
    \item[(iii)] The representation $M$ is uniform if and only if $\TilTheta(M) \in \StVect(\PP(A_r))$ is uniform.

\end{enumerate}
\end{enumerate}
\end{Lemma}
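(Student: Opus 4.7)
My plan is to first establish part~(1), a naturality statement identifying $\TilTheta \circ \alpha^\ast$ with $\hat{\alpha}^\ast \circ \TilTheta$, and then to derive parts~(2)(i)--(iii) from it by combining \cref{1.3.3} with Grothendieck's theorem and the formalism of \cref{S:1.5}.

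For part~(1), I recall from \cite{BF24} that $\TilTheta(M)$ is realized as the cokernel of a canonical morphism
\[ d_M \colon M_1 \otimes_{\KK} \cO_{\PP(A_r)}(-1) \lra M_2 \otimes_{\KK} \cO_{\PP(A_r)} \]
built from $\psi_M$. Since $\hat{\alpha}^\ast$ is right exact, commutes with direct sums, and satisfies $\hat{\alpha}^\ast(\cO_{\PP(A_r)}(j)) \cong \cO_{\PP(A_2)}(j)$, applying $\hat{\alpha}^\ast$ to this presentation yields a presentation of $\hat{\alpha}^\ast(\TilTheta(M))$. A direct naturality check then identifies the resulting differential with $d_{\alpha^\ast(M)}$, since the latter is built from $\psi_{\alpha^\ast(M)} = \psi_M \circ (\alpha \otimes \id_{M_1})$ and the inverse image functor transports the structure map in precisely this way on standard sections.

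For part~(2)(ii), I would combine \cref{1.5.2}(1), which gives $g.M \cong (g^{-1})^\ast(M)$, with the identity $g^\ast.\cF = \widehat{g^{-1}}^\ast(\cF)$ from \cref{S:1.1}; part~(1) then yields $\TilTheta(g.M) \cong g^\ast.\TilTheta(M)$, and full faithfulness of $\TilTheta$ on $\rep_{\proj}(K_r,1)$ gives $g.M \cong M \Leftrightarrow g^\ast.\TilTheta(M) \cong \TilTheta(M)$. For part~(2)(iii), I first observe that the equal-kernels condition is preserved under restriction: since $\alpha$ sends a line in $A_2$ to a line in $A_r$, we have $\alpha^\ast(M), \beta^\ast(M) \in \rep_{\proj}(K_2,1)$, a subcategory on which $\TilTheta$ is again an equivalence; combined with part~(1), this translates uniformity on representations into uniformity on Steiner bundles. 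Part~(2)(i) is a special instance: when $\im \alpha = \im \beta$, the discussion preceding \cref{1.1.2} shows that Grothendieck's theorem forces $\hat{\alpha}^\ast(\TilTheta(M))$ and $\hat{\beta}^\ast(\TilTheta(M))$ to share the same decomposition, so part~(1) and full faithfulness yield $\alpha^\ast(M) \cong \beta^\ast(M)$.

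The main technical obstacle lies in verifying part~(1) at the level of presentations rather than merely of objects: one has to unwind the explicit construction of $d_M$ from \cite{BF24} and check that $\hat{\alpha}^\ast$ commutes with it canonically. Once this naturality is in place, parts~(2)(i)--(iii) become formal consequences of the equivalence $\TilTheta$ and Grothendieck's splitting theorem on $\PP(A_2)$.
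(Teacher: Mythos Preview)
Your proposal is correct and follows essentially the same route as the paper. The paper's own proof is terse: it cites \cite[(5.1.1)]{BF24} for part~(1) and \cite[(1.3.1)]{BF24} for part~(2)(i), and derives (2)(ii)--(iii) from~(1) together with \cref{1.3.3} by noting that $\alpha^\ast(M) \in \rep_{\proj}(K_r,d)$ for $d \in \{2,r\}$ --- exactly the mechanism you describe.

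One minor difference worth noting: for~(2)(i) the paper invokes a direct representation-theoretic argument (the content of \cite[(1.3.1)]{BF24}, later revisited in \cref{4.3.2}): if $\im\alpha=\im\beta$ then $\beta^\ast(M)=g.\alpha^\ast(M)$ for $g=\beta^{-1}\circ\alpha\in\GL(A_2)$, and since $\alpha^\ast(M)\in\rep_{\proj}(K_2,1)$ is preprojective and hence homogeneous (\cref{2.2.1}(4)), one concludes $\alpha^\ast(M)\cong\beta^\ast(M)$ without passing through bundles. Your route via Grothendieck and part~(1) is equally valid but slightly less direct, since it appeals to the equivalence $\TilTheta$ on $\rep_{\proj}(K_2,1)$ where the representation-side argument needs only the $\GL(A_2)$-action. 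Also note that your use of~(1) in deriving~(2)(ii) tacitly requires the analogue of~(1) for $\alpha\in\GL(A_r)=\Inj_\KK(A_r,A_r)$ rather than $\Inj_\KK(A_2,A_r)$; your presentation-level argument handles this case just as well, and the paper silently uses the same extension (hence the phrase ``$d\in\{2,r\}$'').
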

\begin{proof}
\begin{enumerate}
    \item See \cite[(5.1.1)]{BF24}.
    \item Statement (i) is \cite[(1.3.1)]{BF24}. Statements (ii) and (iii) follow from (1) in conjunction with \cref{1.3.3} since $\alpha^\ast(M) \in \rep_{\proj}(K_r,d)$ for every $d \in \{2,r\}$ and every $\alpha \in \Inj_{\KK}(A_d,A_r)$.
    \end{enumerate}
\end{proof}
 
\bigskip

Having established the correspondence between uniform (homogeneous) Steiner bundles and uniform (homogeneous) representations in $\rep_{\proj}(K_r,1)$, we now proceed to examine how the splitting type of a Steiner bundle is reflected within the category $\rep(K_2)$.

Let $M \in \rep_{\proj}(K_r,1)$ and $\alpha \in \Inj_{\KK}(A_2,A_r)$. Then $\alpha^\ast(M) \in \rep_{\proj}(K_2,1)$ and \cite[(4.3)]{Wor13b} implies that $\alpha^\ast(M)$ is preprojective. Hence, we find a (uniquely determined) decomposition 
\[ \alpha^\ast(M) \cong \bigoplus_{i \in \NN_0} b_i(\alpha,M) P_i(2).\]
In view of the foregoing result, we have $\alpha^\ast(M) \cong \beta^\ast(M)$ for every $\beta \in \Inj_{\KK}(A_2,A_r)$ such that $\fv := \im \alpha = \im \beta$ and obtain therefore a well-defined sequence
\[ (b_i(\fv,M))_{i \in \NN_0} := (b_i(\alpha,M))_{i \in \NN_0}.\]
Accordingly, we put
\[
M|_{\fv} := \bigoplus_{i \in \NN_0} b_i(\fv,M)P_i(2) \in \rep(K_2).
\]

Let $i \in \NN_0$, then \cref{1.3.3} gives us $\rk(\widetilde{\Theta}(P_i(2))) = 1$ and $c_1(\widetilde{\Theta}(P_i(2))) = i$, which implies $\TilTheta(P_i(2)) \cong \cO_{\PP(A_2)}(i)$ (cf. \cite[(6.3.1), (7.3.7)]{Ben16}). Given $\fv \in \Gr_2(A_r)$ and $\alpha \in \Inj_{\KK}(A_2,A_r)$ such that $\im \alpha = \fv$, we conclude with \cref{1.5.3}:
\[
    \bigoplus_{i \in \NN_0} b_i(\alpha,M) \cO_{\PP(A_2)}(i) \cong \TilTheta({\alpha}^\ast(M)) \cong \hat{\alpha}^\ast(\TilTheta(M)) 
   = \bigoplus_{i \in \ZZ} b_i(\alpha,\TilTheta(M)) \cO_{\PP(A_2)}(i).
\]
In summary, we have:

\bigskip

\begin{Lemma}\label{1.5.4}
Let $M \in \rep_{\proj}(K_r,1)$, $\alpha \in \Inj_{\KK}(A_2,A_r)$ and $\fv \coloneqq \im \alpha$.
\begin{enumerate}
 \item We have $b_i(\fv,\TilTheta(M)) = b_i(\alpha,\TilTheta(M)) = 0$ for all $i < 0$.
    \item We have $b_i(\fv,M) = b_i(\alpha,M) = b_i(\alpha,\TilTheta(M)) = b_i(\fv,\TilTheta(M))$ for all $i \in \NN_0$.
    \item We have $O_{\Theta(M)} = \{ \fv \in \Gr_2(A_r) \mid M|_{\fv} = \bigoplus_{i \in \NN_0} b_i(\widetilde{\Theta}(M)) P_i(2)\}$. %or $b_i(M) \coloneqq b_i(\widetilde{\Theta}(M))$,
   % \item We have $b_i(\alpha,M) = b_i(\beta,M)$ for all $\beta \in \Inj_{\KK}(A_d,A_r)$ with $\im \beta = \fv$.
\end{enumerate}
\end{Lemma}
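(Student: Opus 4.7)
My plan is to exploit the isomorphism chain
\[
\bigoplus_{i \in \NN_0} b_i(\alpha,M)\,\cO_{\PP(A_2)}(i) \;\cong\; \TilTheta(\alpha^\ast(M)) \;\cong\; \hat{\alpha}^\ast(\TilTheta(M)) \;\cong\; \bigoplus_{i \in \ZZ} b_i(\alpha,\TilTheta(M))\,\cO_{\PP(A_2)}(i)
\]
that is derived in the paragraph immediately preceding the lemma, together with the uniqueness part of Grothendieck's splitting theorem. The proof then amounts to reading off multiplicities.

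For part (1), I would argue as follows. Since $\alpha^\ast(M) \in \rep_{\proj}(K_2,1)$ is preprojective by the cited result \cite[(4.3)]{Wor13b}, its decomposition involves only $P_i(2)$ with $i \in \NN_0$, so the left-hand side of the displayed chain contains only Serre twists of non-negative index. By the uniqueness of the decomposition on $\PP(A_2)$, all coefficients $b_i(\alpha,\TilTheta(M))$ with $i < 0$ must vanish. The identity $b_i(\fv,\TilTheta(M)) = b_i(\alpha,\TilTheta(M))$ has already been established in the paragraph preceding \cref{1.1.2} (using $\hat{\alpha}^\ast(\TilTheta(M)) \cong \hat{\alpha_0}^\ast(\TilTheta(M)|_{\fv})$ for an isomorphism $\alpha_0 \colon A_2 \to \fv$), so the second equality is immediate.

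For part (2), the same chain compared coefficient-wise in each degree $i \in \NN_0$ gives $b_i(\alpha,M) = b_i(\alpha,\TilTheta(M))$. The equality $b_i(\fv,M) = b_i(\alpha,M)$ is the definition of $b_i(\fv,M)$, which is well-defined by \cref{1.5.3}(2)(i) asserting $\alpha^\ast(M) \cong \beta^\ast(M)$ whenever $\im\alpha = \im\beta$. Combined with the bundle-side identity from part (1), this chains together into the asserted four-fold equality.

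Part (3) is then a tautological unwinding. By the definition recalled at the end of \cref{S:1.1}, a line $\fv$ belongs to $O_{\TilTheta(M)}$ precisely when $b_i(\fv,\TilTheta(M)) = b_i(\TilTheta(M))$ for all $i \in \ZZ$; by part (1) it suffices to test $i \in \NN_0$. Substituting the representation-theoretic multiplicities $b_i(\fv,M)$ using part (2) translates this condition into $M|_{\fv} \cong \bigoplus_{i \in \NN_0} b_i(\TilTheta(M))\, P_i(2)$, which is the stated description. I expect no real obstacle here beyond careful bookkeeping of the two parallel notations for $b_i$ on the representation and bundle sides, and keeping track of the fact that $\widetilde{\Theta}$ is used both for the functor and for its restriction to the equivalence of \cref{1.3.3}.
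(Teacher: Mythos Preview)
Your proposal is correct and mirrors exactly what the paper does: the lemma is stated as a summary (``In summary, we have:'') of the displayed isomorphism chain derived in the paragraph immediately preceding it, and your argument simply makes explicit the coefficient comparison via Grothendieck's uniqueness, together with the already-established identities $b_i(\fv,\cF)=b_i(\alpha,\cF)$ and $b_i(\fv,M)=b_i(\alpha,M)$.
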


\bigskip

In view of \cref{1.5.4}, for $M \in \rep_{\proj}(K_r,1)$ we set 
\[
O_{M} := O_{\widetilde{\Theta}(M)}, \qquad 
\cJ_{M} := \cJ_{\widetilde{\Theta}(M)}, \qquad 
b_i(M) := b_i(\widetilde{\Theta}(M)).
\]
We then call
\[
M_{\gen} \coloneqq \bigoplus_{i \in \NN_0} b_i(M) P_i(2)
\]
the \emph{generic decomposition} (or \emph{splitting type}) of $M$. If $M$ is uniform, we just write 
\[ M|_{K_2} = \bigoplus_{i \in \NN_0} b_i(M) P_i(2)\]
to indicate that $M$ has  splitting type $\bigoplus_{i \in \NN_0} b_i(M) P_i(2)$ with $\cJ_M = \emptyset$. 

\bigskip

The final result of this section shows that the category $\rep_{\proj}(K_r,2)$ which is equivalent to the category of Steiner bundles on $\Gr_2(A_r)$ (cf. \cite[(3.2.3)]{BF24}), can also be interpreted as the category of uniform Steiner bundles on $\PP(A_r)$ with support $\{0,1\}$.

\bigskip

\begin{proposition}\label{1.5.5}
Let $r \geq 3$, $\cF$ be a Steiner bundle on $\PP(A_r)$ and $M \in \rep_{\proj}(K_r,1)$ be such that $\cF \cong \widetilde{\Theta}(M)$. The following statements are equivalent.
\begin{enumerate}
    \item The bundle $\cF$ is uniform with $\supp(\cF) = \{0,1\}$ (resp. $\supp(\cF) = \{0\}$).
    \item The representation $M$ is in $\rep_{\proj}(K_r,2)$ and $M_1 \neq \{0\}$ (resp. $M_1 = \{0\}$).
\end{enumerate}
In this case, $\cF|_{\PP(A_2)} = \Delta_M(2) \cO_{\PP(A_2)} \oplus (\dim_{\KK} M_1) \cO_{\PP(A_2)}(1)$.
\end{proposition}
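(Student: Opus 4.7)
The plan is to combine the rank/projectivity characterization of $\rep_{\proj}(K_r,2)$ from \cref{1.2.1} with the splitting-type dictionary between Kronecker representations and Steiner bundles supplied by \cref{1.3.3}, \cref{1.5.3}, and \cref{1.5.4}. The key auxiliary identification I would carry out first is $\TilTheta(P_0(2)) \cong \cO_{\PP(A_2)}$ and $\TilTheta(P_1(2)) \cong \cO_{\PP(A_2)}(1)$: both indecomposable preprojective representations have rank $1$ (by \cref{1.3.3}(2)), with first Chern classes $0$ and $1$ respectively, and the classification of line bundles on $\PP(A_2)$ then forces the identifications.

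For the implication (2) $\Rightarrow$ (1), I would assume $M \in \rep_{\proj}(K_r,2)$ and apply \cref{1.2.1}: for every $\alpha \in \Inj_{\KK}(A_2,A_r)$, the restriction $\alpha^\ast(M)$ is isomorphic to $\Delta_M(2) P_0(2) \oplus (\dim_{\KK} M_1) P_1(2)$, independent of $\alpha$. Transporting this via $\TilTheta$ using \cref{1.5.3}(1) together with the identifications above yields $\hat{\alpha}^\ast(\cF) \cong \Delta_M(2) \cO_{\PP(A_2)} \oplus (\dim_{\KK} M_1) \cO_{\PP(A_2)}(1)$ for all $\alpha$. This simultaneously shows that $\cF$ is uniform, produces the displayed formula for $\cF|_{\PP(A_2)}$, and determines the support according to whether $M_1$ is zero.

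For the converse (1) $\Rightarrow$ (2), \cref{1.5.3}(2)(iii) upgrades the uniformity of $\cF$ to uniformity of $M$, so \cref{1.5.4} gives $M|_{\fv} \cong \bigoplus_{i \in \NN_0} b_i(\cF) P_i(2)$ for every $\fv \in \Gr_2(A_r)$. Under the assumption that $\supp(\cF) \subseteq \{0,1\}$, only the summands $P_0(2)$ and $P_1(2)$ occur, and comparing dimension vectors of the two sides of this isomorphism pins down $\dim_{\KK} M_1 = b_1(\cF)$ and $\Delta_M(2) = b_0(\cF)$. Thus every restriction $\alpha^\ast(M)$ has precisely the form required in condition (iii) of \cref{1.2.1}, placing $M$ in $\rep_{\proj}(K_r,2)$. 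The dichotomy on $M_1$ then reads off directly: $M_1 \neq 0$ exactly when $1 \in \supp(\cF)$, while $M_1 = 0$ corresponds to $\supp(\cF) = \{0\}$.

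Since each individual step reduces to a direct appeal to the cited results, there is no serious obstacle; the mildly delicate point is correctly matching the bundle-theoretic and representation-theoretic notions of splitting type under $\TilTheta$ and bookkeeping the dimension-vector comparison in the converse direction, both of which become formal once the identifications $\TilTheta(P_i(2)) \cong \cO_{\PP(A_2)}(i)$ for $i \in \{0,1\}$ are established.
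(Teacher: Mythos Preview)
Your approach matches the paper's closely and each step you cite does the work you say it does. There is, however, one genuine omission in the direction (2) $\Rightarrow$ (1) for the case $M_1 \neq \{0\}$: having established
\[
\cF|_{\PP(A_2)} \;=\; \Delta_M(2)\,\cO_{\PP(A_2)} \;\oplus\; (\dim_{\KK} M_1)\,\cO_{\PP(A_2)}(1),
\]
you assert that the support is determined by whether $M_1$ vanishes, but this only gives $\supp(\cF) \subseteq \{0,1\}$ together with $1 \in \supp(\cF)$. To conclude $\supp(\cF) = \{0,1\}$ you still need $\Delta_M(2) > 0$, and nothing in the results you invoke delivers that. It is not automatic: a priori one could have $\dim_{\KK} M_2 = 2\dim_{\KK} M_1$, and ruling this out for $M \in \rep_{\proj}(K_r,2)$ with $r \geq 3$ requires the inequality $\Delta_M(2) \geq (r-2)\min\{2,\dim_{\KK} M_1\}$, which is the content of \cite[(2.3.2)]{BF24} (recorded in the paper as \cref{2.3.1}). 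The paper's own proof invokes exactly this reference at this point. Once you insert that citation, your argument is complete and coincides with the paper's.
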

\begin{proof}
We only consider the case $M_1\neq \{0\}$. 

(1) $\Rightarrow$ (2). We have $\cF|_{\PP(A_2)} = b_0(\cF) \cO_{\PP(A_2)} \oplus b_1(\cF) \cO_{\PP(A_2)}(1)$ with $b_0(\cF) \neq 0 \neq b_1(\cF)$ and conclude
\[ M|_{K_2} = b_0(\cF) P_0(2) \oplus b_1(\cF) P_1(2).\]
By definition, we therefore have $\alpha^\ast(M) \cong b_0(\cF) P_0(2) \oplus b_1(\cF) P_1(2)$ for all $\alpha \in \Inj_{\KK}(A_2,A_r)$.  
Since $b_0(\cF) P_0(2) \oplus b_1(\cF) P_1(2)$ is projective, the characterization of $\rep_{\proj}(K_r,2)$ following \cref{1.2.2} yields $M \in \rep_{\proj}(K_r,2)$ and $0 \neq b_1(\cF) = \dim_{\KK} M_1$.

(2) $\Rightarrow$ (1). Since $M \in \rep_{\proj}(K_r,2)$, we have
\[ \alpha^\ast(M) \cong \Delta_M(2) P_0(2) \oplus (\dim_{\KK} M_1) P_1(2)\]
for all $\alpha \in \Inj_{\KK}(A_2,A_r)$. \cref{1.5.4} implies $O_{\widetilde{\Theta}(M)} = \Gr_2(A_r)$ with $b_0(\widetilde{\Theta}(M)) = \Delta_M(2)$, $b_1(\widetilde{\Theta}(M)) = \dim_{\KK} M_1$, and $b_i(\widetilde{\Theta}(M)) = 0$ for all other $i$. Moreover, $M \in \rep_{\proj}(K_r,2)$ and \cite[(2.3.2)]{BF24} give $\Delta_M(2) \geq (r-2)\min\{2,\dim_{\KK} M_1\} > 0$. 
Hence, $\supp(\cF) = \{0,1\}$.
\end{proof}
      
\bigskip

\section{Homogeneous and uniform representations}

Let $M \in \rep(K_r)$ be homogeneous and $\alpha,\beta \in \Inj_{\KK}(A_2,A_r)$. We find $g \in \GL(A_r)$ such that $\alpha = g \circ \beta$ and conclude
\[ \alpha^{\ast}(M) = (g \circ \beta)^{\ast}(M) \cong \beta^\ast(g^\ast(M)) \cong \beta^\ast(g^{-1}.M) \cong \beta^\ast(M).\]
Hence, homogeneous representations are uniform. Based on the equivalence $\TilTheta \colon \rep_{\proj}(K_r,1) \lra \StVect(\PP(A_r))$, \cref{1.5.3} and the findings in \cite{MMR21}, it is already clear that not all uniform representations are homogeneous. 

In what follows, we show that the examples constructed in \cite{MMR21}, which provide uniform but non-homogeneous representations, are not isolated phenomena. In fact, we establish the existence of a broad class of such representations. This is achieved by proving that homogeneous and uniform representations exhibit different behavior with respect to Auslander-Reiten theory and within the variety of representations.

Throughout this section we assume that $(V_1,V_2)$ is a pair of finite-dimensional $\KK$-vector spaces such that $V_1 \oplus V_2 \neq \{0\}$. 

\subsection{The variety of representations and Kac's Theorem}
We denote by
\[\cV(K_r;V_1,V_2) \coloneqq \Hom_\bmk(A_r \otimes_{\KK} V_1,V_2)\]
the (irreducible) affine variety of representations of $K_r$ on $(V_1,V_2)$. A point $\psi$ in the variety $\cV(K_r;V_1,V_2)$ can be identified with the representation $V_{\psi} = (V_1,V_2,\psi \colon A_r \otimes_{\KK} V_1 \lra V_2)\in   \cK_r \cong \rep(K_r)$. In the following we will freely use this identification. 

The \textit{Euler-Ringel form} of $K_r$ is given by
\[ \langle - , - \rangle_{r} \colon \ZZ^2 \times \ZZ^2 \lra \ZZ \ ; \ (x,y) \mapsto x_1 y_1 + x_2 y_2 - r x_1 y_2 \]
and satisfies $\langle \udim M,\udim N\rangle_r =\dim_\KK\Hom_{K_r}(M,N)-\dim_\KK\Ext^1_{K_r}(M,N)$ for all $M,N \in \rep(K_r)$. The corresponding \textit{Tits quadratic} form is denoted by $q_r \colon \ZZ^2 \lra \ZZ \ ; \ x \mapsto \langle x,x \rangle_r = x_1^2 + x_2^2 - r x_1 x_2$.  

According to \cite[(Thm B)]{Kac82}, an indecomposable representation $M \in \rep(K_r)$ satisfies $q_r(\dimu M) \leq 1$. An element $(x,y) \in \NN_0^2$ is called \textit{Schur root}, provided there exists a brick $M \in \rep(K_r)$ such that $\dimu M = (x,y)$. 
We denote by
\[\cB(V_1,V_2) := \{ \psi \in \cV(K_r;V_1,V_2) \mid V_{\psi} \ \text{is a brick}\} \subseteq \cV(K_r;V_1,V_2)\]
the open subset of bricks in $\cV(K_r;V_1,V_2)$.
Note that $\dimu(V_1,V_2)$ is a Schur root if and only if $\cB(V_1,V_2) \neq \emptyset$. 
Kac’s Theorem provides a complete answer to the question of whether $(x,y)$ is a Schur root (see \cite[(6(c), p.159)]{Kac82}). We present a simplified version sufficient for the present discussion.

\bigskip

\begin{Theorem} \label{2.1.1}
Let $r \geq 2$. The following statements hold.
\begin{enumerate}
\item[(1)] The subset $\cB(V_1,V_2)$
is non-empty if and only if $q_r(\dimu(V_1,V_2)) \leq 1$.
\item[(2)] The following statements are equivalent.
\begin{enumerate}
\item[(i)] $\cV(K_r;V_1,V_2)$ contains a regular representation.
\item[(ii)]  Every indecomposable representation in $\cV(K_r;V_1,V_2)$ is regular.
\item[(iii)]  $q_r(\dimu(V_1,V_2)) \leq 0$.
\end{enumerate}
\item[(3)] If $q_{r}(\dimu(V_1,V_2)) = 1$ and $M \in \cV(K_r;V_1,V_2)$ is indecomposable, then there exists a unique $i \in \NN_0$ such that $M \cong P_i(r)$ or $M \cong I_i(r)$.
\end{enumerate}
\end{Theorem}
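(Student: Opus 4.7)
The proof is essentially a translation of Kac's theorem \cite[(Thm B)]{Kac82} and its Schur-root refinement \cite[\S 6(c)]{Kac82} into the language of $K_r$. My plan is to rely on the following consequences of Kac, visible already from the AR quiver of \S 1.4: every indecomposable $M \in \rep(K_r)$ satisfies $q_r(\udim M) \leq 1$, with equality exactly when $\udim M$ is a positive real root; for $K_r$ the positive real roots are precisely $\dimu P_i(r)$ and $\dimu I_i(r)$ ($i \in \NN_0$), and each is realised by a unique indecomposable up to iso; positive imaginary roots are the nonzero $(x,y) \in \NN_0^2$ with $q_r(x,y) \leq 0$ and are realised by regular indecomposables.

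I would handle (3) first. If $M$ is indecomposable with $q_r(\udim M) = 1$, then Kac forces $\udim M$ to be a positive real root, hence $\dimu P_i(r)$ or $\dimu I_i(r)$ for some $i$. The uniqueness-up-to-iso clause in the real-root case of Kac yields $M \cong P_i(r)$ or $M \cong I_i(r)$. Uniqueness of $i$ is a short computation with the AR recurrence $\dimu P_{i+2}(r) = r\,\dimu P_{i+1}(r) - \dimu P_i(r)$ (and similarly for $I_i$): the sequences $(\dimu P_i(r))$ and $(\dimu I_i(r))$ are pairwise distinct, and the two families are disjoint since $\dimu P_i(r)$ has its second coordinate dominating while $\dimu I_i(r)$ has the first.

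For (2), the equivalence is driven by the trichotomy preprojective/preinjective/regular and part (3). The implication (iii)$\Rightarrow$(ii) follows since any indecomposable $M$ with $\udim M = \dimu(V_1,V_2)$ has $q_r(\udim M) \leq 0 < 1$, ruling out preprojective and preinjective via (3). The reverse (i)$\Rightarrow$(iii) uses that a regular indecomposable has imaginary dim vector, hence $q_r \leq 0$. The remaining implications are routine: (iii)$\Rightarrow$(i) uses Kac to produce a regular indecomposable realising the imaginary positive root $\dimu(V_1,V_2)$, and (ii)$\Rightarrow$(i) is clear once an indecomposable representation with the given dim vector is known to exist.

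For (1), the forward implication uses the homological interpretation of the Euler form:
\[
q_r(\dimu(V_1,V_2)) = \dim_\KK \End_{K_r}(V_\psi) - \dim_\KK \Ext^1_{K_r}(V_\psi,V_\psi) = 1 - \dim_\KK \Ext^1_{K_r}(V_\psi,V_\psi) \leq 1,
\]
since $V_\psi$ is a brick. For the converse, I would split on $q_r$: if $q_r = 1$, (3) furnishes a preprojective or preinjective indecomposable of that dim vector, which is a brick by \cite[(VIII.2.7)]{ASS06}; if $q_r \leq 0$, Kac's Schur-root criterion produces a brick with the given imaginary-root dim vector. The main obstacle will be this last step, since Kac's criterion is combinatorial and must be matched against the explicit description of the imaginary-root region for $K_r$ in order to yield the clean condition $q_r \leq 1$; this is where a careful reading of \cite[\S 6(c)]{Kac82} specialised to the Kronecker case is needed.
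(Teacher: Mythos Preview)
The paper does not supply its own proof; the theorem is stated as a simplified version of Kac's results, with citations to \cite[(Thm~B)]{Kac82} and \cite[\S 6(c), p.~159]{Kac82}, and no further argument. Your proposal to derive the three parts from Kac's theorem together with the explicit AR-theoretic description of preprojectives and preinjectives for $K_r$ is exactly the justification the paper leaves implicit.

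One small point on (i)$\Rightarrow$(iii): you invoke that a regular \emph{indecomposable} has imaginary dimension vector, but statement (i) allows decomposable regular representations. To close the argument you need that a sum of positive imaginary roots of $K_r$ again satisfies $q_r\le 0$; this follows from the convexity of the cone $\{(x,y)\in\RR_{\ge 0}^2 : q_r(x,y)\le 0\}$ for $r\ge 2$.
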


\bigskip

As a consequence of \cref{2.1.1}, we refer to any $(x,y) \in \NN^2$ satisfying $q_r(x,y) \leq 0$ as a \textit{regular dimension vector}.

\bigskip

\subsection{Homogeneous representations}

This section is devoted to the study of homogeneous representations in $\rep(K_r)$ for $r \geq 2$.  
To set the stage, we begin with a general structural result that will play a key role in our analysis.  
Although the statement holds in greater generality for any associative algebra and any connected algebraic group \cite[(2.1), (2.2)]{Far11}, we include a complete proof in our setting for the reader’s convenience.

Recall that the algebraic group
\[ G(A_r;V_1,V_2) := \GL(A_r) \times \GL(V_2) \times \GL(V_1)\]
acts on the variety $\cV(K_r;V_1,V_2)$ via
\[ (a,g_2,g_1).\psi := g_2 \circ \psi \circ (a^{-1} \otimes g_1^{-1})\]
for all $a \in A_r, g_1 \in \GL(V_1), g_2 \in \GL(V_2)$, and that $\psi,\psi'$ are in the same orbit under this action if and only if there exists $g \in \GL(A_r)$ such that $g.V_{\psi} \cong V_{\psi'}.$

\bigskip

\begin{Lemma}\phantom{}\label{2.2.1}
\begin{enumerate}
\item Let $M \in \rep(K_r)$, then $\GL(A_r)_M := \{ g \in \GL(A_r) \mid g.M \cong M\}$ is a closed subgroup of $\GL(A_r)$.
\item Homogeneous representations are closed under direct sums and summands.
\item A regular Auslander-Reiten component $\cC$ contains a homogeneous representation if and only if every representation in $\cC$ is homogeneous.
\item All preprojective and preinjective representations are homogeneous.
\item Every non-zero regular representation $M \in \rep(K_2)$ is not homogeneous.
\end{enumerate}

\end{Lemma}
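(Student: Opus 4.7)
The plan is to handle the five items in order, drawing on algebraic-group theory, Krull--Schmidt, the translation-quiver structure of regular components, and Kac's Theorem. For (1), I would observe that $\GL(A_r)_M$ is the preimage of the $\GL(V_1) \times \GL(V_2)$-orbit $\cO(\psi_M) \subseteq \cV(K_r;V_1,V_2)$ under the morphism
\[ \phi_M \colon \GL(A_r) \lra \cV(K_r;V_1,V_2), \qquad g \longmapsto \psi_{g.M}. \]
Since $\cO(\psi_M)$ is locally closed, this preimage is constructible, and a constructible subgroup of an algebraic group is automatically closed. For (2), direct sums are immediate from $g.(M \oplus N) = g.M \oplus g.N$. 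For the summand case, decompose $N = \bigoplus_i X_i^{n_i}$ into pairwise non-isomorphic indecomposables; Krull--Schmidt applied to $g.N \cong N$ yields a unique permutation $\sigma_g$ with $g.X_i \cong X_{\sigma_g(i)}$, and $g \mapsto \sigma_g$ is a group homomorphism into a finite symmetric group. Its kernel $\bigcap_i \GL(A_r)_{X_i}$ is closed by (1), and connectedness of $\GL(A_r)$ forces this kernel to be all of $\GL(A_r)$. Hence each $X_i$, and therefore every summand of $N$, is homogeneous.

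For (3), the action of $g \in \GL(A_r)$ is an auto-equivalence of $\rep(K_r)$, so it induces a translation-quiver automorphism of $\Gamma(K_r)$. If $g.M \cong M$ for some $M \in \cC$, this automorphism fixes the vertex $[M]$ of $\cC$; since $\cC$ is either a rank-$1$ tube (for $r = 2$) or of shape $\ZZ A_{\infty}$ (for $r \geq 3$), any translation-quiver automorphism of $\cC$ that fixes a vertex must be the identity on $\cC$. Hence $g.N \cong N$ for every $N \in \cC$. For (4), \cref{2.1.1}(3) ensures that any indecomposable representation with dimension vector $\dimu P_i(r)$ or $\dimu I_j(r)$ is isomorphic to the corresponding $P_i(r)$ or $I_j(r)$, and the two families $\{\dimu P_i(r)\}_{i \in \NN_0}$ and $\{\dimu I_j(r)\}_{j \in \NN_0}$ are pairwise disjoint (an easy check using the AR recurrence $\dimu P_{i+2} = r\, \dimu P_{i+1} - \dimu P_i$). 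Since $g.P_i(r)$ is indecomposable with the same dimension vector as $P_i(r)$, we must have $g.P_i(r) \cong P_i(r)$; the preinjective case follows by applying the standard duality $D_{K_r}$, and (2) extends homogeneity to arbitrary preprojective and preinjective representations.

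For (5), by (2) and (3) it suffices to produce one non-homogeneous representation in each regular component of $\Gamma(K_2)$. The regular quasi-simples of $\rep(K_2)$ are the representations $M_{[\lambda]}$ of dimension vector $(1,1)$ parametrized by $[\lambda] \in \PP(A_2^{\ast}) \cong \PP^1$, and a direct computation identifies the induced $\GL(A_2)$-action on isomorphism classes with the standard transitive, fixed-point-free $\PGL_2$-action on $\PP^1$; hence no $M_{[\lambda]}$ is homogeneous. The main obstacle I anticipate lies in the summand case of (2): the crucial point is that $g \mapsto \sigma_g$ takes values in a finite group, and one has to leverage the connectedness of $\GL(A_r)$ to force $\sigma_g$ to be trivial for every $g$.
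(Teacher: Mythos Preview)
Your proof is correct. The arguments for (2) and (5) match the paper's almost verbatim, but for (1), (3), and (4) you take genuinely different routes. For (1), the paper realizes $\GL(A_r)_M$ as the \emph{image} of the projection $\Stab_{G(A_r;M_1,M_2)}(\psi_M) \to \GL(A_r)$ and invokes the fact that images of algebraic-group morphisms are closed; you instead realize it as a \emph{preimage} of a locally closed orbit and use that constructible subgroups are closed --- a slightly more indirect route, but perfectly valid. For (3), the paper argues by propagation: starting from a homogeneous $Y \in \cC$ it applies the auto-equivalence $g.(-)$ to the almost split sequence ending in $Y$, and uniqueness of AR-sequences forces $\tau_{K_r}(Y)$ and the middle term to be homogeneous; iterating covers all of $\cC$. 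Your rigidity argument (any translation-quiver automorphism of a homogeneous tube or of $\ZZ A_\infty$ fixing a vertex is the identity) is cleaner and avoids the inductive bookkeeping. For (4), the paper's primary argument is again propagation from the homogeneous simples $S(1),S(2)$ along the preprojective/preinjective components, noting only in passing that \cref{2.1.1} gives an alternative; you go directly via \cref{2.1.1}(3), which is arguably the more transparent route once one checks that the preprojective and preinjective dimension vectors are disjoint.
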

\begin{proof}
\begin{enumerate}
    \item We set $G := G(A_r;M_2,M_1)$. Restriction of the canonical projection $\pi \colon G \lra \GL(A_r)$ yields the morphism of algebraic groups
    \[ \pi_M \colon \Stab_{G}(\psi_M) \lra \GL(A_r) \ ; \ (g,a_2,a_1) \mapsto g.\]
    Note that $\GL(A_r)_M = \im \pi_M \subseteq \GL(A_r)$ is closed, cf. \cite[(2.2.5)]{Spr98}.
    \item Clearly, homogeneity is closed under direct sums. Let $X_1,\ldots,X_n \in \rep(K_r)$ be indecomposable and assume that $M := \bigoplus^n_{i=1} X_i$ is homogeneous. We have $\bigoplus^n_{i=1} g.X_i \cong g.M \cong M \cong \bigoplus^n_{i=1} X_i$ for all $g \in G$. By the Theorem of Krull-Remak-Schmidt, we therefore obtain an action $\GL(A_r)$ on the set of isomorphism classes $\{[X_1],\ldots,[X_n]\}$ with $\Stab_{\GL(A_r)}([X_i]) = \GL(A_r)_{X_i}$. Since $|\GL(A_r)/\Stab_{\GL(A_r)}([X_i])| = |\GL(A_r).[X_i]|$ is finite and $\GL(A_r)$ is connected, we conclude with \cite[(2.2.1)(iii)]{Spr98} that $\Stab_{\GL(A_r)}([X_i]) = \GL(A_r)$. Hence, $g.X_i \cong X_i$ for all $g \in \GL(A_r)$ and every $i \in \{1,\ldots,n\}$.
  \item Let $\cC$ be a regular component of $K_r$ and assume that $\cC$ contains a homogeneous representation. Hence, $\cD := \{ X \in \cC \mid X \ \text{is homogeneous}\}$
is non-empty. Let $Y \in \cC$. Since $Y$ is not injective, there exists an almost split sequence 
\[
0 \longrightarrow \tau_{K_r}(Y) \longrightarrow E \longrightarrow Y \longrightarrow 0.
\] 
As $Y$ is homogeneous and $\GL(A_r)$ acts on $\rep(K_r)$ by auto-equivalences, we obtain for each $g \in \GL(A_r)$ an almost split sequence 
\[
0 \longrightarrow g.\tau_{K_r}(Y) \longrightarrow g.E \longrightarrow Y \longrightarrow 0.
\] 
Uniqueness of almost split sequences ending in $Y$ \cite[(V.1.16)]{ARS95} implies that $\tau_{K_r}(Y)$ and $E$ are homogeneous. In particular, $\cD$ is closed under the $\tau_{K_r}$. Moreover,  (2) implies that $\cD$ is closed under predecessors.
By the same token, $\cD$ is closed under successors in $\cC$ and $\tau_{K_r}^{-1}$. Since $\cC$ is a connected translation quiver, we conclude $\cC = \cD$.
\item In view of (2), it suffices to prove the statement for every indecomposable representations. Since every representation $P_i(r)$ in the preprojective component is a successor of $S(2) = P_0(r)$ and $P_0(r) = S(2)$ is homogeneous, we can apply the arguments given in (3). The proof for $I_i(r) \in \cI$ is similar. We remark that the result also follows from \cref{2.1.1}.
\item It is well-known (see for example \cite[(VIII.7.4)]{ARS95}) that every regular component of $K_2$ contains an indecomposable representation with dimension vector $(1,1)$. It is easy to see that such representations can not be homogeneous. Now we apply (2) and (3). 
\end{enumerate}
\end{proof}

\bigskip

In contrast to the tame case $r = 2$, there exist regular, indecomposable representations (even bricks) that are homogeneous. Explicit examples can be found in~\cite[(2.12), (3.22), (3.24), (4.3)]{Wor13b}. However, these examples are exceptional: they do not reflect the generic behavior of regular representations in the variety of representations. The goal of the remainder of this section is to establish the following theorem, which shows that a general representation of a fixed regular dimension vector is a non-homogeneous brick.

\bigskip

\begin{Theorem}\label{2.2.2}
Let $r \geq 3$. The following statements are equivalent.
\begin{enumerate}
    \item $q_r(\dimu(V_1,V_2)) \leq 0$.
    \item $\cV(K_r;V_1,V_2)$ contains a non-empty open set consisting of bricks that are non-homogeneous.
    \item $\cV(K_r;V_1,V_2)$ contains a non-empty open set consisting of indecomposable representations that are non-homogeneous.
\end{enumerate}
\end{Theorem}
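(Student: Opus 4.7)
My plan is to prove the equivalence in the cycle $(2) \Rightarrow (3) \Rightarrow (1) \Rightarrow (2)$. The implication $(2) \Rightarrow (3)$ is immediate since every brick is indecomposable. For $(3) \Rightarrow (1)$, given a non-empty open $U \subseteq \cV(K_r;V_1,V_2)$ of non-homogeneous indecomposables I would invoke the first part of Kac's Theorem \cref{2.1.1} to conclude $q_r(\dimu(V_1,V_2)) \leq 1$; the remaining case $q_r(\dimu(V_1,V_2)) = 1$ is excluded by the third part of \cref{2.1.1}, which identifies every indecomposable of this dimension with some $P_i(r)$ or $I_i(r)$, all homogeneous by the fourth part of \cref{2.2.1}, contradicting $U \neq \emptyset$.

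The real content lies in $(1) \Rightarrow (2)$. Assuming $q_r(\dimu(V_1,V_2)) \leq 0$, \cref{2.1.1} gives a non-empty open brick locus $\cB(V_1,V_2) \subseteq \cV(K_r;V_1,V_2)$ consisting entirely of regular bricks. I plan to pass to the GIT quotient $\cM := \cB(V_1,V_2)/(\GL(V_1) \times \GL(V_2))$, a variety of positive dimension $1 - q_r(\dimu(V_1,V_2)) \geq 1$ carrying an induced $\GL(A_r)$-action, and to translate homogeneity of a brick $M$ into the condition that $[M] \in \cM$ is a $\GL(A_r)$-fixed point. The set of homogeneous bricks is then the preimage of the closed subvariety of $\GL(A_r)$-fixed points in $\cM$, hence closed in $\cB(V_1,V_2)$. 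It thus suffices to show this closed set is proper, i.e., to exhibit a single non-homogeneous brick of the prescribed dimension vector; its complement will be the non-empty open set required by $(2)$.

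I would produce such a non-homogeneous brick first for $\dimu(V_1,V_2) = (1,1)$. Here a brick is simply a non-zero functional $\psi \in A_r^\ast$, and $M_\psi$ is homogeneous if and only if the line $\KK\psi \subseteq A_r^\ast$ is $\GL(A_r)$-invariant. Since $A_r^\ast$ is an irreducible $\GL(A_r)$-representation of dimension $r \geq 3$, no such invariant line exists, and every $(1,1)$-brick is non-homogeneous. For a general regular dimension vector I combine this with the third part of \cref{2.2.1}, which splits every regular Auslander--Reiten component into the fully homogeneous and the fully non-homogeneous: any regular component $\cC$ containing a $(1,1)$-quasi-simple is uniformly non-homogeneous, so if such a $\cC$ also contains a brick of the prescribed dimension vector, I am done.

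The main obstacle, as I see it, is this final matching step: $\tau$-orbits of $(1,1)$-quasi-simples, governed by the Coxeter transformation, realize only a sparse discrete set of dimension vectors, so for most regular $\dimu(V_1,V_2)$ the component of a $(1,1)$-quasi-simple does not itself contain a brick of the prescribed dimension. I expect to close this gap either by extending the $(1,1)$-style invariance argument to quasi-simples of other regular dimensions --- showing, via the $\GL(A_r)$-action on their moduli, that some quasi-simple of every regular dimension is non-homogeneous --- or by a direct orbit-dimension count in $\cV(K_r;V_1,V_2)$, proving that the homogeneous upper bound $\dim G.\psi_M \leq (\dim V_1)^2 + (\dim V_2)^2 - 1$ is strictly exceeded on a dense open subset of $\cB(V_1,V_2)$.
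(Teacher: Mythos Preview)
Your cycle $(2)\Rightarrow(3)\Rightarrow(1)\Rightarrow(2)$ matches the paper's, and your handling of $(2)\Rightarrow(3)$ and $(3)\Rightarrow(1)$ is essentially identical to the paper's proof. For $(1)\Rightarrow(2)$, both you and the paper reduce to two subtasks: (i) non-homogeneous bricks form an open subset of $\cB(V_1,V_2)$, and (ii) there exists at least one non-homogeneous brick of the given dimension vector. For (i) your GIT-quotient argument is a valid alternative, though the paper's route via \cref{2.2.3} and \cref{2.2.4} is more elementary: it computes directly that for a brick $\psi$ one has $\dim\Stab_{G}(\psi)\leq r^2+1$ with equality precisely when $V_\psi$ is homogeneous, and then invokes upper semicontinuity of stabilizer dimension.

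The genuine gap is (ii), and neither of your proposed fixes closes it. Your option (b), the orbit-dimension count, is circular: showing that generic $G(A_r;V_1,V_2)$-orbits have dimension strictly larger than $(\dim V_1)^2+(\dim V_2)^2-1$ is equivalent to showing that $\GL(A_r)$ acts non-trivially on the moduli space $\cM$, which is exactly the existence of a non-homogeneous brick --- the dimension formula for $\cM$ gives no contradiction if the action were trivial. Your option (a), propagating non-homogeneity through an Auslander--Reiten component from a $(1,1)$-quasi-simple via \cref{2.2.1}(3), fails for the reason you identify: the dimension vectors realized in a component containing a $(1,1)$-vertex form a sparse lattice, missing most regular dimension vectors. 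The paper's \cref{2.2.6} fills this gap by a genuinely different mechanism: it first uses that $\sigma_{K_r}^{\pm1}$ and duality preserve homogeneity to reduce to dimension vectors in the fundamental domain $\cF_r=\{(m,n):\tfrac{2}{r}n\leq m\leq n\}$, and then invokes Chen's explicit brick constructions \cite{Che13}, checking case by case (via the radical computation of \cref{2.2.5} or a direct rank comparison) that these bricks are not uniform, hence not homogeneous. This explicit construction is the missing idea in your proposal.
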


\bigskip
\noindent The following result plays a central role in the proof of \cref{2.2.2}.
\bigskip

\begin{proposition}\label{2.2.3}
Let $\psi \in \cV(K_r;V_1,V_2)$ be such that $V_\psi$ is a brick and set  $G := G(A_r;V_2,V_1)$. We have $\dim \Stab_G(\psi) \leq r^2+1$ and the following statements are equivalent.
\begin{enumerate}
    \item $V_\psi$ is homogeneous.
    \item For every $g \in \GL(A_r)$, there is $(a_{g,2},a_{g,1}) \in \GL(V_2) \times \GL(V_1)$ such that 
    \[\Stab_G(\psi) = \{ (g,\lambda a_{g,2}, \lambda a_{g,1}) \mid \lambda \in \KK^\times, g \in \GL(A_r)\}.\]
    \item $\dim \Stab_G(\psi) = r^2+1$.
\end{enumerate}
\end{proposition}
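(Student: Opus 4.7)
The plan is to analyze $\Stab_G(\psi)$ via the restriction to it of the canonical projection $\pi \colon G \twoheadrightarrow \GL(A_r)$, and to read off both the dimension bound and the structural description from the resulting short exact sequence of algebraic groups.

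First, I would unwind the stabilizer condition: $(a,g_2,g_1) \in \Stab_G(\psi)$ is equivalent to $g_2 \circ \psi = \psi \circ (a \otimes g_1)$, which, using $\psi_{a^{-1}.V_\psi} = \psi \circ (a \otimes \id_{V_1})$, amounts to saying that $(g_1,g_2)$ is an isomorphism of representations $V_\psi \xrightarrow{\sim} a^{-1}.V_\psi$. Since $\GL(A_r)_{V_\psi}$ is a group, this immediately identifies $\pi(\Stab_G(\psi))$ with the closed subgroup $\GL(A_r)_{V_\psi}$ of \cref{2.2.1}(1), and identifies the kernel of $\pi|_{\Stab_G(\psi)}$ with $\{1\}\times \Aut(V_\psi) = \{(1,\lambda\id_{V_2},\lambda\id_{V_1}) : \lambda \in \KK^\times\}$, the last equality coming from the brick assumption $\End_{K_r}(V_\psi) = \KK\id$.

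Applying the fibre-dimension formula to the surjective homomorphism of algebraic groups $\pi|_{\Stab_G(\psi)} \colon \Stab_G(\psi) \twoheadrightarrow \GL(A_r)_{V_\psi}$ then yields
\[ \dim \Stab_G(\psi) \;=\; \dim \GL(A_r)_{V_\psi} + 1 \;\leq\; \dim \GL(A_r) + 1 \;=\; r^2 + 1, \]
which is the asserted bound. For (1)$\Leftrightarrow$(3), equality forces $\dim \GL(A_r)_{V_\psi} = r^2 = \dim \GL(A_r)$; since $\GL(A_r)$ is connected and $\GL(A_r)_{V_\psi}$ is a closed subgroup, this is equivalent to $\GL(A_r)_{V_\psi} = \GL(A_r)$, i.e.\ to homogeneity of $V_\psi$. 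The implication (2)$\Rightarrow$(1) is immediate, since the hypothesis provides an explicit element of $\Stab_G(\psi)$ with prescribed first coordinate $g$, so $\GL(A_r)_{V_\psi} = \GL(A_r)$. For (1)$\Rightarrow$(2), homogeneity guarantees that for each $g \in \GL(A_r)$ the set $\Iso_{K_r}(V_\psi, g^{-1}.V_\psi)$ is a nonempty torsor over $\Aut(V_\psi) = \KK^\times$; fixing any representative $(a_{g,1},a_{g,2})$, every other isomorphism is of the form $(\lambda a_{g,1},\lambda a_{g,2})$, and assembling these fibres yields exactly the stabilizer description in (2).

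The only delicate point, which I would handle up front, is the bookkeeping between the action of $(a,g_2,g_1)$ on $\psi$ and the intrinsic operations $g.M$ and $\Hom_{K_r}(-,-)$: the inversion appearing in $\psi_{g.M} = \psi_M \circ (g^{-1}\otimes \id)$ must be tracked consistently so that $\pi(\Stab_G(\psi))$ really equals the subgroup $\GL(A_r)_{V_\psi}$ rather than some a priori non-group subset. Once the dictionary is set up, the argument is entirely formal, relying only on the brick hypothesis and the connectedness of $\GL(A_r)$.
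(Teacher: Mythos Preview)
Your proof is correct and follows essentially the same approach as the paper: project $\Stab_G(\psi)$ onto $\GL(A_r)$, identify the image with $\GL(A_r)_{V_\psi}$, use the brick hypothesis to see that each nonempty fibre is a $\KK^\times$-torsor, and read off the dimension. The only packaging difference is that you invoke the dimension formula for a surjective homomorphism of algebraic groups (kernel $\cong \KK^\times$) directly, whereas the paper carries out the fibre-dimension argument by hand over the irreducible components of $\im\pi_\psi$; both arrive at $\dim\Stab_G(\psi)=\dim\GL(A_r)_{V_\psi}+1$ and then use connectedness of $\GL(A_r)$ to finish.
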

\begin{proof} 
As in \cref{2.2.1}, we consider the morphism of algebraic groups
\[ \pi_{\psi} \colon \Stab_G(\psi) \lra \GL(A_r) \ ; \ (g,a_2,a_1) \mapsto g.\] 
According to \cite[(2.2.5)]{Spr98}, $\im  \pi_{\psi}$ is a closed subgroup of $\GL(A_r)$. Let $g \in \im \pi_{\psi}$. By definition of $\pi_{\psi}$, we find $(a_{g,2},a_{g,1}) \in \GL(V_2) \times \GL(V_1)$ such that $(g,a_{g,2},a_{g,1}) \in \Stab_G(\psi)$. For $(g,b_2,b_1) \in \Stab_G(\psi)$ we have $(g^{-1},b_2^{-1},b_1^{-1}) \in \Stab_G(\psi)$ and conclude
 \begin{align*}
  \psi \circ (\id_{A_r} \otimes a_{g,1}^{-1} \circ b_1) &= \psi \circ (g^{-1} \otimes a_{g,1}^{-1}) \circ (g \otimes b_1) \\
  &=a_{g,2}^{-1} \circ \psi \circ (g \otimes b_1) \\
  &= a_{g,2}^{-1} \circ b_2 \circ \psi.
 \end{align*}
 Hence, $(a_{g,1}^{-1} \circ b_1,a_{g,2}^{-1} \circ b_2) \in \End_{K_r}(V_{\psi}) \setminus \{0\}$ and $V_{\psi}$ being a brick implies the existence of $\lambda \in \KK^\times$ such that $(b_2,b_1) = (\lambda  a_{g,2},\lambda a_{g,1})$.
 In conclusion, we find a map 
 \[ \im \pi_{\psi} \lra \GL(V_2) \times \GL(V_1) \ ; \ g \mapsto  (a_{g,2},a_{g,1})\] 
 such that
\[ \Stab_G(\psi) = \{ (g,\lambda a_{g,2}, \lambda a_{g,1}) \mid \lambda \in \KK^\times, g \in \im  \pi_{\psi}\}.\]
Let $Y_1,\ldots,Y_n$ be the irreducible components of $\im  \pi_{\psi}$. We obtain surjective morphisms
\[ \varphi_i :=  \pi_{\psi}|_{ \pi_{\psi}^{-1}(Y_i)} \colon  \pi_{\psi}^{-1}(Y_i) \lra Y_i \ ; \ (g,a_2,a_1) \mapsto g.\]
Since $\overline{\im \varphi_i}  = Y_i$ is irreducible, we can apply \cite[(6.4.1)]{Kem93} and find $U_i \subseteq \im \varphi_i = Y_i$ such that $U_i$ is open and dense in the closure $\overline{\im \varphi_i} = Y_i$, and every $g \in U_i$ satisfies 
\[\dim \varphi_i^{-1}(g) = \dim \varphi^{-1}_i(Y_i) - \dim \overline{\im \varphi_i} = \dim \varphi^{-1}_i(Y_i) - \dim Y_i.\] 
Let $g \in U_i$, then $\varphi^{-1}_i(g) = \{(g,\lambda a_{g,2},\lambda a_{g,1}) \mid \lambda \in \KK^\times\} \cong \KK^\times$ and we obtain
\[ (\ast) \quad 1 = \dim \varphi^{-1}_i(g) = \dim \varphi^{-1}_i(Y) - \dim Y_i.\]
Since $\im  \pi_{\psi} = \bigcup^n_{i=1} Y_i$ and $\Stab_{G}(\psi) = \bigcup^n_{i=1} \varphi_i^{-1}(Y_i)$ are finite unions of closed subsets, we conclude with ($\ast$)
\[ \dim \im  \pi_{\psi} = \max\{\dim Y_i\} = \max\{\dim \varphi^{-1}_i(Y_i) - 1\} = -1 + \max\{ \dim \varphi^{-1}_i(Y_i)\} =  \Stab_G(\psi)-1.\]
Hence, $\dim \Stab_G(\psi) \leq \dim \GL(A_r) + 1 = r^2+1$.

Since $\GL(A_r)$ is an irreducible variety (see \cite[\S 7.3]{Hum12}) and $\im  \pi_{\psi} \subseteq \GL(A_r)$ is closed, we also have (see \cite[(I.1.10(d)]{Har77}) $\dim \im  \pi_{\psi} = \dim \GL(A_r) = r^2$ if and only if $\im  \pi_{\psi} = \GL(A_r)$. In summary, 
we have
\[ \psi \ \text{is homogeneous} \Leftrightarrow \im  \pi_{\psi} = \GL(A_r) \Leftrightarrow \dim \im  \pi_{\psi} = r^2 \Leftrightarrow \dim \Stab_{G}(\psi) = r^2+1.\]
This proves the equivalence of (1),(2) and (3).
\end{proof}

\bigskip

\begin{corollary}\label{2.2.4}
Let $r \geq 3$. The following statements are equivalent.
\begin{enumerate}
    \item There exists a brick $\psi \in \cV(K_r;V_1,V_2)$ that is not homogeneous.
    \item The set $\cB(V_1,V_2) \subseteq  \cV(K_r;V_1,V_2)$ contains a non-empty open subset of $\cV(K_r;V_1,V_2)$ consisting of non-homogeneous representations.
\end{enumerate}
\end{corollary}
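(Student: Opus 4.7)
The plan is to deduce \cref{2.2.4} directly from \cref{2.2.3} by combining it with two semicontinuity properties on the irreducible variety $\cV(K_r;V_1,V_2)$.

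The implication (2) $\Rightarrow$ (1) is trivial, since any non-empty open subset of non-homogeneous bricks contains at least one element. So the content is (1) $\Rightarrow$ (2). By \cref{2.2.3}, a brick $\psi$ is non-homogeneous if and only if $\dim \Stab_G(\psi) \leq r^2$ (using the bound $\dim \Stab_G(\psi) \leq r^2+1$ and the equivalence of (1) and (3) there). Thus the set of non-homogeneous bricks can be written as
\[ \cN := \cB(V_1,V_2) \cap \{ \psi \in \cV(K_r;V_1,V_2) \mid \dim \Stab_G(\psi) \leq r^2\}, \]
and the goal is to show that $\cN$ is open and non-empty.

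Non-emptiness is exactly the hypothesis (1). For openness, I would invoke two standard facts. First, $\cB(V_1,V_2)$ is open in $\cV(K_r;V_1,V_2)$; this is the well-known fact that the Schurian locus is Zariski open (the complement is cut out by the closed condition $\dim_\KK \End_{K_r}(V_\psi) \geq 2$, using upper semicontinuity of the $\Hom$-dimension along the family). Second, for any algebraic group $G$ acting morphically on a variety $X$, the function $x \mapsto \dim \Stab_G(x)$ is upper semicontinuous; equivalently, $x \mapsto \dim G.x$ is lower semicontinuous (see e.g.\ \cite[(1.4)]{Hum12} or the analogous fiber-dimension argument used in the proof of \cref{2.2.3} via \cite[(6.4.1)]{Kem93}). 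Applying this to the action of $G = G(A_r;V_2,V_1)$ on $\cV(K_r;V_1,V_2)$ yields that the second set in the intersection defining $\cN$ is open.

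The main thing to be careful about is the second semicontinuity statement: it is standard but worth a brief justification, and in fact it can be extracted from the fiber-dimension argument already used in \cref{2.2.3}, applied now with the base point varying. The fact that any non-empty open set in $\cV(K_r;V_1,V_2) = \Hom_\bmk(A_r \otimes_\KK V_1, V_2)$ is dense then follows automatically since this affine space is irreducible, giving the stronger statement that $\cN$ is open and dense in $\cV(K_r;V_1,V_2)$.
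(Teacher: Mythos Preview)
Your proposal is correct and follows essentially the same approach as the paper: both proofs combine \cref{2.2.3} with the upper semicontinuity of $\psi \mapsto \dim \Stab_G(\psi)$ and the openness of $\cB(V_1,V_2)$. The only cosmetic difference is that the paper passes through the minimum value $s$ of the stabilizer dimension and works with the level set $\varphi^{-1}(\{s\})$, whereas you work directly with the sublevel set $\{\psi \mid \dim \Stab_G(\psi) \leq r^2\}$; your formulation is arguably slightly more direct.
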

\begin{proof} 
Let $G := G(A_r;V_1,V_2)$. It follows from \cite[(6.4.5)]{Kem93} that the map
\[ \varphi \colon \cV(K_r;V_1,V_2) \lra \NN_0 \ ; \ \psi \mapsto \dim \Stab_G(\psi)\]
is upper semicontinuous, i.e., $\varphi^{-1}(\NN_{\geq n})$ is closed for all $n \in \NN_0$. This implies that for $s := \min \varphi(\cV(K_r;V_1,V_2))$ the set
\[ \cO_G(V_1,V_2)  :=  \{ \psi \in \cV(K_r;V_1,V_2) \mid \dim \Stab_G(\psi) = s\} = \varphi^{-1}(\{s\}) = \cV(K_r;V_1,V_2) \setminus \varphi^{-1}(\NN_{\geq s+1})\]
is open and non-empty.

(1) $\Rightarrow$ (2). Since $\psi$ is not homogeneous, we conclude with \cref{2.2.3} $\dim \Stab_G(\psi) \leq r^2$. Moreover,  \cref{2.1.1} implies that $O := \cO_G(V_1,V_2) \cap \cB(V_1,V_2)$ is open in $\cV(K_r;V_1,V_2)$ and non-empty. Let $\psi' \in O$, then $\dim \Stab_G(\psi') = s \leq \dim \Stab_G(\psi) \leq r^2$. We conclude with \cref{2.2.3} that $\psi'$ is not homogeneous.

(2) $\Rightarrow$ (1). This is clear.
\end{proof}

\bigskip

The application of \cref{2.2.4} requires the existence of a non-homogeneous brick for each regular dimension vector.
This will be established in the following via a theorem of Chen \cite{Che13}, which explicitly constructs such bricks.

\bigskip

\begin{Lemma}\label{2.2.5}
Let $m,n,i \in \NN$ such that $2 \leq i \leq n-m+1$\footnote{Note that this implies $n \geq m+1$.}.
Consider the representation $M \in \rep(K_2)$ with $M_1 = \KK^m$, $M_2 = \KK^{n}$, $M(\gamma_1) \colon \KK^m \lra \KK^{n} \ ; \ x \mapsto I(1)x$ and  $M(\gamma_2) \colon \KK^m \lra \KK^{n} \ ; \ x \mapsto I(i)x$,
where 
\[ I(1) := \begin{pmatrix}
   I_{m \times m} \\
   0_{n-m \times m}
\end{pmatrix} \in \Mat_{n \times m}(\KK) \ \text{and} \ I(i) := \begin{pmatrix}
   0_{i-1 \times m} \\
   I_{m \times m} \\
   0_{n-m-(i-1) \times m}
\end{pmatrix} \in \Mat_{n \times m}(\KK).\]
Then the radical $\Rad(M) \subseteq M$ satisfies \[ \dim_{\KK} \Rad(M)_2 =  m + \min\{i-1,m\}.\]
\end{Lemma}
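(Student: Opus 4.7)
The plan is to unpack the definition of the radical in the Kronecker setting and then reduce the statement to a direct inclusion–exclusion computation for two subspaces spanned by standard basis vectors.

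First I would recall that for the path algebra $\KK K_2$, the Jacobson radical is the ideal generated by the two arrows $\gamma_1, \gamma_2$. Consequently, for any $N \in \rep(K_2)$ the radical subrepresentation is concentrated at vertex $2$ and satisfies
\[
\Rad(N) = \bigl(0,\ \im N(\gamma_1) + \im N(\gamma_2)\bigr) \subseteq N.
\]
Applied to our $M$, this gives $\dim_{\KK}\Rad(M)_2 = \dim_{\KK}\bigl(\im M(\gamma_1) + \im M(\gamma_2)\bigr)$, and by inclusion–exclusion
\[
\dim_{\KK}\Rad(M)_2 \;=\; \dim_{\KK}\im M(\gamma_1) + \dim_{\KK}\im M(\gamma_2) - \dim_{\KK}\!\bigl(\im M(\gamma_1)\cap \im M(\gamma_2)\bigr).
\]

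Next I would observe that the columns of $I(1)$ are $e_1,\ldots,e_m$ and those of $I(i)$ are $e_i,\ldots,e_{i+m-1}$ in the standard basis of $\KK^n$; since $2 \leq i \leq n-m+1$, the latter list consists of $m$ distinct basis vectors as well, so both matrices have rank $m$. This gives at once $\dim_{\KK}\im M(\gamma_1) = \dim_{\KK}\im M(\gamma_2) = m$, and identifies the images as the coordinate subspaces
\[
\im M(\gamma_1) = \langle e_1,\ldots,e_m\rangle, \qquad \im M(\gamma_2) = \langle e_i,\ldots,e_{i+m-1}\rangle.
\]

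It then remains to compute the intersection, which is the span of those $e_k$ with $k \in \{1,\ldots,m\}\cap\{i,\ldots,i+m-1\}$. Because $i \geq 2$ and $i+m-1 \geq m+1$, this intersection of integer intervals is $\{i,\ldots,m\}$ when $i \leq m$ (of size $m-i+1$) and empty when $i > m$. In either case the cardinality equals $m - \min\{m, i-1\}$. Substituting,
\[
\dim_{\KK}\Rad(M)_2 \;=\; 2m - \bigl(m - \min\{m,i-1\}\bigr) \;=\; m + \min\{i-1, m\},
\]
as claimed. I do not foresee a real obstacle here: once the identification of $\Rad(M)_2$ as the sum of the two images is made, everything reduces to a bookkeeping argument on indices, with the hypothesis $i \leq n-m+1$ serving only to guarantee that $I(i)$ is a well-defined $n\times m$ matrix.
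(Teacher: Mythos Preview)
Your proof is correct and follows essentially the same approach as the paper: both identify $\Rad(M)_2$ with $\im M(\gamma_1)+\im M(\gamma_2)$ (the paper cites \cite[(III.2.2)]{ASS06} for this, while you argue it directly from the Jacobson radical of $\KK K_2$) and then count standard basis vectors. The only cosmetic difference is that you compute the dimension via inclusion--exclusion on the intersection of the two index sets, whereas the paper computes the set difference $\{i,\ldots,i+m-1\}\setminus\{1,\ldots,m\}$ directly.
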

\begin{proof}
We denote by $\{e'_1,\ldots,e'_m\}$ and \{$e_1,\ldots,e_{n}\}$ the standard bases of $\KK^m$ and $\KK^{n}$, respectively. We have
\[ M(\gamma_1)(e'_j) = e_j \ \text{and} \  M(\gamma_2)(e'_j) = e_{j+i-1} \]
for all $j \in \{1,\ldots,m\}$. Hence, \cite[(III.2.2)]{ASS06} implies 
\[\Rad(M)_2 = \bigoplus_{j=1}^m \KK e_j + \bigoplus_{j=1}^m \KK e_{j+i-1}.\]
Since $|\{i,\ldots,i+(m-1)\} \setminus \{1,\ldots,m\}| = \min \{m,i-1\}$, we conclude $\dim_{\KK} \Rad(M)_2 = m +  \min\{m,i-1\}$.
\end{proof}

\bigskip

We denote by $\sigma_{K_r},\sigma^{-1}_{K_r}: \rep(K_r) \lra \rep(K_r)$ the shift functors on $\rep(K_r)$. They correspond reflection functors but take into account that the opposite quiver of $K_r$ is isomorphic to $K_r$, i.e., $D_{K_r} \circ \sigma_{K_r} \cong \sigma_{K_r}^{-1} \circ D_{K_r}$. For a precise definition we refer to \cref{S:3.2}.

\bigskip

\begin{proposition}\label{2.2.6}
Let $r \geq 3$ and $q_r(\dimu(V_1,V_2)) \leq 0$. There exists $\psi \in \cV(K_r;V_1,V_2)$ such that $V_{\psi}$ is a non-homogeneous brick.
\end{proposition}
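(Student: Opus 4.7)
The plan is to construct an explicit non-homogeneous brick in $\cV(K_r; V_1, V_2)$ by combining Chen's existence theorem \cite{Che13} with the restriction invariant from \cref{2.2.5}. Write $(m, n) := \dimu(V_1, V_2)$. Since homogeneous representations are always uniform (the opening observation of this section), it suffices to exhibit a brick $M$ together with two morphisms $\alpha, \beta \in \Inj_\KK(A_2, A_r)$ such that $\alpha^\ast(M) \not\cong \beta^\ast(M)$ as $K_2$-representations. A convenient discriminating invariant is $\dim_\KK \Rad(\alpha^\ast(M))_2$, which \cref{2.2.5} computes explicitly whenever $\alpha^\ast(M)$ is realized by shifted-identity matrices $(I(1), I(k))$.

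Accordingly, the first step is to apply Chen's theorem \cite{Che13} to obtain a brick $M \in \rep(K_r)$ whose structure matrices may be taken of the form $M(\gamma_j) = I(i_j)$ for integer shifts $1 = i_1 < i_2 < \cdots < i_r \leq n - m + 1$. The regularity inequality $q_r(m, n) \leq 0$ combined with $r \geq 3$ forces $n - m + 1$ to be large enough for the shifts to straddle the threshold $m + 1$; one may therefore arrange $i_2 \leq m$ and $i_r \geq m + 1$. Let $\alpha \in \Inj_\KK(A_2, A_r)$ send a basis of $A_2$ to $\{\gamma_1, \gamma_2\}$, and let $\beta \in \Inj_\KK(A_2, A_r)$ send it to $\{\gamma_1, \gamma_r\}$. \Cref{2.2.5} then yields
\[
\dim_\KK \Rad(\alpha^\ast(M))_2 = m + (i_2 - 1) < 2m = m + \min\{i_r - 1, m\} = \dim_\KK \Rad(\beta^\ast(M))_2,
\]
so $\alpha^\ast(M) \not\cong \beta^\ast(M)$. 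Hence $M$ is not uniform, and \emph{a fortiori} not homogeneous.

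The main obstacle is the treatment of corner cases where Chen's shifts cannot be spread out as above — for instance, small dimension vectors $(1, n)$ with $n \leq r-1$, or regular vectors near the boundary of the cone $\{q_r \leq 0\}$. These are reduced to the generic regime by iterated application of the shift functor $\sigma_{K_r}$, which is an auto-equivalence preserving brickness and compatible with the $\GL(A_r)$-action up to isomorphism, so non-homogeneity and brickness are both transported through shifts. Alternatively, such corner cases admit direct constructions: for $(m,n) = (1,n)$ one picks $r$ vectors $v_1, \ldots, v_r \in V_2$ spanning $V_2$ with no two parallel, and any nonzero $\alpha \in \KK^r$ in the kernel of $(\alpha_j) \mapsto \sum_j \alpha_j v_j$ witnesses non-constancy of $\rk(\sum_j \alpha_j M(\gamma_j))$, giving non-homogeneity by \cref{1.5.2}(2).
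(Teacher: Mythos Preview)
Your strategy—Chen's explicit bricks combined with the radical-dimension invariant of \cref{2.2.5}, plus reduction via shift functors for boundary cases—is essentially the paper's. But the central step has a real gap.

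You assert that $q_r(m,n)\le 0$ forces $n-m+1$ large enough to arrange shifts with $i_2\le m$ and $i_r\ge m+1$. This is false: for $r=3$ and $(m,n)=(3,4)$ one has $q_3(3,4)=-11$ yet $n-m+1=2$, so no shift reaches $m+1=4$. The requirement $i_r\ge m+1$ forces $n\ge 2m$, whereas the fundamental domain for $\langle\sigma_r,\delta\rangle$ (the very reduction you invoke for corner cases) is $\{\tfrac{2}{r}n\le m\le n\}$, giving $n\le\tfrac{r}{2}m$; for $r=3$ these are incompatible, so after reduction your straddling hypothesis is \emph{never} met. Moreover, Chen's construction for $n=m$ uses a nilpotent $M(\gamma_1)$ rather than a shifted identity, so \cref{2.2.5} does not apply there at all, and $(m,m)$ is already in the fundamental domain—your shift-functor escape hatch does not move it.

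The paper closes these gaps by reducing to the fundamental domain first and then following Chen's case split explicitly. The case $n=m$ is handled by a direct rank comparison via \cref{1.5.2}(2), not by \cref{2.2.5}. For $n>m$ the paper compares the two specific shift pairs Chen actually supplies—for instance $(I(1),I(2))$ against $(I(1),I(s{+}1))$—and reads off from \cref{2.2.5} the distinct radical dimensions $m+1$ versus $m+s$. The threshold $m+1$ plays no role; what matters is that Chen's construction provides at least two \emph{distinct} shifts among $\gamma_1,\gamma_2,\gamma_3$, which already yields two non-isomorphic restrictions.
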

\begin{proof}
Recall that the shift functors $\sigma_{K_r},\sigma_{K_r}^{-1} \colon \rep(K_r) \lra \rep(K_r)$ and duality induce  auto-equivalences on the category of regular representations. On the level of the Grothendieck group $K_0(K_r) \cong \ZZ^2$ we obtain an action of the group $H \subseteq \Aut(\ZZ^2)$ 
generated by $\sigma_r \colon \ZZ^2 \lra \ZZ^2 ; (x,y) \mapsto (rx-y,x)$ and the twist $\delta \colon \ZZ^2 \lra \ZZ^2 ; (x,y) \mapsto (y,x)$ acting on the set 
\[\cR := \{ (x,y) \in \NN^2 \mid q_r(x,y) \leq 0 \}.\]
A fundamental action for this group is given by 
\[\cF_r := \{ (m,n) \in \NN^2 \mid \frac{2}{r}n \leq m \leq n\}\] (see \cite[\S 2.6]{Kac80}). Since $\sigma_{K_r},\sigma_{K_r}^{-1}$ and duality respect homogeneity (see \cite[(5.1.3)]{BF24}), we may therefore assume that $V_1 = \KK^m$ and $V_2 = \KK^n$ with $(m,n) \in \cF_r$. We write
\[ n = qm + s \ \text{with} \ q \in \NN_0,s \in \{0,\ldots,m-1\}.\]
Since $(m,n) \in \cF_r,$ we also have $0 \neq q$ and $qm \leq qm+s = n \leq \frac{r}{2}m$ implies $q \leq \frac{r}{2} < r-1$. Hence, we are left to consider the following three cases. We stick to the notation introduced in \cref{2.2.5}.
\begin{enumerate}
    \item[(i)] $q = 1$ and $s = 0$: We are in case (1) of the proof of \cite[(3.6)]{Che13}\footnote{Note that Chen multiplies matrices from the right.} and find a brick $M \in \rep(K_r)$ with dimension vector $(m,m)$ such that $M$ such that that $\rk(M(\gamma_1)) = m-1$ and $\rk(M(\gamma_2)) = m$. Hence, $M$ is not homogeneous by \cref{1.5.2}.
    \item[(ii)] $q = 1$ and $0 < s < m$: We are in case (2) of the proof of \cite[(3.6)]{Che13} and find a brick $M \in \rep(K_r)$ with  dimension vector $(m,n)$ such that $M(\gamma_1)(x) = I(1)x, M(\gamma_2)(x) = I(s+1)x$ and $M(\gamma_3)(x) = I(2)x$ for all $x \in \KK^m$. 
    If $s = 1$ we have $M(\gamma_2) = M(\gamma_3)$ and therefore $\rk(M(\gamma_1)) = m$ and $\rk(M(\gamma_2)- M(\gamma_3)) = 0 \neq m$. Hence, $M$ is not homogeneous by \cref{1.5.2}. 
    For $s > 1$ we consider the injective $\KK$-linear maps 
    \[ \alpha \colon A_2 \lra A_r ; \gamma_1 \mapsto \gamma_1, \gamma_2 \mapsto \gamma_2 \ \text{and} \ \beta \colon A_2 \lra A_r ; \gamma_1 \mapsto \gamma_1, \gamma_2 \mapsto \gamma_3.\]
    We conclude with \cref{2.2.5} that $\dim_{\KK} 
  \Rad(\alpha^\ast(M))_2 = m+1$ and $\dim_{\KK} 
  \Rad(\beta^\ast(M))_2 = m+s > m+1$. Hence, $\alpha^\ast(M) \not\cong \beta^\ast(M)$. Therefore, $M$ is not uniform and in particular not homogeneous.
    \item[(iii)] $2 \leq q \leq r-2$ and $0 \leq s < m$: We are in the case (3) or (4) of the proof of \cite[(3.6)]{Che13} and find a brick $M \in \rep(K_r)$ with dimension vector $(m,n)$ such that
    $M(\gamma_1)(x) = I(1)x, M(\gamma_2)(x) = I(m+1)x$ and $M(\gamma_3)(x) = I(2)x$ for all $x \in \KK^m$. For $m=1$, we have $M(\gamma_2) = M(\gamma_3)$, and argue as in (ii). Hence, we may assume that $m > 1$. We consider the injective $\KK$-linear maps 
    \[ \alpha \colon A_2 \lra A_r ; \gamma_1 \mapsto \gamma_1, \gamma_2 \mapsto \gamma_2 \ \text{and} \ \beta \colon A_2 \lra A_r ; \gamma_1 \mapsto \gamma_1, \gamma_2 \mapsto \gamma_3.\]
    We conclude with \cref{2.2.5} that $\dim_{\KK} 
  \Rad(\alpha^\ast(M))_2 = 2m$ and $\dim_{\KK} 
  \Rad(\beta^\ast(M))_2 = m+1 < 2m$. Hence, $\alpha^\ast(M) \not\cong \beta^\ast(M)$.  Therefore, $M$ is not uniform and in particular not homogeneous.
\end{enumerate}
\end{proof}

\bigskip 

With all the necessary ingredients at hand, we are now in the position to prove \cref{2.2.2}.

\bigskip

\begin{proof}[Proof of \cref{2.2.2}]
(1) $\Rightarrow$ (2) This follows from \cref{2.2.4} and \cref{2.2.6}.

(2) $\Rightarrow$ (3) This is clear.

(3) $\Rightarrow$ (1) We have a non-empty set $\cO \subseteq \cV(K_r;V_1,V_2)$ consisting of indecomposable representations that are not homogeneous. We conclude
with \cref{2.1.1} that $q_r(\dimu(V_1,V_2)) \leq 1$. Hence, it remains to rule out $q_r(\dimu(V_1,V_2)) = 1$. We consider the action of $G \coloneqq \GL(V_2) \times \GL(V_1)$ on $\cV(K_r;V_1,V_2)$ given by $(g_2,g_1).\psi := g_2 \circ \psi \circ (g_1^{-1} \otimes \id_{V_1})$. Note that the orbits correspond to the isomorphisms classes of representations in $\cV(K_r;V_1,V_2)$. 
Hence, according to \cref{2.1.1}(3) and \cref{2.2.1}(4), there exists $\psi \in \cV(K_r;V_1,V_2)$ homogeneous such that $\cO \subseteq G.\psi$. This is a contradiction since every representation in $G.\psi$ is homogeneous. 
\end{proof}

\bigskip

\subsection{Uniform representations} 
Our interest lies in the investigation of uniform Steiner bundles, and by \cref{1.5.3} we may instead focus on uniform representations in $\rep_{\proj}(K_r,1)$.  
We begin this section by recalling the following result, whose proof can be found in \cite[(2.3.2)]{BF24} and \cite[(2.1.4)]{Bis25}.

\bigskip

\begin{Theorem}\label{2.3.1}
Let $r \geq 2$ and $1 \leq d < r$. The following statements hold.
\begin{enumerate}
    \item $\rep_{\proj}(K_r,d) \cap \cV(K_r;V_1,V_2)$ is an open subset of $\cV(K_r;V_1,V_2)$.
    \item The following statements are equivalent.
    \begin{enumerate}
        \item[(i)] $\rep_{\proj}(K_r,d) \cap \cV(K_r;V_1,V_2) \neq \emptyset$.
        \item[(ii)] $\Delta_{(V_1,V_2)}(d) \geq (r-d)\min \{d,\dim_{\KK} M_1\}$.
    \end{enumerate}
\end{enumerate}
\end{Theorem}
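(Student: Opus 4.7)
The plan splits naturally along the two parts of the statement.

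For part (1), I would introduce the incidence variety
\[
Z := \{(\fv,\psi) \in \Gr_d(A_r) \times \cV(K_r;V_1,V_2) \mid \psi_{V_\psi,\fv} \text{ is not injective}\},
\]
which is closed since it is cut out by the vanishing of the $dm \times dm$ minors of $\psi_{V_\psi,\fv}$, whose entries depend algebraically on $(\fv,\psi)$. As $\Gr_d(A_r)$ is complete, the second projection $\pi \colon \Gr_d(A_r) \times \cV(K_r;V_1,V_2) \to \cV(K_r;V_1,V_2)$ is closed, so $\pi(Z)$ is closed. By the characterization of $\rep_{\proj}(K_r,d)$ that follows \cref{1.2.2}, its complement in $\cV(K_r;V_1,V_2)$ is exactly $\rep_{\proj}(K_r,d) \cap \cV(K_r;V_1,V_2)$, which is therefore open.

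Set $m := \dim_{\KK} V_1$ and $n := \dim_{\KK} V_2$, so condition (ii) reads $n - dm \geq (r-d)\min\{d,m\}$. For part (2), I would treat the two regimes $m \leq d$ and $m > d$ separately. For (ii) $\Rightarrow$ (i) when $m \leq d$ the condition reduces to $n \geq rm$, and the representation $M := m P_1(r) \oplus (n - rm)P_0(r)$ has dimension vector $(m,n)$ and lies in $\rep_{\proj}(K_r,d)$: each summand does (the structure map of $P_1(r)$ restricts on every $\fv$ to the inclusion $\fv \hookrightarrow A_r$, whereas $P_0(r)$ has trivial domain), and the subcategory is closed under direct sums. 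For $m > d$, I would exploit the incidence variety $Z$. The fiber of the first projection $Z \to \Gr_d(A_r)$ over $\fv$ is the preimage, under the linear surjection $\cV(K_r;V_1,V_2) \to \Hom_\KK(\fv \otimes V_1, V_2)$, of the determinantal subvariety of non-injective maps, which has codimension $n - dm + 1$. Hence
\[
\dim Z = d(r-d) + \dim \cV(K_r;V_1,V_2) - (n - dm + 1),
\]
and the hypothesis $n - dm \geq d(r-d)$ forces $\dim Z < \dim \cV(K_r;V_1,V_2)$. Thus $\pi(Z) \subsetneq \cV(K_r;V_1,V_2)$, and its non-empty complement lies in $\rep_{\proj}(K_r,d)$.

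For (i) $\Rightarrow$ (ii) when $m \leq d$, I would show that $\psi_M$ itself is injective. If not, pick a nonzero $\xi \in \ker \psi_M$. Its tensor rank is at most $\min\{r,m\} = m \leq d$, so we may write $\xi = \sum_{j=1}^{m'} b_j \otimes w_j$ with $b_1,\ldots,b_{m'}$ linearly independent in $A_r$, $w_1,\ldots,w_{m'}$ linearly independent in $V_1$, and $m' \leq d$. Extending $\fw := \langle b_1,\ldots,b_{m'}\rangle$ to any $\fv \in \Gr_d(A_r)$ with $\fw \subseteq \fv$ gives $\xi \in \fv \otimes V_1$ with $\psi_{M,\fv}(\xi) = 0$, contradicting the injectivity hypothesis. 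Hence $\psi_M$ is injective and $n \geq rm$. When $m > d$, the set $\cR(K_r,d)_M$ is the degeneracy locus of the bundle morphism
\[
\cS \otimes V_1 \lra V_2 \otimes \cO_{\Gr_d(A_r)},
\]
where $\cS$ is the tautological rank-$d$ subbundle. By the Thom--Porteous formula its fundamental class agrees with the degree-$(n-dm+1)$ component of $c(\cS \otimes V_1)^{-1} = c(\cQ)^m$, and the positivity of Schubert cycles on $\Gr_d(A_r)$ makes this a non-zero class in $H^{2(n-dm+1)}(\Gr_d(A_r))$ as soon as $n-dm+1 \leq d(r-d)$. Consequently, whenever $\Delta_M(d) < d(r-d)$ one has $\cR(K_r,d)_M \neq \emptyset$ for \emph{every} representation $M$ of dimension vector $(m,n)$, so $\rep_{\proj}(K_r,d) \cap \cV(K_r;V_1,V_2)$ is empty, contradicting (i).

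The main obstacle is this last step: a pure dimension count on $Z$ only produces the direction (ii) $\Rightarrow$ (i) for $m > d$, whereas the converse demands uniform (in $\psi$) non-emptiness of the determinantal degeneracy locus, for which one genuinely invokes Porteous together with the effectivity of Schubert cycles on Grassmannians.
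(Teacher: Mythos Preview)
The paper does not prove this theorem; it merely records it, with references to \cite[(2.3.2)]{BF24} and \cite[(2.1.4)]{Bis25}. Your outline is correct and self-contained, so there is no in-paper argument to compare against.

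Two small points on the last step. First, invoking Thom--Porteous for a possibly empty degeneracy locus is awkward; the cleaner phrasing is that emptiness of $\cR(K_r,d)_M$ makes $\cS^{\oplus m}\hookrightarrow\cO_{\Gr_d(A_r)}^{\oplus n}$ a subbundle, so the cokernel is locally free of rank $n-dm$ with total Chern class $c(\cQ)^m$, forcing the degree-$k$ part of $c(\cQ)^m$ to vanish for every $k>n-dm$. To see this part is nonzero for each $1\le k\le d(r-d)$, write $k=q(r-d)+s$ with $0\le s<r-d$ and $q\le d$; since $m>d$ the monomial $c_{r-d}(\cQ)^q\,c_s(\cQ)$ occurs in the expansion of $c(\cQ)^m$, and iterated application of Pieri's rule shows it equals the class of a single nonzero Schubert variety. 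This yields $n-dm\ge d(r-d)$, as required. Second, since $\KK$ is allowed arbitrary characteristic here, the intersection-theoretic step should be run in the Chow ring $A^\ast(\Gr_d(A_r))$ rather than in singular cohomology; the Schubert calculus is identical in that setting.
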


\bigskip

Application of \cref{2.3.1} for $d = 2$ in conjunction with \cref{1.5.5} shows that, unlike the situation in \cref{2.2.2}, the variety of representations corresponding to a regular dimension vector may contain a non-empty open subset of uniform representations. However, as the following results demonstrate, the existence of a uniform representation depends critically on the choice of the dimension vector $\dimu(V_1,V_2)$.

\bigskip

\begin{proposition}\label{2.3.2}
The variety $\rep_{\proj}(K_r,1) \cap \cV(K_r;V_1,V_2) \neq \emptyset$ need not to contain any uniform representation. 
\end{proposition}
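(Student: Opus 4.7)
The plan is to exhibit an explicit counterexample. I take $r = 3$ and $V_1 = \KK^2$, $V_2 = \KK^4$. First, \cref{2.3.1} guarantees that $\rep_{\proj}(K_3,1) \cap \cV(K_3;V_1,V_2) \neq \emptyset$, since $\Delta_{(V_1,V_2)}(1) = 2 \geq r - 1$.

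Now suppose for contradiction that some $M$ in this variety is uniform. For any $\fv \in \Gr_2(A_3)$ the restriction $M|_\fv$ lies in $\rep_{\proj}(K_2,1) = \EKP(K_2)$, hence is preprojective by the argument recalled right after \cref{1.5.3}, and by \cref{1.5.4} its isomorphism class is independent of $\fv$. Using $\dim P_i(2) = (i,i+1)$ for $K_2$, the constraints $\sum i b_i = 2$ and $\sum b_i = 2$ force the splitting type to be one of exactly two possibilities: $2 P_1(2)$ (support $\{1\}$) or $P_0(2) \oplus P_2(2)$ (support $\{0,2\}$).

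In the first case, \cref{1.5.5} forces $M \in \rep_{\proj}(K_3,2)$, but then \cref{2.3.1}(2) requires $\Delta_M(2) = 0 \geq (r-2)\min\{2,2\} = 2$, a contradiction. I expect the main obstacle to be the second case, where the support has a gap. For this I would pass through the equivalence to the rank $2$ Steiner bundle $\cF = \TilTheta(M)$ on $\PP^2$ with $c_1(\cF) = 2$ and uniform splitting $\cF|_\ell \cong \cO_\ell \oplus \cO_\ell(2)$ for every line $\ell$. Since the splitting exponents $\{0,2\}$ do not differ by at most one, Grauert--M\"ulich implies that $\cF$ is not $\mu$-semistable; any maximal destabilizing line subbundle $L \hookrightarrow \cF$ must have degree at least $2$ (by instability) and at most $2$ (by the line splitting), so after saturation $L \cong \cO_{\PP^2}(2)$ and $\cF/L \cong \cI_Z$ for some zero-dimensional $Z \subseteq \PP^2$.

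To conclude I would show $Z = \emptyset$ by a torsion argument: for any line $\ell$ meeting $Z$, the exact sequence $0 \to \Tor_1^{\cO_{\PP^2}}(\cI_Z, \cO_\ell) \to L|_\ell \to \cF|_\ell \to \cI_Z|_\ell \to 0$ would embed a nonzero torsion sheaf into the line bundle $L|_\ell$, which is impossible. Hence $\cF/L \cong \cO_{\PP^2}$, and since $\Ext^1_{\PP^2}(\cO,\cO(2)) = H^1(\cO(2)) = 0$ the extension splits and $\cF \cong \cO_{\PP^2}(2) \oplus \cO_{\PP^2}$. But $H^0(\cF(-1)) = H^0(\cO(1)) \neq 0$ contradicts the vanishing $H^0(\cF(-1)) = 0$ forced by any Steiner resolution, eliminating the second case and completing the proof.
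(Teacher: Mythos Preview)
Your overall strategy is sound and gives a genuine alternative to the paper's argument: the paper simply invokes Van de Ven's classification of uniform rank $2$ bundles on $\PP^2$ (which forces $\TilTheta(M)$ to be a twist of $\cT_{\PP^2}$, hence $M\cong P_1(3)$ with the wrong dimension vector), whereas you bypass Van de Ven by splitting into the two possible generic types and using Grauert--M\"ulich plus Harder--Narasimhan in Case~2. Two minor remarks first: in Case~1 your citation should really be the characterization after \cref{1.2.2} (since \cref{1.5.5} is stated for support $\{0,1\}$, not $\{1\}$), and you should add the hypothesis $\Char(\KK)=0$, which Grauert--M\"ulich requires (the paper imposes it too).

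The genuine gap is in your Tor computation. For any ideal sheaf $\cI_Z\subset\cO_{\PP^2}$ and any line $\ell$, one has $\Tor_1^{\cO_{\PP^2}}(\cI_Z,\cO_\ell)=0$: using the resolution $0\to\cO_{\PP^2}(-1)\xrightarrow{\cdot f}\cO_{\PP^2}\to\cO_\ell\to 0$, this Tor is the kernel of multiplication by $f$ on $\cI_Z(-1)$, which vanishes because $\cI_Z$ is torsion-free. So nothing embeds into $L|_\ell$, and your argument for $Z=\emptyset$ collapses. The repair is easy and in fact shortens the proof considerably: once you have a saturated sub-line-bundle $\cO_{\PP^2}(2)\hookrightarrow\cF$, you immediately get $H^0(\cF(-2))\neq 0$; but twisting the Steiner resolution $0\to\cO(-1)^2\to\cO^4\to\cF\to 0$ by $-2$ and using $H^0(\cO(-2))=H^1(\cO(-3))=0$ on $\PP^2$ gives $H^0(\cF(-2))=0$, a contradiction. (Alternatively, the torsion you are looking for lives in $\cI_Z|_\ell$ via $\Tor_1(\cO_Z,\cO_\ell)$, and one checks that any injection $\cO_\ell(2)\hookrightarrow\cO_\ell\oplus\cO_\ell(2)$ is a sub-bundle inclusion with torsion-free cokernel $\cO_\ell$, contradicting $\ell\cap Z\neq\emptyset$.) Either way, your detour through $Z=\emptyset$ and $H^0(\cF(-1))$ is unnecessary.
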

\begin{proof}
We consider $r = 3$ and $\Char(\KK) = 0$\footnote{The assumption $\Char(\KK) = 0$ is not necessary, cf. \cref{6.3.2} and \cite[(2.4)]{Lan79}.}. Let $(V_1,V_2)$ be a pair of vector spaces with dimension vector $\dimu(V_1,V_2) \neq (1,3)$ such that $\Delta_{(V_1,V_2)} = 2$. We apply \cref{2.3.1} and conclude $\rep_{\proj}(K_r,1) \cap \cV(K_r;V_1,V_2) \neq \emptyset$. Assume to the contrary that there exists a representation $M \in \rep_{\proj}(K_r,1) \cap \cV(K_3;V_1,V_2)$ that is uniform. Then \cref{1.3.3} implies that $\TilTheta(M) \in \StVect(\PP(A_3))$ is uniform and of rank $\Delta_{(V_1,V_2)} = 2$. According to a Theorem of Van de Van (see \cite[(2.2.2)]{OSS80}), we find $a \in \ZZ$ such that $\TilTheta(M) \cong \cT_{\PP(A_r)}(a)$ is homogeneous. One readily checks that this implies $a = -1$ since $\widetilde{\Theta}(M)$ is a Steiner bundle. \cref{1.3.3} and \cref{1.3.4} imply $M \cong P_1(3)$. This is impossible since $\dimu P_1(3) = (1,3) \neq \dimu(V_1,V_2)$.   
\end{proof}

\bigskip

\begin{proposition}\label{2.3.3}
Let $r \geq 2$, $1 \leq d < r$ and let $M \in \rep(K_r)$ be an indecomposable representation such that $q_r(\dimu M) + \Delta_{M}(d) \geq 1$. Then $M \in \rep_{\proj}(K_r,d)$.
\end{proposition}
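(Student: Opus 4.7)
The plan is by contraposition: suppose $M$ is indecomposable with $M \notin \rep_{\proj}(K_r,d)$, and deduce $q_r(\dimu M) + \Delta_M(d) \leq 0$, so the hypothesised lower bound $\geq 1$ cannot hold.

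First I would use \cref{1.2.1} to obtain $\fv \in \Gr_d(A_r)$ with $\ker \psi_{M,\fv} \neq \{0\}$, and pick a nonzero element $m = \sum_{i=1}^{s} a_i \otimes v_i$ of minimal tensor rank, with the $a_i \in \fv$ and $v_i \in M_1$ both linearly independent; in particular $s \leq d$. Setting $U_1 \coloneqq \sum_{i=1}^{s} \KK v_i$ and $U_2 \coloneqq \psi_M(A_r \otimes_{\KK} U_1)$ produces a subrepresentation $U \subseteq M$ with $\dim_{\KK} U_1 = s$ and $\dim_{\KK} U_2 \leq rs - 1$, the last inequality reflecting the forced linear relation $\sum_{i=1}^{s} \psi_M(a_i \otimes v_i) = 0$ among the generators of $U_2$. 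Writing $\dimu U = (s,t)$, the function $t \mapsto s^2 + t^2 - rst + t - ds$ is convex on $[0,rs-1]$ with common boundary value $s(s-d)$, so
\[
q_r(\dimu U) + \Delta_U(d) \leq s(s-d) \leq 0.
\]

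The crux is to pass from this bound on $U$ to the same bound on $M$. If $U_1 = M_1$, surjectivity of $\psi_M$ for indecomposable $M \neq P_0(r)$ forces $U = M$ and we are done. Otherwise $U \subsetneq M$, and from the short exact sequence $0 \to U \to M \to Q \to 0$ I would decompose
\[
q_r(\dimu M) + \Delta_M(d) = [q_r(\dimu U) + \Delta_U(d)] + [q_r(\dimu Q) + \Delta_Q(d)] + \langle \dimu U,\dimu Q \rangle_r + \langle \dimu Q,\dimu U \rangle_r,
\]
and exploit the indecomposability of $M$: the sequence does not split, hence $\dim_{\KK} \Ext^1_{K_r}(Q,U) \geq 1$, which translates via the Euler-pairing identity $\dim_{\KK} \Hom - \dim_{\KK} \Ext^1 = \langle \cdot, \cdot \rangle_r$ into a negative contribution from the cross terms; an induction on $\dim_{\KK} M$ applied to indecomposable summands of $Q$ then closes the estimate.

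The principal obstacle is this final transfer: the construction of $U$ only constrains part of the dimension vector of $M$, and the indecomposability of $M$ has to be converted into a quantitative bound on the Euler cross terms. A conceptually cleaner alternative would be to start from a non-projective indecomposable summand $N$ of $\alpha^{\ast}(M) \in \rep(K_d)$ and invoke \cref{ThmA}: the adjunction produces a nonzero morphism from a suitable inflation-shift of $N$ into $M$, and the dimension-vector data of this representation can be used to bound $q_r(\dimu M) + \Delta_M(d)$ directly, bypassing the short-exact-sequence gymnastics.
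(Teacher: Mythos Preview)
Your construction of the subrepresentation $U$ and the bound $q_r(\dimu U)+\Delta_U(d)\le s(s-d)\le 0$ are correct, but the passage from $U$ to $M$ is a genuine gap, and you do not actually close it. For the proposed induction to apply to an indecomposable summand $N$ of $Q=M/U$ you would need $N\notin\rep_{\proj}(K_r,d)$; nothing in the construction forces this, and in general one expects summands of $Q$ to be relative $d$-projective. Even granting that the sequence is non-split, the single inequality $\dim_\KK\Ext^1_{K_r}(Q,U)\ge 1$ only gives $\langle\dimu Q,\dimu U\rangle_r\le\dim_\KK\Hom_{K_r}(Q,U)-1$, and you have no control over $\dim_\KK\Hom_{K_r}(Q,U)$ or over the other cross term $\langle\dimu U,\dimu Q\rangle_r$. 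Finally, $q_r$ is not additive on direct sums, so even bounding each summand of $Q$ would not bound $q_r(\dimu Q)$. The alternative via \cref{ThmA} is left as a suggestion; it is not clear which object would serve as test representation or how the adjunction produces a numerical bound on $q_r(\dimu M)+\Delta_M(d)$.

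The paper's argument avoids all of this with a quiver-enlargement trick. One forms an auxiliary quiver $\widehat{K_r}$ by inserting a new vertex $3$ through which the first $d$ arrows factor (so $1\to 3\to 2$ replaces $\gamma_1,\ldots,\gamma_d$, while $\gamma_{d+1},\ldots,\gamma_r$ stay as arrows $1\to 2$), and sets $\widehat{M}_3=\im\psi_{M,A_d}$. Since the data at vertex $3$ is determined by the rest, $\End_{\widehat{K_r}}(\widehat{M})\cong\End_{K_r}(M)$, so $\widehat{M}$ is again indecomposable and Kac's bound $q_{\widehat{K_r}}(\dimu\widehat{M})\le 1$ unwinds to
\[
q_r(\dimu M)+(d\dim_\KK M_1-\rho)(\dim_\KK M_2-\rho)\le 1,\qquad \rho=\rk\psi_{M,A_d}.
\]
If $\rho<d\dim_\KK M_1$ this, combined with the hypothesis $q_r(\dimu M)\ge 1-\Delta_M(d)$, forces $\Delta_M(d)\ge(d\dim_\KK M_1-\rho)(\dim_\KK M_2-\rho)\ge\dim_\KK M_2-\rho>\Delta_M(d)$, a contradiction. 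Indecomposability of $M$ is used exactly once --- to transport it to $\widehat{M}$ --- and no short-exact-sequence bookkeeping inside $\rep(K_r)$ is needed.
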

\begin{proof}
The proof follows from a straightforward adaptation of the arguments in \cite[Theorem~A]{Wie08b} and \cite[(3.1)]{Bis23} (see also \cite[(4.2.2)]{Wie08a}). We only sketch the main ideas.

In view of \cref{1.2.1}, it suffices to show that $\psi_{M,\fv} \colon \fv \otimes_{\KK} M_1 \lra M_2$ is injective for every $\fv \in \Gr_d(A_r)$. Since $g.M$ is indecomposable for all $g \in \GL(A_r)$ and $\GL(A_r)$ acts with one orbit on $\Gr_d(A_r)$, we may assume $\fv = A_d$.  We denote by $\widehat{K_r}$ the quiver

\[
\xymatrix{
& & 3 \ar^{\nu}@/^1pc/[rrdd] & & \\
& & & & \\
1 \ar^{\eta_{d+1}}_{\vdots}@/^0.6pc/[rrrr] \ar_{\eta_r}@/_1.4pc/[rrrr]  \ar^{\eta_1}_{\ddots}@/^1.4pc/[rruu] \ar_{\eta_d}@/_0.8pc/[rruu]& & & & 2.
}
\]
We define a representation $\widehat{M} \in \rep(\widehat{K_r})$ given by the following data:
$\widehat{M}_i = M_i$ for $i \in \{1,2\}$, 
$\widehat{M}_3 = \im \psi_{M,A_d}$, 
$\widehat{M}(\eta_i) = M(\gamma_i)$ for $i \in \{1,\ldots,r\}$, 
and $\widehat{M}(\nu)$ is the inclusion morphism.
This representation is indecomposable. Hence, \cite[(Thm B)]{Kac82} implies  
\begin{align*}
1 \geq q_{\widehat{K_r}}(\dimu \widehat{M}) &= q_r(\dimu M) + (d \dim_{\KK} M_1 - \rk(\psi_{M,A_d}))(\dim_{\KK} M_2 - \rk(\psi_{M,A_d}))\\
&\geq -\Delta_M(d)+1 + (d \dim_{\KK} M_1 - \rk(\psi_{M,A_d}))(\dim_{\KK} M_2 - \rk(\psi_{M,A_d})). 
\end{align*}
Therefore,
\[ \Delta_{M}(d) \geq (d \dim_{\KK} M_1 - \rk(\psi_{M,A_d}))(\dim_{\KK} M_2 - \rk(\psi_{M,A_d}))\]
and the assumption $d \dim_{\KK} M_1 > \rk(\psi_{M,A_d})$ implies
\[ \Delta_M(d) \geq 1 \cdot (\dim_{\KK} M_2 - \rk(\psi_{M,A_d})) > \dim_{\KK} M_2 - d \dim_{\KK} M_1 = \Delta_M(d),\]
in contradiction.
\end{proof}

\bigskip

\noindent The following examples show how to apply the preceding results.

\bigskip

\begin{example}\label{2.3.4}
Let $r = 3$. For $(m,n) \in \NN^2$ we set $V_1 := \KK^m$ and $V_2 := \KK^n$. Note that all of the following pairs $(m,n)$ satisfy $q_3(m,n) \leq 0$; hence, by \cref{2.1.1}, the variety $\cV(K_r;V_1,V_2)$ contains the non-empty open subset $\cB(V_1,V_2)$ consisting of regular bricks.

\begin{enumerate}
    \item For $(m,n) = (4,10)$ we have $\Delta_{(V_1,V_2)}(2) = 10 - 2 \cdot 4 = 2 \geq  (3-2)\min \{2,4\}$. Hence, \cref{2.3.1} and \cref{1.5.5} imply that $\rep_{\proj}(K_3,2) \cap \cV(K_3;V_1,V_2)$ is non-empty and consists of uniform representations. Moreover, \cref{2.2.2} implies that $\rep_{\proj}(K_3,2) \cap \cV(K_3;V_1,V_2)$ contains a non-empty open subset of non-homogeneous representations.
    \item Set $(m,n) := (2,4)$. According to \cref{2.3.2}, the space $\rep_{\proj}(K_3,1) \cap \cV(K_3;V_1,V_2)$ does not contain any uniform representation.
    \item Set $(m,n) := (5,13)$. Then $q_3(\dimu(V_1,V_2)) + \Delta_{(V_1,V_2)}(2) = 2 \geq 1$. We apply \cref{2.3.3} and conclude that every indecomposable representation in $\cV(K_3;V_1,V_2)$ is contained in $\rep_{\proj}(K_3,2)$. In particular, $\cB(V_1,V_2) \subseteq \rep_{\proj}(K_3,2) \cap \cV(K_3;V_1,V_2)$.
\end{enumerate}
\end{example}

\bigskip

We now turn our attention to the distribution of uniform representations within regular Auslander–Reiten components. To this end, we begin with the following definition and recall a main result of \cite{BF24} that generalizes \cite[(3.3)]{Wor13a}.

\bigskip

\begin{Definition}\label{2.3.5}
Let $r \geq 3$ and $\cC$ be a regular component of the Auslander-Reiten of $\rep(K_r)$. Given a quasi-simple representation $M \in \cC$, we denote by
\[ (M \to) \coloneqq \{ \tau_{K_r}^{-n}(M)_{[i]} \mid n \in \NN_0, i \in \NN \}\]
the \textit{cone of successors} of $M$ in $\cC$.
\end{Definition}

\bigskip

\begin{Theorem}\label{2.3.6}
Let $r \geq 2$ and $d \in \{1,\ldots,r-1\}$. 
\begin{enumerate}
    \item The category $\rep_{\proj}(K_r,d)$ is a torsion-free class closed under direct summands, $\sigma_{K_r}^{-1}$ and $\tau_{K_r}^{-1}$. 
    \item The category $\rep_{\proj}(K_r,d)$ contains all preprojective representations and no non-zero preinjective representation.
    \item Let $r \geq 3$ and $\cC$ be a regular component of the Auslander-Reiten component. There exists a quasi-simple representations $M_{\cC,d} \in \cC$ such that $\cC \cap \rep_{\proj}(K_r,d) = (M_{\cC,d} \to)$.
    \item Let $r \geq 3$ and $\cC$ be a regular component of the Auslander-Reiten component. Then either $M_{\cC,1} = M_{\cC,2}$ or $M_{\cC,1} = \tau_{K_r}(M_{\cC,2})$.
\end{enumerate}
\end{Theorem}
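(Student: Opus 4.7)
The plan is to prove the four parts in order, using throughout the pointwise characterisation $\rep_{\proj}(K_r,d)=\{M:\ker\psi_{M,\fv}=\{0\}\text{ for all }\fv\in\Gr_d(A_r)\}$ from the display following \cref{1.2.2}. For part (1), closure under direct summands is immediate from this description. Closure under subrepresentations holds because $\psi_{N,\fv}=\psi_{M,\fv}|_{\fv\otimes_{\KK} N_1}$ whenever $N\subseteq M$. Closure under extensions is a snake-lemma argument: tensoring $0\to N\to E\to Q\to 0$ by $\fv$ keeps the induced sequence on first components exact, so the resulting commutative diagram produces an exact sequence $0\to\ker\psi_{N,\fv}\to\ker\psi_{E,\fv}\to\ker\psi_{Q,\fv}$ in which vanishing of the outer terms forces the middle to vanish. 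Invariance under $\sigma_{K_r}^{-1}$ is a direct verification from the definition of the shift in \cref{S:3} and generalises the $d=1$ computation of \cite{Wor13a,Bis25}; invariance under $\tau_{K_r}^{-1}$ is then inherited because $\tau_{K_r}^{-1}$ agrees with $\sigma_{K_r}^{-1}\circ\sigma_{K_r}^{-1}$ on representations without preinjective summands.

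Part (2) splits according to the indecomposable summands. The module $P_0(r)=S(2)$ satisfies the pointwise criterion vacuously since its first component vanishes, and for $P_1(r)$ the structure map is the canonical isomorphism $A_r\otimes_{\KK}\KK\xrightarrow{\sim}A_r$, so that $\psi_{P_1(r),\fv}$ is the inclusion $\fv\hookrightarrow A_r$, which is manifestly injective. All remaining $P_i(r)$ for $i\geq 2$ are obtained by iterating $\tau_{K_r}^{-1}$ and therefore lie in $\rep_{\proj}(K_r,d)$ by (1). For the preinjective claim, the dimension vectors $\dimu I_i(r)=(b_i,a_i)$ (with $(a_i,b_i)=\dimu P_i(r)$) satisfy $a_i<b_i$ for all $i\geq 1$, and $(a_0,b_0)=(0,1)$, so $\Delta_{I_i(r)}(d)=a_i-d\,b_i<0$ for every $d\geq 1$; by the numerical necessary condition in \cref{2.3.1}(2), no representation of such dimension vector can be relative $d$-projective.

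For part (3), fix a regular component $\cC$ and label its vertices $\tau_{K_r}^{-n}(M)_{[i]}$ ($n\in\ZZ$, $i\in\NN$) where $M$ is a chosen quasi-simple. The uprising arrows $\tau_{K_r}^{-n}(M)_{[i]}\hookrightarrow\tau_{K_r}^{-n}(M)_{[i+1]}$ are monomorphisms, so torsion-free closure from (1) makes $\cC\cap\rep_{\proj}(K_r,d)$ downward-closed along each ray and reduces the question to the subset $S_d\coloneqq\{n\in\ZZ:\tau_{K_r}^{-n}(M)\in\rep_{\proj}(K_r,d)\}$. Closure under $\tau_{K_r}^{-1}$ forces $S_d$ to be upward-closed, so $S_d$ is either empty or of the form $[n_0,\infty)$. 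Non-emptiness is guaranteed because for $r\geq 3$ the Coxeter transformation of $K_r$ has spectral radius $>1$, so the dimension vectors of $\tau_{K_r}^{-n}(M)$ grow exponentially and $q_r(\dimu\tau_{K_r}^{-n}(M))+\Delta_{\tau_{K_r}^{-n}(M)}(d)\to\infty$, which by \cref{2.3.3} places $\tau_{K_r}^{-n}(M)$ in $\rep_{\proj}(K_r,d)$ for all sufficiently large $n$. Setting $M_{\cC,d}\coloneqq\tau_{K_r}^{-n_0}(M)$, one shows by induction on quasi-length, using the short exact sequence
\[
0\lra \tau_{K_r}^{-n}(M_{\cC,d})\lra \tau_{K_r}^{-n}(M_{\cC,d})_{[j]}\lra \tau_{K_r}^{-(n+1)}(M_{\cC,d})_{[j-1]}\lra 0
\]
together with extension closure, that the full cone $(M_{\cC,d}\to)$ lies inside $\rep_{\proj}(K_r,d)$; the reverse inclusion is the downward-closure argument already established.

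Part (4) is the real obstacle. The nesting $\rep_{\proj}(K_r,2)\subseteq\rep_{\proj}(K_r,1)$ combined with (3) gives $M_{\cC,2}=\tau_{K_r}^{-k}(M_{\cC,1})$ for some $k\geq 0$, and what remains is to exclude $k\geq 2$, equivalently to show $\tau_{K_r}^{-1}(M_{\cC,1})\in\rep_{\proj}(K_r,2)$. The approach I favour is to restrict the almost split sequence
\[
0\lra M_{\cC,1}\lra (M_{\cC,1})_{[2]}\lra \tau_{K_r}^{-1}(M_{\cC,1})\lra 0
\]
along an arbitrary $\alpha\in\Inj_{\KK}(A_2,A_r)$: since both $M_{\cC,1}$ and $(M_{\cC,1})_{[2]}$ lie in $\rep_{\proj}(K_r,1)$, their $\alpha$-restrictions are preprojective objects of $\rep_{\proj}(K_2,1)$, and combined with the rigid structure of the preprojective component of $K_2$ this should force $\alpha^\ast(\tau_{K_r}^{-1}(M_{\cC,1}))$ to be projective, which by \cref{1.5.5} is exactly membership of $\tau_{K_r}^{-1}(M_{\cC,1})$ in $\rep_{\proj}(K_r,2)$. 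Tracking the quasi-length data along this restriction is the delicate step; a cleaner route may go through the half-shift $\sigma_{K_r}^{-1}$ and the adjunctions of \cref{ThmA}, exploiting that $\sigma_{K_r}^{-1}$ trades a plane-level injectivity obstruction for a line-level one, so that a single $\sigma^{-1}$-step upgrades $\rep_{\proj}(K_r,1)$-membership to $\rep_{\proj}(K_r,2)$-membership and two such steps recover $\tau_{K_r}^{-1}$.
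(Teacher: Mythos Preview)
The paper does not prove \cref{2.3.6}; it is quoted from \cite{BF24} (generalising \cite[(3.3)]{Wor13a}), so there is no in-paper argument to compare against. I will therefore evaluate your outline directly.

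Parts (1)--(3) are fine in substance. One remark on (1): closure of $\rep_{\proj}(K_r,d)$ under $\sigma_{K_r}^{-1}$ is not quite a ``direct verification''. The kernel $\ker\psi_{\sigma^{-1}_{K_r}(N),\fv}$ identifies with $(\fv\otimes_{\KK}N_2)\cap\im\eta_N$, and unwinding $\eta_N(m)\in\fv\otimes_{\KK}N_2$ in a basis $b_1,\ldots,b_e$ of $\fv$ extended to $A_r$ shows that $m$ is annihilated by an $(r-e)$-dimensional subspace $\fu\subseteq A_r$, namely $\psi_N(a\otimes m)=0$ for all $a\in\fu$. If $N\in\rep_{\proj}(K_r,1)$ then any single $0\neq a\in\fu$ already forces $m=0$. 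This computation in fact gives the sharper statement
\[
\sigma_{K_r}^{-1}\bigl(\rep_{\proj}(K_r,1)\bigr)\subseteq\rep_{\proj}(K_r,r-1),
\]
from which closure of each $\rep_{\proj}(K_r,d)$ under $\sigma_{K_r}^{-1}$ follows via the nesting $\rep_{\proj}(K_r,d)\subseteq\rep_{\proj}(K_r,1)$ and $\rep_{\proj}(K_r,r-1)\subseteq\rep_{\proj}(K_r,d)$.

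Part (4) is where you have a genuine gap. Your first route fails: the quotient of a preprojective $K_2$-module by a preprojective submodule need not be projective. For instance, the irreducible monomorphism $P_1(2)\hookrightarrow P_2(2)$ has cokernel of dimension vector $(1,1)$, which is regular, not projective; so knowing only that $\alpha^\ast(M_{\cC,1})$ and $\alpha^\ast((M_{\cC,1})_{[2]})$ are preprojective cannot force $\alpha^\ast(\tau_{K_r}^{-1}(M_{\cC,1}))$ to be projective. Your second route is the correct one, but you state the key step without proof. The displayed inclusion above is exactly what you need: applying it once gives $\sigma_{K_r}^{-1}(M_{\cC,1})\in\rep_{\proj}(K_r,r-1)\subseteq\rep_{\proj}(K_r,2)$, and applying closure under $\sigma_{K_r}^{-1}$ once more yields $\tau_{K_r}^{-1}(M_{\cC,1})\in\rep_{\proj}(K_r,2)$, hence $k\leq 1$. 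So your instinct that ``a single $\sigma^{-1}$-step upgrades $\rep_{\proj}(K_r,1)$-membership'' is right---indeed it upgrades all the way to $\rep_{\proj}(K_r,r-1)$---but you must supply the kernel computation sketched above to close the argument.
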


\bigskip

Figure~\ref{Fig:3} illustrates the preceding result. As a direct consequence of \cref{2.3.6} and \cref{1.5.5}, every regular component $\cC$ contains infinitely many uniform representations.

\bigskip

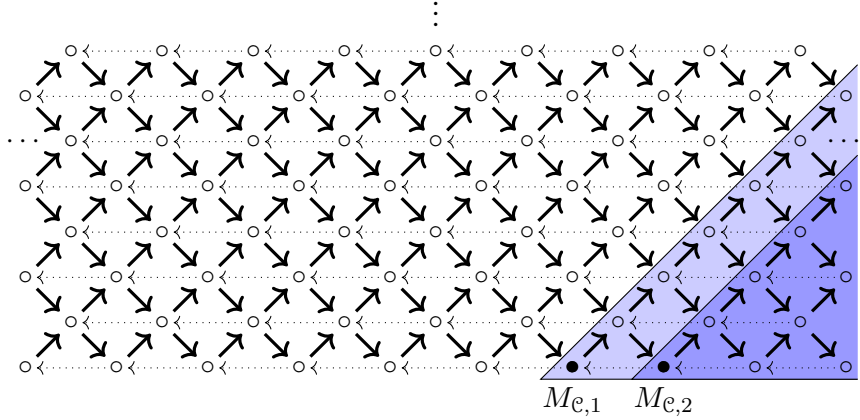
\begin{figure*}[!ht]
\centering 
\begin{tikzpicture}[very thick, scale=1]
                    [every node/.style={fill, circle, inner sep = 1pt}]

\def \n {9} % #Knoten Reihe  - 1
\def \m {3} % #Knoten Spalte - 1
\def \translation {1} % 1 Für Translation

\def \ab {0.15} % Abstand Pfeil und Knoten
\def \Pab {0.6} % Halber Abstand Horizontal

\def \rcone {1} % 1 für rechten Kegel
\def \rdist {2} % Anzahl der quasi-einfachen die eingeschlossen werden - 1
\def \rcolor {blue} %  white für keine Farbe

\def \rrcone {1} %1 für einen zweiten rechten Kegel links von rcone
\def \rrdist {3} % Anzahl der quasi-einfachen die eingeschlossen werden - 1

\node[color=black]  at (\n*\Pab*2-\Pab*2*3,-\ab-0.3) {$M_{\mathcal{C},1}$};
\node[color=black]  at (\n*\Pab*2-\Pab*2*2,-\ab-0.3) {$M_{\mathcal{C},2}$};
\node[color=black]  at (\n*\Pab*2-\Pab*2*4+1.2,-\ab+0.16) {$\bullet$};
\node[color=black]  at (\n*\Pab*2-\Pab*2*4+2.4,-\ab+0.16) {$\bullet$};
\foreach \a in {0,...,\n}{
\foreach \b in {0,...,\m}{
  
   \ifthenelse{\a = \n \and \b < \m}{
   \node[color=black] ({\a,\b,5})at ({\a*2*\Pab},{\b*2*\Pab}) {$\circ$};
     }
     {
      \ifthenelse{\b = \m \and \a < \n}{
      \node[color=black] ({\a,\b}) at ({\a*2*\Pab+\Pab},{\b*2*\Pab+\Pab}) {$\circ$};
      \node[color=black] ({\a,\b,5})at ({\a*2*\Pab},{\b*2*\Pab}) {$\circ$};
      }
      {
    
     \ifthenelse{\b = \m \and \a = \n}
     {\node[color=black] ({\a,\b,5})at ({\a*2*\Pab},{\b*2*\Pab}) {$\circ$};}
    {\node[color=black] ({\a,\b}) at ({\a*2*\Pab+\Pab},{\b*2*\Pab+\Pab}) {$\circ$};
    \node[color=black] ({\a,\b,5})at ({\a*2*\Pab},{\b*2*\Pab}) {$\circ$};

      }
      }
      }
    }
    }

\foreach \s in {0,...,\n}{
\foreach \t in {0,...,\m}
{  
 \ifthenelse{\t = \m \and \s < \n}{
    \draw[->] (\s*2*\Pab+\ab,\t*2*\Pab+\ab) to (\s*2*\Pab+\Pab-\ab,\t*2*\Pab+\Pab-\ab); 
    \draw[->] (\s*2*\Pab+\Pab+\ab,\t*2*\Pab+\Pab-\ab) to (\s*2*\Pab+2*\Pab-\ab,\t*2*\Pab+\ab); 

  }{
  
  \ifthenelse{\s = \n \and \t < \m}{
  
  }
  {
  \ifthenelse{\s = \n \and \t = \m}{
   
  }{
   \draw[->] (\s*2*\Pab+\ab,\t*2*\Pab+\ab) to (\s*2*\Pab+\Pab-\ab,\t*2*\Pab+\Pab-\ab); 
   \draw[->] (\s*2*\Pab+\Pab+\ab,\t*2*\Pab+\Pab+\ab) to (\s*2*\Pab+2*\Pab-\ab,\t*2*\Pab+2*\Pab-\ab);
   \draw[->] (\s*2*\Pab+\ab,\t*2*\Pab+2*\Pab-\ab) to (\s*2*\Pab+\Pab-\ab,\t*2*\Pab+\Pab+\ab); 
   \draw[->] (\s*2*\Pab+\Pab+\ab,\t*2*\Pab+\Pab-\ab) to (\s*2*\Pab+2*\Pab-\ab,\t*2*\Pab+\ab);    
   }
   }
  
    }
    }
    }

\ifthenelse{\isodd{\m}}
%% IF
 { 
  \node[color=black] (Dots1) at (0,\m*\Pab+2*\Pab) {$\cdots$};
  \node[color=black] (Dots2) at (\n*2*\Pab,\m*\Pab+2*\Pab) {$\cdots$};
   \ifthenelse{\isodd{\n}}{
  \node[color=black] (Dots3) at (0.5*\n*2*\Pab,2*\m*\Pab+2*\Pab) {$\vdots$};}
  {\node[color=black] (Dots3) at (0.5*\n*2*\Pab,2*\m*\Pab+\Pab) {$\vdots$};} 
  }
%% Else
  {
  \node[color=black] (Dots1) at (0,\m*\Pab+\Pab) {$\cdots$};
  \node[color=black] (Dots2) at (\n*2*\Pab,\m*\Pab+\Pab) {$\cdots$};
  \ifthenelse{\isodd{\n}}{
  \node[color=black] (Dots3) at (0.5*\n*2*\Pab,2*\m*\Pab+2*\Pab) {$\vdots$};}
  {\node[color=black] (Dots3) at (0.5*\n*2*\Pab,2*\m*\Pab+\Pab) {$\vdots$};}
  }
 
\ifthenelse{\translation = 1}{
   \foreach \s in {0,...,\n}{
   \foreach \t in {0,...,\m}{ 
   \ifthenelse{\s = 0}{}{
      \ifthenelse{\s = \n}{\draw[->,dotted,thin] (\s*2*\Pab-\ab,\t*2*\Pab) to (\s*2*\Pab-2*\Pab+\ab,\t*2*\Pab); }{
   \draw[->,dotted,thin] (\s*2*\Pab-\ab,\t*2*\Pab) to (\s*2*\Pab-2*\Pab+\ab,\t*2*\Pab); 
   \draw[->,dotted,thin] (\s*2*\Pab-\ab+\Pab,\t*2*\Pab+\Pab) to (\s*2*\Pab-2*\Pab+\Pab+\ab,\t*2*\Pab+\Pab); 
   }
   }}
}}
{}  %ELSE

\begin{scope}[on background layer]

\ifthenelse{\rrcone = 1}{
        \draw[fill = \rcolor!20] (\n*\Pab*2+\ab,-\ab) node[anchor=north]{}
  -- (\n*\Pab*2-\rrdist*\Pab*2-0.7*\Pab,-\ab) node[anchor=north]{}
  -- (\n*\Pab*2+\ab,\rrdist*\Pab*2+0.7*\Pab) node[anchor=south]{};
    }
  {}

\ifthenelse{\rcone = 1}{
        \draw[fill= \rcolor!40](\n*\Pab*2+\ab,-\ab) node[anchor=north]{}
  -- (\n*\Pab*2-\rdist*\Pab*2-0.7*\Pab,-\ab) node[anchor=north]{}
  -- (\n*\Pab*2+\ab,\rdist*\Pab*2+0.7*\Pab) node[anchor=south]{};
    }
  {}   
   
\end{scope}
\end{tikzpicture}
\caption{Cones of successors of $M_{\mathcal{C},1}$ and $M_{\mathcal{C},2}$ in a regular component $\mathcal{C} \subseteq \Gamma(K_r)$ with $M_{\mathcal{C},1} \neq M_{\mathcal{C},2}$. The Auslander-Reiten translation is indicated by dotted arrows.}
\label{Fig:3}
\end{figure*}

\bigskip

\begin{example}\label{2.3.7}
Let $r = 3$, $V_1 := \KK^{13}$ and $V_2 := \KK^{34}$. Then $q_3(\dimu(V_1,V_2)) = -1 \leq 0$ and \cref{2.2.2} gives us a non-empty open subset $O \subseteq \cV(K_3;V_1,V_2)$ such that for every $\psi \in O$ the representation $M := V_{\psi}$ is a quasi-simple brick (see \cite[(9.2),(9.4)]{Ker94}) in a regular component $\cC = \cC_{\psi}$ and not homogeneous. Due to \cref{2.2.1}, every representation in $\cC$ is non-homogeneous. 
We have $\dimu \tau_{K_3}(M) = (2,5)$. Application of \cref{2.3.3} for $d= 1$ yields $\tau_{K_3}(M) \in \rep_{\proj}(K_r,1)$ and \cref{2.3.1} shows that $\tau_{K_3}(M) \not\in  \rep_{\proj}(K_r,2)$. In summary, $\tau_{K_3}(M) = M_{\cC,1}$ and $M = M_{\cC,2}$.
\end{example}

\bigskip

\section{Restriction, inflation and shifts functors}\label{S:3}

Let $1 \leq d \leq r$. In this section, we recall the definitions of restriction, inflation, and shift functors, and prove that these functors give rise to adjoint pairs between $\rep(K_d)$ and $\rep(K_r)$. 

\subsection{Restriction and inflation}

We fix $1 \leq d \leq r$. Let $\iota \colon A_d \lra A_r$ be the canonical embedding sending $\gamma_i$ to $\gamma_i$ for all $1 \leq i \leq d$. We define functors that act as the identity on morphisms:
\[ \res \colon \rep(K_r) \lra \rep(K_d) \ ; \ M \mapsto \res(M) := \iota^\ast(M)\] with
\[ \psi_{\res(M)} = \psi_{M} \circ (\iota \otimes \id_{M_1}),\]
and 
\[  \inf \colon \rep(K_d) \lra \rep(K_r) \ ; \ X \mapsto \inf(X),\]
where
\[\psi_{\inf(X)}(\gamma_i \otimes x) = \begin{cases}
\psi_X(\gamma_i \otimes x), & 1 \leq i \leq d \\
0, & i > d.
\end{cases}\]

These two functors allow us to move between $\rep(K_d)$ and $\rep(K_r)$.
The following  example show that $(\inf,\res)$ and $(\res,\inf)$ are not adjoint pairs.

\bigskip

\begin{example}\label{3.1.1}
Let $1 \leq d < r$. We have $\res(P_1(r)) \cong P_1(d) \oplus (r-d)P_0(d)$ and 
\[ 1 = \dim_{\KK} \res(P_1(d))_1 = \dim_{\KK} \Hom_{K_d}(P_1(d),\res(P_1(r))).\]
The representation $\inf(P_1(d)) \in \rep(K_r)$ is indecomposable and regular (see \cref{2.1.1}). Since there are no non-zero morphisms from regular to projective representations (see \cite[(VIII.2.13)]{ASS06}), we conclude 
\[  \Hom_{K_r}(\inf(P_1(d)),P_1(r)) = 0 \neq \Hom_{K_d}(P_1(d),\res(P_1(r))).\]
By the same token, we have
\[  \Hom_{K_r}(I_1(r),\inf(I_1(d))) = 0 \neq \Hom_{K_d}(\res(I_1(r)),I_1(d)).\]
This shows that we can not hope for natural isomorphisms of the form
\[ \Hom_{K_r}(\inf(X),M) \cong \Hom_{K_d}(X,\res(M)) \ \text{and} \  \Hom_{K_r}(M,\inf(X)) \cong \Hom_{K_d}(\res(M),X).\]
\end{example}

\bigskip

\subsection{Shift functors}\label{S:3.2}

In the sequel the \textit{shift functors} $\sigma_{K_r},\sigma^{-1}_{K_r}: \rep(K_r) \lra \rep(K_r)$  will be of major importance. These functors correspond reflection functors but take into account that the opposite quiver of $K_r$ is isomorphic to $K_r$, i.e., $D_{K_r} \circ \sigma_{K_r} \cong \sigma_{K_r}^{-1} \circ D_{K_r}$.
We recall the definitions of $\sigma_{K_r}$ and $\sigma_{K_r}^{-1}$ in terms of the corresponding structure maps.  
Given a representation $M \in \rep(K_r)$, the structure map 
\[ \psi_{\sigma_{K_r}(M)} \colon A_r \otimes_{\KK} \ker \psi_M \lra M_1 \]
of $\sigma_{K_r}(M) \in \rep(K_r)$ is the restriction of the $\KK$-linear map 
\[A_r \otimes_{\KK} (A_r \otimes_{\KK} M_1) \lra M_1 \ ; \ \gamma_i \otimes (\gamma_j \otimes m) \mapsto \delta_{ij} m,\]
where $\delta_{ij}$ denotes the Kronecker delta.
If $f \in \Hom_{K_r}(M,N)$, then $\sigma_{K_r}(f)_1 : \sigma_{K_r}(M)_1 \lra \sigma_{K_r}(N)_1$ is the restriction of 
\[\id_{A_r} \otimes f_1 \colon A_r \otimes_{\KK}M_1 \lra A_r \otimes_{\KK}N_1\]
to $\ker \psi_{M}$ while $\sigma_{K_r}(f)_2\coloneqq f_1$. 

The representation $\sigma_{K_r}^{-1}(M)$ is given by $(M_2,\coker \eta_{M})$, where 
\[ \eta_{M} \colon M_1 \lra A_r \otimes_{\KK} M_2 \ ; \ m \mapsto \sum^r_{i=1} \gamma_i \otimes \psi_{M}(\gamma_i \otimes m)\]
with structure map
\[ \psi_{\sigma_{K_r}^{-1}(M)} \colon A_r \otimes_{\KK} M_2 \lra \coker \eta_{M} \ ; \  a \otimes m \mapsto a \otimes m + \im \eta_{M}.\]
If $f \in \Hom_{K_r}(M,N)$, then $\sigma^{-1}_{K_r}(f)_1 = f_2$ and $\sigma^{-1}_{K_r}(f)_2 : \sigma_{K_r}(M)_2 \lra \sigma_{K_r}(N)_1$ is the unique $\KK$-linear map making the diagram
\[
\xymatrix{
A_r \otimes_{\KK} M_2 \ar^{ \psi_{\sigma_{K_r}^{-1}(M)}}[r]  \ar_{\id_{A_r} \otimes f_2}[d] &\coker \eta_{M}  \ar^{\sigma^{-1}_{K_r}(f)_2}[d] \\
A_r \otimes_{\KK} N_2  \ar^{ \psi_{\sigma_{K_r}^{-1}(N)}}[r] & \coker \eta_{N} 
}
\]
commute.
As $(\sigma_{K_r}^{-1},\sigma_{K_r})$ is an adjoint pair \cite[(VII.5.7)]{ASS06}, $\sigma_{K_r}$ is left exact, while $\sigma_{K_r}^{-1}$ is right exact.

For $i \in \{1,2\}$ we denote by $\rep_i(K_r)$ the full subcategory of $\rep(K_r)$, whose objects do not have any direct summands isomorphic to the simple representation $S(i)$ and set $\rep_{1,2}(K_r) \coloneqq \rep_1(K_r) \cap \rep_2(K_r)$.
We also note that $D_{K_r}(\rep_i(K_r)) = \rep_{3-i}(K_r)$ for every $i \in \{1,2\}$. By \cite[(VII.5.3)]{ASS06}, the functor $\sigma_{K_r}$ induces an equivalence
\[ \sigma_{K_r} : \rep_2(K_r) \lra \rep_1(K_r).\]
By the same token, $\sigma_{K_r}^{-1} \colon \rep_1(K_r) \lra \rep_2(K_r)$ is a quasi-inverse of $\sigma_{K_r}$.
We 
also note that 
\[\dimu \sigma_{K_r}(M) =(r \dim_\KK M_1 - \dim_\KK M_2, \dim_\KK M_1)\] 
for $M$ in $\rep_{2}(K_r)$, while $\sigma_{K_r}(S(2))=0$.  In conjunction with the left exactness of $\sigma_{K_r}$ this implies that $\sigma_{K_r} \colon \rep_2(K_r) \lra \rep_1(K_r)$ is exact. By the same token, $\sigma_{K_r}^{-1} \colon \rep_1(K_r) \lra \rep_2(K_r)$ is exact.
The map
\[ \sigma_r \colon \ZZ^2 \lra \ZZ^2 \ ; \ (x,y) \mapsto (rx - y,x)\]
is invertible and satisfies
\[ \dimu \sigma_{K_r}(M) = \sigma_{r}(\dimu M) \ \text{and} \ \dimu \sigma^{-1}_{K_r}(N) = \sigma^{-1}_{r}(\dimu N)\]
for all $M \in \rep_2(K_r)$ and $N \in \rep_1(K_r)$. Finally, we recall (see \cite[(VII.5.8)]{ASS06}) that $\sigma_{K_r} \circ \sigma_{K_r}$ is just the Auslander-Reiten translation $\tau_{K_r}$. By the same token, we have $\sigma_{K_r}^{-1} \circ \sigma_{K_r}^{-1} \cong \tau_{K_r}^{-1}$.

\bigskip

\subsection{The adjoint pairs for Kronecker quivers}\label{S:3.3}
This section is devoted to the proof of the following result. We remark that this result holds in greater generality for any connected quiver $Q$ with a sink. The precise statement and the proof may be found in \cite[(2.2.2)]{Bis25b}.

\bigskip

\begin{Theorem}\label{3.3.1}
The functor
\[ \sigma_{K_r}^{-1} \circ \inf \colon \rep(K_d) \lra \rep(K_r) \ \text{is left adjoint to} \ \sigma_{K_d} \circ \res \colon \rep(K_r) \lra \rep(K_d).\]
\end{Theorem}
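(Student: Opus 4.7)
The plan is to reduce to a more transparent adjunction by first moving $\sigma_{K_r}^{-1}$ across via the known adjunction $(\sigma_{K_r}^{-1},\sigma_{K_r})$ of \cite[(VII.5.7)]{ASS06}, and then to identify morphisms $\inf(X)\to\sigma_{K_r}(M)$ with morphisms $X\to\sigma_{K_d}(\res(M))$ by inspection of the structure maps. More precisely, once the adjunction $(\sigma_{K_r}^{-1},\sigma_{K_r})$ supplies a natural isomorphism
\[
\Hom_{K_r}(\sigma_{K_r}^{-1}(\inf(X)),M)\;\cong\;\Hom_{K_r}(\inf(X),\sigma_{K_r}(M)),
\]
the remaining task is to establish a natural isomorphism
\[
\Phi_{X,M}\colon\Hom_{K_r}(\inf(X),\sigma_{K_r}(M))\;\cong\;\Hom_{K_d}(X,\sigma_{K_d}(\res(M))).
\]

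To construct $\Phi_{X,M}$, I would unpack both sides. A morphism $f\colon \inf(X)\to\sigma_{K_r}(M)$ consists of $f_1\colon X_1\to \ker\psi_M$ and $f_2\colon X_2\to M_1$, subject to the commutativity conditions
\[
f_2\bigl(\psi_{\inf(X)}(\gamma_i\otimes x)\bigr)\;=\;\psi_{\sigma_{K_r}(M)}(\gamma_i\otimes f_1(x))
\]
for all $1\leq i\leq r$ and $x\in X_1$. By the description of $\sigma_{K_r}(M)$ recalled in Section 3.2, writing $f_1(x)=\sum_{j=1}^r \gamma_j\otimes m_j(x)\in\ker\psi_M$, the right hand side equals $m_i(x)$. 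For $i>d$ the left hand side vanishes by definition of $\inf$, so the compatibility forces $m_i(x)=0$ for all $i>d$. Consequently $f_1$ factors through
\[
A_d\otimes_{\KK}M_1\cap \ker\psi_M\;=\;\ker\psi_{\res(M)}\;=\;\sigma_{K_d}(\res(M))_1,
\]
and the remaining conditions for $1\leq i\leq d$ are precisely the compatibility condition for the pair $(f_1,f_2)$ to define a morphism $X\to\sigma_{K_d}(\res(M))$ in $\rep(K_d)$. This yields a well-defined bijection $\Phi_{X,M}$, and the inverse sends a morphism $g\colon X\to\sigma_{K_d}(\res(M))$ to the pair $(\iota\circ g_1,g_2)$, where $\iota\colon\ker\psi_{\res(M)}\hookrightarrow \ker\psi_M$ is the inclusion induced by the canonical embedding $A_d\hookrightarrow A_r$.

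Naturality in $X$ and $M$ should be a direct verification: a morphism $X\to X'$ in $\rep(K_d)$ and a morphism $M\to M'$ in $\rep(K_r)$ act on both Hom-spaces by pre- and post-composition with the functorially induced maps, and the identification above is built from the canonical inclusion $A_d\hookrightarrow A_r$, so everything commutes on the nose. Composing $\Phi_{X,M}$ with the $(\sigma_{K_r}^{-1},\sigma_{K_r})$-adjunction isomorphism then gives the desired natural isomorphism establishing that $\sigma_{K_r}^{-1}\circ\inf$ is left adjoint to $\sigma_{K_d}\circ\res$.

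The heart of the argument is the observation that the vanishing of $\psi_{\inf(X)}$ on the arrows $\gamma_{d+1},\ldots,\gamma_r$ is exactly what cuts out $\ker\psi_{\res(M)}$ inside $\ker\psi_M$; everything else is bookkeeping. I do not anticipate a real obstacle here, beyond carefully tracking the canonical inclusions of kernels and of arrow spaces. A small point to be aware of is that $\sigma_{K_r}^{-1}$ and $\sigma_{K_r}$ are only equivalences on $\rep_1(K_r)$ and $\rep_2(K_r)$ respectively, but this is immaterial: the adjunction $(\sigma_{K_r}^{-1},\sigma_{K_r})$ holds on the full category, so no restriction of domains is required.
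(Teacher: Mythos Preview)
Your proposal is correct, and it takes a genuinely different route from the paper's proof. The paper constructs the adjunction isomorphism directly: it writes down an explicit map
\[
\tau_{X,M}\colon \Hom_{K_r}((\sigma_{K_r}^{-1}\circ\inf)(X),M)\to\Hom_{K_d}(X,(\sigma_{K_d}\circ\res)(M)),\quad (f_1,f_2)\mapsto ((\id_{A_d}\otimes f_1)\circ\eta_X,\,f_1),
\]
working at the level of the cokernel description of $\sigma_{K_r}^{-1}(\inf(X))$, and then checks injectivity, surjectivity, and naturality by hand. Your argument instead first invokes the standard adjunction $(\sigma_{K_r}^{-1},\sigma_{K_r})$ to pass to $\Hom_{K_r}(\inf(X),\sigma_{K_r}(M))$, and then isolates the genuinely new content as the identification $\Hom_{K_r}(\inf(X),\sigma_{K_r}(M))\cong\Hom_{K_d}(X,\sigma_{K_d}(\res(M)))$, which reduces to the pleasant observation that the vanishing of $\psi_{\inf(X)}$ on $\gamma_{d+1},\ldots,\gamma_r$ forces $f_1(X_1)\subseteq (A_d\otimes M_1)\cap\ker\psi_M=\ker\psi_{\res(M)}$. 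Your approach is more economical and conceptually clearer, since it separates the reflection-functor adjunction (already known) from the $\inf$/$\res$ interaction (the new ingredient). The paper's approach has the minor advantage of producing the explicit unit of the composite adjunction in one step, but since the subsequent applications only use the existence of the natural isomorphism, this is not needed.
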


\begin{proof} Let $X \in \rep(K_d)$ and $M \in \rep(K_r)$. We fix $(f_1,f_2) \in \Hom_{K_r}((\sigma_{K_r}^{-1} \circ \inf)(X),M)$. 
Then $\psi_{(\sigma_{K_r}^{-1} \circ \inf)(X)} $ is given by
\[ \psi_{(\sigma_{K_r}^{-1} \circ \inf)(X)} \colon A_r \otimes_{\KK} X_2 \lra \coker \eta_{\inf(X)} \ ; \ a \otimes x \mapsto a \otimes x + \im \eta_{\inf(X)}.\]
By definition, we have $f_1 \in \Hom_{\KK}(X_2,M_1)$, $f_2 \in \Hom_{\KK}(\coker \eta_{\inf(X)},M_2)$ and
\[ (+) \quad \psi_M \circ (\id_{A_r} \otimes f_1) = f_2 \circ \psi_{(\sigma_{K_r}^{-1} \circ \inf)(X)}.\]
For $x \in X_1 = \inf(X)_1$ we have
\[(\ast) \quad \sum^d_{i=1} \gamma_i \otimes \psi_X(\gamma_i \otimes x) = \sum^r_{i=1} \gamma_i \otimes \psi_{\inf(X)} (\gamma_i \otimes x) = \eta_{\inf(X)}(x)\] and conclude
\begin{align*}
    (\psi_{\res(M)} \circ (\id_{A_d} \otimes f_1) \circ \eta_{X})(x) &=  \psi_{\res(M)}(\sum^d_{i=1} \gamma_i \otimes f_1(\psi_X(\gamma_i \otimes x))) \\
    &= \sum^d_{i=1} (\psi_M \circ (\id_{A_r} \otimes f_1))(\gamma_i \otimes \psi_X(\gamma_i \otimes x))\\ 
    &\stackrel{(+)}{=} \sum^d_{i=1} (f_2 \circ \psi_{(\sigma^{-1}_{K_r} \circ \inf)(X)})(\gamma_i \otimes \psi_X(\gamma_i \otimes x))\\
    &= f_2(\sum^d_{i=1} \gamma_i \otimes \psi_X(\gamma_i \otimes x) + \im \eta_{\inf(X)}) \\
    &\stackrel{(\ast)}{=} f_2(\eta_{\inf(X)}(x)+\im \eta_{\inf(X)}) = f_2(0) = 0.
\end{align*}
Hence, $\im((\id_{A_d} \otimes f_1) \circ \eta_{X}) \subseteq \ker \psi_{\res(M)}$ and we obtain a diagram
\[
\xymatrix{
A_d \otimes_{\KK} X_1 \ar^{\psi_X}[rrr] \ar^{\id_{A_d} \otimes (\id_{A_d} \otimes f_1) \circ \eta_{X}}[d]&&& X_2 \ar^{f_1}[d]\\
A_d \otimes_{\KK} \ker \psi_{\res(M)} \ar_{\psi_{(\sigma_{K_d} \circ \res)(M)}}[rrr] &&& M_1.
}
\]
Given $i \in \{1,\ldots,d\}$ and $x \in X_1$, we have
\begin{align*}
     (\psi_{(\sigma_{K_d} \circ \res)(M)} \circ (\id_{A_d} \otimes (\id_{A_d} \otimes f_1) \circ \eta_{X}))(\gamma_i \otimes x) &= \psi_{(\sigma_{K_d} \circ \res)(M)}(\gamma_i \otimes \sum^d_{j=1} \gamma_j \otimes (f_1 \circ \psi_X)(\gamma_j \otimes x)) \\
     &= (f_1 \circ \psi_X)(\gamma_i \otimes x).
\end{align*} 
This shows $((\id_{A_d} \otimes f_1) \circ \eta_{X},f_1) \in \Hom_{K_d}(X,(\sigma_{K_d} \circ \res)(M))$. We obtain a $\KK$-linear map
\[ \tau_{X,M} \colon \Hom_{K_r}((\sigma_{K_r}^{-1} \circ \inf)(X),M) \lra \Hom_{K_d}(X,(\sigma_{K_d} \circ \res)(M)) \ ; \ (f_1,f_2) \mapsto ((\id_{A_d} \otimes f_1) \circ \eta_{X},f_1).\]
Now we proceed in steps.
\begin{enumerate}
    \item[(i)] $\tau_{X,M}$ is injective: Let $(f_1,f_2) \in \ker \tau_{X,M}$, then $f_1 = 0$. Since $Y \in \rep(K_r) \in \rep_2(K_r)$ for every $Y \in \rep(K_r)$, $\psi_{(\sigma_{K_r}^{-1} \circ \inf)(X)}$ 
 is surjective and the equality $f_2 \circ \psi_{(\sigma_{K_r}^{-1} \circ \inf)(X)} = \psi_M \circ (\id_{A_r} \otimes f_1)$ implies $f_2 = 0$.
    \item[(ii)] $\tau_{X,M}$ is surjective: Let $(g_1,g_2) \in \Hom_{K_d}(X, (\sigma_{K_d} \circ \res)(M))$ and $x \in X_1 = \inf(X)_1$. We write $g_1(x) = \sum^d_{j=1} \gamma_j \otimes m_j \in (\sigma_{K_d} \circ \res(M))_1 = \ker \psi_{\res(M)} \subseteq A_d \otimes_{\KK} M_1$.  Recall that $\iota \colon A_d \lra A_r$ denotes the canonical embedding. We have 
\begin{align*}
    (\psi_M \circ (\id_{A_r} \otimes g_2))(\eta_{\inf(X)}(x))&=  (\psi_M \circ (\id_{A_r} \otimes g_2))(\sum^r_{i=1} \gamma_i \otimes \psi_{\inf(X)}(\gamma_i \otimes x)) \\
    &= (\psi_M \circ (\id_{A_r} \otimes g_2))(\sum^d_{i=1} \gamma_i \otimes \psi_{X}(\gamma_i \otimes x)) \\
    &= \psi_M(\sum^d_{i=1} \gamma_i \otimes g_2(\psi_X(\gamma_i \otimes x)))\\
    &=\psi_M(\sum^d_{i=1} \gamma_i \otimes \psi_{(\sigma_{K_d} \circ \res)(M)}(\gamma_i \otimes g_1(x))) \\
     &=\psi_M(\sum^d_{i=1} \gamma_i \otimes \psi_{(\sigma_{K_d} \circ \res)(M)}(\gamma_i \otimes \sum^d_{j=1} \gamma_j \otimes m_j)) \\
     &=\psi_M(\sum^d_{i=1} \gamma_i \otimes m_i) = (\psi_{M} \circ (\iota \otimes \id_{M_1}))(\sum^d_{i=1} \gamma_i \otimes m_i) \\
    &=\psi_{\res(M)}(g_1(x)) = 0,
\end{align*}
since $g_1(x) \in \ker \psi_{\res(M)}$. The universal property of $\coker \eta_{\inf(X)}$ gives us a unique $\KK$-linear map $h \colon \coker \eta_{\inf(X)} \lra M_2$, making the following diagram commute:
\[
\xymatrix{
X_1 \ar^{\eta_{\inf(X)}}[rr] && A_r \otimes_{\KK} X_2 \ar^{\psi_{(\sigma_{K_r}^{-1} \circ \inf)(X)}}[rr] \ar^{\id_{A_r} \otimes g_2}[d]&& \coker \eta_{\inf(X)} \ar^{h}@{..>}[d] \ar[rr] && 0\\
 && A_r \otimes_{\KK} M_1 \ar_{\psi_M}[rr] && M_2.
}
\]
Hence, $(g_2,h) \in \Hom_{K_r}((\sigma_{K_r}^{-1} \circ \inf)(X),M)$. Application of $\tau_{X,M}$ yields the morphism $\tau_{X,M}(g_2,h) = ((\id_{A_d} \otimes g_2) \circ \eta_{X},g_2) \in \Hom_{K_d}(X,(\sigma_{K_d} \circ \res)(M))$. In conclusion,
\[ (g_1 -(\id_{A_d} \otimes g_2) \circ \eta_{X},0)  = (g_1,g_2) - ((\id_{A_d} \otimes g_2) \circ \eta_{X},g_2)  \in \Hom_{K_d}(X,(\sigma_{K_d} \circ \res)(M))\]
and therefore 
\[0 = 0 \circ \psi_X = \psi_{(\sigma_{K_d} \circ \res)(M)} \circ (\id_{A_d} \otimes \underbrace{(g_1 -(\id_{A_d} \otimes g_2) \circ \eta_{X})}_{s\colon X_1 \lra \ker \psi_{\res(M)} \subseteq A_d \otimes M_1}).\]
Let $x \in X_1$ and write $s(x) = \sum^d_{i=1} \gamma_i \otimes m_i$ with $m_1,\ldots,m_d \in M_1$. We conclude for $j \in \{1,\ldots,d\}$
\[ 0 = \psi_{(\sigma_{K_d} \circ \res)(M)}(\gamma_j \otimes s(x)) =  \psi_{\sigma_{K_d}(\res(M))}(\gamma_j \otimes \sum^d_{i=1} \gamma_i \otimes m_i) = m_j.\]
In conclusion,  $s = 0$, $g_1 = (\id_{A_d} \otimes g_2) \circ \eta_{X}$ and $(g_1,g_2)  = \tau_{X,M}(g_2,h)$.
\item[(iii)] $\tau$ is natural in the first component: 
We let $X,Y \in \rep(K_d)$, $M \in \rep(K_r)$ be representations and $(g_1,g_2) \in  \Hom_{K_d}(X,Y)$, $(f_1,f_2) \in\Hom_{K_r}((\sigma_{K_r}^{-1} \circ \inf)(Y),M)$ be morphisms. We have $\eta_{Y} \circ g_1 = (\id_{A_d} \otimes g_2) \circ \eta_{X}$, $(\sigma_{K_r}^{-1} \circ \inf)(g)_1= g_2$ and conclude
\begin{align*}
[\Hom_{K_d}(g,(\sigma_{K_d} \circ \res)(M)) \circ \tau_{Y,M}](f_1,f_2) &= \Hom_{K_d}(g,(\sigma_{K_d} \circ \res)(M))((\id_{A_d} \otimes f_1) \circ \eta_{Y},f_1) \\
&= ((\id_{A_d} \otimes f_1) \circ \eta_{Y} \circ g_1,f_1 \circ g_2) \\
&= ((\id_{A_d} \otimes f_1) \circ (\id_{A_d} \otimes g_2) \circ \eta_{X},f_1 \circ g_2)  \\
&= ((\id_{A_d} \otimes f_1 \circ g_2) \circ \eta_{X},f_1 \circ g_2)  \\
&= \tau_{X,M}((f_1 \circ g_2,f_2 \circ (\sigma_{K_r}^{-1} \circ \inf)(g)_2)\\
&= \tau_{X,M}((f_1 \circ (\sigma_{K_r}^{-1} \circ \inf)(g)_1,f_2 \circ (\sigma_{K_r}^{-1} \circ \inf)(g)_2)\\
&= [(\tau_{X,M} \circ\Hom_{K_r}((\sigma_{K_r}^{-1} \circ \inf)(g),M)](f_1,f_2).
\end{align*}
Hence, we have a commutative diagram
\[
\xymatrix{
\Hom_{K_r}((\sigma_{K_r}^{-1} \circ \inf)(Y),M) \ar^{\tau_{Y,M}}[rrr] \ar^{\Hom_{K_r}((\sigma_{K_r}^{-1} \circ \inf)(g),M)}[d]&&& \Hom_{K_d}(Y,(\sigma_{K_d} \circ \res)(M)) \ar^{\Hom_{K_d}(g,(\sigma_{K_d} \circ \res)(M))}[d]\\
\Hom_{K_r}((\sigma_{K_r}^{-1} \circ \inf)(X),M) \ar^{\tau_{X,M}}[rrr] &&& \Hom_{K_d}(X,(\sigma_{K_d} \circ \res)(M)).
}
\]
\item[(iv)] $\tau$ is natural in the second component: Let $X \in \rep(K_d)$, $M,N \in \rep(K_r)$ be representations and $f \in \Hom_{K_r}(M,N)$,  $(g_1,g_2) \in \Hom_{K_r}((\sigma_{K_r}^{-1} \circ \inf)(X),M)$ be morphisms. 
We have 
\begin{align*}
    [\tau_{X,N} \circ \Hom_{K_r}((\sigma^{-1}_{K_r} \circ \inf)(X),f)](g_1,g_2) &= \tau_{X,N}(f_1 \circ g_1,f_2 \circ g_2) \\
    &= ((\id_{A_d} \otimes (f_1 \circ g_1)) \circ \eta_{X},f_1 \circ g_1) \\
    &=((\id_{A_d} \otimes f_1) \circ (\id_{A_d} \otimes g_1) \circ \eta_{X},f_1 \circ g_1)\\
    &= [\Hom_{K_d}(X,(\sigma_{K_d} \circ \res)(f)) \circ \tau_{X,M}](g_1,g_2).
\end{align*}
Hence, we have a commutative diagram
\[
\xymatrix{
\Hom_{K_r}((\sigma_{K_r}^{-1} \circ \inf)(X),M) \ar^{\tau_{X,M}}[rrr] \ar^{\Hom_{K_r}((\sigma^{-1}_{K_r} \circ \inf)(X),f)}[d]&&& \Hom_{K_d}(X,(\sigma_{K_d} \circ \res)(M)) \ar^{\Hom_{K_d}(X,(\sigma_{K_d} \circ \res)(f))}[d]\\
\Hom_{K_r}((\sigma_{K_r}^{-1} \circ \inf)(X),N) \ar^{\tau_{X,N}}[rrr] &&& \Hom_{K_d}(X,(\sigma_{K_d} \circ \res)(N)).
}
\]

\end{enumerate}
\end{proof}

\bigskip
\noindent By applying duality, we obtain the following result.
\bigskip

\begin{corollary}\label{3.3.2}
The functor
\[ \sigma_{K_d}^{-1} \circ \res \colon \rep(K_r) \lra \rep(K_d)\]
is left adjoint to
\[ \sigma_{K_r} \circ \inf \colon \rep(K_d) \lra \rep(K_r)\]
\end{corollary}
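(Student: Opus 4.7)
The plan is to deduce the corollary from \cref{3.3.1} by conjugating the adjunction with the standard duality $D_{K_r} \colon \rep(K_r) \lra \rep(K_r)$. The key observation is that $D_{K_r}$ is a contravariant self-equivalence that intertwines every functor involved. Concretely, I would first record the compatibilities
\[ D_{K_d} \circ \res \cong \res \circ D_{K_r}, \qquad D_{K_r} \circ \inf \cong \inf \circ D_{K_d}, \qquad D_{K_r} \circ \sigma_{K_r}^{-1} \cong \sigma_{K_r} \circ D_{K_r}. \]
The first two are straightforward from the definitions: restriction and inflation modify the action of the arrows but leave the vertex data untouched, and dualizing commutes with these modifications on $A_d \subseteq A_r$. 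The third is just the identity $D_{K_r} \circ \sigma_{K_r} \cong \sigma_{K_r}^{-1} \circ D_{K_r}$ recalled in \cref{S:3.2}, applied in reversed form.

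With these identities in hand, I would take the natural isomorphism
\[ \Hom_{K_r}((\sigma_{K_r}^{-1} \circ \inf)(X'), M') \cong \Hom_{K_d}(X', (\sigma_{K_d} \circ \res)(M'))\]
from \cref{3.3.1} and specialize to $X' \coloneqq D_{K_d}(X)$, $M' \coloneqq D_{K_r}(M)$. Using the duality isomorphism $\Hom_{K_r}(A,B) \cong \Hom_{K_r}(D_{K_r}(B), D_{K_r}(A))$ on both sides, together with the compatibilities above, transforms the left-hand Hom-space into $\Hom_{K_r}(M, (\sigma_{K_r} \circ \inf)(X))$ and the right-hand one into $\Hom_{K_d}((\sigma_{K_d}^{-1} \circ \res)(M), X)$, yielding the desired adjunction.

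The only potential obstacle is bookkeeping: ensuring that the chain of natural isomorphisms is natural in both variables $M \in \rep(K_r)$ and $X \in \rep(K_d)$ simultaneously. This is automatic since $D_{K_r}, D_{K_d}$ are equivalences of categories and all the intertwining isomorphisms are natural. No genuinely new computations are required, so the proof reduces to a clean formal chain of Hom-isomorphisms once the three compatibility isomorphisms are verified. I expect the write-up to be a short paragraph citing \cref{3.3.1} and \cref{S:3.2}.
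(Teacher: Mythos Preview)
Your proposal is correct and matches the paper's own proof essentially line for line: the paper also derives the corollary from \cref{3.3.1} by applying the dualities $D_{K_r}, D_{K_d}$, invoking exactly the compatibilities $D_{K_r} \circ \inf \cong \inf \circ D_{K_d}$, $D_{K_d} \circ \res \cong \res \circ D_{K_r}$, and $D_{K_r} \circ \sigma_{K_r} \cong \sigma_{K_r}^{-1} \circ D_{K_r}$ (together with its $K_d$-analogue), and then chaining the resulting natural $\Hom$-isomorphisms.
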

\begin{proof}
By \cref{3.3.1}, we have an adjoint pair $(\sigma_{K_r}^{-1} \circ \inf,\sigma_{K_d} \circ \res)$. Let $Y \in \rep(K_d)$ and $N \in \rep(K_r)$. We write $Y \cong D_{K_d}(X)$ and $N \cong D_{K_r}(M)$ with $X \in \rep(K_d)$ and $M \in \rep(K_r)$. Since we have natural equivalences $D_{K_r} \circ \inf \cong \inf \circ D_{K_d}$, $D_{K_d} \circ \res \cong \res \circ D_{K_r}$, $D_{K_r} \circ\sigma_{K_r} \cong \sigma_{K_r}^{-1} \circ D_{K_r}$ and $D_{K_d} \circ\sigma_{K_d} \cong \sigma_{K_d}^{-1} \circ D_{K_d}$, we obtain a sequence of natural isomorphisms
\begin{align*}
\Hom_{K_r}(N,(\sigma_{K_r} \circ \inf)(Y)) &\cong \Hom_{K_r}((D_{K_r} \circ \sigma_{K_r} \circ \inf)(Y),D_{K_r}(N))\\
&\cong \Hom_{K_r}((\sigma^{-1}_{K_r} \circ \inf)(D_{K_d}(Y)),D_{K_r}(N)) \\
&\cong \Hom_{K_d}(D_{K_d}(Y),(\sigma_{K_d} \circ \res)(D_{K_r}(N))) \\
&\cong \Hom_{K_d}((D_{K_d} \circ \sigma_{K_d} \circ \res)(D_{K_r}(M)),D_{K_d}(D_{K_d}(Y)))\\
&\cong \Hom_{K_d}((\sigma_{K_d}^{-1} \circ \res)(N),Y).
\end{align*}
\end{proof}

\begin{Remark}\label{3.3.3}
If we consider $d = r$, i.e., we do not remove any arrows, we obtain the classical result that $\sigma_{K_r}^{-1}$ is left adjoint to $\sigma_{K_r}$ (cf. \cite[(VII.5.7)]{ASS06}).
\end{Remark}

\bigskip

\section{Families of test representations}\label{S:4}

Characterizing a full subcategory of $\rep(K_r)$ as being right $\Hom$- or $\Ext^1$-orthogonal to certain algebraic families of \textit{test representations} is a well-established technique, employed repeatedly in the study of Kronecker representations (cf.\ \cite{HU91,Wor13a,Bis20}).
Recently, test representations have been applied to the study of relative projective representations, as will be outlined in what follows.

Let $1 \leq d < r$, $\fv \in \Gr_d(A_r)$ and $\alpha \in \Inj_{\KK}(A_d,A_r)$ with $\im \alpha = \fv$. We consider the projective Kronecker representations $(0,A_d) \cong d P_0(d)$ and $(\KK,A_d) \cong P_1(d)$, with structure map $\psi_{P_1(d)}(a \otimes t)  = ta$. The morphism $\alpha$ induces a morphism of representations 
\[ \overline{\alpha} \colon (0,A_d) \lra (\KK,A_r)\]
by setting $\overline{\alpha}_1 = 0$ and $\overline{\alpha}_2 \coloneqq \alpha$. We define 
\[ E(\fv) := D_{K_r}(\tau_{K_r}(\coker \overline{\alpha})).\]
In fact, up to isomorphism, this definition does not depend on the choice of $\alpha \in \Inj_{\KK}(A_d,A_r)$, cf. \cite[(2.1.3)]{BF24} and the following statements hold (see \cite[(2.1.3),(2.1.5)]{BF24}):

\medskip

\begin{enumerate}
   % \item $E(\fv)$ does not depend $\alpha \in \Inj_{\KK}(A_d,A_r)$ with $\im \alpha =\fv$.
    \item For $\fv,\fw \in \Gr_d(A_r)$ we have $E(\fv) \cong E(\fw)$ if and only if $\fv = \fw$. 
    \item $\rep_{\proj}(K_r,d) = \{ M \in \rep(K_r) \mid \forall \fv \in \Gr_d(A_r) \colon \Hom_{K_r}(E(\fv),M) = 0 \}$.
\end{enumerate}

\medskip

In this section, we show that the adjoint pairs constructed in \cref{S:3} give rise to a family of test representations in $\rep(K_r)$ for every non-semisimple homogeneous representation $X \in \rep(K_d)$. We then apply this construction to the preprojective indecomposables $P_n(d) \in \rep(K_d)$ for all $n \in \NN_0$, focusing on the case $d = 2$ to derive consequences for uniform Kronecker representations. Moreover, we show that the family obtained from $P_1(d)$ coincides with $(E(\fv))_{\fv \in \Gr_d(A_r)}$.

\subsection{Constructing test representations}

Let $1 \leq d < r$. In the following we construct two families of test representations, $(X^+(\alpha))_{\alpha \in \Inj_{\KK}(A_d, A_r)}$ and $(X^-(\alpha))_{\alpha \in \Inj_{\KK}(A_d, A_r)}$, for each $X \in \rep(K_d)$, and show in the next section that this construction can be lifted to $\Gr_d(A_r)$ whenever $X$ is homogeneous. As a first step, we introduce for $X \in \rep(K_r)$ the representations
\[
X_{d,r}^{-} \coloneqq (\sigma_{K_r}^{-1} \circ \inf)(X) 
\quad \text{and} \quad 
X_{d,r}^+ \coloneqq (\sigma_{K_r} \circ \inf)(X),
\]
whose definition will be generalized after the necessary prerequisites have been established in the subsequent result. Recall that $\iota \colon A_d \lra A_r$ denotes the canonical embedding.

\bigskip

\begin{Lemma}\label{4.1.1} Let $X \in \rep(K_d)$.
\begin{enumerate}
\item For $X\in\rep_{1,2}(K_d)$, we have an isomorphism $X^+_{d,r} \cong \tau_{K_r}(X^{-}_{d,r})$.
\item Let $g \in \GL(A_r)$ be such that $g \circ \iota = \iota$ holds, then $g.X_{d,r}^- \cong X_{d,r}^-$ and $g.X_{d,r}^+ \cong X_{d,r}^+$.
\end{enumerate}
\end{Lemma}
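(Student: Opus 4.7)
For part (1), the plan is to exploit the identity $\tau_{K_r} = \sigma_{K_r} \circ \sigma_{K_r}$ together with the fact that $\sigma_{K_r}$ and $\sigma_{K_r}^{-1}$ restrict to mutually quasi-inverse equivalences between $\rep_2(K_r)$ and $\rep_1(K_r)$ (cf.\ \cref{S:3.2}). The only non-trivial preliminary step is to verify that $\inf$ preserves the absence of $S(1)$- and $S(2)$-summands, i.e., $\inf(X) \in \rep_{1,2}(K_r)$ whenever $X \in \rep_{1,2}(K_d)$. This follows from the elementary identity $\res \circ \inf = \id_{\rep(K_d)}$ combined with the additivity of $\res$ and the equalities $\res(S(i)_r) = S(i)_d$ for $i \in \{1,2\}$: any $S(i)_r$-summand of $\inf(X)$ would restrict to an $S(i)_d$-summand of $X$, contradicting the hypothesis. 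With $\inf(X) \in \rep_1(K_r)$ in hand, the equivalence $\sigma_{K_r} \circ \sigma_{K_r}^{-1}|_{\rep_1(K_r)} \cong \id$ gives $\sigma_{K_r}(\sigma_{K_r}^{-1}(\inf(X))) \cong \inf(X)$, whence
\[
\tau_{K_r}(X^-_{d,r}) = (\sigma_{K_r} \circ \sigma_{K_r})\bigl(\sigma_{K_r}^{-1}(\inf(X))\bigr) \cong \sigma_{K_r}(\inf(X)) = X^+_{d,r}.
\]

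For part (2), the plan is to convert the desired isomorphism of objects into a natural isomorphism of representable functors and then invoke Yoneda. The key observation is that the hypothesis $g \circ \iota = \iota$ forces the literal equality $\res(g^{-1}.M) = \res(M)$ for every $M \in \rep(K_r)$, since
\[
\psi_{\res(g^{-1}.M)} = \psi_M \circ \bigl((g \circ \iota) \otimes \id_{M_1}\bigr) = \psi_M \circ (\iota \otimes \id_{M_1}) = \psi_{\res(M)}.
\]
Combining this with \cref{3.3.1} and the fact that $g.$ is an autoequivalence of $\rep(K_r)$ produces a chain of isomorphisms
\[
\Hom_{K_r}(g.X^-_{d,r}, M) \cong \Hom_{K_r}(X^-_{d,r}, g^{-1}.M) \cong \Hom_{K_d}(X, \sigma_{K_d}(\res(M))) \cong \Hom_{K_r}(X^-_{d,r}, M),
\]
natural in $M$, and the Yoneda lemma yields $g.X^-_{d,r} \cong X^-_{d,r}$. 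The analogue for $X^+_{d,r}$, applied to the contravariant representable functor $\Hom_{K_r}(-,g.X^+_{d,r})$ and invoking \cref{3.3.2} in place of \cref{3.3.1}, proceeds in exactly the same way.

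The main obstacle I anticipate is the elementary but essential verification that $\inf(X) \in \rep_{1,2}(K_r)$ in part (1): one might naively worry that $\inf$ could introduce new $S(1)$- or $S(2)$-summands over $K_r$ that were invisible over $K_d$, but the identity $\res \circ \inf = \id$ dispatches this at once. Once that lemma is in place, both parts reduce to formal consequences of the adjunctions of \cref{S:3.3} together with the invariance of $\res$ under the stabilizer of $\iota$ in $\GL(A_r)$.
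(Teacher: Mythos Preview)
Your proof is correct. For part (1) the strategy coincides with the paper's: both reduce to the identity $\sigma_{K_r}\circ\sigma_{K_r}^{-1}(\inf(X))\cong\inf(X)$ and then compose with $\sigma_{K_r}$. The paper obtains this by reducing to indecomposable non-simple $X$ and quoting \cite[(3.2.1),(3.2.2)]{Bis20} to see that $\inf(X)$ is regular, whereas you argue directly that $\inf(X)\in\rep_{1,2}(K_r)$ via $\res\circ\inf=\id$ and Krull--Schmidt; your verification is more self-contained and avoids the external reference.

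For part (2) the approaches genuinely differ. The paper works at the level of structure maps: it checks by hand that $\psi_{g.X^-_{d,r}}$ annihilates $\im\eta_{\inf(X)}$, applies the universal property of the cokernel to produce an explicit isomorphism, and then treats $X^+_{d,r}$ by decomposing $X$ into its semisimple and $\rep_{1,2}$-parts and invoking part (1) for the latter. Your argument instead uses the observation that $g\circ\iota=\iota$ forces the \emph{equality of functors} $\res\circ(g^{-1}.)=\res$, feeds this into the adjunctions of \cref{3.3.1} and \cref{3.3.2}, and concludes by Yoneda. This is more conceptual, handles $X^+_{d,r}$ and $X^-_{d,r}$ symmetrically without recourse to part (1), and makes transparent why the stabilizer of $\iota$ should fix both objects up to isomorphism. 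The paper's approach has the mild advantage of producing an explicit isomorphism, which is occasionally useful elsewhere.
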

\begin{proof}
\begin{enumerate}
\item Since $\sigma_{K_r}^{-1},\sigma_{K_r}$ and $\inf$ commute with finite direct sums, we may assume that $X$ is indecomposable and not simple. By \cite[(3.2.1),(3.2.2)]{Bis20}, the representation $\inf(X)$ is therefore regular. Hence, we have $\sigma_{K_r}(\sigma_{K_r}^{-1}(\inf(X))) \cong \inf(X)$ and 
\[ \tau_{K_r}(X_{d,r}^-) \cong \sigma_{K_r}^2(\sigma_{K_r}^{-1}(\inf(X))) \cong (\sigma_{K_r} \circ \inf)(X) = X_{d,r}^+.\]
\item We have
\[ \psi_{X_{d,r}^-} = \psi_{\sigma_{K_r}^{-1}(\inf(X))} \colon A_r \otimes_{\KK} X_2 \lra \coker \eta_{\inf(X)} \ ; \  a \otimes x \mapsto a \otimes x + \im \eta_{\inf(X)}\]
with 
\[ \eta_{\inf(X)} \colon X_1 \lra A_r \otimes_{\KK} X_2 \ ; \ x \mapsto \sum^r_{i=1} \gamma_i \otimes \psi_{\inf(X)}(\gamma_i \otimes x) = \sum^d_{i=1} \gamma_i \otimes \psi_X(\gamma_i \otimes x)\]
and
\[ \psi_{g.X_{d,r}^-} = \psi_{X_{d,r}^-} \circ (g^{-1} \otimes \id_{X_2}).\]
Since $g \circ \iota = \iota$, we have $g^{-1}(\gamma_i) = \gamma_i$ for all $i \in \{1,\ldots,d\}$ and conclude for $x \in X_1$:
\begin{align*}
    \psi_{g.X_{d,r}^-}(\eta_{\inf(X)}(x)) &=  [\psi_{X_{d,r}^-} \circ (g^{-1} \otimes \id_{X_2})](\sum^d_{i=1} \gamma_i \otimes \psi_X(\gamma_i \otimes x))\\
    &= \psi_{X_{d,r}^-}(\sum^d_{i=1} \gamma_i \otimes \psi_X(\gamma_i \otimes x)) = \psi_{X_{d,r}^-}(\eta_{\inf(X)}(x)) = 0.
\end{align*}
This shows that there exists a unique $\KK$-linear map $h_2$ making the diagram
    \[
    \xymatrix{
    X_1 \ar^{\eta_{\inf(X)}}[rr] && A_r \otimes_{\KK} X_2 \ar^{\psi_{X_{d,r}^-}}[rr] \ar^{\id_{A_r} \otimes \id_{X_2}}[d]&& \coker \eta_{\inf(X)} \ar@{..>}^{h_2}[d]  \ar[rr] && 0 \\
        &&  A_r \otimes_{\KK} X_2\ar^{\psi_{g.X_{d,r}^-}}[rr]&& \coker \eta_{\inf(X)}  \ar[rr] && 0  
    }
    \]
    commute. Note that $h_2$ is an isomorphism. Hence, $(\id_{X_2},h_2)$ is the desired isomorphism and $g.X_{d,r}^- \cong X_{d,r}^-$.
    
    Now we prove the second isomorphism.  
    We write $X = Y \oplus P \oplus I$ with $Y \in \rep_{1,2}(K_d), P \in \add(P_0(d))$, $I \in \add(I_0(d))$ and prove the isomorphism for each direct summand. We have $\inf(P_0(d)) = P_0(r)$, $\inf(I_0(d)) = I_0(r)$ and conclude $P_0(d)^+_{d,r} = 0$ and $I_0(d)^+_{d,r} \cong I_1(r)$. Since $g.I_1(r) = I_1(r)$ (see \cref{2.2.1}), the statement follows for $P$ and $I$. Twofold application of \cref{4.1.1}(1) in conjunction with $g.Y^-_{d,r} \cong Y^{-}_{d,r}$ yields 
    \[g.Y_{d,r}^+ \cong g.(\tau_{K_r}(Y_{d,r}^-)) \cong \tau_{K_r}(g.Y_{d,r}^-) \cong \tau_{K_r}(Y_{d,r}^-) \cong Y_{d,r}^+.\]
\end{enumerate}
\end{proof}

\bigskip

\noindent For $g \in \GL(A_r)$, we define
\[ g|_{A_d} \coloneqq g \circ \iota \in \Inj_{\KK}(A_d,A_r).\]
Let $\alpha \in \Inj_{\KK}(A_d,A_r)$ and $g,h \in \GL(A_r)$ be such that $g|_{A_d} = \alpha = h|_{A_d}$. Then $(h^{-1} \circ g)|_{A_d} = \iota$ and \cref{4.1.1} implies
\[ g.X^{+}_{d,r} \cong h.X^{+}_{d,r} \ \text{and} \ g.X^{-}_{d,r} \cong h.X^{-}_{d,r}.\]
%\[ (h^{-1} \circ g).X^{\pm}_{d,r} \cong X^{\pm}_{d,r}\]
%and $g.X^{\pm}_{d,r} \cong h.X^{\pm}_{d,r}$.
Therefore, the following definition is well-defined up to isomorphism of representations. 

\bigskip

\begin{Definition}\label{4.1.2}
Let $\alpha \in \Inj_{\KK}(A_d,A_r)$ and $X \in \rep(K_d)$. Given $g \in \GL(A_r)$ such that $g|_{A_d} = \alpha$, we define 
\[ X^-(\alpha) \coloneqq  g.X_{d,r}^- \ \text{and}  \ X^+(\alpha) \coloneqq  g.X_{d,r}^+.\]
\end{Definition}

\bigskip

\begin{proposition}\label{4.1.3}
Let $M \in \rep(K_r)$, $X \in \rep(K_d)$ and $\alpha \in \Inj_{\KK}(A_d,A_r)$.    
\begin{enumerate}
\item We have isomorphisms of $\KK$-vector spaces
\[ \Hom_{K_r}(X^-(\alpha),M) \cong \Hom_{K_d}(X,(\sigma_{K_d} \circ \alpha^\ast)(M)) \cong \Hom_{K_d}(\sigma^{-1}_{K_d}(X),\alpha^\ast(M)).\]
\item We have isomorphisms of $\KK$-vector spaces
\[ \Hom_{K_r}(M,X^+(\alpha)) \cong \Hom_{K_d}((\sigma_{K_d}^{-1} \circ \alpha^\ast)(M),X) \cong \Hom_{K_d}(\alpha^\ast(M),\sigma_{K_d}(X)).\]
\end{enumerate}
\end{proposition}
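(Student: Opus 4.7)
The plan is to reduce the statement to the two adjunctions of \cref{3.3.1} and \cref{3.3.2}, together with the classical adjunction $(\sigma_{K_d}^{-1},\sigma_{K_d})$ recalled in \cref{3.3.3}, after absorbing the dependence on $\alpha$ into a $\GL(A_r)$-twist of $M$.

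For part (1), I would first pick $g \in \GL(A_r)$ with $g|_{A_d} = \alpha$, which is possible since $\GL(A_r)$ acts transitively on $\Inj_{\KK}(A_d,A_r)$. By \cref{4.1.2} we have $X^-(\alpha) = g.X_{d,r}^-$. Because $\GL(A_r)$ acts on $\rep(K_r)$ by auto-equivalences (\cref{S:1.5}), shifting the action across the $\Hom$ gives
\[
\Hom_{K_r}(X^-(\alpha),M) \cong \Hom_{K_r}(X_{d,r}^-, g^{-1}.M).
\]
A direct computation of the structure maps then yields $\res(g^{-1}.M) = \alpha^\ast(M)$: indeed,
\[
\psi_{\res(g^{-1}.M)} = \psi_{g^{-1}.M} \circ (\iota \otimes \id_{M_1}) = \psi_M \circ (g \otimes \id_{M_1}) \circ (\iota \otimes \id_{M_1}) = \psi_M \circ ((g\circ\iota) \otimes \id_{M_1}) = \psi_{\alpha^\ast(M)}.
\]
Applying the adjunction from \cref{3.3.1} then gives
\[
\Hom_{K_r}(X_{d,r}^-, g^{-1}.M) = \Hom_{K_r}((\sigma_{K_r}^{-1}\circ\inf)(X), g^{-1}.M) \cong \Hom_{K_d}(X,(\sigma_{K_d}\circ\res)(g^{-1}.M)) = \Hom_{K_d}(X,(\sigma_{K_d}\circ\alpha^\ast)(M)),
\]
which is the first isomorphism of (1). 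The second isomorphism of (1) is immediate from the classical adjunction $(\sigma_{K_d}^{-1},\sigma_{K_d})$ of \cref{3.3.3}, applied to shift $\sigma_{K_d}$ across the $\Hom$.

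For part (2), I would proceed analogously. Writing $X^+(\alpha) = g.X_{d,r}^+$, the same argument yields
\[
\Hom_{K_r}(M, X^+(\alpha)) \cong \Hom_{K_r}(g^{-1}.M, X_{d,r}^+) = \Hom_{K_r}(g^{-1}.M, (\sigma_{K_r}\circ\inf)(X)),
\]
and the adjunction of \cref{3.3.2} transforms the latter into $\Hom_{K_d}((\sigma_{K_d}^{-1}\circ\res)(g^{-1}.M),X) = \Hom_{K_d}((\sigma_{K_d}^{-1}\circ\alpha^\ast)(M),X)$, using the same identification $\res(g^{-1}.M) = \alpha^\ast(M)$. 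The second isomorphism of (2) again follows from the adjunction $(\sigma_{K_d}^{-1},\sigma_{K_d})$ of \cref{3.3.3}.

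There is no substantial obstacle here; the proof is essentially an accounting of three adjunctions together with the compatibility between restriction and the $\GL(A_r)$-action. The only delicate bookkeeping is to keep the conventions straight: one must use that $g.M$ corresponds to pre-composing $\psi_M$ with $g^{-1}\otimes\id$ (\cref{1.5.2}(1)), so that the composition $g\circ\iota = \alpha$ appears with the correct sign and the identification $\res(g^{-1}.M) = \alpha^\ast(M)$ holds on the nose rather than in terms of $\alpha^{-1}$. Well-definedness of $X^{\pm}(\alpha)$ up to isomorphism, needed to justify the choice of $g$, has already been secured in \cref{4.1.1}(2).
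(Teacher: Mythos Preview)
Your proof is correct and follows essentially the same approach as the paper: choose $g$ with $g|_{A_d}=\alpha$, use the $\GL(A_r)$-action to move the twist from $X^{\pm}(\alpha)$ to $M$, verify $\res(g^{-1}.M)=\alpha^\ast(M)$ via the structure maps, and then apply the adjunctions of \cref{3.3.1}, \cref{3.3.2}, and $(\sigma_{K_d}^{-1},\sigma_{K_d})$. The order of steps and the key computation match the paper's proof almost verbatim.
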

\begin{proof}
We fix $g \in \GL(A_r)$ such that $g|_{A_d} = \alpha$ and note that
\begin{align*}
    \psi_{\res(g^{-1}.M)} &= \psi_{g^{-1}.M} \circ (\iota \otimes \id_{M_1}) = \psi_M \circ (g \otimes \id_{M_1}) \circ (\iota \otimes \id_{M_1})\\
    &= \psi_M \circ (g|_{A_d}\otimes \id_{M_1}) = \psi_{\alpha^\ast(M)}.
\end{align*}
\begin{enumerate}
\item We have
\begin{align*}
   \Hom_{K_r}(X^-(\alpha),M) & \ \cong \Hom_{K_r}(g.X_{d,r}^-,M) \cong \Hom_{K_r}(X_{d,r}^-,g^{-1}.M)\\
   &\stackrel{\ref{3.3.1}}{\cong} \Hom_{K_d}(X,(\sigma_{K_d} \circ \res)(g^{-1}.M)) \\
   & \ \cong  \Hom_{K_d}(X,(\sigma_{K_d} \circ \alpha^\ast)(M)).
\end{align*}
The second isomorphism holds since $(\sigma_{K_d}^{-1},\sigma_{K_d})$ is an adjoint pair.
\item We have 
\begin{align*}
    \Hom_{K_r}(M,X^+(\alpha)) &\cong \Hom_{K_r}(g^{-1}.M,X_{d,r}^+) \stackrel{\ref{3.3.2}}{\cong} \Hom_{K_d}((\sigma_{K_d}^{-1} \circ \res)(g^{-1}.M),X) \\
    &\cong \Hom_{K_d}((\sigma_{K_d}^{-1} \circ \alpha^{\ast})(M),X).
\end{align*}
The second isomorphism holds since $(\sigma_{K_d}^{-1},\sigma_{K_d})$ is an adjoint pair.
\end{enumerate}
\end{proof}

\bigskip

\bigskip
\noindent In the next result, we collect important properties of $X^-$ and $X^+$ needed in the subsequent sections. Whenever a statement applies to both $X^-$ and $X^+$, we abbreviate it by writing $X^{\pm}$.

\bigskip

\begin{Lemma}\label{4.1.4} Let $\alpha \in \Inj_{\KK}(A_d,A_r)$ and $X,Y \in \rep(K_d)$.
\begin{enumerate}
    \item For $h \in \GL(A_r)$ we have $h.X^{\pm}(\alpha) \cong X^{\pm}(h \circ \alpha)$.
    \item We have $(X \oplus Y)^{\pm}(\alpha) \cong X^{\pm}(\alpha) \oplus Y^{\pm}(\alpha)$.
    \item If $X$ is indecomposable and $X \not\cong I_0(d), P_0(d)$, then $X^{\pm}(\alpha)$ is regular indecomposable.
    \item If $\inf(X)$ and $\inf(Y)$ are regular, then $\Hom_{K_r}(X^{\pm}(\alpha),Y^{\pm}(\alpha)) \cong \Hom_{K_d}(X,Y)$.
    \item For $X \in \rep_{1,2}(K_d)$ we have $\tau_{K_r}(X^-(\alpha)) \cong X^+(\alpha)$.
\end{enumerate}
\end{Lemma}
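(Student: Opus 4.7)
My plan is to derive all five statements from the definition $X^{\pm}(\alpha) = g.X^{\pm}_{d,r}$ for any $g \in \GL(A_r)$ with $g|_{A_d} = \alpha$, with the main computational work concentrated in (4).

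For (1), the $\GL(A_r)$-action satisfies $h.(g.M) = (hg).M$, and $(hg)|_{A_d} = h \circ (g|_{A_d}) = h \circ \alpha$, so $h.X^{\pm}(\alpha) = (hg).X^{\pm}_{d,r} \cong X^{\pm}(h \circ \alpha)$. For (2), the functors $\inf, \sigma_{K_r}^{\pm 1}$ and the auto-equivalence $g.(-)$ all commute with finite direct sums. For (3), the fact cited in the proof of \cref{4.1.1} (that $\inf(X)$ is regular indecomposable for indecomposable $X \not\cong P_0(d), I_0(d)$), combined with the observation that $\sigma_{K_r}^{\pm 1}$ restricts to an auto-equivalence on the full subcategory of regular representations and $g.(-)$ is an auto-equivalence of $\rep(K_r)$, yields the claim. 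For (5), since $g.(-)$ is an auto-equivalence, it preserves almost split sequences and therefore commutes with $\tau_{K_r}$ up to natural isomorphism; combined with \cref{4.1.1}(1), this gives
\[ \tau_{K_r}(X^-(\alpha)) = \tau_{K_r}(g.X^-_{d,r}) \cong g.\tau_{K_r}(X^-_{d,r}) \cong g.X^+_{d,r} = X^+(\alpha). \]

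For (4), the strategy is to apply the first isomorphism of \cref{4.1.3}(1):
\[ \Hom_{K_r}(X^-(\alpha), Y^-(\alpha)) \cong \Hom_{K_d}(X, \sigma_{K_d}(\alpha^*(Y^-(\alpha)))). \]
A direct structure-map calculation shows $\alpha^*(g.N) \cong \res(N)$ whenever $g|_{A_d} = \alpha$, hence $\alpha^*(Y^-(\alpha)) = \res(\sigma_{K_r}^{-1}(\inf(Y)))$. Writing $A_r = A_d \oplus C$ with $C = \bigoplus_{i > d} \KK \gamma_i$ and using $\im \eta_{\inf(Y)} \subseteq A_d \otimes Y_2$ (already computed in the proof of \cref{4.1.1}), the cokernel splits along this decomposition, yielding
\[ \res(\sigma_{K_r}^{-1}(\inf(Y))) \cong \sigma_{K_d}^{-1}(Y) \oplus (r-d)(\dim_{\KK} Y_2) P_0(d). \]
Applying $\sigma_{K_d}$ annihilates the $P_0(d)$-summands and returns $Y$ from $\sigma_{K_d}^{-1}(Y)$, since the hypothesis forces $Y \in \rep_{1,2}(K_d) \subseteq \rep_1(K_d)$ and $\sigma_{K_d}^{-1}$ is quasi-inverse to $\sigma_{K_d}$ there. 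This gives the $-$-case. For the $+$-case, the hypothesis that $\inf(X), \inf(Y)$ are regular forces $X, Y \in \rep_{1,2}(K_d)$ summand-wise, so (5) applies and (3) ensures $X^{\pm}(\alpha), Y^{\pm}(\alpha)$ are regular; since $\tau_{K_r}$ restricts to an auto-equivalence on the full subcategory of regular representations (no non-zero morphisms between regulars factor through projectives or injectives), we obtain $\Hom_{K_r}(X^+(\alpha), Y^+(\alpha)) \cong \Hom_{K_r}(X^-(\alpha), Y^-(\alpha))$, reducing to the case already treated.

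The main obstacle is the identification of $\res(\sigma_{K_r}^{-1}(\inf(Y)))$ up to projective summands and the verification that those extra summands are annihilated by $\sigma_{K_d}$; the remaining steps follow cleanly from the adjunctions of \cref{3.3.1} and \cref{3.3.2} and the basic properties of the shift functors recalled in \cref{S:3.2}.
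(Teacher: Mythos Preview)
Your arguments for (1), (2), (3), and (5) are essentially the same as the paper's. The difference lies in (4).

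For (4), the paper takes a much shorter path. Since the $\GL(A_r)$-action on $\rep(K_r)$ is the identity on morphisms, one has the literal equality $\Hom_{K_r}(g.X^{\pm}_{d,r}, g.Y^{\pm}_{d,r}) = \Hom_{K_r}(X^{\pm}_{d,r}, Y^{\pm}_{d,r})$. Then, using that $\sigma_{K_r}^{\pm 1}$ is an equivalence on the regular part and that $\inf(Z)$ regular implies $\sigma_{K_r}^{\mp 1}(Z^{\pm}_{d,r}) \cong \inf(Z)$, one gets $\Hom_{K_r}(X^{\pm}_{d,r}, Y^{\pm}_{d,r}) \cong \Hom_{K_r}(\inf(X), \inf(Y))$, which equals $\Hom_{K_d}(X,Y)$ because $\inf$ is fully faithful. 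No adjunction and no explicit splitting are needed.

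Your route via \cref{4.1.3} and the explicit decomposition $\res(\sigma_{K_r}^{-1}(\inf(Y))) \cong \sigma_{K_d}^{-1}(Y) \oplus (r-d)(\dim_\KK Y_2)\,P_0(d)$ is correct and in fact produces a useful by-product: this decomposition is precisely what underlies \cref{4.4.1}(3) in the special case $X = P_n(d)$. So your approach is more hands-on and recovers a structural fact that the paper proves separately later, at the cost of a longer argument for the present lemma. The paper's approach is cleaner here because it exploits directly that $g.(-)$ acts trivially on $\Hom$-spaces and that $\inf$ is fully faithful, bypassing any restriction computation.
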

\begin{proof}
We fix $g \in \GL(A_r)$ such that $g|_{A_d} = \alpha$.
\begin{enumerate}
\item Let $h \in \GL(A_r)$. We have $(h \circ g)|_{A_d} = h \circ \alpha$ and conclude \[h.X^{\pm}(\alpha) \cong h.(g.X^{\pm}_{d,r}) = (h \circ g).X^{\pm}_{d,r} \cong X^{\pm}(h \circ \alpha).\]
\item Clear.
\item By \cite[(3.2.1),(3.2.2)]{Bis20}, the representation $X_{d,r}^{\pm}$ is regular and indecomposable. Hence, the same is true for $g.X_{d,r}^{\pm} \cong X^{\pm}(\alpha)$.
\item For $Z \in \{X,Y\}$ we have $Z^{\pm}(\alpha) \cong g.Z^{\pm}_{d,r}$ and $\inf(Z)$ being regular gives us $\sigma_{K_r}^{-1}(Z^+_{d,r}) \cong \inf(Z)$. Since $\sigma_{K_r}^{-1}$ induces an equivalence on the category of regular representations, we conclude with $\inf \colon \rep(K_d) \lra \rep(K_r)$ being full and faithful
\begin{align*}
     \Hom_{K_r}(X^+(\alpha),Y^+(\alpha)) &\cong \Hom_{K_r}(g.X_{d,r}^+,g.Y_{d,r}^+) = \Hom_{K_r}(X_{d,r}^+,Y_{d,r}^+) \\
     &\cong \Hom_{K_r}(\sigma_{K_r}^{-1}(X_{d,r}^+),\sigma_{K_r}^{-1}(Y_{d,r}^+)) \cong \Hom_{K_r}(\inf(X),\inf(Y)) \\
     &\cong \Hom_{K_d}(X,Y).
\end{align*}
The isomorphism $\Hom_{K_r}(X^-(\alpha),Y^-(\alpha)) \cong \Hom_{K_d}(X,Y)$ follows in the same fashion.
\item \cref{4.1.1} implies $\tau_{K_r}(X^-(\alpha)) = \tau_{K_r}(g.X^-_{d,r}) \cong g.\tau_{K_r}(X^-_{d,r}) \cong g.X^{+}_{d,r} \cong X^+(\alpha)$.
\end{enumerate}
\end{proof}

\bigskip

\subsection{Constructing test representations}
The aim of this section is to show that, for a homogeneous and non-semisimple representation $X \in \rep(K_d)$ and $\alpha,\beta \in \Inj_{\KK}(A_d,A_r)$, we have
\[
X^{\pm}(\alpha) \cong X^{\pm}(\beta) \quad \text{if and only if} \quad \im \alpha = \im \beta.
\]

\bigskip

\begin{Definition}\label{4.2.1}
Let $s \in \NN$,  $\cB = (\gamma_1,\ldots,\gamma_s)$ be the standard basis of $A_s$ and $f \in \End_{\KK}(A_s)$. We denote by $f^{\tr} \in  \End_{\KK}(A_s)$ the unique endomorphism that satisfies the equation $\Mat_{\cB}(f^{\tr}) = \Mat_{\cB}(f)^{\tr}$.
\end{Definition}

\bigskip
\noindent 
The proof of the following result may be found in \cite[(5.1.3)]{BF24}.

\bigskip

\begin{Lemma}\label{4.2.2}
Let $g \in \GL(A_r)$ and $M \in \rep(K_r)$. We have
\[ \sigma^{-1}_{K_r}(g.M) \cong (g^{-1})^{\tr}.\sigma^{-1}_{K_r}(M) \ \text{and} \ \sigma_{K_r}(g.M) \cong (g^{-1})^{\tr}.\sigma_{K_r}(M).\]
\end{Lemma}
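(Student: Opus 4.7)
The strategy is to establish the first isomorphism by direct computation and then deduce the second one via duality. The whole argument hinges on one identity: by unwinding the definition,
\[
 \eta_{g.M}(m) \;=\; \sum_{i} \gamma_i \otimes \psi_M(g^{-1}(\gamma_i) \otimes m),
\]
and expanding $g^{-1}(\gamma_i)=\sum_j (g^{-1})_{ji}\gamma_j$ in the standard basis and reindexing identifies this with $((g^{-1})^{\tr} \otimes \id_{M_2}) \circ \eta_M$. The transpose appears because $\eta_M$ couples the external $A_r$-factor with the internal one fed to $\psi_M$: changing basis in one slot by $g^{-1}$ forces the matching transpose on the other.

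With this identity in hand, the first isomorphism is essentially formal. Since $g^{\tr}\otimes\id_{M_2}$ is an isomorphism of $A_r\otimes_\KK M_2$ and, using $g^{\tr}\circ(g^{-1})^{\tr}=\id$, sends $\im\eta_{g.M}$ bijectively onto $\im\eta_M$, it induces an isomorphism $\bar{h}\colon \coker\eta_{g.M}\to\coker\eta_M$. I would then check that $(\id_{M_2},\bar{h})$ is a morphism $\sigma^{-1}_{K_r}(g.M)\to (g^{-1})^{\tr}.\sigma^{-1}_{K_r}(M)$ by chasing a pure tensor $a\otimes m$ through the relevant square: the structure map of $(g^{-1})^{\tr}.\sigma^{-1}_{K_r}(M)$ sends $a\otimes m$ to $g^{\tr}(a)\otimes m+\im\eta_M$, which is precisely $\bar h(a\otimes m+\im\eta_{g.M})$ by construction of $\bar h$. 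Since $\bar h$ is bijective, this is an isomorphism.

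For the $\sigma_{K_r}$ statement, direct computation is possible but awkward because $\sigma_{K_r}(M)$ is a subobject of $A_r\otimes M_1$ rather than a quotient, so I prefer the duality route. A short check shows $D_{K_r}(g.N)\cong g.D_{K_r}(N)$ for every $g\in\GL(A_r)$, since the action only twists the first tensor factor of the structure map. Combined with the intertwining $D_{K_r}\circ\sigma_{K_r}\cong\sigma_{K_r}^{-1}\circ D_{K_r}$ recalled in Section 3.2 and the first part applied to $D_{K_r}(M)$, one obtains
\[
 D_{K_r}(\sigma_{K_r}(g.M))\;\cong\;\sigma_{K_r}^{-1}(g.D_{K_r}(M))\;\cong\;(g^{-1})^{\tr}.\sigma_{K_r}^{-1}(D_{K_r}(M))\;\cong\;D_{K_r}\bigl((g^{-1})^{\tr}.\sigma_{K_r}(M)\bigr),
\]
and applying $D_{K_r}$ once more gives the desired isomorphism. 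The main obstacle is Step 1: correctly tracking which of $g$, $g^{-1}$, $g^{\tr}$, $(g^{-1})^{\tr}$ acts on which tensor slot, since $\sigma^{-1}_{K_r}$ is ``doubly sensitive'' to $A_r$ through the canonical element $\sum_i\gamma_i\otimes\gamma_i$ implicit in $\eta_M$. Once that identification is secured, the remainder is bookkeeping.
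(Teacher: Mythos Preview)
Your argument is correct. The key identity $\eta_{g.M}=((g^{-1})^{\tr}\otimes\id_{M_2})\circ\eta_M$ is verified correctly, the induced map $\bar h$ on cokernels is well-defined and makes $(\id_{M_2},\bar h)$ an isomorphism of representations, and the duality reduction for $\sigma_{K_r}$ goes through since indeed $D_{K_r}(g.N)=g.D_{K_r}(N)$ on the nose.

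As for comparison: the paper does not give its own proof of this lemma but simply cites \cite[(5.1.3)]{BF24}, so there is no in-paper argument to compare against. Your proof is a complete and self-contained substitute.
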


\bigskip

\begin{proposition}\label{4.2.3}
Let $X \in \rep(K_d)$ be homogeneous. Then $X^{\pm}(\alpha)$ only depends on $\im \alpha$. 
\end{proposition}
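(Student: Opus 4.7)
\medskip

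\textbf{Proof plan for \cref{4.2.3}.} Let $\alpha,\beta \in \Inj_{\KK}(A_d,A_r)$ with $\im \alpha = \im \beta =: \fv$. Since both $\alpha$ and $\beta$ are isomorphisms $A_d \to \fv$, the composition $h := \beta^{-1}|_{\fv} \circ \alpha$ lies in $\GL(A_d)$ and satisfies $\alpha = \beta \circ \iota \circ h$. The plan is to extend $h$ to a distinguished block-diagonal element $\tilde{h} \in \GL(A_r)$, namely the unique automorphism with $\tilde{h}|_{A_d} = \iota \circ h$ and $\tilde{h}(\gamma_i) = \gamma_i$ for all $d < i \leq r$. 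If $g \in \GL(A_r)$ is chosen with $g|_{A_d} = \beta$, then $(g \circ \tilde{h})|_{A_d} = \alpha$, so by \cref{4.1.2} and \cref{4.1.4}(1) we obtain
\[
X^{\pm}(\alpha) \;\cong\; (g \circ \tilde{h}).X^{\pm}_{d,r} \;\cong\; g.(\tilde{h}.X^{\pm}_{d,r})
\quad \text{and} \quad
X^{\pm}(\beta) \;\cong\; g.X^{\pm}_{d,r}.
\]
Hence the entire claim reduces to proving $\tilde{h}.X^{\pm}_{d,r} \cong X^{\pm}_{d,r}$ for the specific extension $\tilde{h}$.

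\medskip

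The next step is the following elementary computation with structure maps: writing $A_r = A_d \oplus W$ with $W := \langle \gamma_{d+1},\ldots,\gamma_r\rangle$, any block-diagonal $g = \begin{pmatrix} g_0 & 0 \\ 0 & \id_W \end{pmatrix} \in \GL(A_r)$ with $g_0 \in \GL(A_d)$ satisfies $g.\inf(X) = \inf(g_0.X)$; this is immediate from $\psi_{\inf(X)}(\gamma_j \otimes -) = 0$ for $j > d$ and the definition of inflation. Now, by \cref{4.2.2}, setting $g' := (\tilde{h}^{\tr})^{-1}$ one has $(g'^{-1})^{\tr} = \tilde{h}$, and therefore
\[
\tilde{h}.X^{-}_{d,r}
\;=\; \tilde{h}.\sigma^{-1}_{K_r}(\inf(X))
\;\cong\; \sigma^{-1}_{K_r}\bigl((\tilde{h}^{\tr})^{-1}.\inf(X)\bigr).
\]
Crucially, $(\tilde{h}^{\tr})^{-1}$ is itself block-diagonal in the basis $(\gamma_1,\ldots,\gamma_r)$, with block $(h^{\tr})^{-1} \in \GL(A_d)$ on the $A_d$-part and $\id_W$ on $W$. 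By the observation above, $(\tilde{h}^{\tr})^{-1}.\inf(X) = \inf\bigl((h^{\tr})^{-1}.X\bigr)$, and since $X$ is homogeneous, $(h^{\tr})^{-1}.X \cong X$. Combining these gives $\tilde{h}.X^{-}_{d,r} \cong \sigma^{-1}_{K_r}(\inf(X)) = X^{-}_{d,r}$. The case $X^{+}_{d,r}$ is entirely parallel, using the second isomorphism in \cref{4.2.2}.

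\medskip

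The only subtle point will be the careful bookkeeping of the transpose introduced by \cref{4.2.2}: the action of $\tilde{h}$ on $X^{-}_{d,r}$ does not correspond to $\tilde{h}$ acting on $\inf(X)$ but to $(\tilde{h}^{\tr})^{-1}$ acting on $\inf(X)$. The crucial reason the argument succeeds is that $\tilde{h}$ was chosen block-diagonal in the standard basis, so $(\tilde{h}^{\tr})^{-1}$ remains block-diagonal with $\id_W$ on the complement; this compatibility is precisely what allows us to pull the action back through $\inf$ and invoke homogeneity of $X$ on the $A_d$-part. Without the block-diagonal choice of extension $\tilde{h}$, the transposition would mix $A_d$ and $W$ and the reduction to $\inf$ would fail.
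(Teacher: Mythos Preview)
Your proof is correct and follows essentially the same approach as the paper's: both reduce to showing that a block-diagonal element of $\GL(A_r)$ (with some $h \in \GL(A_d)$ on $A_d$ and the identity on $A_d^\perp$) fixes $X^{\pm}_{d,r}$ up to isomorphism, and both use \cref{4.2.2} to transfer this to the statement $g^{-\tr}.\inf(X) \cong \inf(X)$, which follows from homogeneity of $X$. The paper verifies this last isomorphism by writing out the commutative diagram explicitly on basis vectors, whereas you package the same computation into the cleaner observation that $g.\inf(X) = \inf(g_0.X)$ for block-diagonal $g$; this is a presentational improvement but not a different argument. One small notational slip: the identity ``$\alpha = \beta \circ \iota \circ h$'' does not typecheck (since $\beta \colon A_d \to A_r$); you mean simply $\alpha = \beta \circ h$.
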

\begin{proof}
Let $\alpha,\alpha' \in \Inj_{\KK}(A_d,A_r)$ such that $\fv \coloneqq  \im \alpha = \im \alpha'$. We define $A_{d}^\perp \coloneqq  \bigoplus^r_{i > d} \KK \gamma_i$ and fix a $\KK$-complement $\fu \subseteq A_r$ of $\fv$ and an isomorphism $k \colon A_d^\perp \lra \fu$. Let $\pi \colon A_r = \fv \oplus \fu \lra \fv$ be the canonical projection and $\beta \in \{\alpha,\alpha'\}$. Then $\pi \circ \beta \in A_d \lra \fv$ is an isomorphism and we define
\[g_{\beta} \coloneqq  \begin{pmatrix}
\pi \circ \beta & 0 \\
0 & k
\end{pmatrix} \colon A_d \oplus A_{d}^\perp \lra \fv \oplus \fu \in \GL(A_r).\] 
By definition, we have $g_{\beta}|_{A_d} = \beta$ and therefore $X^{\pm}(\beta) \cong g_{\beta}.X_{d,r}^{\pm}$. We have to show that $X^{\pm}(\alpha) \cong X^{\pm}(\alpha')$ which is equivalent to $[g_{\alpha'}^{-1} \circ g_{\alpha}].X_{d,r}^{\pm} = X_{d,r}^{\pm}$. Let $h \coloneqq  (\pi \circ \alpha')^{-1} \circ (\pi \circ \alpha) \in \GL(A_d)$, then
\[ g \coloneqq  g_{\alpha'}^{-1} \circ g_{\alpha} = \begin{pmatrix}
(\pi \circ \alpha')^{-1} \circ (\pi \circ \alpha) & 0 \\
0 & \id_{A_d^\perp}
\end{pmatrix} =\begin{pmatrix}
h & 0\\
0 & \id_{A_d^\perp}
\end{pmatrix}.\]
Hence, 
\[ g^{\tr} = \begin{pmatrix}
h^{\tr} & 0\\
0 & \id_{A_d^\perp}
\end{pmatrix}.\]
Since $X \in \rep(K_d)$ is homogeneous, there is an isomorphism $f \in \Hom_{K_d}(h^{-\tr}.X,X)$. We claim that the diagram
     \[ 
     \xymatrix{
     A_r \otimes_{\KK} X_1 \ar^{\psi_{\inf(X)} \circ (g^{\tr} \otimes \id_{X_1})}[rrr] \ar^{\id_{A_r} \otimes f_1}[d] &&& X_2\ar^{f_2}[d] \\
      A_r \otimes_{\KK} X_1 \ar_{\psi_{\inf(X)}}[rrr] &&& X_2 
     }
     \]
     commutes.
Let $x \in X_1$ and $i \in \{1,\ldots,d\}$, then $h^{\tr}(\gamma_i) \in A_d$ and
\begin{align*}
[f_2 \circ \psi_{\inf(X)} \circ (g^{\tr} \otimes \id_{X_1})](\gamma_i \otimes x) &= [f_2 \circ \psi_{\inf(X)}](h^{\tr}(\gamma_i) \otimes x)  = (f_2 \circ \psi_X)(h^{\tr}(\gamma_i) \otimes x) \\
&= [f_2 \circ \psi_X \circ (h^{\tr} \otimes \id_{X_1})](\gamma_i \otimes x)\\
&= [f_2 \circ \psi_{h^{-\tr}.X}](\gamma_i \otimes x)\\
&= [\psi_{X}\circ (\id_{A_d} \otimes f_1)](\gamma_i \otimes x) \\
&= [\psi_{\inf(X)}\circ (\id_{A_r} \otimes f_1)](\gamma_i \otimes x).
\end{align*}
For $i > d$, we have
\begin{align*}
[f_2 \circ \psi_{\inf(X)} \circ (g^{\tr} \otimes \id_{X_1})](\gamma_i \otimes x) &= [f_2 \circ \psi_{\inf(X)}](\gamma_i \otimes x) = f_2(0) = 0 = \psi_{\inf(X)}(\gamma_i \otimes f_1(x)) \\
&=[\psi_{\inf(X)}\circ (\id_{A_r} \otimes f_1)](\gamma_i \otimes x).
\end{align*}
Hence, the diagram commutes and $g^{-\tr}.\inf(X) \cong \inf(X)$. Finally, we conclude with \cref{4.2.2}
\begin{align*}
g.X_{d,r}^- &= g.((\sigma_{K_r}^{-1} \circ \inf)(X)) \cong \sigma_{K_r}^{-1}(g^{-\tr}.\inf(X)) \cong \sigma_{K_r}^{-1}(\inf(X)) = X_{d,r}^-,
\end{align*}
and
\begin{align*}
g.X_{d,r}^+ &= g.((\sigma_{K_r} \circ \inf)(X)) \cong  \sigma_{K_r}(g^{-\tr}.\inf(X)) \cong \sigma_{K_r}(\inf(X)) = X_{d,r}^+.
\end{align*}
\end{proof}

\bigskip
\noindent 
By \cref{4.2.3}, the following definition is well-defined up to isomorphism of representations. 

\bigskip

\begin{Definition}\label{4.2.4}
    Let $X\in\rep(K_r)$ be homogeneous and $\fv \in \Gr_d(A_r)$. We define
    \[ X^{+}(\fv) \coloneqq  X^{+}(\alpha) \ \text{and} \ X^{-}(\fv) \coloneqq  X^{-}(\alpha)\]
    for any $\alpha \in \Inj_{\KK}(A_d,A_r)$ such that $\im \alpha = \fv$.
\end{Definition}

\bigskip

It follows directly from the definition that $P_0(d)^-(\fv) \cong P_1(r)$, $P_0(d)^+(\fv) = 0 = I_0(d)^-(\fv)$, and $I_0(d)^+(\fv) \cong I_1(r)$ for all $\fv \in \Gr_d(A_r)$. As we now show, apart from these cases, a homogeneous $X \in \rep(K_d)$ satisfies $X^{\pm}(\fv) \cong X^{\pm}(\fw)$ only if $\fv = \fw$.

\bigskip

\begin{proposition}\label{4.2.5}
Let $X \in \rep(K_d)$ be homogeneous, not semisimple, and $\alpha,\beta \in \Gr_d(A_r)$ be such that $X^{\pm}(\alpha) \cong X^{\pm}(\beta)$. Then $\im \alpha = \im \beta$.
\end{proposition}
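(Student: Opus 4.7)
The plan is to reduce to the case $X\in\rep_{1,2}(K_d)$, to read off $\im\alpha$ from $X^{-}(\alpha)$ as the unique element of $\Gr_d(A_r)$ maximizing a natural kernel-dimension function, and to transfer the $X^{-}$ case to $X^{+}$ via $\tau_{K_r}$.

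For the reduction, I would decompose $X\cong Y\oplus aP_0(d)\oplus bI_0(d)$ with $Y\in\rep_{1,2}(K_d)$. Since $P_0(d)$ and $I_0(d)$ are homogeneous, Krull--Schmidt shows that $Y$ inherits homogeneity from $X$, and non-semisimplicity of $X$ forces $Y\neq 0$. A direct computation of the simple contributions gives $P_0(d)^{-}(\alpha)\cong P_1(r)$, $I_0(d)^{-}(\alpha)=0$, $P_0(d)^{+}(\alpha)=0$ and $I_0(d)^{+}(\alpha)\cong I_1(r)$, all independent of $\alpha$; by \cref{4.1.4}(3) the summands of $Y^{\pm}(\alpha)$ are regular indecomposable and hence disjoint from $P_1(r)$ and $I_1(r)$ in the Auslander--Reiten quiver, so Krull--Schmidt cancels these contributions and reduces the claim to $Y^{\pm}(\alpha)\cong Y^{\pm}(\beta)$.

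The heart of the argument concerns $X^{-}$ for $X\in\rep_{1,2}(K_d)$ non-semisimple. Unwinding the definition of $\sigma_{K_r}^{-1}\circ\inf$ in \cref{S:3.2}, the structure map of $X^{-}_{d,r}$ is the canonical projection $A_r\otimes_{\KK}X_2\twoheadrightarrow\coker\eta_{\inf(X)}$, and since $\psi_{\inf(X)}$ vanishes on $\gamma_i$ for $i>d$ the image $\im\eta_{\inf(X)}$ lies inside $A_d\otimes X_2$. Combined with the identity $(\fw\otimes X_2)\cap(A_d\otimes X_2)=(\fw\cap A_d)\otimes X_2$ in $A_r\otimes X_2$, this gives
\[\ker\psi_{X^{-}_{d,r},\fw}=\im\eta_{\inf(X)}\cap\bigl((\fw\cap A_d)\otimes X_2\bigr)\]
for every $\fw\in\Gr_d(A_r)$. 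The key claim is that $\fw=A_d$ is the unique maximizer on $\Gr_d(A_r)$ of $\fw\mapsto\dim_{\KK}\ker\psi_{X^{-}_{d,r},\fw}$. Writing $X^{-}(\alpha)\cong g.X^{-}_{d,r}$ with $g|_{A_d}=\alpha$, this would imply that $\im\alpha=g(A_d)$ is the unique maximizer of the analogous function attached to $X^{-}(\alpha)$; since the maximizer is an isomorphism invariant, $X^{-}(\alpha)\cong X^{-}(\beta)$ forces $\im\alpha=\im\beta$.

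I expect the main obstacle to be proving this uniqueness claim, which amounts to ruling out $\im\eta_{\inf(X)}\subseteq U\otimes X_2$ for every proper $U\subsetneq A_d$. My approach is to test against a non-zero $\phi\in A_d^{\ast}$ vanishing on $U$: composing with $\phi\otimes\id_{X_2}$ sends $\eta_{\inf(X)}(x)$ to $\sum_{i=1}^{d}\phi(\gamma_i)X(\gamma_i)(x)$, so such an inclusion would force the relation $\sum_{i=1}^{d}\phi(\gamma_i)X(\gamma_i)=0$ with non-zero coefficient vector. Homogeneity of $X$ together with \cref{1.5.2}(2) guarantees that the rank of $\sum a_iX(\gamma_i)$ is constant in $a\in\KK^d\setminus\{0\}$, and non-semisimplicity ensures this rank is positive, yielding the required contradiction. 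For the $X^{+}$ case, \cref{4.1.1}(1) gives $X^{+}_{d,r}\cong\tau_{K_r}(X^{-}_{d,r})$, which together with the $\GL(A_r)$-equivariance of $\tau_{K_r}$ (derivable from \cref{4.2.2}) yields $X^{+}(\alpha)\cong\tau_{K_r}(X^{-}(\alpha))$; since $\tau_{K_r}$ induces a bijection on isomorphism classes of regular indecomposables, $X^{+}(\alpha)\cong X^{+}(\beta)$ implies $X^{-}(\alpha)\cong X^{-}(\beta)$, reducing to the already handled case.
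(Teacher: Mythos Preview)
Your proof is correct and takes a genuinely different route from the paper in the core step.

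Both arguments share the same bookends: the reduction to $X\in\rep_{1,2}(K_d)$ via Krull--Schmidt and the computation of $P_0(d)^\pm,I_0(d)^\pm$, and the transfer from $X^+$ to $X^-$ via \cref{4.1.4}(5) and the equivalence induced by $\tau_{K_r}$ on regular representations. The divergence is in how one extracts $\im\alpha$ from $X^-(\alpha)$. The paper first reduces to the special case $X^-(\alpha)\cong X^-(\iota)$, then passes through $\sigma_{K_r}$ and \cref{4.2.2} to obtain $g^{-\tr}.\inf(X)\cong\inf(X)$, and from this deduces $A_d\cap g^{\tr}(A_d^\perp)=\{0\}$ via a rank argument; a separate matrix computation with the lower-right $(r-d)\times(r-d)$ block then upgrades this to $A_d\cap g(A_d^\perp)=\{0\}$, whence $\im\alpha=A_d$. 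Your approach bypasses all of this by reading off $\im\alpha$ directly as the unique maximizer on $\Gr_d(A_r)$ of the isomorphism-invariant function $\fw\mapsto\dim_\KK\ker\psi_{X^-(\alpha),\fw}$. The identification $\ker\psi_{X^-_{d,r},\fw}=\im\eta_{\inf(X)}\cap((\fw\cap A_d)\otimes X_2)$ is clean, and the uniqueness of the maximizer reduces to the single observation that homogeneity plus non-semisimplicity forbid $\im\eta_{\inf(X)}\subseteq U\otimes X_2$ for proper $U\subsetneq A_d$, exactly as you argue via \cref{1.5.2}(2). Your route is shorter, avoids \cref{4.2.2} entirely in the $X^-$ step, and makes transparent \emph{which} invariant of $X^-(\alpha)$ remembers $\im\alpha$; the paper's route, by contrast, exhibits the interplay between the $\GL(A_r)$-action and the transpose under $\sigma_{K_r}$, which is of independent interest elsewhere in the paper.
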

\begin{proof} Let $\alpha,\beta \in \Inj_{\KK}(A_d,A_r)$ such that $X^-(\alpha) \cong X^{-}(\beta)$. In the following we suppress the exponent "-", i.e., we have $X(\alpha) \cong X(\beta)$. 
We write $X = a P_0(d) \oplus b I_0(d) \oplus Y$ with $0 \neq Y \in \rep_{1,2}(K_d)$. We have 
\[ a P_1(r) \oplus Y(\alpha) \cong X(\alpha) \cong X(\beta) = a P_1(r) \oplus Y(\beta)\]
and conclude with Krull-Remark-Schmidt that $Y(\alpha) \cong Y(\beta)$. Hence, we may assume that $X \in \rep_{1,2}(K_d)$. First we consider the special case $X(\alpha) \cong X(\iota)$. We need to show that $\im \alpha = \im \iota = A_d$ and proceed in steps. Let $A_d^\perp \coloneqq \bigoplus_{i > d}^r \KK \gamma_i$.
\begin{enumerate}
  \item[(i)] Let $g \in \GL(A_r)$ such that $g \circ \iota = \alpha$, then $A_d \cap g^{\tr}(A^\perp_d) = \{0\}$: We have
\[ X(\iota) \cong X(\alpha) \cong X(g \circ \iota) \stackrel{\ref{4.1.4}(1)}{\cong} g.X(\iota).\]
Since $X\in \rep_{1,2}(K_r)$, we conclude with \cite[(3.2.2)]{Bis20} that $\inf(X) \in \rep(K_r)$ is regular. Hence, $\inf(X) \cong (\sigma_{K_r} \circ \sigma_{K_r}^{-1})(\inf(X)) \cong \sigma_{K_r}(X(\iota))$ and
\[ \inf(X) \cong \sigma_{K_r}(X(\iota)) \cong \sigma_{K_r}(g.X(\iota)) \stackrel{\ref{4.2.2}}{\cong} g^{-\tr}.\sigma_{K_r}(X(\iota)) \cong g^{-\tr}.\inf(X).\]
Let $f \colon \inf(X) \lra g^{-\tr}.\inf(X)$ be an isomorphism. We assume that $A_d \cap g^{\tr}(A^\perp_d) \neq \{0\}$ and find $0 \neq a \in A_d^\perp$ such that $0 \neq g^{\tr}(a) \in A_d$. We conclude for $x \in \inf(X)_1 = X_1$ that 
\begin{align*}
    0 &= (f_2 \circ \psi_{\inf(X)})(a\otimes x) = (\psi_{g^{-\tr}.\inf(X)} \circ (\id_{A_r} \otimes f_1))(a \otimes x)  \\
    &= \psi_{\inf(X)} \circ (g^{\tr}(a) \otimes f_1(x)) = \psi_X(g^{\tr}(a) \otimes f_1(x)).
\end{align*} 
Since $f_1 \colon X_1 \lra X_1$ is surjective, we conclude
\[ (\dagger) \quad \psi_{X}(g^{\tr}(a) \otimes x)  = 0\]
for all $x \in X_1$. Since  $X$ is homogeneous, \cref{1.5.2} implies $\psi_X = 0$. This is a contradiction since $0 \neq X \in \rep_{1,2}(K_d)$.
\item[(ii)] Let $g \in \GL(A_r)$ such that $g \circ \iota = \alpha$, then  $A_d \cap g(A^\perp_d) = \{0\}$:
 We write
\[M_{\cB}(g^{tr}) = (\lambda_{ji})_{1 \leq j,i \leq n},\]
so for $1 \leq i \leq r$ we have $g^{\tr}(\gamma_i) = \sum^{r}_{j=1} \lambda_{ji} \gamma_j$. The assumption that the lower right hand block of $M_{\cB}(g^{\tr})$ of size $(r-d) \times (r-d)$ is not invertible, yields a non-trivial linear combination
\[ (\ast) \quad \mu_{d+1} \begin{pmatrix}
    \lambda_{d+1,d+1} \\
    \lambda_{d+2,d+1} \\
    \vdots \\
    \lambda_{r,d+1} 
\end{pmatrix} + \mu_{d+2} \begin{pmatrix}
    \lambda_{d+1,d+2} \\
    \lambda_{d+2,d+2} \\
    \vdots \\
    \lambda_{r,d+2}
\end{pmatrix}
    + \cdots + \mu_{r} \begin{pmatrix}
    \lambda_{d+1,r} \\
    \lambda_{d+2,r} \\
    \vdots \\
    \lambda_{r,r}
\end{pmatrix} =\begin{pmatrix}
    0 \\
    0 \\
    \vdots \\
    0
\end{pmatrix}.\]
We have $0 \neq \sum^r_{i=d+1} \mu_i \gamma_i \in A_d^{\perp}$ and $(\ast)$ implies that $g^{\tr}(\sum^r_{i=d+1} \mu_i \gamma_i)$ is a non-zero element in $A_d$. Hence, $A_d \cap g^{\tr}(A_d^\perp) \neq \{0\}$, in contradiction to (i). Therefore, the lower right hand block of $M_{\cB}(g) = M_{\cB}(g^{\tr})^{\tr}$ that we denote by $(c_{ji})_{d+1 \leq j,i \leq r}$ is also invertible.

Finally, we assume that $A_d \cap g(A^\perp_d) \neq \{0\}$ and find $0 \neq a = \sum^r_{i=d+1} \eta_i \gamma_i$ such that $g(a) \in A_d$. We have
\[ g(a) = \sum^r_{i=d+1} \eta_i g(\gamma_i)\]
and conclude 
\[ \begin{pmatrix}
    0 \\
    0 \\
    \vdots \\
    0
\end{pmatrix} =  \eta_{d+1} \begin{pmatrix}
    c_{d+1,d+1} \\
    c_{d+2,d+1} \\
    \vdots \\
    c_{r,d+1} 
\end{pmatrix} + \eta_{d+2} \begin{pmatrix}
    c_{d+1,d+2} \\
    c_{d+2,d+2} \\
    \vdots \\
    c_{r,d+2}
\end{pmatrix}
    + \cdots + \eta_{r} \begin{pmatrix}
    c_{d+1,r} \\
    c_{d+2,r} \\
    \vdots \\
    c_{r,r}
\end{pmatrix},\]
a contradiction.
\item[(iii)] We have $\im \alpha = A_d = \im \iota$: Let $a \in A_r \setminus \im \alpha$ and $a \in \fu$ be a $\KK$-complement of $\im \alpha$ in $A_r$. We find $g \in \GL(A_r)$ such that $g \circ \iota = \alpha$ and $g|_{A_d^\perp} \lra \fu$ is an isomorphism. By (ii), we have $A_d \cap \fu= A_d \cap g(A_d^\perp) = \{0\}$ and conclude $a \not\in A_d$. Since $\dim_{\KK} \fu = \dim_{\KK} A_d$, we conclude $\im \alpha = A_d$.
\end{enumerate}

Now we prove the general case. We let $g_{\alpha},g_{\beta} \in \GL(A_r)$ such that $g_{\alpha} \circ\iota = \alpha$ and $g_{\beta} \circ \iota = \beta$. We have $g_{\alpha}.X_{d,r} \cong X(\alpha) \cong X(\beta) \cong g_{\beta}.X_{d,r}$ and conclude
\[ X(g_{\beta}^{-1} \circ g_{\alpha} \circ \iota) \stackrel{\ref{4.1.4}(1)}{\cong} (g_{\beta}^{-1} \circ g_{\alpha}).X(\iota) =  (g_{\beta}^{-1} \circ g_{\alpha}).X_{d,r} \cong X_{d,r} \cong X(\iota).\]
Now (iii) implies $\im (g_{\beta}^{-1} \circ g_{\alpha} \circ \iota) = \im \iota$ and therefore 
\[ \im \alpha = \im (g_{\alpha} \circ \iota) = \im (g_{\beta} \circ \iota) = \im \beta.\]
This completes the proof in case $X^-(\alpha) \cong X^-(\beta)$.

Now we assume $X^+(\alpha) \cong X^+(\beta)$. As before, we conclude that we can assume that $X \in \rep_{1,2}(K_d)$. Then \cref{4.1.4} implies
\[ X^-(\alpha) \cong \tau_{K_r}^{-1}(X^+(\alpha)) \cong \tau_{K_r}^{-1}(X^+(\beta)) \cong X^-(\beta).\]
The first case implies $\im \alpha \cong \im \beta$.
\end{proof}

\bigskip
\noindent 
We summarize our findings of this section in the following result.
\bigskip

\begin{Theorem}\label{4.2.6}
Let $X \in \rep(K_d)$ be a homogeneous representation.
\begin{enumerate}
    \item If $X$ is not semisimple, then the maps
\[ \Gr_d(A_r) \lra \Iso(K_r) \ ; \ \fv \mapsto [X^{-}(\fv)] \ \text{and} \ \Gr_d(A_r) \lra \Iso(K_r) \ ; \ \fv \mapsto [X^+(\fv)]\]
are injective.
\item Let $M \in \rep(K_r)$, $\alpha \in \Inj_{\KK}(A_d,A_r)$ and $\fv := \im \alpha$. We have isomorphisms of $\KK$-vector spaces
\[ \Hom_{K_r}(X^-(\fv),M) \cong \Hom_{K_d}(X,(\sigma_{K_d} \circ \alpha^\ast)(M)) \cong \Hom_{K_d}(\sigma^{-1}_{K_d}(X),\alpha^\ast(M)),\]
\[ \Hom_{K_r}(M,X^+(\fv)) \cong \Hom_{K_d}((\sigma_{K_d}^{-1} \circ \alpha^\ast)(M),X) \cong \Hom_{K_d}(\alpha^\ast(M),\sigma_{K_d}(X)).\]
\end{enumerate}
\end{Theorem}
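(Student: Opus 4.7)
The plan is to assemble the theorem from the results already established in this section; essentially all the work has been done, and the statement is the clean summary.

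For part (1), the argument is an unwinding of \cref{4.2.4} followed by an application of \cref{4.2.5}. Suppose $X^{-}(\fv) \cong X^{-}(\fw)$ for some $\fv,\fw \in \Gr_d(A_r)$. Pick any $\alpha, \beta \in \Inj_{\KK}(A_d,A_r)$ with $\im \alpha = \fv$ and $\im \beta = \fw$. By \cref{4.2.4} we have $X^{-}(\fv) \cong X^{-}(\alpha)$ and $X^{-}(\fw) \cong X^{-}(\beta)$, so the hypothesis becomes $X^{-}(\alpha) \cong X^{-}(\beta)$. Since $X$ is homogeneous and not semisimple, \cref{4.2.5} forces $\im \alpha = \im \beta$, i.e.\ $\fv = \fw$. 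The argument for $\fv \mapsto [X^{+}(\fv)]$ is identical, using the statement of \cref{4.2.5} for the $+$-case.

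For part (2), fix $\alpha \in \Inj_{\KK}(A_d,A_r)$ with $\im \alpha = \fv$. By \cref{4.2.4} we have $X^{\pm}(\fv) \cong X^{\pm}(\alpha)$, and because $\Hom_{K_r}(-,M)$ and $\Hom_{K_r}(M,-)$ turn isomorphisms of representations into isomorphisms of $\KK$-vector spaces, \cref{4.1.3} yields
\[
\Hom_{K_r}(X^-(\fv),M) \cong \Hom_{K_r}(X^-(\alpha),M) \cong \Hom_{K_d}(X,(\sigma_{K_d} \circ \alpha^\ast)(M)),
\]
and the second isomorphism to $\Hom_{K_d}(\sigma_{K_d}^{-1}(X),\alpha^\ast(M))$ follows from the adjunction $(\sigma_{K_d}^{-1},\sigma_{K_d})$. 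The analogous chain holds for $X^+(\fv)$, producing the second line of natural isomorphisms.

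There is no genuine obstacle: the substantive content lies in the adjunctions \cref{3.3.1} and \cref{3.3.2}, the transport-of-structure \cref{4.2.2}, and above all in the injectivity argument of \cref{4.2.5} (whose core step reduces to showing that a homogeneous $X \in \rep_{1,2}(K_d)$ with vanishing structure map cannot exist, via \cref{1.5.2}). The present statement merely packages these into the geometric language of $\Gr_d(A_r)$, which is exactly the form needed for applications to families of test representations in the sequel.
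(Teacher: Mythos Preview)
Your proposal is correct and takes essentially the same approach as the paper, which simply states that the result follows immediately from \cref{4.2.5} and \cref{4.1.3}. You have merely spelled out in detail what the paper leaves implicit.
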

\begin{proof}
Follows immediately from \cref{4.2.5} and \cref{4.1.3}.
\end{proof}

\bigskip

\subsection{Application to preprojective representations}\label{S:4.3}
Let $1 \leq d < r$. In this section, we apply \cref{4.2.6} to the special case of preprojective indecomposable $K_d$-representations\footnote{For $d = 1$ we only have projective indecomposables $P_0(1)$ and $P_1(1)$. In this case, $n \in \NN_0$ means $n \in \{0,1\}$ and $P_2(1) \coloneqq  \sigma_{K_1}^{-1}(P_1(1)) = I_0(1)$.}. 
Let $i \in \NN_0$ and $d \geq 2$. We write $\dimu P_i(d) = (a_i(d),a_{i+1}(d))$ and note that $a_0(d) = 0, a_1(d) = 1$.
 We let $\delta \colon \ZZ^2 \lra \ZZ^2 \ ; \ (a,b) \mapsto (b,a)$ be the twist function on $\ZZ^2$.
Then we have
\[ \dimu P_i(d) = (a_i(d),a_{i+1}(d)) = \delta(\dimu I_i(d)).\]

Since $P_n(d) \in \rep(K_d)$ is homogeneous (see \cref{2.2.1}), we may define
\[ P_n^{+}(\fv) \coloneqq  P_n(d)^{+}(\fv) \ \text{and} \ P_n^{-}(\fv) \coloneqq  P_n(d)^{-}(\fv)\]
for all $\fv \in \Gr_d(A_r)$. In the following Lemma we collect basic properties of preprojective indecomposable Kronecker representations.

\bigskip

\begin{Lemma}\label{4.3.1} Let $1  < d$ and $n,m \in \NN_0$. 
\begin{enumerate}
    \item We have $\sigma_{K_d}^{-1}(P_n(d)) \cong P_{n+1}(d)$.
    \item We have $a_{n+2}(d) = d a_{n+1}(d) - a_{n}(d)$.
    \item We have 
\[\dim_{\KK} \Hom_{K_r}(P_n(d),P_m(d)) = \begin{cases}
0 & \text{for} \ n > m\\
a_{m-n+1}(d)\neq 0 & \text{for} \ n \leq m.
\end{cases}\]
\end{enumerate}
\end{Lemma}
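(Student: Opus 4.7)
The plan is to prove the three parts sequentially, with (1) providing the key tool for (2) and (3).

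For part (1), I would first verify the base case $n=0$ by direct computation: since $P_0(d) = S(2)$ has first component zero, the map $\eta_{S(2)} \colon 0 \to A_d \otimes_\KK \KK$ is zero, so $\coker \eta_{S(2)} \cong A_d$, and the canonical structure map on $\sigma^{-1}_{K_d}(S(2))$ identifies it with $(\KK, A_d, a \otimes \lambda \mapsto \lambda a) = P_1(d)$. For the general case I would show by induction on $n$ that $P_n(d) \cong \sigma^{-n}_{K_d}(P_0(d))$, using the definitions $P_{2\ell}(d) = \tau_{K_d}^{-\ell}(P_0(d))$ and $P_{2\ell+1}(d) = \tau_{K_d}^{-\ell}(P_1(d))$ together with $\tau_{K_d}^{-1} \cong \sigma_{K_d}^{-1} \circ \sigma_{K_d}^{-1}$ recalled in \cref{S:3.2}. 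The claim $\sigma_{K_d}^{-1}(P_n(d)) \cong P_{n+1}(d)$ then follows at once.

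For part (2), I would combine (1) with the identity $\dimu \sigma^{-1}_{K_d}(N) = \sigma_d^{-1}(\dimu N)$ for $N \in \rep_1(K_d)$ from \cref{S:3.2}. Since $\sigma_d(x,y) = (dx-y,x)$, its inverse sends $(u,v)$ to $(v, dv-u)$. Applied to $\dimu P_n(d) = (a_n(d), a_{n+1}(d))$, this yields $\dimu P_{n+1}(d) = (a_{n+1}(d), d\,a_{n+1}(d) - a_n(d))$, which by definition equals $(a_{n+1}(d), a_{n+2}(d))$, giving the desired recursion.

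For part (3), the key tool is the adjunction $(\sigma^{-1}_{K_d}, \sigma_{K_d})$. Combined with (1) and its consequence $\sigma_{K_d}(P_m(d)) \cong P_{m-1}(d)$ for $m \geq 1$ (which follows by applying $\sigma_{K_d}$ to (1) and using that $P_{m-1}(d) \in \rep_1(K_d)$), it yields the shift identity
\[ \Hom_{K_d}(P_{n+1}(d), P_m(d)) \cong \Hom_{K_d}(\sigma^{-1}_{K_d}(P_n(d)), P_m(d)) \cong \Hom_{K_d}(P_n(d), P_{m-1}(d))\]
whenever $m \geq 1$. For $n \leq m$, iterating this identity $n$ times reduces the computation to $\Hom_{K_d}(P_0(d), P_{m-n}(d)) = \Hom_{K_d}(S(2), P_{m-n}(d)) \cong (P_{m-n}(d))_2$, which has dimension $a_{m-n+1}(d)$. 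For $n > m$, iterating $m$ times reduces to $\Hom_{K_d}(P_{n-m}(d), P_0(d))$; since $n-m \geq 1$, one further application of the adjunction gives $\Hom_{K_d}(P_{n-m-1}(d), \sigma_{K_d}(S(2))) = 0$, using that $\sigma_{K_d}(S(2)) = 0$ as $S(2) \notin \rep_2(K_d)$.

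I do not anticipate any serious obstacle in this lemma; the main point requiring care is that $P_0(d) = S(2)$ lies outside $\rep_2(K_d)$, so $\sigma_{K_d}$ does not act as an equivalence here. This is precisely what makes the reduction to zero work in the $n > m$ case, and it is handled naturally by the adjunction (which requires no equivalence) rather than by the restricted quasi-inverse.
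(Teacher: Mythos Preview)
Your argument is correct and complete. The paper actually states this lemma without proof, introducing it as a collection of ``basic properties of preprojective indecomposable Kronecker representations''; your approach via the base case $\sigma_{K_d}^{-1}(S(2)) \cong P_1(d)$, the identification $P_n(d) \cong \sigma_{K_d}^{-n}(P_0(d))$, and the adjunction $(\sigma_{K_d}^{-1},\sigma_{K_d})$ is exactly the standard way to establish these facts, and your handling of the boundary case $\sigma_{K_d}(S(2)) = 0$ is the right way to obtain vanishing for $n>m$.
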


\bigskip

In the following we study restrictions via elements in $\Inj_{\KK}(A_d,A_r)$ in more detail. The first result is a generalization of \cref{1.5.3}(2)(i).

\bigskip

\begin{Lemma}\label{4.3.2}
Let $M \in \rep(K_r)$ be a representation.
\begin{enumerate}
    \item Let $\alpha,\beta \in \Inj_\bmk(A_d,A_r)$ be such that $\im \alpha = \im \beta$ and let $\delta^{\ast}(M) = P_\delta \oplus R_\delta \oplus I_\delta$ be the decomposition of $\delta^{\ast}(M)$ into preprojective, regular and preinjective summands for $\delta \in \{\alpha,\beta\}$. Then $P_{\alpha} \cong P_{\beta}$, and $I_{\alpha} \cong I_{\beta}$.
    \item Let $\fv \in \Gr_d(A_r)$ and $\im \colon \Inj_{\KK}(A_d,A_r) \lra \Gr_d(A_r) ; \alpha \mapsto \im \alpha$. The following statements are equivalent:
    \begin{enumerate}[(i)]
        \item There is $\alpha \in \im^{-1}(\fv)$ such that $\alpha^{\ast}(M)$ is preprojective.
        \item For all $\alpha\in \im^{-1}(\fv)$ the representation $\alpha^{\ast}(M)$ is preprojective.
        \item For all $\alpha \in \im^{-1}(\fv)$ the representation $\alpha^{\ast}(M)$ is preprojective and $\alpha^{\ast}(M) \cong \beta^{\ast}(M)$ for all $\beta \in \im^{-1}(\fv)$.
    \end{enumerate}
\end{enumerate}
\end{Lemma}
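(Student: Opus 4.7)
The plan rests on a single translation between the two equivalence relations in play. If $\im \alpha = \im \beta$ for $\alpha,\beta \in \Inj_{\KK}(A_d,A_r)$, there exists $h \in \GL(A_d)$ with $\beta = \alpha \circ h$. Unwinding the definitions of the structure maps together with the $\GL(A_d)$-action recalled in \cref{S:1.5}, I compute
\[
\psi_{h^{-1}.\alpha^{\ast}(M)} = \psi_{\alpha^{\ast}(M)} \circ (h \otimes \id_{M_1}) = \psi_M \circ (\alpha \circ h \otimes \id_{M_1}) = \psi_{\beta^{\ast}(M)},
\]
so $\beta^{\ast}(M) \cong h^{-1}.\alpha^{\ast}(M)$. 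Since the $\GL(A_d)$-action is by auto-equivalences of $\rep(K_d)$, it preserves the Auslander--Reiten quiver, and in particular sends each of the preprojective, regular, and preinjective subcategories to itself.

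For part (1), writing $\alpha^{\ast}(M) = P_{\alpha} \oplus R_{\alpha} \oplus I_{\alpha}$, the observation above yields
\[
\beta^{\ast}(M) \cong h^{-1}.P_{\alpha} \oplus h^{-1}.R_{\alpha} \oplus h^{-1}.I_{\alpha},
\]
where the three summands are preprojective, regular, and preinjective respectively. By the Krull--Schmidt theorem, these summands must be isomorphic to $P_{\beta}$, $R_{\beta}$, and $I_{\beta}$. Invoking \cref{2.2.1}(4), preprojective and preinjective Kronecker representations are homogeneous; hence $h^{-1}.P_{\alpha} \cong P_{\alpha}$ and $h^{-1}.I_{\alpha} \cong I_{\alpha}$, which gives the claimed isomorphisms. (One does not get $R_{\alpha} \cong R_{\beta}$, since regular representations need not be homogeneous, cf.\ \cref{2.2.1}(5) and \cref{2.2.2}.)

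For part (2), the implications (iii) $\Rightarrow$ (ii) $\Rightarrow$ (i) are immediate, noting that $\im^{-1}(\fv) \neq \emptyset$. For (i) $\Rightarrow$ (iii), I suppose $\alpha^{\ast}(M)$ is preprojective for some $\alpha \in \im^{-1}(\fv)$. Given an arbitrary $\beta \in \im^{-1}(\fv)$, the translation above produces $h \in \GL(A_d)$ with $\beta^{\ast}(M) \cong h^{-1}.\alpha^{\ast}(M)$. Auto-equivalences preserve the preprojective component, so $h^{-1}.\alpha^{\ast}(M)$ is preprojective; moreover, preprojective representations are homogeneous by \cref{2.2.1}(4), so $h^{-1}.\alpha^{\ast}(M) \cong \alpha^{\ast}(M)$. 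This simultaneously shows that $\beta^{\ast}(M)$ is preprojective and that it is isomorphic to $\alpha^{\ast}(M)$, proving (iii).

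Once the dictionary $\beta = \alpha \circ h \leftrightarrow \beta^{\ast}(M) \cong h^{-1}.\alpha^{\ast}(M)$ has been set up, the argument is essentially formal; the only input with genuine representation-theoretic content is the homogeneity of preprojective and preinjective representations provided by \cref{2.2.1}(4). I do not anticipate a real obstacle, only the careful verification of the structure-map computation establishing the dictionary.
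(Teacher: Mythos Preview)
Your proof is correct and follows essentially the same approach as the paper: both set up the identity $\beta^{\ast}(M) = g.\alpha^{\ast}(M)$ for a suitable $g \in \GL(A_d)$, then invoke the homogeneity of preprojective and preinjective representations from \cref{2.2.1}(4) together with Krull--Schmidt. For part (2) the paper routes the implication (i) $\Rightarrow$ (iii) through part (1) and a small dimension-count, whereas you argue directly via $\beta^{\ast}(M) \cong h^{-1}.\alpha^{\ast}(M) \cong \alpha^{\ast}(M)$; this is a cosmetic difference only.
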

\begin{proof}
\begin{enumerate}
    \item We have $g \coloneqq  \beta^{-1} \circ \alpha \in \GL(A_d)$. Note that $\psi_{g.\alpha^\ast(M)} = \psi_{M} \circ (\alpha \circ g^{-1} \otimes \id_{M_1}) = \psi_{M}\circ (\beta \otimes \id_{M_1})$ and therefore $\beta^\ast(M) = g.(\alpha^\ast(M))$. We conclude
\[ P_{\beta} \oplus R_{\beta} \oplus I_{\beta}  = \beta^{\ast}(M) = g.(\alpha^\ast(M)) \cong g.P_{\alpha} \oplus g.R_{\alpha} \oplus g.I_{\alpha}.\]
Since preprojective and preinjective representations are homogeneous (see \cref{2.2.1}), we obtain
\[ P_{\beta} \oplus R_{\beta} \oplus I_{\beta}  = \beta^{\ast}(M) \cong g.(\alpha^\ast(M)) \cong P_{\alpha} \oplus g.R_{\alpha} \oplus I_{\alpha}.\]
As regular representations are closed under the action of $\GL(A_d)$, the claim follows from Krull–Remak–Schmidt.

\item It suffices to prove (i) $\Rightarrow$ (iii). By (i), there is $\alpha \in \im^{-1}(\fv)$ such that $\alpha^{\ast}(M)$ is preprojective. Let $\beta \in \im^{-1}(\fv)$. Then $\im \alpha= \im \beta$ and $(1)$ implies that $P_{\alpha} \cong P_{\beta}$. Since $\dimu \beta^{\ast}(M) = \dimu \alpha^{\ast}(M) = \dimu P_{\alpha} =\dimu  P_{\beta} \leq \dimu \beta^{\ast}(M)$, we conclude that $P_{\beta} = \beta^{\ast}(M)$. Hence, $\beta^{\ast}(M)$ is preprojective with 
\[\beta^{\ast}(M) =  P_{\beta} \cong P_{\alpha} = \alpha^{\ast}(M).\]
\end{enumerate}
\end{proof}

\bigskip

\begin{Definition}\label{4.3.3}
Let $M \in \rep(K_r)$ and $\fv \in \Gr_d(A_r)$. For each $\alpha \in \Inj_{\bmk}(A_d,A_r)$ such that $\im \alpha = \fv$ we have a decomposition 
\[ \alpha^{\ast}(M) = \alpha^{\ast}(M)_{\pproj} \oplus \alpha^{\ast}(M)_{\reg} \oplus \alpha^{\ast}(M)_{\pinj}\]
into preprojective, regular and preinjective summands. We define 
\[ M|_{\fv,\pproj} \coloneqq  \alpha^{\ast}(M)_{\pproj} \quad \text{and} \quad  M|_{\fv,\pinj} \coloneqq  \alpha^{\ast}(M)_{\pinj},\]
which is well-defined up to isomorphism by \cref{4.3.2}. 

We say that $M|_{\fv}$ is \textit{(pre)projective} if $\alpha^\ast(M)$ is (pre)projective for some $\alpha \in \Inj_{\KK}(A_d,A_r)$ with $\im \alpha = \fv$. By \cref{4.3.2}, this is equivalent to $\beta^\ast(M)$ being preprojective for all $\beta \in \Inj_{\KK}(A_d,A_r)$ such that $\im \beta = \fv$ and moreover, we have an isomorphism of representations $\alpha^\ast(M) \cong \beta^\ast(M)$ in this case.
\end{Definition}

\bigskip

\begin{proposition}\label{4.3.4}
Let $M \in \rep(K_r)$, $\fv \in \Gr_d(A_r)$, $n \in \NN_0$. We write $M|_{\fv,\pproj} = \bigoplus_{i \geq 0} b_i(\fv) P_i(d)$.
\begin{enumerate}
    \item If $M|_{\fv}$ is preprojective, we have 
    \[ \dim_\bmk \Hom_{K_r}(P_n^-(\fv),M) = \sum_{i \geq n +1} b_i(\fv) \dim_\bmk \Hom_{K_d}(P_{n+1}(d), P_i(d)).\]
    \item The followings statements are equivalent.
    \begin{enumerate}
        \item[(i)] $\Hom_{K_r}(P_n^-(\fv),M) = 0$.
        \item[(ii)] $M|_{\fv}$ is preprojective and $M|_{\fv} \in \add(P_0(d),\ldots,P_n(d))$.
    \end{enumerate}
   \item We have $\dim_\bmk \Hom_{K_r}(M,P_n^+(\fv)) = \sum_{i = 0}^{n - 1} b_i(\fv) \dim_{\KK} \Hom_{K_d}(P_{i}(d),P_{n-1}(d))\footnote{Note that $n = 0$ gives us the empty sum.}$.
   \item The following statements are equivalent.
     \begin{enumerate}
        \item[(i)] $\Hom_{K_r}(M,P_n^+(\fv)) = 0$.
        \item[(ii)] $M|_{\fv,\pproj} \in \add(\{P_i(d) \mid i \geq n\})$.
    \end{enumerate}
\end{enumerate}
\end{proposition}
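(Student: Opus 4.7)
The plan is to derive all four statements from the adjunction isomorphisms of \cref{4.2.6}(2) together with the $\Hom$-formula between preprojective Kronecker indecomposables recorded in \cref{4.3.1}(3). Fix $\alpha \in \Inj_{\KK}(A_d, A_r)$ with $\im \alpha = \fv$. For (1), \cref{4.2.6}(2) combined with $\sigma_{K_d}^{-1}(P_n(d)) \cong P_{n+1}(d)$ from \cref{4.3.1}(1) yields $\Hom_{K_r}(P_n^-(\fv), M) \cong \Hom_{K_d}(P_{n+1}(d), \alpha^\ast(M))$. The preprojectivity of $M|_{\fv}$ lets us replace $\alpha^\ast(M)$ by $\bigoplus_{i \ge 0} b_i(\fv) P_i(d)$ via \cref{4.3.2}(2), and distributing $\Hom_{K_d}(P_{n+1}(d), -)$ over direct summands --- with \cref{4.3.1}(3) annihilating all terms where $i \le n$ --- produces the displayed formula.

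For (2), the implication (ii)$\Rightarrow$(i) is immediate from (1) and \cref{4.3.1}(3). For (i)$\Rightarrow$(ii), I would decompose $\alpha^\ast(M) = P \oplus R \oplus I$ into preprojective, regular, and preinjective summands, and use the adjunction $(\sigma_{K_d}^{-1}, \sigma_{K_d})$ iterated $n+1$ times together with $\Hom_{K_d}(P_0(d), -) \cong (-)_2$ to rewrite the hypothesis as $\sigma_{K_d}^{n+1}(\alpha^\ast(M))_2 = 0$. Since $\sigma_{K_d}$ restricts to an auto-equivalence of the subcategory of regular representations and acts on preinjectives by $I_m(d) \mapsto I_{m+1}(d)$, and since every nonzero regular or preinjective indecomposable has nonzero vertex-$2$ component, this forces $R = I = 0$. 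Hence $\alpha^\ast(M)$ is preprojective, so $M|_{\fv}$ is preprojective by \cref{4.3.2}(2), and a second application of (1) combined with the positivity of $\dim_{\KK} \Hom_{K_d}(P_{n+1}(d), P_i(d))$ for $i \ge n+1$ forces $b_i(\fv) = 0$ in that range.

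For (3), \cref{4.2.6}(2) gives $\Hom_{K_r}(M, P_n^+(\fv)) \cong \Hom_{K_d}(\alpha^\ast(M), \sigma_{K_d}(P_n(d)))$. The case $n = 0$ matches the empty sum via $\sigma_{K_d}(P_0(d)) = 0$; for $n \ge 1$, \cref{4.3.1}(1) yields $\sigma_{K_d}(P_n(d)) \cong P_{n-1}(d)$. Since $\Hom_{K_d}(-, P_{n-1}(d))$ vanishes on regular and preinjective representations (there are no nonzero morphisms from either class to preprojectives in a hereditary algebra), the $\Hom$ only sees $M|_{\fv, \pproj} = \bigoplus_{i \ge 0} b_i(\fv) P_i(d)$, and \cref{4.3.1}(3) converts the resulting sum into the stated formula. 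Claim (4) follows at once from (3) and \cref{4.3.1}(3), since $\dim_{\KK} \Hom_{K_d}(P_i(d), P_{n-1}(d)) > 0$ precisely when $i \le n-1$, so the sum vanishes if and only if $b_i(\fv) = 0$ for every $i < n$. The only step that is not mere bookkeeping within the adjoint framework of \cref{S:3} is the observation in (2) that every nonzero regular or preinjective summand obstructs the vanishing of $\Hom_{K_d}(P_{n+1}(d), -)$; once that is in place, the rest is a routine transport through the adjunction.
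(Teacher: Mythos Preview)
Your argument is correct and follows essentially the same route as the paper: both parts reduce via \cref{4.2.6}(2) to $\Hom_{K_d}(P_{n+1}(d),\alpha^\ast(M))$ and $\Hom_{K_d}(\alpha^\ast(M),P_{n-1}(d))$, and then invoke the $\Hom$-pattern among preprojectives from \cref{4.3.1}(3); for (2)(i)$\Rightarrow$(ii) the paper shifts $n$ times to $P_1(d)$ and reads off vertex~$1$, whereas you shift $n+1$ times via the adjunction to $P_0(d)$ and read off vertex~$2$, which is the same mechanism. One small imprecision: your blanket claim that every nonzero preinjective indecomposable has nonzero vertex-$2$ component fails for $I_0(d)=S(1)$, but this is harmless here since after applying $\sigma_{K_d}^{n+1}$ every preinjective summand has index $\geq n+1\geq 1$.
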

\begin{proof}
\begin{enumerate}
    \item Let $\alpha \in \Inj_{\KK}(A_d,A_r)$ be such that $\im \alpha = \fv$. By assumption, we have $\alpha^\ast(M) \cong M|_{\fv,\pproj}$ and conclude with \cref{4.2.6} that
    \begin{align*}
        \dim_{\KK} \Hom_{K_r}(P_n^-(\fv),M)  &= \dim_{\KK} \Hom_{K_d}(P_{n+1}(d),\alpha^\ast(M))\\
        &= \dim_\bmk \Hom_{K_d}(P_{n+1}(d),\bigoplus_{i \in \NN_0} b_i(\fv) P_i(d))\\
        &\stackrel{\ref{4.3.1}}{=} \sum_{i \geq n+1} b_i(\fv) \dim_\bmk \Hom_{K_d}(P_{n+1}(d), P_i(d)).
    \end{align*}
    \item 
    (i) $\Rightarrow$ (ii). Let $0 \neq X \in \rep(K_d)$ be such that $X$ does not have a preprojective direct summand. Then $\sigma_{K_d}^{n}(X)$ does not have $P_0(r)$ as a direct summand. In particular, we have  $(\sigma_{K_d}^{n}(X))_1 \neq 0$ and we obtain
   \begin{align*}
       \Hom_{K_d}(P_{n+1}(d),X) &\cong \Hom_{K_d}(\sigma_{K_d}^{n}(P_{n+1}(d)), \sigma_{K_d}^{n}(X)) \\
       &\cong \Hom_{K_d}(P_1(d), \sigma_{K_d}^{n}(X)) \cong (\sigma_{K_d}^{n}(X))_1 \neq 0.
   \end{align*}
   Hence, the assumption in conjunction with \cref{4.2.6} implies that every indecomposable direct summand of $\alpha^{\ast}(M)$ is preprojective, i.e., $M|_{\fv}$ is preprojective. Now we apply (1) and conclude
   \[ 0 = \sum_{i \geq n+1} b_i(\fv) \dim_{\KK} \Hom_{K_d}(P_{n+1}(d),P_i(d)).\]
   \cref{4.3.1} implies $b_i(\fv) = 0$ for all $i \geq n +1$. Therefore, we have $M|_{\fv} = M|_{\fv,\pproj} = \bigoplus_{i=0}^n b_i(\fv) P_i(d) \in  \add(P_0(d),\ldots,P_n(d))$.

   (ii) $\Rightarrow$ (i). We have $M|_{\fv} = M|_{\fv,\pproj} = \sum^n_{i=0} b_i(\fv) P_i(d)$. Hence, (1) implies 
   \[\dim_\bmk \Hom_{K_r}(P_n^-(\fv),M) =\sum_{i \geq n +1} b_i(\fv) \dim_\bmk \Hom_{K_d}(P_{n+1}(d), P_i(d)) = 0.\]
   \item Let $\alpha \in \Inj_{\KK}(A_d,A_r)$ such that $\im \alpha = \fv$. We write $\alpha^{\ast}(M) = M|_{\fv,\pproj} \oplus L$ such that $L$ does not have a preprojective direct summand. We have $\Hom_{K_d}(L,P_{n-1}(d)) = 0$ (see \cite[(VIII.2.13)]{ASS06}) and obtain 
   \begin{align*}
        \dim_\bmk \Hom_{K_r}(M,P_n^+(\fv)) &\stackrel{\ref{4.2.6}}{=} \dim_\bmk \Hom_{K_d}(\alpha^{\ast}(M),P_{n-1}(d))\\
        &= \sum_{i \in \NN_0} b_i(\fv) \dim_\bmk \Hom(P_i(d),P_{n-1}(d))\\
        &\stackrel{\ref{4.3.1}}{=} \sum_{i = 0}^{n - 1} b_i(\fv) \dim_{\KK} \Hom_{K_d}(P_{i}(d),P_{n-1}(d)).
    \end{align*}
    \item Follows immediately from (3) and \cref{4.3.1}.
\end{enumerate}
\end{proof}

\bigskip
\noindent 
We arrive at the main result of this section:

\bigskip

\begin{Theorem}\label{4.3.5}
Let $M \in \rep(K_r)$, $n \in \NN_0$ and $\fv \in \Gr_d(A_r)$. The following statements are equivalent.
\begin{enumerate}
    \item $M|_{\fv}\!=\!(-a_{n+2}(d)\! \dim_{\KK}\!M_1\!+\!a_{n+1}(d)\!\dim_{\KK}\!M_2)P_n(d)\oplus (a_{n+1}(d)\!\dim_{\KK}\!M_1\!-\!a_{n}(d)\!\dim_{\KK}\!M_2) P_{n+1}(d)$.
    \item $M|_{\fv} \in \add(P_n(d),P_{n+1}(d))$.
    \item $\Hom_{K_r}(P_{n+1}^{-}(\fv),M) = 0 = \Hom_{K_r}(M,P^+_{n}(\fv))$.
\end{enumerate}
\end{Theorem}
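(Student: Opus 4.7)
The plan is to prove the cycle of implications (1)\,$\Rightarrow$\,(2), (2)\,$\Leftrightarrow$\,(3), and (2)\,$\Rightarrow$\,(1). The first implication is immediate since $\add(P_n(d),P_{n+1}(d))$ consists by definition of all direct sums $c_n P_n(d) \oplus c_{n+1} P_{n+1}(d)$ with $c_n,c_{n+1} \in \NN_0$. The two middle implications reduce to a direct application of \cref{4.3.4}, while (2)\,$\Rightarrow$\,(1) requires a small determinant identity for the sequence $a_i(d)$.

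For (2)\,$\Rightarrow$\,(3), write $M|_{\fv} = c_n P_n(d) \oplus c_{n+1} P_{n+1}(d)$. Since $M|_{\fv}$ is preprojective, we have $M|_{\fv,\pproj} = M|_{\fv}$. Observing $M|_{\fv} \in \add(P_0(d),\ldots,P_{n+1}(d))$ and applying \cref{4.3.4}(2) with $n$ replaced by $n+1$ yields $\Hom_{K_r}(P_{n+1}^-(\fv),M) = 0$; observing $M|_{\fv,\pproj} \in \add(\{P_i(d) \mid i \geq n\})$ and applying \cref{4.3.4}(4) yields $\Hom_{K_r}(M,P_n^+(\fv)) = 0$. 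Conversely, for (3)\,$\Rightarrow$\,(2), \cref{4.3.4}(2) with $n+1$ forces $M|_{\fv}$ to be preprojective and satisfy $M|_{\fv} \in \add(P_0(d),\ldots,P_{n+1}(d))$, while \cref{4.3.4}(4) gives $M|_{\fv} = M|_{\fv,\pproj} \in \add(\{P_i(d) \mid i \geq n\})$. By Krull--Remak--Schmidt, the intersection of these two additive closures is $\add(P_n(d),P_{n+1}(d))$, so $M|_{\fv} \in \add(P_n(d),P_{n+1}(d))$.

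For (2)\,$\Rightarrow$\,(1), comparing dimension vectors in $M|_{\fv} = c_n P_n(d) \oplus c_{n+1} P_{n+1}(d)$ gives the linear system
\[
\dim_{\KK} M_1 = c_n a_n(d) + c_{n+1} a_{n+1}(d), \qquad \dim_{\KK} M_2 = c_n a_{n+1}(d) + c_{n+1} a_{n+2}(d).
\]
The key point is to show that the determinant $D_n := a_n(d) a_{n+2}(d) - a_{n+1}(d)^2$ is independent of $n$. Using the recurrence $a_{n+2}(d) = d\, a_{n+1}(d) - a_n(d)$ of \cref{4.3.1}(2), a short expansion gives $D_{n+1} = D_n$, and the base case $(a_0(d),a_1(d),a_2(d)) = (0,1,d)$ yields $D_0 = -1$. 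Inverting the system via Cramer's rule then produces exactly
\[
c_n = -a_{n+2}(d) \dim_{\KK} M_1 + a_{n+1}(d) \dim_{\KK} M_2, \qquad c_{n+1} = a_{n+1}(d) \dim_{\KK} M_1 - a_n(d) \dim_{\KK} M_2,
\]
which is the statement of (1).

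The main obstacle, such as it is, lies in the identity $D_n = -1$: it is essential precisely because $a_0(d) = 0$ would obstruct a naive elimination when $n = 0$, and only the full determinant identity makes the inversion uniform in $n$. Everything else reduces directly to \cref{4.3.4} together with basic manipulations within the Krull--Remak--Schmidt category $\rep(K_d)$.
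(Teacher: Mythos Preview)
Your proof is correct and follows essentially the same approach as the paper: both use \cref{4.3.4}(2) and (4) to pass between (2) and (3), and then solve the $2\times 2$ linear system to recover the explicit multiplicities in (1). The only cosmetic difference is that the paper computes the determinant $a_{n+1}(d)^2 - a_n(d)a_{n+2}(d) = 1$ via the identity $q_d(\dimu P_n(d)) = 1$, whereas you obtain it by induction on the recurrence; both arguments are equivalent.
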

\begin{proof}
(1) $\Rightarrow$ (2). This is clear.

(2) $\Rightarrow$ (3).  We have $M|_{\fv} = M|_{\fv,\pproj} = b_n(\fv) P_n(d) \oplus b_{n+1}(\fv) P_{n+1}(d)$. Hence, $M|_{\fv} \in \add(\{P_i(d) \mid i \geq n\}) \cap\add(\{P_i(d) \mid i \leq n+1\})$ and \cref{4.3.4}(2)+(4) implies
\[\dim_\bmk \Hom_{K_r}(P^-_{n+1}(\fv),M) = 0 = \dim_\bmk \Hom_{K_r}(M,P_{n}^+(\fv)).\]

(3) $\Rightarrow$ (1). We apply \cref{4.3.4}(2) and conclude that $M|_{\fv}$ is preprojective and of the form $M|_{\fv} = \bigoplus_{i=0}^{n+1} b_i(\fv) P_i(d)$. Another application of \cref{4.3.4}(4) gives us $b_i(\fv) = 0$ for all $0 \leq i \leq n-1$. Hence, $M|_{\fv} \in \add(P_n(d),P_{n+1}(d))$. 
We write
\[ M|_{\fv} = b_n(\fv) P_n(d) \oplus b_{n+1}(\fv) P_{n+1}(d).\]
Recall that 
\begin{align*}
    1 &= q_d(\dimu P_n(d)) = q_d(a_n(d),a_{n+1}(d)) \\
&= a_{n+1}(d)^2 + a_n(d)(a_n(d)-d a_{n+1}(d)) \stackrel{\ref{4.3.1}}{=} a_{n+1}(d)^2 - a_n(d) a_{n+2}(d).
\end{align*} 
Hence,
\[ 
A \coloneqq  \begin{pmatrix}
a_{n}(d) & a_{n+1}(d) \\
a_{n+1}(d) & a_{n+2}(d)
\end{pmatrix}
\]
is invertible with $A^{-1} = \frac{1}{-\det(A)} \begin{pmatrix}
-a_{n+2}(d) & a_{n+1}(d) \\
a_{n+1}(d) & -a_n(d)
\end{pmatrix} = \begin{pmatrix}
-a_{n+2}(d) & a_{n+1}(d) \\
a_{n+1}(d) & -a_n(d)
\end{pmatrix}$.
We conclude 
\[
\begin{pmatrix}
b_n(\fv)\\
b_{n+1}(\fv) 
\end{pmatrix} = 
A^{-1} \begin{pmatrix}
\dim_{\KK} M_1 \\
\dim_{\KK} M_2 
\end{pmatrix}
= 
\begin{pmatrix}
- a_{n+2}(d) \dim_{\KK} M_1 + a_{n+1}(d) \dim_{\KK} M_2 \\
a_{n+1}(d) \dim_{\KK} M_1 - a_{n}(d) \dim_{\KK} M_2 \\
\end{pmatrix}.
\]
\end{proof}

\bigskip

\subsection{From the ad hoc construction to the general case}
We end this section by showing that our approach generalizes the considerations in \cite[(2.13),(2.14)]{BF24}, which are based on the constructions in \cite{Wor13a} and \cite[(2.2)]{Bis20}.

\bigskip

\begin{proposition}\label{4.4.1}
Let $n \in \NN$ and $\fv \in \Gr_d(A_r)$. The following statements hold.
\begin{enumerate}
    \item $P_n^{\pm}(\fv)$ is a regular brick.
    \item The representation $P_n^-(\fv)|_{\fv}$ is preprojective and in $\add(P_0(d),\ldots,P_{n+1}(d))$.
    \item $P_n^-(\fv)|_{\fv} \cong (r-d) a_{n+1}(d) P_0(d) \oplus P_{n+1}(d)$.
\end{enumerate}
\end{proposition}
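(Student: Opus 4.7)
The plan is to split the argument into three steps matching the three claims, handling (1) and (2) through the functorial formalism of Section~4.1 and Section~4.3, and reserving an explicit computation of $\res(P_n^-_{d,r})$ for (3).

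For (1), the assumption $n \in \NN$ guarantees that $P_n(d)$ is indecomposable, lies in $\rep_{1,2}(K_d)$, and is isomorphic to neither $P_0(d)$ nor $I_0(d)$. Using \cite[(3.2.2)]{Bis20} to conclude that $\inf(P_n(d))$ is regular, one feeds this into \cref{4.1.4}(3) to obtain that $P_n^{\pm}(\fv)$ is regular indecomposable. The brick property then drops out of the Hom-isomorphism in \cref{4.1.4}(4), once one recalls that preprojective indecomposable $K_d$-representations are bricks.

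For (2), I would combine \cref{4.1.4}(4) with \cref{4.3.1}(3) to write
\[ \Hom_{K_r}(P_{n+1}^-(\fv),P_n^-(\fv)) \cong \Hom_{K_d}(P_{n+1}(d),P_n(d)) = 0,\]
and then apply \cref{4.3.4}(2) with the role of $n$ played by $n+1$ and $M = P_n^-(\fv)$; this simultaneously yields the preprojectivity of $P_n^-(\fv)|_\fv$ and the support bound.

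For (3), I would pick $\alpha \in \Inj_{\KK}(A_d,A_r)$ with $\im \alpha = \fv$ and $g \in \GL(A_r)$ with $g \circ \iota = \alpha$, and note that a direct comparison of structure maps gives $\alpha^\ast(g.P_n^-_{d,r}) = \res(P_n^-_{d,r})$. The task reduces to decomposing $\res(\sigma_{K_r}^{-1}(\inf(P_n(d))))$. The crucial observation is that $\psi_{\inf(P_n(d))}$ vanishes on $A_d^\perp \otimes_{\KK} P_n(d)_1$, where $A_d^\perp \coloneqq \bigoplus_{i>d}\KK\gamma_i$, so $\eta_{\inf(P_n(d))}$ coincides with $\eta_{P_n(d)}$ and factors through the direct summand $A_d \otimes_{\KK} P_n(d)_2$ of $A_r \otimes_{\KK} P_n(d)_2$. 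This forces a splitting
\[ \coker\eta_{\inf(P_n(d))} \cong \coker\eta_{P_n(d)} \oplus (A_d^\perp \otimes_{\KK} P_n(d)_2),\]
and, because the structure map of the restriction sends $\gamma_i \otimes m$ (for $1 \leq i \leq d$) entirely into the first summand, the decomposition lifts to a decomposition of $K_d$-representations. The first summand is recognizable as $\sigma_{K_d}^{-1}(P_n(d)) = P_{n+1}(d)$, and the second summand has zero vertex-$1$ component and no structure map, hence assembles into $\dim_{\KK}(A_d^\perp \otimes_{\KK} P_n(d)_2) = (r-d)\,a_{n+1}(d)$ copies of $P_0(d) = S(2)$.

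The main obstacle lies in (3): the careful identification of the transverse factor $A_d^\perp \otimes_{\KK} P_n(d)_2$ as a sum of trivial $P_0(d)$ summands, and of the quotient piece with $\sigma_{K_d}^{-1}(P_n(d))$ equipped with its correct structure map. Once the containment $\im\eta_{\inf(P_n(d))} \subseteq A_d \otimes_{\KK} P_n(d)_2$ is noted, the remaining bookkeeping is forced, and together with (1) and (2) this completes the proposition.
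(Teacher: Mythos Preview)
Your arguments for (1) and (2) are essentially identical to the paper's proof. For (3), however, you take a genuinely different route. The paper argues indirectly: using (2) to write $P_n^-(\fv)|_{\fv} = \bigoplus_{i=0}^{n+1} b_i(\fv) P_i(d)$, it applies the adjunction isomorphism of \cref{4.2.6} together with the brick property from (1) to obtain
\[
1 = \dim_{\KK}\End_{K_r}(P_n^-(\fv)) = \dim_{\KK}\Hom_{K_d}(P_{n+1}(d),P_n^-(\fv)|_{\fv}) = b_{n+1}(\fv),
\]
and then a comparison of dimension vectors (noting that $\dim_{\KK}(P_n^-(\fv))_1 = a_{n+1}(d) = \dim_{\KK}(P_{n+1}(d))_1$) forces all remaining summands to be copies of $P_0(d)$. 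Your approach instead computes $\res\bigl(\sigma_{K_r}^{-1}(\inf(P_n(d)))\bigr)$ explicitly, exploiting the containment $\im\eta_{\inf(P_n(d))} \subseteq A_d \otimes_{\KK} P_n(d)_2$ to split off the transverse piece $A_d^\perp \otimes_{\KK} P_n(d)_2$ and to identify the remaining part with $\sigma_{K_d}^{-1}(P_n(d)) \cong P_{n+1}(d)$. This is correct and has the advantage of being entirely self-contained: it bypasses the adjunction theorem \cref{4.2.6} and does not need (1) or (2) as input (indeed, it reproves (2) along the way). The paper's argument, by contrast, is less computational and showcases the utility of the adjunction formalism that underlies the whole section.
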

\begin{proof}
\begin{enumerate}
 \item Follows from \cref{4.1.4}(3)(4) since preprojective indecomposable representations are bricks by \cite[(VIII.2.7)]{ASS06}
 \item  By \cref{4.1.4} and \cref{4.3.1}, we have
 \[\Hom_{K_r}(P_{n+1}^-(\fv),P_n^-(\fv)) \cong \Hom_{K_d}(P_{n+1}(d),P_n(d)) = 0\]
 and application of \cref{4.3.4} gives us \[ P_n^-(\fv)|_{\fv} = P_n^-(\fv)|_{\fv,\pproj} \in \add(P_0(d),\ldots,P_{n+1}(d)).\] 
 \item By (2), $P_n^-(\fv)|_{\fv}$ is preprojective and we have $P_n^-(\fv)|_{\fv} = \sum^{n+1}_{i=0} b_i(\fv) P_i(d)$. Hence, application of (1), \cref{4.2.6} and \cref{4.3.1}   imply 
 \[ 1 = \dim_{\KK} \End_{K_r}(P_n^-(\fv)) \cong \dim_{\KK} \Hom_{K_d}(P_{n+1}(d),P_n^-(\fv)|_{\fv}) = b_{n+1}(\fv).\]
 Hence, $P_{n+1}(d)$ appears in the direct sum decomposition of $P_n^-(\fv)|_{\fv}$ with multiplicity $1$. We have 
 \[ \dimu P_n^-(\fv) = \sigma_r^{-1}(a_n(d),a_{n+1}(d)) = (a_{n+1}(d),r a_{n+1}(d) - a_n(d)).\]
 Since $\dimu P_{n+1}(d) = (a_{n+1}(d),a_{n+2}(d))$ and $a_{n+2}(d) \stackrel{\ref{4.3.1}}{=} d a_{n+1}(d) - a_n(d)$, we conclude
 \[P_n^-(\fv)|_{\fv} \cong (r-d) a_{n+1}(d) P_0(d) \oplus P_{n+1}(d).\]
 \end{enumerate}
\end{proof}

\bigskip

\begin{proposition}\label{4.4.2}
Let $1 \leq d < r$ and $\fv \in \Gr_d(A_r)$.
\begin{enumerate}
\item The representation $P_1^{\pm}(\fv)$ is \textit{elementary}, i.e., $P_1^{\pm}(\fv)$ is regular and there is no short exact sequence $0 \lra A \lra P_1^{\pm}(\fv) \lra B \lra 0$ with $A,B \in \rep(K_r)$ regular and non-zero. 
\item We have $ P_1^+(\fv) \cong \tau_{K_r}(E(\fv))$ and $P_1^-(\fv) \cong E(\fv)$.
\item Let $1 < d < r$ and $\fv \in \Gr_d(A_r)$, then $P_1^-(\fv) \in \rep_{\proj}(K_r,d-1)$.
\end{enumerate}
\end{proposition}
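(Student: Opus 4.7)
The plan is to prove (2) first, identifying $P_1^-(\fv)$ with the test representation $E(\fv)$ already constructed in the paper; (1) then follows by transporting a direct elementarity argument from $\inf(P_1(d))$, and (3) is an immediate consequence of Proposition 2.3.3.

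For (2), I would settle the canonical case $\fv = \im\iota$ by direct computation and transfer to general $\fv$ by $\GL(A_r)$-equivariance of both constructions. From $D_{K_r}\circ\sigma_{K_r} \cong \sigma_{K_r}^{-1}\circ D_{K_r}$ one obtains $D_{K_r}\circ\tau_{K_r} \cong \tau_{K_r}^{-1}\circ D_{K_r}$, so
\[ E(\iota) = D_{K_r}(\tau_{K_r}(\coker\overline{\iota})) \cong \sigma_{K_r}^{-2}(D_{K_r}(\coker\overline{\iota})). \]
A short calculation with explicit structure maps identifies $D_{K_r}(\coker\overline{\iota})$, of dimension vector $(r-d,1)$, with $\sigma_{K_r}(\inf(P_1(d)))$. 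Since $\inf(P_1(d))$ is regular and hence lies in $\rep_2(K_r)$, the composite $\sigma_{K_r}^{-1}\circ\sigma_{K_r}$ acts as the identity on it, giving $E(\iota)\cong\sigma_{K_r}^{-1}(\inf(P_1(d))) = P_1^-(\iota)$. The identity $P_1^+(\fv)\cong\tau_{K_r}(E(\fv))$ then follows from Lemma 4.1.4(5), which applies since $P_1(d)\in\rep_{1,2}(K_d)$.

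For (1), regularity of $P_1^{\pm}(\fv)$ is immediate from Lemma 4.1.4(3). For elementarity I would first verify it directly for $\inf(P_1(d))$: since $\dimu\inf(P_1(d)) = (1,d)$, in any short exact sequence $0\to A\to\inf(P_1(d))\to B\to 0$ one of $\dimu A$ and $\dimu B$ must have first coordinate zero, forcing the corresponding representation to be a direct sum of copies of $S(2) = P_0(r)$, which is projective and thus not a non-zero regular representation. To transfer this to $P_1^-(\fv)$, note that in a hypothetical regular-by-regular short exact sequence with middle term $P_1^-(\fv)$ all three terms lie in $\rep_{1,2}(K_r)\subseteq\rep_2(K_r)$ (regulars have no simple summands), so applying the restriction of $\sigma_{K_r}$ to $\rep_2(K_r)$ (which is exact) together with Lemma 4.2.2 produces a regular-by-regular short exact sequence whose middle term is a $\GL(A_r)$-twist of $\inf(P_1(d))$; elementarity of the latter forces $A = 0$ or $B = 0$. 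Elementarity of $P_1^+(\fv) = \tau_{K_r}(P_1^-(\fv))$ reduces to the $P_1^-$ case by applying $\tau_{K_r}^{-1}$, which is exact on regular representations.

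For (3), I would apply Proposition 2.3.3. The representation $P_1^-(\fv)$ is indecomposable (a brick by Proposition 4.4.1(1)) with dimension vector $\sigma_r^{-1}(1,d) = (d,rd-1)$, and the arithmetic
\[ q_r(d,rd-1) + \Delta_{P_1^-(\fv)}(d-1) = (d^2 - rd + 1) + (rd - 1 - (d-1)d) = d \geq 1 \]
allows Proposition 2.3.3 to apply, yielding $P_1^-(\fv)\in\rep_{\proj}(K_r,d-1)$. The main obstacle of the overall argument is the explicit structure-map identification $D_{K_r}(\coker\overline{\iota})\cong\sigma_{K_r}(\inf(P_1(d)))$ at the heart of (2); once that identification is pinned down, the remaining steps in (1) and (3) are essentially formal.
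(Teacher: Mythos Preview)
Your proposal is correct, and for parts (2) and (3) it takes genuinely different routes from the paper's proof.

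For (2), the paper argues indirectly: it first establishes (1), then observes via Proposition~4.4.1 that $P_1^-(\fv)|_{\fv}$ contains $P_2(d)$ as a summand, so $\Hom_{K_r}(E(\fv),P_1^-(\fv))\neq 0$ by \cite[(2.1.5)]{BF24}; since both representations are elementary of the same dimension vector, \cite[(1.4)]{KL96} forces an isomorphism. Your approach instead identifies the two representations by direct structure-map computation at $\fv=A_d$ and then transports by $\GL(A_r)$-equivariance. This is more hands-on but avoids the external input from \cite{KL96} and does not depend on (1). One caution: the equivariance $g.E(\fv)\cong E(g.\fv)$ is not stated in the paper and must be checked (it follows since $D_{K_r}$ and $\tau_{K_r}=\sigma_{K_r}^2$ commute with the $\GL(A_r)$-action, and since an explicit isomorphism $g.P_1(r)\to P_1(r)$ carries $g.\overline{\iota}$ to $\overline{g\circ\iota}$); this is routine but should be made explicit.

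For (3), the paper reduces via (2) to showing $\Hom_{K_r}(\coker\overline{\beta},\coker\overline{\alpha})=0$ for $\beta\in\Inj_\KK(A_{d-1},A_r)$, which it handles by noting that proper subrepresentations of $\coker\overline{\alpha}$ are projective while $\coker\overline{\beta}$ is regular of larger dimension. Your use of the numerical criterion in Proposition~2.3.3 is cleaner: the identity $q_r(d,rd-1)+\Delta_{P_1^-(\fv)}(d-1)=d\geq 1$ gives the conclusion in one line and makes (3) logically independent of (2).

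For (1) both arguments reduce elementarity to $\inf(P_1(d))$ via its dimension vector $(1,d)$; your transfer via exactness of $\sigma_{K_r}$ on $\rep_2(K_r)$ and Lemma~4.2.2 just spells out what the paper abbreviates as ``twist and shifts preserve elementarity''.
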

\begin{proof}
\begin{enumerate}
\item Let $E \in \rep(K_r)$. Since twist and shifts of $E$ are elementary if and only if $E$ is regular, it suffices to prove that $\inf(P_1(d))$ is elementary. According to \cite[(3.2.2)]{Bis20}, $\inf(P_1(d))$ is regular. If $0 \lra A \lra \inf(P_1(d)) \lra B \lra 0$ is a short exact sequence, then either $\dim_{\KK} A_1 = 0$ or $\dim_{\KK} B_2 = 0$ since $\dimu P_1(d) = (1,d)$. Hence, $A$ or $B$ is not a non-zero regular representation.
\item Recall the definition of $E(\fv)$ from the beginning of \cref{S:4}. Let $\alpha \in \Inj_{\KK}(A_d,A_r)$ such that $\im \alpha = \fv$. By \cref{4.4.1}, we have 
\[\alpha^\ast(P_1^-(\fv)) = (r-d)a_{2}(d) P_0(d) \oplus P_{2}(d).\]
We conclude with \cite[(2.1.5)]{BF24} that $0\neq \Hom_{K_r}(E(\fv),P_1^-(\fv))$. By (1) and \cite[(2.1.3)]{BF24}, both representations are elementary. Since they also have the same dimension vector $(d,rd-1)$, we conclude with \cite[(1.4)]{KL96} that $E(\fv) \cong P_1^-(\fv)$. Moreover, we have
\[ \tau_{K_r}(E(\fv)) \cong \tau_{K_r}(P_1^-(\fv)) \stackrel{\ref{4.1.4}}{\cong} P_1^+(\fv).\]
\item Let $\fu \in \Gr_{d-1}(A_r)$. In view of \cref{4.3.5} (note that $P_1^+(d) = \{0\})$, it suffices to show that
\[0 = \Hom_{K_r}(P_1^-(\fu),P_1^-(\fv)).\] By (2), we find $\alpha \in \Inj_{\KK}(A_d,A_r)$ and $\beta \in \Inj_{\KK}(A_{d-1},A_r)$ such that 
\[ P_1^-(\fv) \cong D_{K_r}(\tau_{K_r}(\coker \overline{\alpha})) \ \text{and} \ P_1^-(\fu) \cong D_{K_r}(\tau_{K_r}(\coker \overline{\beta})).\]
Since all involved representations are regular, we conclude
\[ \Hom_{K_r}(P_1^-(\fu),P_1^-(\fv)) \cong \Hom_{K_r}(\coker \overline{\beta},\coker \overline{\alpha}).\] 
We have $\dimu \coker \overline{\beta} = (1,r-d+1)$ and $\dimu \coker \overline{\alpha} = (1,r-d)$. Note that every proper subrepresentation of $\coker \overline{\alpha}$ is projective. Since $\coker \overline{\beta}$ is regular and there are no non-zero morphisms from regular to preprojective representation (see \cite[(VIII.2.13)]{ASS06}), we conclude  with $\dim_{\KK}  \coker \overline{\beta} >\dim_{\KK} \coker \overline{\alpha}$ that $\Hom_{K_r}(\coker \overline{\beta},\coker \overline{\alpha}) = 0$.
\end{enumerate}
\end{proof}

\bigskip

\begin{corollary}\label{4.4.3}
For $M \in \rep(K_r)$ we have $\cV(K_r,d)_M = \{ \fv \in \Gr_d(A_r) \mid \Hom_{K_r}(P_1^-(\fv),M) \neq 0\}$.
\end{corollary}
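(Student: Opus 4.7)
The plan is to derive Corollary 4.4.3 directly from the adjunction formulas in Proposition 4.1.3 (or its repackaged version Theorem 4.2.6(2)) applied to the homogeneous preprojective $P_1(d) \in \rep(K_d)$. Concretely, fix $\fv \in \Gr_d(A_r)$ and choose any $\alpha \in \Inj_\KK(A_d,A_r)$ with $\im\alpha = \fv$. Proposition 4.1.3(1) with $X = P_1(d)$ yields a natural isomorphism of $\KK$-vector spaces
\[
\Hom_{K_r}(P_1^-(\fv),M) \;\cong\; \Hom_{K_d}\bigl(P_1(d),\,\sigma_{K_d}(\alpha^{\ast}(M))\bigr).
\]
Since $P_1(d)$ is the indecomposable projective at vertex $1$, one has $\Hom_{K_d}(P_1(d),N) \cong N_1$ for every $N \in \rep(K_d)$.

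Applied to $N = \sigma_{K_d}(\alpha^{\ast}(M))$, this identifies the right-hand side with $(\sigma_{K_d}(\alpha^{\ast}(M)))_1$. By the very definition of the shift functor recalled in \cref{S:3.2}, the vertex-$1$ component of $\sigma_{K_d}(\alpha^{\ast}(M))$ is exactly $\ker \psi_{\alpha^\ast(M)}$. Thus
\[
\Hom_{K_r}(P_1^-(\fv),M) \;\cong\; \ker \psi_{\alpha^{\ast}(M)}.
\]
On the other hand, the isomorphism $\alpha^{-1}\otimes \id_{M_1}\colon \fv\otimes_\KK M_1 \to A_d\otimes_\KK M_1$ (used in the derivation preceding \cref{1.2.1}) intertwines $\psi_{M,\fv}$ and $\psi_{\alpha^{\ast}(M)}$, so $\dim_\KK \ker\psi_{\alpha^{\ast}(M)} = \dim_\KK \ker\psi_{M,\fv}$.

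Combining these isomorphisms, $\Hom_{K_r}(P_1^-(\fv),M) \neq 0$ if and only if $\ker \psi_{M,\fv} \neq \{0\}$. Since by the equality $\cV(K_r,d)_M = \cR(K_r,d)_M$ recorded in \cref{S:1.2}, the latter is equivalent to $\fv \in \cV(K_r,d)_M$, this yields the claim. No real obstacle is expected: all the ingredients are already in place; the only thing to check carefully is that the two descriptions of the Hom-space — via the adjunction and via the kernel of $\psi_{M,\fv}$ — line up, and this is immediate once one unwinds the definitions of $\sigma_{K_d}$ and of $\alpha^{\ast}$.
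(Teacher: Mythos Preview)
Your proof is correct and takes a genuinely different, more direct route than the paper. The paper proves Corollary~4.4.3 by first establishing the identification $P_1^-(\fv)\cong E(\fv)$ in Proposition~4.4.2(2) (which itself requires Proposition~4.4.1, the elementarity argument, and \cite[(1.4)]{KL96}), and then invoking the test-representation description of $\cV(K_r,d)_M$ in terms of $E(\fv)$ from \cite[(2.1.5)]{BF24}. You bypass all of this: you apply the adjunction of Proposition~4.1.3(1) directly with $X=P_1(d)$, use the universal property of $P_1(d)$ to identify the resulting $\Hom$-space with $(\sigma_{K_d}(\alpha^{\ast}M))_1=\ker\psi_{\alpha^{\ast}(M)}\cong\ker\psi_{M,\fv}$, and finish via Proposition~1.2.1. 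Your argument is shorter, stays entirely within the framework set up in the paper, and does not require knowing anything about the representations $E(\fv)$; the paper's route, by contrast, serves the additional purpose (announced at the start of \S4.4) of explicitly relating the new construction $P_n^-(\fv)$ to the ad hoc families already in the literature.
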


\begin{proof}
This is a direct consequence of \cref{4.4.2} and \cite[(2.1.5)]{BF24}.
\end{proof}

\bigskip

\begin{Remark}\label{4.4.4}
For $d = 1$,  $\alpha \in \Inj_{\KK}(A_1,A_r)$ and $\fv \coloneqq \im \alpha$, we obtain $P_1(\fv) \cong E(\fv) \cong X(\alpha)$, where $X(\alpha)$ is the representation defined in \cite[\S 3]{Wor13a}.
\end{Remark}

\bigskip

\section{General subrepresentations and uniform representations}\label{S:5}

In this section, we use the test representation obtained from preprojective indecomposable $K_2$-representations, together with a recent result on general subrepresentations for Kronecker quivers, to prove the existence of indecomposable uniform Kronecker representations $M \in \rep(K_r)$ with
\[ M|_{K_r} = b_n(M) P_n(2) \oplus b_{n+1}(M) P_{n+1}(2)\]
for each $n \in \NN_0$ and $r \geq 3$. Throughout this section, $(V_1,V_2)$ denotes a pair of finite-dimensional $K$-vector spaces with $V_1 \oplus V_2 \neq \{0\}$ and $r \geq 3$.

\subsection{Preliminaries}\label{S:5.1}

The following gathers notation and a recent result on general subrepresentations, both of which will be used repeatedly in the sequel. We set $d \coloneqq (\dim_\bmk V_1,\dim_\bmk V_2) \neq (0,0)$.  
For $e \in \NN^2_0$ we let $\rep(K_r;V_1,V_2)_{e}$ denote the Zariski-closed subset of $\rep(K_r;V_1,V_2)$ (cf.\ \cite[(3.1)]{Sch92}) consisting of all representations that admit a subrepresentation of dimension vector $e$.  
We then define
\[
\cO(e,\not \hookrightarrow) \coloneqq \cV(K_r;V_1,V_2) \setminus \cV(K_r;V_1,V_2)_{e}.
\]
If $e$ is a positive root of $K_r$, the following result holds; see \cite[(3.4)]{Rei24} and \cite[(2.9)]{Bis25}.

\bigskip

\begin{proposition}\label{5.1.1}
The following statements are equivalent for $e \in \NN^2_0$ with $e \leq d$ (componentwise) and $q_r(e) \leq 1$.
\begin{enumerate}
    \item $\cO(e,\not \hookrightarrow) \neq \emptyset$.
    \item $\langle e,d - e\rangle_r < 0$.
\end{enumerate}
\end{proposition}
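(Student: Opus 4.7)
The plan is to translate the statement into a dimension count on a natural incidence variety, and to combine it with Schofield's criterion for the existence of subrepresentations as adapted to Kronecker quivers in the cited works of Reineke and Bissinger.

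First, I would introduce the incidence variety
\[ Z_e := \{((U_1,U_2),M) \in \Gr_{e_1}(V_1) \times \Gr_{e_2}(V_2) \times \cV(K_r;V_1,V_2) \mid \psi_M(A_r \otimes_\bmk U_1) \subseteq U_2\},\]
where $d \coloneqq (d_1,d_2) \coloneqq (\dim_\bmk V_1,\dim_\bmk V_2)$ and $e \coloneqq (e_1,e_2)$. Projecting to the Grassmannian factor realizes $Z_e$ as a vector bundle of rank $re_1 e_2 + r(d_1-e_1)d_2$, from which a direct calculation gives
\[ \dim Z_e - \dim \cV(K_r;V_1,V_2) = e_1(d_1-e_1) + e_2(d_2-e_2) - re_1(d_2-e_2) = \langle e, d-e \rangle_r.\]
The image of the second projection $\pi \colon Z_e \to \cV(K_r;V_1,V_2)$ is precisely the closed subset $\cV(K_r;V_1,V_2)_e$, so $\cO(e,\not\hookrightarrow) \neq \emptyset$ is equivalent, by irreducibility of $\cV(K_r;V_1,V_2)$, to $\pi$ not being dominant.

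The implication (2) $\Rightarrow$ (1) is then immediate: if $\langle e, d-e\rangle_r < 0$ the dimension count forces $\dim \pi(Z_e) \leq \dim Z_e < \dim \cV(K_r;V_1,V_2)$, so $\pi(Z_e)$ is a proper closed subset and its complement $\cO(e,\not\hookrightarrow)$ is non-empty open.

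For the converse (1) $\Rightarrow$ (2), I would argue by contrapositive: assume $\langle e, d-e\rangle_r \geq 0$, and show that $\pi$ is surjective. The hypothesis $q_r(e) \leq 1$ together with $e \in \NN_0^2$ and $e \leq d$ guarantees via \cref{2.1.1} that $e$ is a Schur root (or $e = 0$, which is trivial), so there exist bricks of dimension vector $e$. Schofield's theorem on general subrepresentations, in the form used in \cite[(3.4)]{Rei24} and \cite[(2.9)]{Bis25}, asserts for Schur roots $e$ the simplified formula $\ext(e, d-e) = \max\{0,-\langle e, d-e\rangle_r\}$; the assumption $\langle e, d-e\rangle_r \geq 0$ thus yields $\ext(e,d-e)=0$, and Schofield's criterion then forces a dense open subset of $\cV(K_r;V_1,V_2)$ to admit subrepresentations of dimension $e$. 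Since $\pi(Z_e)$ is constructible with dense closure $\cV(K_r;V_1,V_2)$, the open set $\cO(e,\not\hookrightarrow)$ has empty interior and is therefore empty.

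The main obstacle is the reverse implication (1) $\Rightarrow$ (2): while the dimension calculation handles (2) $\Rightarrow$ (1) cleanly, the converse crucially depends on the fact that, under $q_r(e) \leq 1$, Schofield's recursive $\ext$-formula collapses to $\max\{0,-\langle e,d-e\rangle_r\}$, so that $\langle e, d-e\rangle_r \geq 0$ already forces vanishing of $\ext$. Handling the boundary cases $e = 0$ and $e = d$ separately, and carefully invoking the Schur-root property guaranteed by Kac's theorem, are the technical points where care is needed.
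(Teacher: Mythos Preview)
The paper does not give its own proof of this proposition; it simply cites \cite[(3.4)]{Rei24} and \cite[(2.9)]{Bis25} and moves on. Your proposal is therefore a reconstruction of the argument in those references, and it is the right one: the incidence-variety dimension count yielding $\dim Z_e - \dim \cV(K_r;V_1,V_2) = \langle e,d-e\rangle_r$ is the standard Schofield setup and disposes of (2) $\Rightarrow$ (1) immediately, while the converse rests exactly on the simplification that, for a Schur root $e$ on a Kronecker quiver, Schofield's recursive formula collapses to $\ext(e,d-e)=\max\{0,-\langle e,d-e\rangle_r\}$---which is the content of the cited results of Reineke and the author. Your identification of this collapse as the crux, and of the need to invoke \cref{2.1.1} to pass from $q_r(e)\leq 1$ to $e$ being a Schur root, is accurate; the boundary cases $e=0$ and $e=d$ are indeed trivial as you note. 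In short, there is nothing to compare against in the paper itself, and your sketch matches the argument the paper is importing.
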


\bigskip

By definition, $\cO(e,\not \hookrightarrow) \subseteq \cV(K_r;V_1,V_2)$ is the open subset consisting of all representations in $\cV(K_r;V_1,V_2)$ that do not contain a subrepresentation of dimension vector $\be$.  In what follows, we apply \cref{5.1.1} to the dimension vectors  
\[
f^-_{n,r} \coloneqq (n+1,(r-1)(n+1)+1) 
\qquad  \text{and} \qquad 
f^+_{n,r} \coloneqq (n(r-1)-1,n),
\]  
that satisfy  
$\dimu f^{\pm}_{n,r} = \dimu P_n^{\pm}(\fv)$ for all $\fv \in \Gr_2(A_r)$ and all $n \in \NN$ (cf. \cref{S:4.3}) since $\dimu P_n(2) = (n,n+1)$.

\bigskip

\subsection{Uniform and relative projective representations}

Aim of this section is to prove that for $r \geq 3$ and $n \in \NN_0$, there exists uniform but non-homogeneous representations in $M \in \rep_{\proj}(K_r,1)$ such that
\[ M|_{K_2} \cong [(n+1) \dim_\bmk M_2 - (n+2) \dim_\bmk M_1] P_n(2) \oplus [(n+1)\dim_\bmk M_1 - n \dim_\bmk M_2] P_{n+1}(2).\] 
This, in turn, will later enable us to establish the existence of new uniform Steiner bundles.

\bigskip

\begin{proposition}\label{5.2.1} 
Let $n \in \NN$ and $(V_1,V_2)$ be a pair of $\KK$-vector spaces.
\begin{enumerate}
    \item If $n \dim_\bmk V_2 - (n+1) \dim_\bmk V_1 \geq  n(n+1) (r-2)$, then $\cO(f^-_{n,r},\not \hookrightarrow) \neq \emptyset$. 
    \item Assume that $\dim_\bmk V_2 \geq (n+1)(r-1)+1$ and let $M \in \cO(f^-_{n,r},\not \hookrightarrow) \cap \rep_{\proj}(K_r,1)$. We have $0 = \Hom_{K_r}(P_{n}^-(\fv),M)$ for all $\fv \in \Gr_2(A_r)$.
\end{enumerate}
\end{proposition}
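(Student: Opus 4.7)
The plan is to derive part (1) as an immediate consequence of \cref{5.1.1} and part (2) by combining the adjunction in \cref{4.2.6} with a dimension-counting argument that promotes a $K_2$-subrepresentation of $\alpha^{\ast}(M)$ to an $A_r$-subrepresentation of $M$ of the prescribed dimension vector $f^{-}_{n,r}$.

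For part (1), I would set $e \coloneqq f^{-}_{n,r}$ and invoke \cref{5.1.1}. Since $f^{-}_{n,r} = \udim P^{-}_n(\fv)$ and $P^{-}_n(\fv)$ is a regular brick by \cref{4.4.1}(1), one has $q_r(f^{-}_{n,r}) \leq 1$; a direct computation even gives $q_r(f^{-}_{n,r}) = 1 - (r-2)n(n+1)$. Expanding the Euler-Ringel form then yields
\[ \langle f^{-}_{n,r}, d - f^{-}_{n,r}\rangle_r = (n+1)\dim_{\KK} V_1 - n\dim_{\KK} V_2 + (r-2)n(n+1) - 1, \]
where $d \coloneqq (\dim_{\KK} V_1, \dim_{\KK} V_2)$, and the hypothesis of (1) is precisely that this integer is negative. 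The conclusion then follows from \cref{5.1.1}, after noting that the marginal situation $f^{-}_{n,r} \not\leq d$ is trivial because $\cV(K_r;V_1,V_2)_{f^{-}_{n,r}} = \emptyset$ in that case.

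For part (2), I would argue by contradiction. Suppose $\Hom_{K_r}(P^{-}_n(\fv), M) \neq 0$ for some $\fv \in \Gr_2(A_r)$, and fix $\alpha \in \Inj_{\KK}(A_2, A_r)$ with $\im \alpha = \fv$. Using $\sigma^{-1}_{K_2}(P_n(2)) \cong P_{n+1}(2)$, the adjunction of \cref{4.2.6} gives $\Hom_{K_2}(P_{n+1}(2), \alpha^{\ast}(M)) \neq 0$. Restriction sends $M \in \rep_{\proj}(K_r,1)$ to $\alpha^{\ast}(M) \in \rep_{\proj}(K_2,1)$, which consists of preprojective representations, so one may write $\alpha^{\ast}(M) \cong \bigoplus_{j \geq 0} c_j P_j(2)$ with $c_j > 0$ for some $j \geq n+1$. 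Irreducible morphisms between indecomposable preprojective $K_2$-representations are injective -- they are irreducible maps between indecomposables, hence either injective or surjective, and the latter is ruled out by a strict dimension inequality -- so a composition of such maps embeds $P_{n+1}(2)$ into $P_j(2) \subseteq \alpha^{\ast}(M)$, producing a subrepresentation $N \subseteq \alpha^{\ast}(M)$ with $\udim N = (n+1, n+2)$.

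Let $N_1 \subseteq M_1$ denote the first component of $N$. The smallest $A_r$-subrepresentation of $M$ containing $N_1$ has second component $\psi_M(A_r \otimes N_1) = \sum_{i=1}^{r} M(\gamma_i)(N_1)$. Choosing $\gamma_1, \gamma_2$ to span $\fv$, the sum $M(\gamma_1)(N_1) + M(\gamma_2)(N_1) = \psi_M(\fv \otimes N_1)$ coincides with the second component of $N$ and hence has dimension $n+2$. Since $M \in \rep_{\proj}(K_r,1)$, every $M(\gamma_i)$ is injective on $M_1$, so $\dim_{\KK} M(\gamma_i)(N_1) = n+1$ for all $i$, whence
\[ \dim_{\KK} \psi_M(A_r \otimes N_1) \leq (n+2) + (r-2)(n+1) = (r-1)(n+1)+1. \]
The hypothesis $\dim_{\KK} V_2 \geq (r-1)(n+1)+1$ then permits a choice of $N_2' \subseteq M_2$ with $\psi_M(A_r \otimes N_1) \subseteq N_2'$ and $\dim_{\KK} N_2' = (r-1)(n+1)+1$, and the pair $(N_1, N_2')$ is an $A_r$-subrepresentation of $M$ of dimension vector $f^{-}_{n,r}$, contradicting $M \in \cO(f^{-}_{n,r}, \not\hookrightarrow)$. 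I expect the most delicate step to be the tight bound on $\dim_{\KK} \psi_M(A_r \otimes N_1)$: the naive count from the $r$ arrows gives $r(n+1)$, and it is the $n$-dimensional overlap $\dim_{\KK}(M(\gamma_1)(N_1) \cap M(\gamma_2)(N_1)) = n$ forced by the embedded copy of $P_{n+1}(2)$ that sharpens this to the required $(r-1)(n+1)+1$.
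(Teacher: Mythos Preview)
Your proof is correct. Part~(1) is essentially identical to the paper's argument.

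For part~(2), both you and the paper argue by contradiction and end by exhibiting a subrepresentation of $M$ of dimension vector $f^{-}_{n,r}$, but the constructions differ. The paper takes a nonzero $f\colon P_n^{-}(\fv)\to M$ and works with $\im f$, which is already a $K_r$-subrepresentation; using that $\rep_{\proj}(K_r,1)$ is torsion-free (\cref{2.3.6}) and \cref{4.3.4}, one finds $\dim_\KK(\im f)_1=n+1$, while the bound $\dim_\KK(\im f)_2\le(n+1)(r-1)+1$ is automatic from $\dimu P_n^{-}(\fv)$. You instead go through the adjunction to embed $P_{n+1}(2)$ into $\alpha^{\ast}(M)$ as a $K_2$-subrepresentation, then generate a $K_r$-subrepresentation from its first component and bound its second dimension by hand via the $(r-2)$ extra arrows. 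Your route avoids invoking the torsion-free property of $\rep_{\proj}(K_r,1)$ and makes the origin of the bound $(n+1)(r-1)+1$ transparent; the paper's route is slicker in that the bound comes for free from the source of $f$. Both are equally valid. One small remark: your claim that $\psi_M(\fv\otimes N_1)$ equals $N_2$ (rather than merely being contained in it) uses that the structure map of $P_{n+1}(2)$ is surjective, which holds since $n\ge 1$; in any case the inequality $\dim_\KK\psi_M(\fv\otimes N_1)\le n+2$ is all you need.
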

\begin{proof}
\begin{enumerate}
\item  We can assume that $f^-_{n,r} \leq \dimu (V_1,V_2)$, otherwise $\cO(f^-_{n,r},\not \hookrightarrow) = \cV(K_r;V_1,V_2)$ and we are done. We have
\begin{align*}
    &\langle f^-_{n,r},\dimu(V_1,V_2) -  f^-_{n,r} \rangle_r = \langle (n+1,(r-1)(n+1)+1),\dimu(V_1,V_2)\rangle_r -q_r(f^-_{n,r})\\
    = &(n+1) \dim_\bmk V_1 - n\dim_\bmk V_2 -  q_r(f^-_{n,r}) \leq -(n(n+1)(r-2)) -q_r(\sigma_r(f^-_{n,r}))  \\
    = & -(n(n+1)(r-2)) - (q_2(P_n(2)) - (r-2)n(n+1)) = -1 < 0. 
\end{align*}
Since $q_r(\dimu f^-_{n,r}) = q_r(n,n+1) \leq 1$, we can apply \cref{5.1.1}.
\item Let $\fv \in \Gr_2(A_r)$. We assume that there exists $f \colon P_{n}^-(\fv) \lra M$ non-zero. Since $\rep_{\proj}(K_r,1)$ is closed under subrepresentations (see \cref{2.3.6}), we have $0 \neq \im f \in \rep_{\proj}(K_r,1)$. For $\alpha \in \Inj_{\KK}(A_d,A_r)$ such that $\im \alpha = \fv$, we have $\alpha^\ast(\im f) \in \rep_{\proj}(K_2,1)$. Hence, $\im f|_{\fv}$ is preprojective by \cite[(4.3)]{Wor13b} we write $\im f|_{\fv} = \bigoplus_{i\geq 0} b_i(\fv) P_i(2)$. Now \cref{4.3.4} implies
\[ 0 \neq \dim_\bmk \Hom_{K_r}(P_{n}^-(\fv),\im f) = \sum_{i \geq n+1} b_i(\fv) \dim_{\KK}\Hom_{K_2}(P_{n+1}(2),P_i(2)).\]
Hence, $b_{i}(\fv) > 0$ for some $i \geq n+1$. In particular, $n+1 = \dim_\bmk (P_{n}^-(\fv))_1 \geq \dim_\bmk (\im f)_1 \geq \dim_\bmk (P_i(2))_1 = i \geq n+1$. We conclude $i = n + 1 = \dim_\bmk (\im f)_1$. By assumption, we have
\[ \ell \coloneqq \dim_\bmk (\im f)_2 \leq \dim_\bmk (P_n^-(\fv))_2 = (n+1)(r-1)+1 \leq \dim_\bmk M_2.\]
Therefore, find a subrepresentation $Y \subseteq M$ such that
$Y \cong [{(n+1)(r-1)+1-\ell}]P_0(r)$ and $Y \cap \im f = \{0\}$. Hence, $\dimu (Y \oplus \im f) = (n+1,(n+1)(r-1)+1) = f^-_{n,r}$, contradicting our assumption.
\end{enumerate}
\end{proof}

\bigskip
Given $e \leq d = \dimu(V_1,V_2)$, we consider the open subset
\[
\cO(e,\not \twoheadrightarrow) \coloneqq \cV(K_r;V_1,V_2) \setminus \cV(K_r;V_1,V_2)_{\dimu(V_1,V_2) - \be}
\]
consisting of all representations that do not have a factor of dimension vector $\be$. Since the duality $D_{K_r} \colon \rep(K_r) \lra \rep(K_r)$ interchanges quotients and subrepresentations, we have
\begin{align*}
    \cO(e,\not \twoheadrightarrow) 
    &= \cV(K_r;V_1,V_2) \setminus \cV(K_r;V_1,V_2)_{\dimu(V_1,V_2) - \be} \\
    &\cong \cV(K_r;V_2,V_1) \setminus \cV(K_r;V_2,V_1)_{\delta(\be)},
\end{align*}
where $\delta \colon \ZZ^2 \lra \ZZ^2$ denotes the twist. 

\bigskip

\begin{proposition}\label{5.5.2} 
Let $n \in \NN$ and $(V_1,V_2)$ be a pair of $\KK$-vector spaces.
\begin{enumerate}
    \item If $(n+1)\dim_{\KK} V_1 - n\dim_{\KK} V_2 \geq n(n+1)(r-2)$
    and $\Delta_{(V_1,V_2)} \geq 0$, then $\bigcap^n_{l=1} \cO(f^+_{l,r},\not \twoheadrightarrow) \neq \emptyset$.
    \item Assume that $\dim_\bmk V_1 \geq n(r-1)-1$. 
\begin{enumerate}
    \item[(i)] Let $M \in \cV(K_r;V_1,V_2)$ and assume that there is $l \in \{1,\ldots,n\}$, a representation $U \in \rep(K_r)$ with $\dim_{\KK} U_1 \leq l(r-1)-1$, $\dim_{\KK} U_2 = l$ and an epimorphism $M \twoheadrightarrow U$. Then $M  \notin \bigcap^n_{i=1} \cO(f^+_{i,r},\not \twoheadrightarrow)$.
    \item[(ii)] We have $0 =  \Hom_{K_r}(M,P_n^+(\fv))$
for all $M \in \bigcap^n_{l=1} \cO(f^+_{l,r},\not \twoheadrightarrow)$ and all $\fv \in \Gr_2(A_r)$.
\end{enumerate}
\item If $(n+1)\dim_\bmk V_1 - n \dim_\bmk V_2 \geq n(n+1)(r-2)$ and $\Delta_{(V_1,V_2)} \geq r-1$, then $\dim_\bmk V_1 \geq n(r-1)-1$ and $\bigcap^n_{l=1} \cO(f^+_{l,r},\not \twoheadrightarrow) \neq \emptyset$. Moreover, we have $0 = \dim_\bmk \Hom_{K_r}(M,P_n^+(\fv))$
for all $M \in \bigcap^n_{l=1} \cO(f^+_{l,r},\not \twoheadrightarrow)$ and all $\fv \in \Gr_2(A_r)$.
\end{enumerate}

\bigskip

\end{proposition}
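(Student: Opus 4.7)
My plan is to treat the four parts of the statement in turn.

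For (1), I would translate the surjection condition into a subrepresentation condition via the duality $D_{K_r}$, reducing the problem to showing $\bigcap_{l=1}^n \cO(\delta(f^+_{l,r}),\not\hookrightarrow) \neq \emptyset$ in $\cV(K_r;V_2,V_1)$, where $\delta$ is the twist. For each $l$, I apply \cref{5.1.1} to $e = (l, l(r-1)-1)$: the verification $q_r(e) \leq 1$ is routine, and the condition $\langle e, \dimu(V_2,V_1)-e\rangle_r < 0$ simplifies (after writing $\Delta = \dim_{\KK} V_2 - \dim_{\KK} V_1$) to $\dim_{\KK} V_1 - l\Delta > (r-2)l(l+1) - 1$. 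The first hypothesis gives $\dim_{\KK} V_1 - n\Delta \geq n(n+1)(r-2)$, so $\dim_{\KK} V_1 - l\Delta \geq n(n+1)(r-2) + (n-l)\Delta \geq l(l+1)(r-2)$ since $n \geq l$ and $\Delta \geq 0$. Each $\cO(\delta(f^+_{l,r}),\not\hookrightarrow)$ is therefore a nonempty open subset of the irreducible variety $\cV(K_r;V_2,V_1)$, and a finite intersection of such sets remains nonempty.

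For (2)(i), I would enlarge the kernel. Let $K = \ker(M \twoheadrightarrow U)$; because $\dim_{\KK} U_1 \leq l(r-1)-1$ and $\dim_{\KK} V_1 \geq n(r-1)-1 \geq l(r-1)-1$, there is a subspace $K_1' \subseteq K_1$ of dimension $\dim_{\KK} M_1 - l(r-1) + 1$. Setting $K' = (K_1', K_2)$ yields a subrepresentation of $M$ (the inclusion $\psi_M(A_r \otimes K_1') \subseteq K_2$ being automatic from $K_1' \subseteq K_1$), and $M/K'$ has dimension vector $f^+_{l,r}$, showing $M \notin \cO(f^+_{l,r},\not\twoheadrightarrow)$.

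For (2)(ii), I argue by contradiction. Given a nonzero $f\colon M \to P_n^+(\fv)$, let $U = \im f$. The explicit description $P_n^+(\fv) \cong \sigma_{K_r}(\inf P_n(2))$ shows $\bigcap_k \ker(\psi_{P_n^+(\fv)}(\gamma_k \otimes -)) = 0$, so $l \coloneqq \dim_{\KK} U_2 \in \{1,\ldots,n\}$. The main step is to prove $\dim_{\KK} U_1 \leq l(r-1) - 1$, after which (i) yields a contradiction to the hypothesis on $M$. Identifying $(P_n^+(\fv))_1$ with $\ker\psi_{\inf P_n(2)} \subseteq A_r \otimes P_n(2)_1$, the subrepresentation condition forces
\[ U_1 \subseteq (A_r \otimes U_2) \cap \ker\psi_{\inf P_n(2)} = (A_2 \otimes U_2 \cap \ker\psi_{P_n(2)}) \oplus \bigoplus_{k \geq 3}\gamma_k \otimes U_2. \]
The right summand contributes $(r-2)l$, and I reduce the bound to proving $\dim_{\KK}\psi_{P_n(2)}(A_2 \otimes U_2) \geq l+1$. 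Since $P_n(2) \in \rep_{\proj}(K_2,1)$, each $\psi_{P_n(2)}(\gamma_j \otimes -)$ is injective on $U_2$ and so has $l$-dimensional image; were the two images equal to a single subspace $W$, then in the chain basis of $P_n(2)$ the map $\psi_{P_n(2)}(\gamma_2 \otimes -)$ equals the composition of $\psi_{P_n(2)}(\gamma_1 \otimes -)$ with the nilpotent shift on $P_n(2)_2$, forcing $W$ to be invariant with the shift acting invertibly on $W$, so $W = 0$. This dimension bound on subrepresentations of $P_n^+(\fv)$ is the hard part of the argument.

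Part (3) is a direct combination: from $(n+1)\dim_{\KK} V_1 - n\dim_{\KK} V_2 \geq n(n+1)(r-2)$ and $\Delta \geq r-1$ one derives $\dim_{\KK} V_1 \geq n(r-1) + n(n+1)(r-2) \geq n(r-1) - 1$, so the hypotheses of both (1) and (2)(ii) are satisfied and the stated conclusions follow.
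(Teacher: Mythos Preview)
Your treatment of (1), (2)(i) and (3) is essentially identical to the paper's. The difference lies in (2)(ii). The paper does not analyse the subrepresentation lattice of $P_n^+(\fv)$ directly; instead it chooses $l$ \emph{minimal} with $\Hom_{K_r}(M,P_l^+(\fv))\neq 0$, sets $U=\im f$ for a nonzero $f\colon M\to P_l^+(\fv)$, and uses \cref{4.3.4}(3) (the adjunction-based formula $\dim\Hom_{K_r}(U,P_l^+(\fv))=\sum_{i=0}^{l-1} b_i(\fv)\dim\Hom_{K_2}(P_i(2),P_{l-1}(2))$) twice: minimality forces $b_i(\fv)=0$ for $i\le l-2$, and nonvanishing forces $b_{l-1}(\fv)\neq 0$, whence $\dim U_2\ge l$; combined with $\dimu P_l^+(\fv)=f^+_{l,r}$ one gets $\dim U_2=l$ and $\dim U_1\le l(r-1)-1$, and (i) concludes. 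Your route is more elementary: you bound $\dim U_1$ for \emph{every} subrepresentation $U\subseteq P_n^+(\fv)$ with $\dim U_2=l$ via the explicit description of $\sigma_{K_r}(\inf P_n(2))$ and the chain (shift) model of $P_n(2)$, bypassing \cref{4.3.4} and the minimality trick. Both arguments are correct; the paper's stays within the test-representation framework it has built, while yours gives a self-contained dimension bound. One small imprecision: the isomorphism $P_n^+(\fv)\cong\sigma_{K_r}(\inf P_n(2))$ holds only for $\fv=A_2$; for general $\fv$ one has a twist $g.(\sigma_{K_r}(\inf P_n(2)))$. This is harmless, since twisting does not change the pairs of subspaces that form subrepresentations (equivalently, replace $M$ by $g^{-1}.M$, which lies in the same $\GL(A_r)$-stable intersection $\bigcap_l\cO(f^+_{l,r},\not\twoheadrightarrow)$), but you should say so explicitly.
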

\begin{proof}
\begin{enumerate}
    \item Let $1 \leq l \leq n$. We have
\begin{align*}
   (l+1) \dim_\bmk V_1 - l \dim_\bmk V_2 &= (n+1) \dim_\bmk V_1 - n \dim_\bmk V_2 + (n-l) \Delta_{(V_1,V_2)}\\
    &\geq n(n+1)(r-2) \geq l(l+1)(r-2).
    \end{align*}
and
\begin{align*}
    q_r(\delta(f^+_{l,r})) &= q_r(f^+_{l,r}) =  q_r(\sigma_{r}^{-1}(f^+_{l,r})) = q_r(\dimu P_l(2)) =  q_2(\dimu P_l(2)) - (r-2)l(l+1) \\
    &= 1-  (r-2)l(l+1),
\end{align*}
where $\delta \colon \ZZ^2 \lra \ZZ^2 ; (x,y) \mapsto (y,x)$. We conclude
\begin{align*}
\langle \delta(f^+_{l,r}),\dimu(V_2,V_1) -  \delta(f^+_{l,r})\rangle_r  &= l \dim_{\KK} V_2 - (l+1) \dim_{\KK} V_1 -(1-  (r-2)l(l+1)) \\
  &  \leq -l(l+1)(r-2) -1 + (r-2)l(l+1) < 0.
\end{align*}
Moreover, we have $q_r(\delta(f^+_{l,r})) \leq 1$. As in the proof of \cref{5.2.1} we may assume without loss of generality $f^+_{l,r} \leq \dimu(V_1,V_2)$. Now \cref{5.1.1} and taking duals imply $\cO(f^+_{l,r},\not \twoheadrightarrow) \neq \emptyset$. The statement follows since $\cV(K_r;V_1,V_2)$ is irreducible and we take an intersection of finitely many non-empty open sets.
\item  
\begin{enumerate}
    \item[(i)] 
     Denote by $K = (K_1,K_2,\psi_{M}|_{A_r \otimes_{\KK} K_1})$ the kernel of the epimorphism $M \twoheadrightarrow U$ under consideration. We have
    \begin{align*}
        \dim_\bmk K_1 &= \dim_{\KK} V_1 - \dim_{\KK} U_1 \geq \dim_{\KK} V_1 - (l(r-1)-1) \\
        &\geq \dim_{\KK} V_1 - (n(r-1)-1) \geq  0.
    \end{align*}
   In particular, we find a subspace $W_1 \subseteq K_1$ of dimension $\dim_{\KK} V_1 - (l(r-1)-1)$. We consider the subrepresentation $W$ of $K$ corresponding to $(W_1,K_2)$. We have $\dimu W = (\dim_{\KK} V_1 - (l(r-1)-1),\dim_\bmk V_2 - l)$ and therefore $\dimu (M/W) = (l(r-1)-1,l) = f^+_{l,r}$. Hence, $M \not\in  \cO(f^+_{l,r},\not \twoheadrightarrow) \supseteq \bigcap^{n}_{i =1} \cO(f^+_{i,r},\not \twoheadrightarrow)$.
\item[(ii)] Let $M \in \bigcap^n_{l=1} \cO(f^+_{l,r},\not \twoheadrightarrow)$ and assume that $0 \neq \Hom_{K_r}(M,P_{n}^+(\fv))$ for some $\fv \in \Gr_2(A_r)$. We find $l \in \{1,\ldots,n\}$ minimal such that $0 \neq \Hom_{K_r}(M,P^+_{l}(\fv))$, and a non-zero morphism $f \colon M \lra P^+_{l}(\fv)$. We set $U \coloneqq \im f \subseteq P^+_{l}(\fv)$ and note that $\dim_\bmk U_2 \neq 0$, since $I_0(r)$ is not a direct summand of $P^+_{l}(\fv)$ by \cref{4.4.1}. 

We write $U|_{\fv,\pproj} = \bigoplus_{i \in \NN_0} b_i(\fv) P_i(2)$ and conclude with \cref{4.3.4}(3) 
        \[ (\ast) \quad 0 \neq \dim_\bmk \Hom_{K_r}(U,P^+_{l}(\fv)) = \sum^{l-1}_{i=0} b_i(\fv) {\dim_\bmk \Hom_{K_2}(P_{i}(2),P_{l-1}(2))}.\]
If $l = 1$, we have $\dimu P^+_{l}(\fv) =  f^+_{1,r} = (r-2,1)$ and therefore $0 < \dim_\bmk U_2 \leq 1 = l$, i.e., $\dim_\bmk U_2 = l$ and $\dimu U = (k,l)$ with $k \leq r-2 = l(r-1)-1$. This is a contradiction to (i). Hence, $l > 1$. Minimality of $l$ implies  $\Hom_{K_r}(M,P_{l-1}^+(\fv)) = 0$ and left exactness of $\Hom_{K_r}(-,P_{l-1}^+(\fv))$ in conjunction with  \cref{4.3.4}(3) implies 
         \[ 0 = \dim_\bmk \Hom_{K_r}(U,P_{l-1}^+(\fv)) = \sum_{i = 0}^{l-2} b_i(\fv)  \underbrace{\dim_\bmk \Hom_{K_2}(P_{i}(2),P_{l-2}(2))}_{\neq 0}.\]
        Hence, $b_i(\fv) = 0$ for all $0 \leq i \leq l-2$ and $(\ast)$ simplifies to 
        \[0 \neq b_{l-1}(\fv) \Hom_{K_2}(P_{l-1}(2),P_{l-1}(2)) = b_{l-1}(\fv).\]
        In particular, $\dim_{\KK} U_2 \geq \dim_{\KK} (P_{l-1}(2))_2 = l$.
         Since 
\[\dimu U \leq \dimu P_{l}^+(\fv) = (l(r-1)-1,l),\] we get $\dim_{\KK} U_1 \leq l(r-1)-1$ and $\dim_\bmk U_2 = l$. This is a contradiction to (i).
\end{enumerate}
\item By (1), we have 
$\bigcap^{n}_{l=1} \cO(f^+_{l,r},\not \twoheadrightarrow) \neq \emptyset$ and 
\[\dim_{\KK} V_1 \geq n(n+1)(r-2)+n \Delta_{(V_1,V_2)} \geq n(n+1)(r-2) +n(r-1) \geq n(r-1)-1.\] Hence, the conditions of (2) are satisfied and the statement follows from (2)(ii).
\end{enumerate}
\end{proof}

\bigskip
\noindent Now we can prove the main result of this section.

\bigskip

\begin{Theorem}\label{5.2.3}
Let $n \in \NN$, $r \geq 3$ and $(V_1,V_2)$ be a pair of $\KK$-vector spaces such that
\[  (n+1)\dim_\bmk V_1 - n \dim_\bmk V_2 \geq n(n+1)(r-2) \quad (\mathrm{I}) \]
and 
\[ (n+1) \dim_\bmk V_2 - (n+2) \dim_\bmk V_1 \geq (n+1)(n+2)(r-2) \quad (\mathrm{II}).\] 
There exists a non-empty open subset $\cO \subseteq \rep_{\proj}(K_r,1) \cap \cV(K_r;V_1,V_2)$ such that every representation in $M \in \cO$ is a non-homogeneous brick, uniform and
    \[ M|_{K_2} \cong [(n+1) \dim_\bmk V_2 - (n+2) \dim_\bmk V_1] P_n(2) \oplus [(n+1)\dim_\bmk V_1 - n \dim_\bmk V_2] P_{n+1}(2).\]
\end{Theorem}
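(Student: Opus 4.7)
The plan is to realise $\cO$ as an intersection of finitely many non-empty Zariski open subsets of the irreducible affine variety $\cV(K_r;V_1,V_2)$, and then to read off the splitting type via the test-representation criterion of \cref{4.3.5}.

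The first step is to unpack the numerical content of (I) and (II). Suitable non-negative linear combinations of the two inequalities yield the lower bounds $\dim_\bmk V_1 \geq n(n+1)(2n+3)(r-2)$ and $\dim_\bmk V_2 \geq (n+1)(n+2)(2n+1)(r-2)$. Combining these with (II) gives $\Delta_{(V_1,V_2)} \geq 2(n+1)^2(r-2)$, which for $r \geq 3$ is at least $r-1$, and also the estimate $\dim_\bmk V_2 \geq (n+2)(r-1)+1$. Finally, the ratio $s \coloneqq \dim_\bmk V_2 / \dim_\bmk V_1$ lies in the interval $[(n+2)/(n+1),(n+1)/n]$, and since $s>1$ this forces $s+s^{-1} \leq 2 + \frac{1}{n(n+1)} \leq 3 \leq r$; hence $q_r(\dimu(V_1,V_2)) \leq 0$.

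Next I would identify four non-empty open subsets of $\cV(K_r;V_1,V_2)$ whose intersection is $\cO$. The subset $\cO_1 \coloneqq \rep_{\proj}(K_r,1) \cap \cV(K_r;V_1,V_2)$ is open and non-empty by \cref{2.3.1}, using $\Delta \geq r-1$. The subset $\cO_2$ of non-homogeneous bricks is open and non-empty by \cref{2.2.2}, using $q_r(\dimu(V_1,V_2)) \leq 0$. The subset $\cO_3 \coloneqq \cO(f^-_{n+1,r},\not\hookrightarrow)$ is non-empty by \cref{5.2.1}(1) applied with $n$ replaced by $n+1$, whose hypothesis is exactly condition (II). And $\cO_4 \coloneqq \bigcap_{l=1}^{n} \cO(f^+_{l,r},\not\twoheadrightarrow)$ is non-empty by \cref{5.5.2}(3), whose hypothesis is precisely condition (I) together with $\Delta \geq r-1$. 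Irreducibility of $\cV(K_r;V_1,V_2)$ then forces $\cO \coloneqq \cO_1 \cap \cO_2 \cap \cO_3 \cap \cO_4$ to be non-empty.

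Finally, for $M \in \cO$, \cref{5.2.1}(2) (with $n$ replaced by $n+1$; valid since $\dim_\bmk V_2 \geq (n+2)(r-1)+1$) yields $\Hom_{K_r}(P^-_{n+1}(\fv),M) = 0$ for every $\fv \in \Gr_2(A_r)$, while \cref{5.5.2}(3) yields $\Hom_{K_r}(M,P^+_n(\fv)) = 0$ for every $\fv$. Applying \cref{4.3.5} immediately produces the claimed isomorphism for $M|_{\fv}$, uniformly in $\fv$, which in particular shows that $M$ is uniform with the stated splitting type; the remaining properties (non-homogeneous brick) follow from $M \in \cO_2$. The only non-routine point is the arithmetic of the first step: verifying that (I), (II) together with $r \geq 3$ imply the three auxiliary estimates $\Delta \geq r-1$, $\dim_\bmk V_2 \geq (n+2)(r-1)+1$ and $q_r(\dimu(V_1,V_2)) \leq 0$. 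Once these are in place, the rest is a direct assembly of results from Sections 2, 4 and 5.
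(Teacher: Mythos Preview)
Your proposal is correct and follows the same overall architecture as the paper: intersect the open sets coming from \cref{2.3.1}, \cref{2.2.2}, \cref{5.2.1} and \cref{5.5.2}, then invoke \cref{4.3.5} to read off the splitting type.

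The one noteworthy difference is how you obtain $q_r(\dimu(V_1,V_2))\le 0$. You derive it directly from (I) and (II) via the ratio $s=\dim_\bmk V_2/\dim_\bmk V_1\in\bigl(\tfrac{n+2}{n+1},\tfrac{n+1}{n}\bigr)$, giving $s+s^{-1}<2+\tfrac{1}{n(n+1)}\le\tfrac52<r$. The paper instead argues structurally: it first shows that every $M$ in the open set $X=\rep_{\proj}(K_r,1)\cap\bigcap_{l=1}^n\cO(f^+_{l,r},\not\twoheadrightarrow)\cap\cO(f^-_{n+1,r},\not\hookrightarrow)$ has no preprojective or preinjective indecomposable summands (using that $P_0(2)$ does not occur in $M|_\fv$, together with \cref{2.3.6}), and then invokes Kac's theorem via \cref{2.1.1} to conclude $q_r\le 0$. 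Your direct arithmetic is shorter and avoids this detour; the paper's route has the incidental benefit of making explicit that every such $M$ is regular, but this is not needed for the statement itself.

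One small imprecision: \cref{2.2.2} does not assert that the set of all non-homogeneous bricks is open, only that there exists a non-empty open subset consisting of such; your $\cO_2$ should be phrased accordingly.
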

\begin{proof}
Note that $(\mathrm{I}) + (\mathrm{II})$ gives us 
\[ (\ast) \quad \Delta_{(V_1,V_2)} \geq (2n+2)(n+1)(r-2) \geq 8(r-2) \geq r - 1.\]

This shows that the assumptions of \cref{2.3.1}, \cref{5.5.2}(1) and \cref{5.2.1}(1) are satisfied. In particular,  
\[ X\coloneqq  \rep_{\proj}(K_r,1) \cap \bigcap^n_{i=1} \cO(f^+_{i,r},\not \twoheadrightarrow) \cap \cO(f^-_{n+1,r},\not \hookrightarrow)\]
is a non-empty and open subset of $\cV(K_r;V_1,V_2)$. Let $M \in X$ and $\fv \in \Gr_2(A_r)$, then $M \in \rep_{\proj}(K_r,1)$, $M|_{\fv}$ is preprojective (see \cite[(4.3)]{Wor13b}), and \cref{5.5.2}(3) gives us $0 = \Hom_{K_r}(M,P_n^+(\fv))$. We conclude with \cref{4.3.4}(4) and $n \geq 1$ that $P_0(2)$ is not a direct summand of $M|_{\fv}$. Hence, $P_0(r)$ is not a direct summand of $M$.

Let $N$ be a direct summand of $M$, then $N_1 \neq 0$. The assumption $N \in \rep_{\proj}(K_r,2)$ yields with \cref{2.3.1} that $\Delta_N(2) \geq \min \{ 2(r-2),\dim_{\KK} N_1(r-2)\} \geq r-2 \neq 0$ and $N|_{\fv} = \Delta_{N}(2) P_0(2) \oplus (\dim_{\KK} N_1) P_1(2)$ is a direct summand of $M|_{\fv}$, a contradiction. We conclude with \cref{2.3.6}(2) that $M$ can not have a preprojective representation as a direct summand. Since $\rep_{\proj}(K_r,1)$ does not contain non-zero preinjective representations (see \cref{2.3.6}), every indecomposable direct summand of $M$ is therefore regular. Hence, we may apply \cref{2.1.1} and conclude $q_r(\dimu(V_1,V_2)) \leq 0$. Now \cref{2.2.2} gives us a non-empty open subset $\cO' \subseteq \cV(K_r;V_1,V_2)$ consisting of bricks that are not homogeneous. We set
\[ \cO := \cO' \cap X \neq \emptyset\]
and note that $(\mathrm{I})$ implies $\dim_\bmk V_1 \geq n(n+1)(r-2) + n \Delta_{(V_1,V_2)} \geq n(n+1)(r-2) + n(r-1)$ and $\Delta_{(V_1,V_2)} \geq (2n+2)(n+1)(r-2)$ established in $(\ast)$, gives
\begin{align*}
    \dim_\bmk V_2 &\geq \dim_\bmk V_1 + (2n+2)(n+1)(r-2) \\
    &\geq n(n+1)(r-2) + n(r-1) + (2n+2)(n+1)(r-2) \\
    &= (3n+2)(n+1)(r-2) + n(r-1) \\
    &\geq 10(r-2) + n(r-1) \geq 2(r-1)+ 1 + n(r-1)\\
    &= (n+2)(r-1)+1.
\end{align*} 
Hence, we may apply \cref{5.2.1}(2) and \cref{5.5.2}(3) to conclude that for every element in $\cO$ the condition of \cref{4.3.5}(3) is satisfied. Finally, we recall that $a_{n}(2) = n$ for all $n \in \NN_0$.
\end{proof}

\bigskip
\noindent The following result shows a possible application of \cref{5.2.3}.

\bigskip

\begin{corollary}\label{5.2.4}
Let $n \in \NN$ and $r \geq 3$, fix $s \geq 2(n+1)^2(r-2)$ and set $\ell \coloneqq s -2(n+1)^2(r-2) \in \NN_0$.
\begin{enumerate}
    \item For each 
\[c \in [n((n+1)(r-2)+s),n((n+1)(r-2)+s)+\ell],\]
and every pair of vector space $(V_1,V_2)$ with dimension vector $\dimu(V_1,V_2) = (c,s+c)$, there is a non-empty open subset $\cO \subseteq \rep_{\proj}(K_r,1) \cap \cV(K_r;V_1,V_2)$ such that every representation in $M \in \cO$ is a non-homogeneous brick, uniform and
    \[ M|_{K_2} \cong [(n+1)\dim_\bmk V_2 - (n+2) \dim_\bmk V_1] P_{n}(2) \oplus [(n+1)\dim_\bmk V_1 - n \dim_\bmk V_2] P_{n+1}(2).\]
    \item For $s = 2(n+1)^2(r-2)$ and $c = n((n+1)(r-2)+s)$ we obtain
\[ M|_{K_2} \cong [(n+2)(n+1)(r-2)] P_n(2) \oplus [n(n+1)(r-2)] P_{n+1}(2).\]
In particular, this implies $\dim_\bmk M_1 = n(n+1)(2n+3)(r-2)$ and $\Delta_M = (n+1)(2n+2)(r-2)$.
\end{enumerate}
\end{corollary}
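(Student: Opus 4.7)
The statement is a direct corollary of Theorem 5.2.3, so the plan is simply to rewrite the stated bounds on $c$ in terms of the two inequalities (I) and (II) of that theorem, and then to substitute the specific values in (2).

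For part (1), I would set $\dim_{\KK} V_1 = c$ and $\dim_{\KK} V_2 = s+c$, so that the quantities appearing in (I) and (II) of Theorem 5.2.3 become
\[ (n+1)\dim_{\KK} V_1 - n \dim_{\KK} V_2 = (n+1)c - n(s+c) = c - ns,\]
\[ (n+1)\dim_{\KK} V_2 - (n+2)\dim_{\KK} V_1 = (n+1)s - c.\]
Thus condition (I), namely $c - ns \geq n(n+1)(r-2)$, becomes
\[ c \geq n\bigl((n+1)(r-2)+s\bigr),\]
which is exactly the left endpoint of the interval defining $c$. Condition (II), namely $(n+1)s - c \geq (n+1)(n+2)(r-2)$, becomes
\[ c \leq (n+1)s - (n+1)(n+2)(r-2).\]
The main verification is that this upper bound coincides with $n((n+1)(r-2)+s)+\ell$. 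Using $\ell = s - 2(n+1)^2(r-2)$, a short computation gives
\[ n\bigl((n+1)(r-2)+s\bigr)+\ell = ns + s + n(n+1)(r-2) - 2(n+1)^2(r-2) = (n+1)s - (n+1)(n+2)(r-2),\]
as required. Hence both (I) and (II) are satisfied for every admissible $c$, and Theorem 5.2.3 produces the desired open subset $\cO$ with the stated splitting type.

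For part (2), I would simply substitute $s = 2(n+1)^2(r-2)$ and $c = n((n+1)(r-2)+s)$ into the formulas of part (1). The multiplicities of $P_n(2)$ and $P_{n+1}(2)$ reduce, after cancellation, to $(n+2)(n+1)(r-2)$ and $n(n+1)(r-2)$ respectively; the computation for $\dim_{\KK} M_1 = c = n(n+1)(r-2)(2n+3)$ is immediate by factoring out $n(n+1)(r-2)$; and $\Delta_M = \dim_{\KK} M_2 - \dim_{\KK} M_1 = s = 2(n+1)^2(r-2) = (n+1)(2n+2)(r-2)$.

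There is no real obstacle here since the entire content of the corollary is already contained in Theorem 5.2.3; the only thing to be careful about is the algebraic identity reconciling the upper endpoint $n((n+1)(r-2)+s)+\ell$ with the inequality (II), which is a routine manipulation.
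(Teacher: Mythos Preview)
Your proposal is correct and follows essentially the same approach as the paper: both verify that the two inequalities (I) and (II) of Theorem~5.2.3 hold for the given range of $c$ via the same algebraic identities, and then substitute the extremal values for part~(2). The only cosmetic difference is that the paper checks (I) and (II) directly from the assumed bounds on $c$, whereas you first rewrite (I) and (II) as bounds on $c$ and then match them to the stated interval; the content is identical.
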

\begin{proof}
\begin{enumerate}
    \item 
   We compute 
\begin{align*}
    (n+1)\dim_{\KK} V_1 - n\dim_{\KK} V_2 &=  c - ns \geq n(n+1)(r-2), \text{and} \\
    (n+1)\dim_{\KK} V_2 - (n+2)\dim_{\KK} V_1 &= (n+1)s - c\\ &\geq (n+1)s-n((n+1)(r-2)+s)-\ell\\
    &= s-\ell - n(n+1)(r-2)\\
    &= 2(n+1)^2(r-2) - n(n+1)(r-2) \\
    &=(n+1)(n+2)(r-2).
\end{align*}
Hence, the conditions of \cref{5.2.3} are satisfied.
\item Follows from the proof of (1).
\end{enumerate}
\end{proof}

\bigskip
\noindent We also record the following Lemma.
\bigskip

\begin{corollary} \label{5.2.5}
Let $n \in \NN$, $r \geq 3$ and $(V_1,V_2)$ be a pair of $\KK$-vector spaces.
\begin{enumerate} 
    \item Assume that $n \dim_\bmk V_2 - (n+1)\dim_\bmk V_1 \geq n(n+1)(r-2)$ and $\dim_\bmk V_2 \geq (n+1)(r-1)+1$. A general representation $M$\footnote{By \qq{general representation} we mean that there exists a non-empty open subset $O$ of $\cV(K_r;V_1,V_2)$ such that every representation in $O$ has the described property.} of $\cV(K_r;V_1,V_2)$ is in $\rep_{\proj}(K_r,1)$ and for each $\fv \in \Gr_2(A_r)$ we have     
    \[ M|_{\fv} = M|_{\fv,\pproj} \in \add(\{ P_i(2) \mid 0 \leq i \leq n\}).\]
     \item Assume that $(n+1)\dim_\bmk V_1 - n \dim_\bmk V_2 \geq n(n+1)(r-2)$ and $\Delta_{(V_1,V_2)} \geq r - 1$. A general representation $M$ of $\cV(K_r;V_1,V_2)$ is in $\rep_{\proj}(K_r,1)$ and for each $\fv \in \Gr_2(A_r)$ we have   
    \[ M|_{\fv} = M|_{\fv,\pproj} \in \add(\{ P_i(2) \mid i \geq n\}).\]
\end{enumerate}
\end{corollary}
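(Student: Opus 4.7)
The plan is to assemble each of the two parts from the technical ingredients already established in this section: \cref{2.3.1} (nonemptiness of the relative $1$-projective locus), \cref{5.2.1} and \cref{5.5.2} (generic (non)existence of sub- and factor representations of dimension $f_{n,r}^{\pm}$), and \cref{4.3.4} (characterization of vanishing Hom-spaces against the test representations $P_n^{\pm}(\fv)$). In both cases I would construct a non-empty open subset of $\cV(K_r;V_1,V_2)$ as an intersection of three such open loci, and then read off the conclusion pointwise for every $\fv \in \Gr_2(A_r)$.

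For part (1), the hypotheses force $\Delta_{(V_1,V_2)} \geq r-1$, as an elementary consequence of $n\dim_\KK V_2 \geq (n+1)\dim_\KK V_1 + n(n+1)(r-2)$ combined with $\dim_\KK V_2 \geq (n+1)(r-1)+1$. Hence \cref{2.3.1} guarantees that $\cO_1 \coloneqq \rep_{\proj}(K_r,1) \cap \cV(K_r;V_1,V_2)$ is open and non-empty. \cref{5.2.1}(1) supplies the non-empty open subset $\cO_2 \coloneqq \cO(f^-_{n,r},\not\hookrightarrow)$, and irreducibility of $\cV(K_r;V_1,V_2)$ makes $\cO \coloneqq \cO_1 \cap \cO_2$ non-empty and open. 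For $M \in \cO$, \cref{5.2.1}(2) (where the hypothesis $\dim_\KK V_2 \geq (n+1)(r-1)+1$ enters) yields $\Hom_{K_r}(P_n^-(\fv),M) = 0$ for every $\fv \in \Gr_2(A_r)$. The equivalence in \cref{4.3.4}(2) then gives that $M|_\fv$ is preprojective with $M|_\fv \in \add(P_0(2),\ldots,P_n(2))$. Since $M \in \rep_{\proj}(K_r,1)$ forces $\alpha^\ast(M) \in \rep_{\proj}(K_2,1)$ for every $\alpha \in \Inj_\KK(A_2,A_r)$ with $\im\alpha = \fv$, the restriction $M|_\fv$ is preprojective by \cite[(4.3)]{Wor13b}, whence $M|_\fv = M|_{\fv,\pproj}$.

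For part (2) the argument is parallel, with \cref{5.5.2}(3) replacing \cref{5.2.1}: it simultaneously provides the non-emptiness of $\bigcap_{l=1}^{n} \cO(f^+_{l,r},\not\twoheadrightarrow)$ and the vanishing $\Hom_{K_r}(M,P_n^+(\fv)) = 0$ for every $\fv \in \Gr_2(A_r)$, once we intersect with the open non-empty set $\rep_{\proj}(K_r,1) \cap \cV(K_r;V_1,V_2)$ (non-empty by \cref{2.3.1}, since the standing assumption $\Delta_{(V_1,V_2)} \geq r-1$ is exactly what is required there). \cref{4.3.4}(4) then translates the vanishing into $M|_{\fv,\pproj} \in \add(\{P_i(2) \mid i \geq n\})$, and the same \cite[(4.3)]{Wor13b}-argument as above ensures $M|_\fv = M|_{\fv,\pproj}$.

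All non-trivial work having been carried out in the preceding subsections, no real obstacle remains: the corollary is a book-keeping exercise combining known open conditions and translating vanishing Hom-spaces through \cref{4.3.4}. The only care required is the elementary arithmetic verification that the numerical hypotheses of each part are strong enough to activate \cref{2.3.1}, as noted above for part (1) and already explicit in part (2).
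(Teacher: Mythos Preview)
Your proposal is correct and follows essentially the same route as the paper: for part~(1) you derive $\Delta_{(V_1,V_2)}\geq r-1$ from the numerical hypotheses, intersect the open loci from \cref{2.3.1} and \cref{5.2.1}(1), and translate the resulting $\Hom$-vanishing via \cref{4.3.4}(2); for part~(2) you invoke \cref{5.5.2}(3), \cref{2.3.1} and \cref{4.3.4}(4) exactly as the paper does. One minor remark: in part~(1) the bound $\Delta_{(V_1,V_2)}\geq r-1$ already follows from the first hypothesis alone (as in the paper's proof), so the second hypothesis is not needed there---it only enters through \cref{5.2.1}(2).
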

\begin{proof}
\begin{enumerate}
      \item By assumption, we have $\Delta_{(V_1,V_2)} = \dim_\bmk V_2 - \dim_\bmk V_1 \geq (n+1)(r-2) + \frac{\dim_\bmk V_1}{n} \geq  (n+1)(r-2) \geq 2(r-2) \geq r - 1$. We combine \cref{5.2.1}, \cref{2.3.1} and \cref{4.3.4} and obtain a non-empty open subset $\cO \subseteq \cV(K_r;V_1,V_2) \cap \rep_{\proj}(K_r,1)$ such that for all $M \in \cO$ we have $M|_{\fv} = M|_{\fv,\pproj} \in \add(\{ P_i(2) \mid 0 \leq i \leq n\})$.
     \item This follows from \cref{5.5.2}(3), \cref{2.3.1} and \cref{4.3.4}.
\end{enumerate}
\end{proof}

\bigskip

\section{Consequences for Steiner bundles} \label{S:6}

In this section, we transfer our findings from the previous section to $\StVect(\PP(A_r))$ and show that several \qq{wild} phenomena of $\Vect(\PP^{r-1})$ already occur in the at first glance, innocent-looking category of Steiner bundles on $\PP(A_r) \cong \PP^{r-1}$ for $r \geq 3$.

\subsection{The type of a uniform Steiner bundle}

Let $\cG \in \Vect(\PP(A_r))$ be a Steiner bundle.  
Recall from \cref{S:1.1} that there exists a uniquely determined sequence $(b_i(\cG))_{i \in \NN_0}$ of natural numbers such that 
\[
O_{\cG} := \bigl\{ \fv \in \Gr_2(A_r) \ \big| \ 
\cG|_{\fv} \cong \bigoplus_{i \in \NN_0} b_i(\cG)\,\cO_{\PP(\fv)}(i) \bigr\}
\]
is a non-empty open subset of $\Gr_2(A_r)$.  
Moreover, $O_{\cG} = \Gr_2(A_r)$ holds if and only if $\cG$ is uniform.

\bigskip

\begin{Definition}\label{6.1.1}
Let $\cF \in \StVect(\PP(A_r))$ be a Steiner bundle. We call $\cF$ a \textit{$k$-type uniform bundle} or \textit{of $k$-type}, provided $\cF$ is uniform and $k = \max \{ i \in \NN_0\mid b_i(\cF) \neq 0\}$. 
\end{Definition}

\bigskip

Uniform Steiner bundles of $k$-type exist for all $k \in \NN_0$ (see \cite[(2.8),(5.2)]{MMR21}). According to \cref{1.5.5}, Steiner bundles of $0$-type are just direct sums of $\cO_{\PP(A_r)}$, 
while the uniform Steiner bundles of $1$-type are exactly those of the form 
$\widetilde{\Theta}(M)$ with $M \in \rep_{\proj}(K_r,2)$ non-semisimple. We begin this section by showing how our result can be applied to construct non-homogeneous Steiner bundles that are $1$-uniform.  

\bigskip

\subsection{1-type uniform Steiner bundles}\label{S:6.2}
The following result shows that uniform Steiner bundles that are non-homogeneous can be obtained by elementary operations, thereby slightly sharpening \cite[(5.3.4)]{BF24}.

\bigskip

\begin{Lemma}\label{6.2.1}
Let $2 \leq d < r$ and $X \in \rep_{\proj}(K_d,1)$ be indecomposable and not simple. Then the following statements hold.
\begin{enumerate}
    \item The representation $X_{d,r}^- = (\sigma_{K_r}^{-1} \circ \inf)(X) \in \rep(K_r)$ is indecomposable, quasi-simple, not homogeneous and $X_{d,r}^- \in \rep_{\proj}(K_r,d-1) \setminus \rep_{\proj}(K_r,d)$. 
    \item If $d \geq 3$ and $N \in (X_{d,r}^- \to)$, then the Steiner bundle $\widetilde{\Theta}(N)$ is uniform of $1$-type but not homogeneous.
\end{enumerate}
\end{Lemma}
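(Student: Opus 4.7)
The plan is to establish (1) in four steps---indecomposability and regularity, the two relative projectivity assertions, non-homogeneity, and quasi-simplicity---and then deduce (2) from the cone structure of regular components together with \cref{2.2.1}(3) and \cref{1.5.3}(2).

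Since $X$ is indecomposable, not simple, and $d \geq 2$, the result \cite[(3.2.2)]{Bis20} (invoked already in the proof of \cref{4.1.1}) shows that $\inf(X) \in \rep(K_r)$ is regular, and the equivalence $\sigma_{K_r}^{-1} \colon \rep_1(K_r) \to \rep_2(K_r)$ preserves the regular subcategory, so $X_{d,r}^- = \sigma_{K_r}^{-1}(\inf(X))$ is regular indecomposable. The two relative projectivity claims reduce to computing $\ker \psi_{X_{d,r}^-,\fv}$ directly, exploiting that $\im \eta_{\inf(X)} = \eta_X(X_1) \subseteq A_d \otimes_\KK X_2$. For $\fv = A_d$ this kernel contains $\eta_X(X_1) \neq 0$ (using $X_1 \neq 0$ and injectivity of $\eta_X$, which follows from that of $\psi_X$), so $X_{d,r}^- \notin \rep_{\proj}(K_r,d)$. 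For $\fu \in \Gr_{d-1}(A_r)$ and $\fw \coloneqq \fu \cap A_d$, an element $\eta_X(x) \in \fu \otimes X_2$ must land in $\fw \otimes X_2$; this condition rewrites as $\psi_X(v \otimes x) = 0$ for every $v$ in a subspace $\fz \subseteq A_d$ of dimension $d - \dim \fw \geq 1$. Injectivity of $\psi_X|_{\fz \otimes X_1}$ (inherited from $X \in \rep_{\proj}(K_d,1)$) then forces $x = 0$, whence $X_{d,r}^- \in \rep_{\proj}(K_r,d-1)$.

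For non-homogeneity I distinguish two cases. If $X$ is not homogeneous in $\rep(K_d)$, then assuming $X_{d,r}^-$ were homogeneous and applying \cref{4.2.2} to block-diagonal elements $h \in \GL(A_r)$ of the form $\mathrm{diag}(g^{-\tr},\id_{A_d^\perp})$ yields $\inf(g.X) \cong \inf(X)$ for every $g \in \GL(A_d)$, and full-faithfulness of $\inf$ gives $g.X \cong X$, contradicting the hypothesis on $X$. If $X$ is homogeneous, then $X$ is non-semisimple (being indecomposable non-simple with $d \geq 2$), so \cref{4.2.5} applies: any $h \in \GL(A_r)$ with $h(A_d) \neq A_d$ would satisfy $h.X_{d,r}^- \cong X^-(h(A_d)) \not\cong X^-(A_d) = X_{d,r}^-$, ruling out homogeneity of $X_{d,r}^-$.

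For quasi-simplicity let $\cC$ be the regular component containing $X_{d,r}^-$. The previous steps combined with \cref{2.3.6}(3)--(4) give $X_{d,r}^- \in (M_{\cC,d-1} \to) \setminus (M_{\cC,d} \to)$ together with $M_{\cC,d-1} = \tau_{K_r}(M_{\cC,d})$. A short combinatorial analysis of the $\ZZ A_\infty$ shape reduces this set-difference to the vertical ray $\{M_{\cC,d-1,[k]} : k \geq 1\}$ of modules whose bottom quasi-simple is $M_{\cC,d-1}$, so $X_{d,r}^- \cong M_{\cC,d-1,[k]}$ for some $k \geq 1$. Matching $\dimu X_{d,r}^- = \sigma_r^{-1}(\dimu X)$ against the dimensions of the $M_{\cC,d-1,[k]}$, read off along their quasi-composition chains, then pins down $k = 1$, showing that $X_{d,r}^- = M_{\cC,d-1}$ is quasi-simple. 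Part (2) then follows quickly: $d \geq 3$ gives $\rep_{\proj}(K_r,d-1) \subseteq \rep_{\proj}(K_r,2)$, and by \cref{2.3.6}(3) the cone $(X_{d,r}^- \to)$ is contained in $(M_{\cC,d-1} \to) \subseteq \rep_{\proj}(K_r,2)$, so $N \in \rep_{\proj}(K_r,2)$ with $N_1 \neq 0$; \cref{1.5.5} yields that $\widetilde{\Theta}(N)$ is uniform of $1$-type. Finally, \cref{2.2.1}(3) propagates non-homogeneity from $X_{d,r}^-$ throughout $\cC$, hence to $N$, and \cref{1.5.3}(2)(ii) transfers this to $\widetilde{\Theta}(N)$. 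The hard part will be the quasi-simplicity step; the remaining items are direct kernel computations, applications of the adjunction calculus of \cref{S:3}, and invocations of \cref{2.3.6}.
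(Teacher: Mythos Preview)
Your arguments for indecomposability/regularity, for the two relative projectivity assertions, and for part~(2) are correct. The relative projectivity argument via the explicit description of $\ker\psi_{X_{d,r}^-,\fu}=(\fu\otimes X_2)\cap\im\eta_{\inf(X)}$ is a nice self-contained alternative to the paper's approach, which instead invokes the criterion \cite[(2.6)]{Bis25} relating $\rep_{\proj}$ of $\sigma_{K_r}^{-1}(M)$ to intersections of the kernels $\ker b_M$ for $M=\inf(X)$. Your non-homogeneity argument is also correct, though more elaborate than necessary: the paper simply observes that $\psi_{\inf(X)}\neq 0$ while $\psi_{\inf(X)}(\gamma_r\otimes{-})=0$, so by \cref{1.5.2}(2) the representation $\inf(X)$ is not homogeneous, and homogeneity is preserved by $\sigma_{K_r}^{-1}$. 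This avoids the case distinction entirely.

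The genuine gap is in the quasi-simplicity step. You invoke \cref{2.3.6}(4) to obtain $M_{\cC,d-1}=\tau_{K_r}(M_{\cC,d})$, but that statement is only established for the pair $(1,2)$; for general $d$ you only know $M_{\cC,d}=\tau_{K_r}^{-j}(M_{\cC,d-1})$ for some $j\geq 1$, so the set-difference $(M_{\cC,d-1}\to)\setminus(M_{\cC,d}\to)$ consists of $j$ rays, not one. More seriously, even granting $j=1$, your dimension-matching argument is circular: to compute $\dimu M_{\cC,d-1,[k]}=\sum_{i=0}^{k-1}\dimu\tau_{K_r}^{-i}(M_{\cC,d-1})$ you need $\dimu M_{\cC,d-1}$, which you do not know unless $k=1$ is already established. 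The paper sidesteps this entirely by citing \cite[(3.2.2)]{Bis18}, which proves directly that $(\sigma_{K_r}^{-1}\circ\inf)(X)$ is quasi-simple for indecomposable non-simple $X$; if you want to avoid that citation you will need a genuinely different argument (for instance, showing that every proper regular submodule of $\inf(X)$ lies in $\rep_{\proj}(K_r,d)$ while $\inf(X)$ itself does not, or a similar obstruction).
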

\begin{proof}
\begin{enumerate}
    \item By \cref{4.1.4}, the representation $X_{d,r}^-$ is indecomposable, regular and \cite[(3.2.2)]{Bis18} implies that $X_{d,r}^{-}$ is quasi-simple. 
    Since $\psi_{\inf(X)} \neq 0$ and $\psi_{\inf(X)}(\gamma_r \otimes -) = 0$, we conclude with \cref{1.5.2} that $X_{d,r}^-$ can not be homogeneous. 

Given a representation $N \in \rep(K_s)$ and $a \in A_s$, we denote by $a_N$ the $\KK$-linear map 
\[ a_N \colon N_1 \lra N_2 \ ; \ n \mapsto \psi_N(a \otimes n).\]
Let $\fv \in \Gr_{r-(d-1)}(A_r)$. We have $\{0\} \neq \fv \cap A_d$ and find $0 \neq a\in \fv \cap A_d$.
Since $X \in \rep_{\proj}(K_d,1)$, we have $\{0\}=\ker a_X = \ker a_{\inf(X)} $ and conclude  $\{0\} = \bigcap_{b \in \fv} \ker b_{\inf(X)}$. On the other hand, $A_{d}^\perp \coloneqq \bigoplus^r_{d+1} \KK \gamma_i \in \Gr_{r-d}(A_r)$ satisfies 
\[ \bigcap_{b \in A_{d}^\perp} \ker b_{\inf(X)} = X_1\neq \{0\}.\]
By \cite[(2.6)]{Bis25}, this is equivalent to $X_{d,r}^- \in \rep_{\proj}(K_r,d-1) \setminus \rep_{\proj}(K_r,d)$. 
\item By (1), the representation $X_{d,r}^-$ is not homogeneous and $d \geq 3$ implies  $X_{d,r}^- \in \rep_{\proj}(K_r,d-1) \subseteq \rep_{\proj}(K_r,2)$. Now we apply \cref{1.5.5} and \cref{2.2.1}.
\end{enumerate}
\end{proof}

\bigskip

\subsection{1-type uniform Steiner bundles of minimal rank.}
Let $n,c \in \NN_{\geq 2}$. In \cite{MMR21} the authors study uniform Steiner bundles that are not homogeneous over an algebraically closed field of characteristic $0$ and show that each $1$-type uniform Steiner bundle $\cE \in \StVect(\PP^n)$ 
\[ 0 \to \cO_{\PP^n}(-1)^{c} \to \cO_{\PP^n}^{s+c} \to \cE \to 0
\]
satisfies $c+2(n-1) \leq s = \rk(\cE)$ (note that this result follows from \cref{1.3.3}, \cref{1.5.5} and \cref{2.3.1}). Then they show that this lower bound is sharp by constructing for all $n \geq 2$ and all $c \geq 2$ a family of Steiner bundles $U_{n,c} \in \StVect(\PP^n)$ with and $c_1(U_{n,c}) = c$, $\rk(U_{n,c}) = c + 2(n-1)$. Moreover, they show that each $U_{n,c}$ has the following properties (see \cite[(4.6)]{MMR21}):
\begin{itemize}
\item[(i)] $U_{n,c}$ is uniform of $1$-type.
\item[(ii)] $U_{n,c}|_{\PP^2} \cong U_{2,c} \oplus \cO^{2n-4}_{\PP^2}$ for a suitable projective plane $\PP^2 \subseteq \PP^n$.	
\end{itemize} 
Moreover, they prove that the constructed $U_{2,c} \in \StVect(\PP^2)$ are non-homogeneous for $c \geq 4$, and, combining this with $(i),(ii)$, conclude that each $U_{n,c}$ is a uniform but non-homogeneous Steiner bundle of $1$-type. We provide an alternative proof of the last statement which avoids relying on the structure of the bundles $U_{2,c}$ and instead uses the theorem of Van de Ven, thereby extending the result to all uniform $1$-type bundles on $\PP^2$ of minimal rank.

\bigskip

\begin{Theorem}\label{6.3.1} Let $\Char(\KK) = 0$, $c \geq 4$ and $\cF \in \StVect(\PP(A_3))$ be a uniform Steiner bundle of $1$-type with $c_1(\cF) = c$, $\rk(\cF) = c + 2$. Then $\cF$ is not homogeneous.
\end{Theorem}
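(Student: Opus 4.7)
The plan is to apply the shift functor $\sigma_{K_3}$ to $M$, reducing the problem to one about rank-$2$ Steiner bundles on $\PP^2$ where Van de Ven's classification applies.

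First, by \cref{1.5.5} and \cref{1.5.3}, it suffices to show that the representation $M \in \rep_{\proj}(K_3,2)$ with $\dimu M = (c,2c+2)$ corresponding to $\cF$ is not homogeneous. Assume for contradiction that $M$ is homogeneous. Using that $\rep_{\proj}(K_3,2)$ is closed under direct summands (\cref{2.3.6}) and the bound $\Delta_N(2)\geq\min(2,\dim_{\KK} N_1)$ from \cref{2.3.1}, a short dimension-vector computation shows that $M$ admits no direct summand isomorphic to $P_0(3)=S(2)$; since $\rep_{\proj}(K_3,2)$ contains no non-zero preinjective (\cref{2.3.6}), the summand $I_0(3)=S(1)$ is also excluded. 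Hence $M \in \rep_{1,2}(K_3)$, and the shift $\sigma_{K_3}(M) \in \rep_1(K_3)$ is non-zero of dimension vector $\sigma_3(c,2c+2)=(c-2,c)$.

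The key technical step is to verify that $\sigma_{K_3}(M) \in \rep_{\proj}(K_3,1)$. Unwinding the definition in \cref{S:3.2}, for $0\neq a \in A_3$ the induced map $\phi_a\colon \ker\psi_M \to M_1$ is the restriction of the contraction $b \otimes m \mapsto \langle a,b\rangle m$, where $\langle-,-\rangle$ denotes the canonical pairing on $A_3$ with $\langle\gamma_i,\gamma_j\rangle=\delta_{ij}$. Consequently $\ker\phi_a = (a^\perp \otimes M_1) \cap \ker\psi_M = \ker\psi_{M,a^\perp}$, where $a^\perp \in \Gr_2(A_3)$; since $M \in \rep_{\proj}(K_3,2)$, all such kernels vanish. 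By \cref{1.3.3}, $\TilTheta(\sigma_{K_3}(M))$ is then a Steiner bundle on $\PP^2$ of rank $\Delta_{\sigma_{K_3}(M)}=2$ and first Chern class $c-2$. As shift functors respect homogeneity (\cite[(5.1.3)]{BF24}), $\sigma_{K_3}(M)$ is homogeneous, hence uniform; by \cref{1.5.3} so is $\TilTheta(\sigma_{K_3}(M))$.

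Finally I would apply Van de Ven's Theorem \cite[(2.2.2)]{OSS80}, which classifies uniform rank-$2$ bundles on $\PP^2$ as either $\cO_{\PP^2}(a)\oplus\cO_{\PP^2}(b)$ or $\cT_{\PP^2}(a)$. Combining this with the Steiner conditions of global generation and $H^0(-\otimes\cO_{\PP^2}(-1))=0$, an elementary cohomology calculation on $\PP^2$ shows that the only uniform rank-$2$ Steiner bundles are $2\cO_{\PP^2}$ (with $c_1=0$) and $\cT_{\PP^2}(-1)$ (with $c_1=1$). Hence $c-2\in\{0,1\}$, contradicting the hypothesis $c\geq 4$.

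The main obstacle is the technical computation in the third paragraph, specifically making the identification $\ker\phi_a = \ker\psi_{M,a^\perp}$ rigorous, which requires care because the bilinear pairing on $A_3$ is implicit in the definition of $\sigma_{K_3}$. Once this is established, the remainder of the argument is a routine combination of the cited structural results in the paper together with Van de Ven's theorem and the standard computation ruling out all twists of $\cT_{\PP^2}$ and sums of line bundles other than the two stated possibilities.
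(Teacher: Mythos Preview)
Your proposal is correct and follows essentially the same strategy as the paper: pass to $\sigma_{K_3}(M)$, verify it lies in $\rep_{\proj}(K_3,1)$, and apply Van de Ven's classification of uniform rank-$2$ bundles on $\PP^2$. The only notable differences are that the paper cites external results (\cite[(2.3.3)]{BF24} to see that $M$ is a brick, and \cite[(2.6)]{Bis25} for $\sigma_{K_3}(M)\in\rep_{\proj}(K_3,1)$), whereas you supply a self-contained argument for the latter via the identification $\ker\phi_a=\ker\psi_{M,a^\perp}$; and the paper uses simplicity of $\widetilde{\Theta}(\sigma_{K_3}(M))$ to exclude the split case in Van de Ven, whereas you instead classify all uniform rank-$2$ Steiner bundles directly, which avoids needing $M$ to be a brick.
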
 
\begin{proof}
    We write $\cF = \widetilde{\Theta}(M)$ with $M \in \rep_{\proj}(K_3,1)$. According to \cref{1.5.5}, we have $M \in \rep_{\proj}(K_3,2)$.  In view of \cref{1.5.3}, it suffices to show that $M$ is not homogeneous. We have $\dimu M = (c,2c+2)$ and $\Delta_M(2) = 2c+2 - 2c = 2 = 2(3-2)$. Since $\dim_{\KK} M_1 = c \geq 4 > 3 = 2+1$, we conclude with \cite[(2.3.3)]{BF24} that $M$ is not projective. Another application of \cite[(2.3.3)]{BF24} implies that $M$ is a brick. By \cite[(2.6)]{Bis25}, we have $\sigma_{K_3}(M) \in \rep_{\proj}(K_3,1)$ with $\dimu \sigma_{K_r}(M) = (c-2,c)$ and \cref{4.2.2} implies that $\sigma_{K_r}(M)$ is homogeneous.  Hence, the Steiner bundle $\widetilde{\Theta}(\sigma_{K_r}(M))$ is a simple and homogeneous Steiner bundle of rank $2$ with first Chern class $c-2$. Now the Theorem of Van de Ven \cite[(2.2.2)]{OSS80} implies that $\widetilde{\Theta}(\sigma_{K_r}(M))$ is isomorphic to a twist of the tangent bundle $\cT_{\PP(A_3)}$, i.e., $\widetilde{\Theta}(\sigma_{K_r}(M)) \cong \cT_{\PP(A_3)}(a)$ for some $a \in \ZZ$.
    One checks that $\cT_{\PP(A_3)}(a) \in \StVect(\PP(A_3))$ implies $a = -1$. Hence, $\widetilde{\Theta}(\sigma_{K_r}(M)) \cong \cT_{\PP(A_3)}(-1)$ and $c_1(\cT_{\PP(A_3)}(-1)) = c-2$. This is a contradiction since the Euler sequence
    $0 \lra \cO_{\PP(A_3)}(-1) \lra \cO_{\PP(A_3)}^{3} \lra \cT_{\PP(A_3)}(-1) \lra 0$
    gives us $c_1(\cT_{\PP(A_r)}(-1)) = 1 \neq c-2$.
\end{proof}

\bigskip

\begin{Remark}\label{6.3.2}
Let $\Char(\KK) = p > 0$. Together with Rolf Farnsteiner \cite{BF25} we have proven that every simple and homogeneous Steiner bundle of rank $r-1$ on $\PP(A_r)$ is isomorphic to $\cT_{\PP(A_r)}(-1)$. This shows that \cref{6.3.1} also holds for positive characteristic.
\end{Remark}

\bigskip

\subsection{k-type uniform Steiner bundles} 

In this section, we study the $k$-type of uniform but non-homogeneous Steiner bundles for $r \geq 3$ and $k \geq 2$.  
The first systematic approach to this problem was carried out by Marchesi and Miró-Roig \cite{MMR21}, who constructed such bundles $\cF$ of $k$-type for every $k \geq 2$, all satisfying $\supp(\cF) = \{0,1,\ldots,k\}$. 
In contrast, we prove the existence of non-homogeneous indecomposable $k$-uniform Steiner bundles with disconnected splitting type, that is, with $\supp(\cF)$ not forming an interval in $\NN_0$. Moreover, we show that the gaps in the splitting type can be made arbitrarily large.

\bigskip

\begin{Theorem}\label{6.4.1}
Let $n \in \NN_0$.
\begin{enumerate}
\item For $n \in \NN$, fix $s \geq 2(n+1)^2(r-2)$ and set $\ell \coloneqq s -2(n+1)^2(r-2) \in \NN_0$ as well as \[c \in [n((n+1)(r-2)+s),n((n+1)(r-2)+s)+\ell].\]
 The general Steiner bundle $\cF \in \Vect(\PP(A_r))$ with first Chern class $c$ and of rank $s$ is simple, uniform of $k$-type with support $\{n,n+1\}$, and not homogeneous.
\item There exists a simple, uniform, but non-homogeneous Steiner bundle of $(n+1)$-type with support $\supp(\cF) = \{n,n+1\}$.
  \item For $n \geq 2$, there exists an indecomposable, uniform, but non-homogeneous Steiner bundle $\cF$ of $(n+1)$-type with support $\supp(\cF) = \{0,1,n,n+1\}$.
 In particular, there is no upper bound for the size of the gaps.
\end{enumerate}
\end{Theorem}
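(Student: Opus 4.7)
I would establish the three parts in turn by working in $\rep(K_r)$ and transferring to $\StVect(\PP(A_r))$ via $\widetilde{\Theta}$, using Theorem 1.3.3, Lemma 1.5.3, and Lemma 1.5.4 throughout. For part (1), given $(V_1, V_2)$ with $\dimu(V_1, V_2) = (c, s + c)$, the numerical hypotheses on $s$ and $c$ are precisely those of Corollary 5.2.4, which yields a non-empty open subset $\cO \subseteq \rep_{\proj}(K_r, 1) \cap \cV(K_r; V_1, V_2)$ of non-homogeneous uniform bricks with $M|_{K_2} \cong [(n+1)s - c]\, P_n(2) \oplus [c - ns]\, P_{n+1}(2)$, both coefficients being strictly positive throughout the stated range of $c$. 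Theorem 1.3.3 supplies $\rk \widetilde{\Theta}(M) = \Delta_M = s$ and $c_1 \widetilde{\Theta}(M) = \dim_\KK V_1 = c$; Lemma 1.5.3 transports brick-ness (hence simplicity), uniformity, and non-homogeneity to the Steiner side; and Lemma 1.5.4 identifies the support as $\{n, n+1\}$. Part (2) for $n \in \NN$ is then immediate by specializing $(s, c)$ in (1), while the boundary case $n = 0$ reduces via Proposition 1.5.5 to constructing a non-semisimple non-homogeneous brick in $\rep_{\proj}(K_r, 2)$; this is obtained by intersecting the open locus from Theorem 2.3.1 with the open locus of non-homogeneous bricks from Theorem 2.2.2 at any dimension vector $(V_1, V_2)$ with $q_r(\dimu(V_1, V_2)) \leq 0$ and $\Delta_{(V_1, V_2)} \geq 2(r - 2)$.

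For part (3), I would form a non-split extension $0 \to M_1 \to E \to M_2 \to 0$, where $M_1$ is a brick produced by (1) with $M_1|_{K_2} \cong c_1 P_n(2) \oplus c_2 P_{n+1}(2)$, and $M_2 \in \rep_{\proj}(K_r, 2)$ is a non-semisimple uniform representation with $M_2|_{K_2} \cong a P_0(2) \oplus b P_1(2)$, $a, b > 0$, supplied by Proposition 1.5.5 and Theorem 2.3.1. I would choose the underlying dimension vectors so that $\Hom_{K_r}(M_2, M_1) = 0$, which then forces $\dim_\KK \Ext^1_{K_r}(M_2, M_1) = -\langle \dimu M_2, \dimu M_1 \rangle_r > 0$ by a direct Euler-form calculation. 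Since $\rep_{\proj}(K_r, 1)$ is a torsion-free class (Theorem 2.3.6), it is closed under extensions and $E \in \rep_{\proj}(K_r, 1)$. Restricting to any $\fv \in \Gr_2(A_r)$: because $M_2 \in \rep_{\proj}(K_r, 2)$ implies that $M_2|_\fv$ is projective in $\rep(K_2)$, the restricted short exact sequence splits, giving $E|_\fv \cong M_1|_\fv \oplus M_2|_\fv$ uniformly in $\fv$. Consequently $E$ is uniform with splitting $a P_0(2) \oplus b P_1(2) \oplus c_1 P_n(2) \oplus c_2 P_{n+1}(2)$, so that $\widetilde{\Theta}(E)$ is uniform of $(n+1)$-type with support $\{0, 1, n, n+1\}$ as required.

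The main obstacle will be verifying indecomposability and non-homogeneity of $E$, since non-splitness of the extension does not by itself force either. For indecomposability, I would invoke the standard fact that a non-split extension of two bricks $A$, $B$ with $\Hom(B, A) = 0$ has indecomposable middle term: a putative decomposition $E = E_1 \oplus E_2$ induces compatible decompositions of the inclusion $\iota\colon A \to E$ and projection $\pi\colon E \to B$, and the local endomorphism rings of $A$ and $B$ combined with $\Hom(B, A) = 0$ then force the extension class to be split. For non-homogeneity, I would show that $M_1 \subseteq E$ is canonically recoverable---for instance, as the maximal subrepresentation $K \subseteq E$ with $E/K \in \rep_{\proj}(K_r, 2)$ and $\dimu(E/K) = \dimu M_2$---so that any hypothetical isomorphism $g.E \cong E$ would restrict to one on $M_1$, contradicting the non-homogeneity of $M_1$ established in (1). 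Making this canonical-recovery step fully rigorous is the technical heart of (3); as a fallback, I would run a dimension count on the $\GL(A_r)$-stabilizer in the projectivized space of extension classes along the lines of Proposition 2.2.3, showing that a general non-split extension class produces an $E$ that is simultaneously indecomposable and non-homogeneous.
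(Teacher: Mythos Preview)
Your arguments for parts (1) and (2) are correct and coincide with the paper's.

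For part (3), the extension strategy---building $0 \to M_1 \to E \to M_2 \to 0$ with $M_2 \in \rep_{\proj}(K_r,2)$ so that every restriction to a line splits---is exactly the mechanism the paper uses. However, your indecomposability lemma is false as stated: the almost split sequence $0 \to P_0(2) \to 2P_1(2) \to P_2(2) \to 0$ in $\rep(K_2)$ is a non-split extension of two bricks with $\Hom_{K_2}(P_2(2),P_0(2)) = 0$, yet the middle term is decomposable. One needs $\Hom$-orthogonality in \emph{both} directions to force the middle term to be a brick (this is Ringel's simplification lemma), and you have not controlled $\Hom_{K_r}(M_1,M_2)$. Likewise, your canonical-recovery argument for non-homogeneity is incomplete: uniqueness of $M_1$ as a subrepresentation with the stated quotient property is not established, and the fallback dimension count is only a sketch.

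The paper dissolves both obstacles by choosing $E$ \emph{inside the Auslander--Reiten component} $\cC$ of $M$. The representation $M$ produced by Corollary~5.2.4 is a regular brick, hence quasi-simple; since $M \in \rep_{\proj}(K_r,1) \setminus \rep_{\proj}(K_r,2)$, Theorem~2.3.6(3),(4) force $M = M_{\cC,1}$ and $\tau_{K_r}^{-1}(M) = M_{\cC,2}$. One then takes $E := M_{[\ell]}$ for any $\ell \geq 2$ and $X := \tau_{K_r}^{-1}(M_{[\ell-1]})$, obtaining the standard sequence $0 \to M \to M_{[\ell]} \to X \to 0$ with $X \in (M_{\cC,2} \to) \subseteq \rep_{\proj}(K_r,2)$. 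Now $E$ is indecomposable because it is a vertex of $\Gamma(K_r)$, and $E$ is non-homogeneous by Lemma~2.2.1(3), since homogeneity is constant on a regular component and $M \in \cC$ is not homogeneous. The splitting of the restricted sequence and the computation of $\supp(\widetilde{\Theta}(E))$ then proceed exactly as you outlined.
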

\begin{proof}
\begin{enumerate}
  \item This is a direct consequence of \cref{5.2.4}, \cref{1.3.3}, \cref{1.5.3} and \cref{1.5.4}.
  \item In view of (1), it suffices to treat the case $n = 0$, which follows from \cref{2.3.1} and \cref{2.2.6}.
  \item We consider $M$ as in \cref{5.2.4}(1). Then $M \in \rep_{\proj}(K_r,1) \setminus \rep_{\proj}(K_r,2)$ is regular (see \cref{1.5.5} and \cref{2.3.6}) and quasi-simple since $M$ is a brick (see \cite[(9.2),(9.4)]{Ker94}). We choose $\ell \in \NN_{\geq 2}$ and set $E \coloneqq M_{[\ell]}$ as well as  $X \coloneqq \tau_{K_r}^{-1}(M_{[\ell-1]})$. The situation may be illustrated as follows:     
 
%%%%%%%%%%%%%%%%%%%%%%%%%%%%%%%%%%%%%%%% ZA_\infty Anfang %%%%%%%%%%%%%%%%%%%%%%%%%%
\begin{center}
\begin{tikzpicture}[very thick, scale=1]
                    [every node/.style={fill, circle, inner sep = 1pt}]

\def \n {9} % #Knoten Reihe  - 1
\def \m {3} % #Knoten Spalte - 1
\def \translation {1} % 1 Für Translation

\def \ab {0.15} % Abstand Pfeil und Knoten
\def \Pab {0.6} % Halber Abstand Horizontal

\def \rcone {1} % 1 für rechten Kegel
\def \rdist {2} % Anzahl der quasi-einfachen die eingeschlossen werden - 1
\def \rcolor {blue} %  white für keine Farbe

\def \rrcone {1} %1 für einen zweiten rechten Kegel links von rcone
\def \rrdist {3} % Anzahl der quasi-einfachen die eingeschlossen werden - 1
\node[color=black]  at (\n*\Pab*2-\Pab*2*4+1.2,-\ab+0.16) {$\bullet$};
\node[color=black]  at (\n*\Pab*2-\Pab*2*4+4.2,-\ab+3.15) {$\textcolor{red}{\bullet}$};
\node[color=black]  at (\n*\Pab*2-\Pab*2*4+4.8,-\ab+2.55) {$\textcolor{red}{\bullet}$};

\node[color=black]  at (\n*\Pab*2-\Pab*2*4+4.2,-\ab-0.3+4.1) {\textcolor{red}{$E$}};
\node[color=black]  at (\n*\Pab*2-\Pab*2*4+4.8,-\ab-0.3+3.5) {\textcolor{red}{$X$}};

\node[color=black]  at (\n*\Pab*2-\Pab*2*4+1.2,-\ab-0.3) {$M$};

\foreach \a in {0,...,\n}{
\foreach \b in {0,...,\m}{
  
   \ifthenelse{\a = \n \and \b < \m}{
   \node[color=black] ({\a,\b,5})at ({\a*2*\Pab},{\b*2*\Pab}) {$\circ$};
     }
     {
      \ifthenelse{\b = \m \and \a < \n}{
      \node[color=black] ({\a,\b}) at ({\a*2*\Pab+\Pab},{\b*2*\Pab+\Pab}) {$\circ$};
      \node[color=black] ({\a,\b,5})at ({\a*2*\Pab},{\b*2*\Pab}) {$\circ$};
      }
      {
    
     \ifthenelse{\b = \m \and \a = \n}
     {\node[color=black] ({\a,\b,5})at ({\a*2*\Pab},{\b*2*\Pab}) {$\circ$};}
    {\node[color=black] ({\a,\b}) at ({\a*2*\Pab+\Pab},{\b*2*\Pab+\Pab}) {$\circ$};
    \node[color=black] ({\a,\b,5})at ({\a*2*\Pab},{\b*2*\Pab}) {$\circ$};

      }
      }
      }
    }
    }

\foreach \s in {0,...,\n}{
\foreach \t in {0,...,\m}
{  
 \ifthenelse{\t = \m \and \s < \n}{
    \draw[->] (\s*2*\Pab+\ab,\t*2*\Pab+\ab) to (\s*2*\Pab+\Pab-\ab,\t*2*\Pab+\Pab-\ab); 
    \draw[->] (\s*2*\Pab+\Pab+\ab,\t*2*\Pab+\Pab-\ab) to (\s*2*\Pab+2*\Pab-\ab,\t*2*\Pab+\ab); 

  }{
  
  \ifthenelse{\s = \n \and \t < \m}{
  
  }
  {
  \ifthenelse{\s = \n \and \t = \m}{
   
  }{
   \draw[->] (\s*2*\Pab+\ab,\t*2*\Pab+\ab) to (\s*2*\Pab+\Pab-\ab,\t*2*\Pab+\Pab-\ab); 
   \draw[->] (\s*2*\Pab+\Pab+\ab,\t*2*\Pab+\Pab+\ab) to (\s*2*\Pab+2*\Pab-\ab,\t*2*\Pab+2*\Pab-\ab);
   \draw[->] (\s*2*\Pab+\ab,\t*2*\Pab+2*\Pab-\ab) to (\s*2*\Pab+\Pab-\ab,\t*2*\Pab+\Pab+\ab); 
   \draw[->] (\s*2*\Pab+\Pab+\ab,\t*2*\Pab+\Pab-\ab) to (\s*2*\Pab+2*\Pab-\ab,\t*2*\Pab+\ab);    
   }
   }
  
    }
    }
    }

\ifthenelse{\isodd{\m}}
%% IF
 { 
  \node[color=black] (Dots1) at (0,\m*\Pab+2*\Pab) {$\cdots$};
  \node[color=black] (Dots2) at (\n*2*\Pab,\m*\Pab+2*\Pab) {$\cdots$};
   \ifthenelse{\isodd{\n}}{
  \node[color=black] (Dots3) at (0.5*\n*2*\Pab,2*\m*\Pab+2*\Pab) {$\vdots$};}
  {\node[color=black] (Dots3) at (0.5*\n*2*\Pab,2*\m*\Pab+\Pab) {$\vdots$};} 
  }
%% Else
  {
  \node[color=black] (Dots1) at (0,\m*\Pab+\Pab) {$\cdots$};
  \node[color=black] (Dots2) at (\n*2*\Pab,\m*\Pab+\Pab) {$\cdots$};
  \ifthenelse{\isodd{\n}}{
  \node[color=black] (Dots3) at (0.5*\n*2*\Pab,2*\m*\Pab+2*\Pab) {$\vdots$};}
  {\node[color=black] (Dots3) at (0.5*\n*2*\Pab,2*\m*\Pab+\Pab) {$\vdots$};}
  }
 
\ifthenelse{\translation = 1}{
   \foreach \s in {0,...,\n}{
   \foreach \t in {0,...,\m}{ 
   \ifthenelse{\s = 0}{}{
      \ifthenelse{\s = \n}{\draw[->,dotted,thin] (\s*2*\Pab-\ab,\t*2*\Pab) to (\s*2*\Pab-2*\Pab+\ab,\t*2*\Pab); }{
   \draw[->,dotted,thin] (\s*2*\Pab-\ab,\t*2*\Pab) to (\s*2*\Pab-2*\Pab+\ab,\t*2*\Pab); 
   \draw[->,dotted,thin] (\s*2*\Pab-\ab+\Pab,\t*2*\Pab+\Pab) to (\s*2*\Pab-2*\Pab+\Pab+\ab,\t*2*\Pab+\Pab); 
   }
   }}
}}
{}  %ELSE

\begin{scope}[on background layer]
 
\ifthenelse{\rrcone = 1}{
        \draw[fill = \rcolor!20] (\n*\Pab*2+\ab,-\ab) node[anchor=north]{}
  -- (\n*\Pab*2-\rrdist*\Pab*2-0.7*\Pab,-\ab) node[anchor=north]{}
  -- (\n*\Pab*2+\ab,\rrdist*\Pab*2+0.7*\Pab) node[anchor=south]{};
    }
  {}

\ifthenelse{\rcone = 1}{
        \draw[fill= \rcolor!40](\n*\Pab*2+\ab,-\ab) node[anchor=north]{}
  -- (\n*\Pab*2-\rdist*\Pab*2-0.7*\Pab,-\ab) node[anchor=north]{}
  -- (\n*\Pab*2+\ab,\rdist*\Pab*2+0.7*\Pab) node[anchor=south]{};
    }
  {}   
   
\end{scope}
\end{tikzpicture}
\end{center}
General theory yields a short exact sequence
     \[ 0 \lra M \lra E \lra X  \lra 0.\]
By \cref{2.3.6}, we have $X \in \rep_{\proj}(K_r,r-1) \subseteq  \rep_{\proj}(K_r,2)$ and \cref{1.5.5} implies that $\widetilde{\Theta}(X)$ is of $1$-type with $\supp(\widetilde{\Theta}(X)) = \{0,1\}$. 
Let $\alpha \in \Inj_{\KK}(A_2,A_r)$, then $\alpha^\ast(X) \in \rep(K_2)$ is projective and the exact sequence 
\[ 0 \lra \alpha^\ast(M) \lra \alpha^\ast(E) \lra \alpha^\ast(X) \lra 0\]
 splits. We define $\cF \coloneqq \widetilde{\Theta}(E)$. Twofold application of \cref{1.5.3} yields
     \[ \hat{\alpha}^\ast(\cF) \cong \widetilde{\Theta}(\alpha^\ast(E)) \cong \widetilde{\Theta}(\alpha^\ast(M)) \oplus \widetilde{\Theta}(\alpha^\ast(X)) \cong \hat{\alpha}^\ast(\widetilde{\Theta}(M)) \oplus \hat{\alpha}^\ast(\widetilde{\Theta}(X)).\]
     Since $\widetilde{\Theta}(M)$ and $\widetilde{\Theta}(X)$ are uniform, we conclude that $\cF$ is uniform and 
     \[\supp(\cF) = \supp(\widetilde{\Theta}(M)) \cup \supp(\widetilde{\Theta}(X)) = \{0,1,n,n+1\}.\]
\end{enumerate}
\end{proof}

\bigskip

\noindent The following results may be of independent interest for future investigations.

\bigskip

\begin{proposition}\label{6.4.2}
Let $c,s,n \in \NN$. The following statements hold.
\begin{enumerate}
    \item If $c \leq n[s - (n+1)(r-2)]$ and $s + c \geq (n+1)(r-1) + 1$, then a general\footnote{See \cref{5.2.5} for the definition.} Steiner bundle $\cF$ of rank $s$ and with first Chern class $c$ satisfies that for each $\fv \in \Gr_2(A_r)$ every direct summand $\cO_{\PP(\fv)}(i)$ of $\cF|_{\fv}$ satisfies $i \leq n$.
    \item If $c \geq n[(n+1)(r-2)+s]$ and $s \geq r-1$, then a general Steiner bundle $\cF$ of rank $s$ and with first Chern class $c$ satisfies that for each $\fv \in \Gr_2(A_r)$ every direct summand $\cO_{\PP(\fv)}(i)$ of $\cF|_{\fv}$ satisfies $i \geq n$.
    \item Let $1 \leq d < r$. If $(d-1)c - s < d(r-d)$, $s \geq r - 1$ and $\cF$ is a Steiner bundle of rank $s$ and with first Chern class $c$, then there exists $\alpha \in \Inj_{\KK}(A_d,A_r)$ such that $\hat{\alpha}^\ast(\cF)$ has $\cO_{\PP(A_d)}$ as a direct summand. 
    \end{enumerate}
\end{proposition}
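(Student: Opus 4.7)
Throughout, I would write $\cF = \widetilde{\Theta}(M)$ for some $M \in \rep_{\proj}(K_r,1)$ via the equivalence of \cref{1.3.3}, so $\dim_\KK M_1 = c$, $\dim_\KK M_2 = s+c$, and $M \in \cV(K_r;\KK^c,\KK^{s+c})$. \cref{1.5.3} and \cref{1.5.4} then translate the decomposition of $\cF|_{\PP(\fv)}$ into the preprojective decomposition of $M|_\fv$, via $\cO_{\PP(\fv)}(i) \leftrightarrow P_i(2)$.

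For (1), the stated hypotheses rewrite as the conditions of \cref{5.2.5}(1) when $V_1 = \KK^c,\ V_2 = \KK^{s+c}$: the inequality $n\dim_\KK V_2 - (n+1)\dim_\KK V_1 \geq n(n+1)(r-2)$ becomes $c \leq n[s-(n+1)(r-2)]$, and $\dim_\KK V_2 \geq (n+1)(r-1)+1$ becomes $s+c \geq (n+1)(r-1)+1$. That corollary yields a non-empty open subset of $\rep_{\proj}(K_r,1) \cap \cV(K_r;V_1,V_2)$ whose elements $M$ satisfy $M|_\fv \in \add(P_0(2),\ldots,P_n(2))$ for every $\fv \in \Gr_2(A_r)$; applying $\widetilde{\Theta}$ delivers (1). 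Part (2) is entirely analogous via \cref{5.2.5}(2), after rewriting $(n+1)\dim_\KK V_1 - n\dim_\KK V_2 \geq n(n+1)(r-2)$ as $c \geq n[(n+1)(r-2)+s]$ and noting $\Delta_{(V_1,V_2)} = s \geq r-1$.

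For (3), $\widetilde{\Theta}(P_0(d)) \cong \cO_{\PP(A_d)}$ by \cref{1.3.3}, and for any $\alpha \in \Inj_\KK(A_d,A_r)$ the restriction $\alpha^\ast(M)$ lies in $\rep_{\proj}(K_d,1)$ with $\widetilde{\Theta}(\alpha^\ast(M)) \cong \hat{\alpha}^\ast(\cF)$. The task thus reduces to producing some $\alpha$ for which $P_0(d) = S(2)$ is a direct summand of $\alpha^\ast(M)$. Since $P_0(d)$ is projective, this is equivalent to the structure map $\psi_{M,\fv} \colon \fv \otimes_\KK M_1 \to M_2$ failing to be surjective, where $\fv \coloneqq \im \alpha$. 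If $\psi_M$ itself is not surjective, every $\fv$ works; otherwise the dual map $\Psi^\ast \colon M_2^\ast \to \Hom_\KK(A_r,M_1^\ast)$ is injective, embedding $M_2^\ast$ as an $(s+c)$-dimensional subspace.

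A failure of surjectivity of $\psi_{M,\fv}$ corresponds to a nonzero $\ell \in M_2^\ast$ whose image $\ell^\sharp \coloneqq \Psi^\ast(\ell)$ has rank at most $r-d$: one then picks any $d$-dimensional $\fv \subseteq \ker \ell^\sharp$. Equivalently, I would show that the projective subspace $\PP(\Psi^\ast(M_2^\ast))$ meets the determinantal variety $\PP(D_{r-d})$ of rank-at-most-$(r-d)$ elements in $\PP(\Hom_\KK(A_r,M_1^\ast))$. The case $c < r-d$ is trivial (every matrix has rank $\leq c < r-d$), so assume $c \geq r-d$; then $\PP(D_{r-d})$ is irreducible of dimension $(r-d)(c+d)-1$ in the ambient $\PP^{rc-1}$, while $\PP(\Psi^\ast(M_2^\ast))$ has dimension $s+c-1$. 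The classical projective intersection principle---two closed irreducible subvarieties of $\PP^N$ whose dimensions sum to at least $N$ must meet---then applies as soon as
\[
(s+c-1) + \bigl((r-d)(c+d)-1\bigr) \geq rc-1,
\]
which simplifies precisely to $(d-1)c-s < d(r-d)$. The main technical obstacle is the clean invocation of this intersection principle; it rests on the standard argument via irreducible affine cones in $\A^{rc}$ and the dimension theorem for affine varieties, together with the well-known irreducibility and dimension formula for the determinantal variety $D_{r-d}$.
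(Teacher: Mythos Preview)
Your proofs of (1) and (2) coincide with the paper's: both verify the numerical hypotheses of \cref{5.2.5} for the pair $(V_1,V_2)=(\KK^c,\KK^{s+c})$ and then translate back via \cref{1.5.4}.

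For (3) you take a genuinely different route. The paper reaches the same intermediate claim --- that some $\psi_{M,\fv}\colon \fv\otimes_\KK M_1\to M_2$ fails to be surjective --- by computing $d\dim_\KK M_1-\dim_\KK M_2=(d-1)c-s<d(r-d)$ and then invoking an external result (\cite[(2.12)]{Bis25}) to conclude $D_{K_r}(M)\notin\rep_{\esp}(K_r,d)$, which after dualising gives exactly the non-surjectivity. Your argument instead proves this claim from scratch: dualising $\psi_M$ embeds $M_2^\ast$ in the matrix space $\Hom_\KK(A_r,M_1^\ast)$, and you seek a nonzero element there of rank $\le r-d$. The dimension count $(s+c-1)+\bigl((r-d)(c+d)-1\bigr)\ge rc-1$ is exactly the hypothesis $(d-1)c-s<d(r-d)$, and the projective intersection principle (via affine cones and the dimension theorem) together with the standard irreducibility and dimension formula for determinantal loci does the rest. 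This is correct and fully self-contained; the trade-off is that the paper's citation keeps the argument within its own representation-theoretic language (duality and the category $\rep_{\esp}$), while yours replaces that black box by a short classical algebro-geometric computation. One small cosmetic point: your trivial case should really be $c\le r-d$ rather than $c<r-d$, though at $c=r-d$ the determinantal-variety argument also works since the formula $(r-d)(c+d)$ then equals $rc$.
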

\begin{proof}
We consider a pair of vector spaces such that $\dim_\bmk V_1 = c$ and $\dim_\bmk V_2 = s + c$. 
\begin{enumerate}
\item We have
\[ \dim_\bmk V_2 = s+c \geq (n+1)(r-1) + 1,\]
and 
\begin{align*}
     n \dim_\bmk V_2 - (n+1) \dim_\bmk V_1 &= n(s+c) - (n+1)c  \\
     &= ns - c \geq ns-  n[s-(n+1)(r-2)] \\
     &= n(n+1)(r-2).
\end{align*}
By \cref{5.2.5}(1), we find a non-empty open subset $\cO \subseteq \rep_{\proj}(K_r,1)$ of $\cV(K_r;V_1,V_2)$ such that for each $M \in \cO$ and every $\fv \in \Gr_2(A_r)$, we have $M|_{\fv} \in \add(P_0(2),\ldots,P_n(2))$. We conclude with \cref{1.5.4} that $\widetilde{\Theta}(M)|_{\fv} \in \add(\cO_{\PP(\fv)},\ldots,\cO_{\PP(\fv)}(n))$.
\item We have  
\begin{align*}
     (n+1) \dim_{\KK} V_1 - n \dim_\bmk V_2 &= (n+1)c - n(c+s) \\
     &= c - ns \geq n[(n+1)(r-2)+s] - ns \\
     &= n(n+1)(r-2)
\end{align*}
and $\dim_\bmk V_2 - \dim_\bmk V_1 = s \geq r - 1$. The statement follows as in (1) using \cref{5.2.5}(2).
\item Let $M \in \rep_{\proj}(K_r,1)$ with $\dimu M = (c,s+c)\in \NN^2$ and $\widetilde{\Theta}(M) \cong \cF$. Then $d \dim_\bmk M_1 - \dim_\bmk M_2 = (d-1)c - s < d(r-d)$. We conclude with \cite[(2.12)]{Bis25} that $D_{K_r}(M) \not\in \rep_{\esp}(K_r,d)$. Dualizing implies that there is $\fv \in \Gr_d(A_r)$ such that the map
\[ \psi_{M}|_{\fv \otimes_{\KK} M_1} \colon \fv \otimes_{\KK} M_1\lra M_2\] is not surjective.
Let $\alpha\in \Inj_{\KK}(A_d,A_r)$ be such that $\im \alpha = \fv$, then $\alpha^\ast(M)$ has $P_0(d)$ as a direct summand. 
Since $\widetilde{\Theta}$ commutes with direct sums, $\cO_{\PP(A_d)}$ is a direct summand of $\hat{\alpha}^\ast(\cF)$.
\end{enumerate}
\end{proof} 

\bigskip

\subsection{Jumping lines and almost-uniform Steiner bundles}
Uniform vector bundles $\cF \in \Vect(\PP^n)$ are characterized by the property that their set of jumping lines $\cJ_{\cF}$ is empty (see \cref{S:1.1}). A natural generalization of this notion, due to Ellia \cite{Ell17}, is the following:

\bigskip

\begin{Definition}\label{6.5.1}
A vector bundle $\cF \in \Vect(\PP(A_r))$ is called \textit{almost-uniform}, provided $\cJ_{\cF}$ is non-empty and finite. 
\end{Definition}

\bigskip

Using the functor $\widetilde{\Theta} \colon \rep_{\proj}(K_r,1) \lra\StVect(\PP(A_r))$ we are able to give a handy criterion to check for almost-uniform Steiner bundles.
In general, it is difficult to compute the generic decomposition of a given representation. However, we have the following useful criterion.

\bigskip

\begin{proposition}\label{6.5.2}
Let $0 \neq M \in \rep_{\proj}(K_r,1)$.
\begin{enumerate}
    \item If there is $\fv \in \Gr_2(A_r)$ such that $0 = \Hom_{K_r}(P_1^-(\fv),M)$, then
   \[ M_{\gen} = \Delta_{M}(2) P_0(2) \oplus (\dim_{\KK} M_1)P_1(2)\]
   and
\[\cJ_{\widetilde{\Theta}(M)} = \cJ_M  =\{ \fu \in \Gr_2(A_r) \mid \Hom_{K_r}(P_1^{-}(\fu),M) \neq 0\}.\]
\item If $\{ \fu \in \Gr_2(A_r) \mid \Hom_{K_r}(P_1^-(\fu),M) \neq 0\}$ is finite and non-empty, then the Steiner bundle $\widetilde{\Theta}(M)$ is almost-uniform and 
\[\cJ_{\widetilde{\Theta}(M)} =\{ \fu \in \Gr_2(A_r) \mid \Hom_{K_r}(P_1^{-}(\fu),M) \neq 0\}.\]
\end{enumerate}
\end{proposition}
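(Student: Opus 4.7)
The plan is to reduce both parts to \cref{4.3.5} applied with $n = 0$ and $d = 2$. Since $P_0(2)^+(\fv) = 0$ by the discussion following \cref{4.2.4}, and since restrictions $\alpha^\ast(M)$ of $M \in \rep_{\proj}(K_r,1)$ are preprojective in $\rep(K_2)$ by \cite[(4.3)]{Wor13b} (so that $M|_\fv$ is well-defined in the sense of \cref{4.3.3}), \cref{4.3.5} yields the pointwise equivalence
\[
\Hom_{K_r}(P_1^-(\fv), M) = 0
\;\iff\; M|_\fv \in \add(P_0(2), P_1(2))
\;\iff\; M|_\fv \cong \Delta_M(2)\,P_0(2) \oplus (\dim_\KK M_1)\,P_1(2),
\]
for every $\fv \in \Gr_2(A_r)$.

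For part (1), I would first use the hypothesis together with this equivalence to exhibit a point $\fv_0 \in \Gr_2(A_r)$ where the claimed decomposition is already realized. To promote this pointwise statement to the generic splitting type, I would show that the locus
\[
\mathcal{U} \coloneqq \{\fv \in \Gr_2(A_r) \mid \Hom_{K_r}(P_1^-(\fv), M) = 0\}
\]
is open in $\Gr_2(A_r)$. The cleanest route is via the adjunction isomorphism from \cref{4.2.6}: for any $\alpha \in \Inj_\KK(A_2,A_r)$ with $\im \alpha = \fv$ one has $\dim_\KK \Hom_{K_r}(P_1^-(\fv), M) = \dim_\KK \Hom_{K_2}(P_2(2), \alpha^\ast(M))$. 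The map $\alpha \mapsto \psi_{\alpha^\ast(M)}$ is a morphism of varieties $\Inj_\KK(A_2,A_r) \to \cV(K_2; M_1, M_2)$, and upper semi-continuity of $\dim \Hom$ produces an open locus on $\Inj_\KK(A_2,A_r)$. Since the value depends only on $\im \alpha$ and $\Inj_\KK(A_2,A_r) \to \Gr_2(A_r)$ is a $\GL(A_2)$-torsor, this locus descends to an open subset $\mathcal{U} \subseteq \Gr_2(A_r)$. Existence of $\fv_0$ makes $\mathcal{U}$ non-empty, hence dense by irreducibility, so the displayed decomposition is generic, i.e., $M_{\gen} = \Delta_M(2) P_0(2) \oplus (\dim_\KK M_1) P_1(2)$. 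Combining the pointwise equivalence above with \cref{1.5.4}(3) gives both inclusions $\mathcal{U} \subseteq O_M$ and $O_M \subseteq \mathcal{U}$, so $\mathcal{U} = O_M$, and taking complements yields the formula for $\cJ_{\widetilde{\Theta}(M)} = \cJ_M$.

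Part (2) then reduces to part (1): when $\mathcal{S} = \{\fu \mid \Hom_{K_r}(P_1^-(\fu), M) \neq 0\}$ is finite and non-empty, its complement in the irreducible positive-dimensional Grassmannian $\Gr_2(A_r)$ (here $r \geq 3$) is non-empty, so some $\fv$ with $\Hom_{K_r}(P_1^-(\fv), M) = 0$ exists and (1) applies. The conclusion is that $\cJ_{\widetilde{\Theta}(M)} = \mathcal{S}$ is both finite and non-empty, i.e., $\widetilde{\Theta}(M)$ is almost-uniform by \cref{6.5.1}.

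The principal step is the openness of $\mathcal{U}$: the pointwise equivalences from \cref{4.3.5} do most of the work, but upgrading them to a statement about the generic splitting type rests on an algebraic-family argument, most naturally phrased via the $\Hom$-adjunction of \cref{4.2.6}. Once openness is in hand, the rest of the proof is formal bookkeeping.
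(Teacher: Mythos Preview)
Your proof is correct and follows essentially the same logic as the paper's. The paper's version is slightly more economical: instead of arguing openness of $\mathcal{U}$ via semi-continuity and the adjunction of \cref{4.2.6}, it invokes \cref{4.4.3} to identify the complement $\Gr_2(A_r)\setminus\mathcal{U}$ with the rank variety $\cV(K_r,2)_M$, whose closedness is already recorded (see the footnote after \cref{1.2.1} and \cite[(2.1.1)]{BF24}); the generic decomposition and the description of $O_M$ are then read off from \cite[(2.5.5)]{BF24}. Your route through \cref{4.3.5} and a direct semi-continuity argument is a valid self-contained alternative that avoids the external citation, at the cost of redoing work already packaged in \cref{4.4.3}.
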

\begin{proof}
\begin{enumerate}
    \item Since $\Hom_{K_r}(P_1^-(\fv),M) = 0$, we conclude with \cref{4.4.3} and \cite[(2.5.5)]{BF24} that 
\[ M_{\gen} = \Delta_M(2) P_0(2) \oplus (\dim_{\KK} M_1)P_1(2)\]
and $O_M = \{ \fu \in \Gr_2(A_r) \mid M|_{\fu} = \Delta_M(2) P_0(2) \oplus (\dim_{\KK} M_1)P_1(2)\}$. Another application of \cref{4.4.3} gives us $O_M = \{ \fu \in \Gr_2(A_r) \mid \Hom_{K_r}(P_1^-(\fu),M) = 0\}$. Hence, \[ \cJ_{\TilTheta(M)} = \cJ_M = \{ \fu \in \Gr_2(A_r) \mid \Hom_{K_r}(P_1^-(\fu),M) \neq 0\}.\]
\item Since $\{ \fu \in \Gr_2(A_r) \mid \Hom_{K_r}(P_1^-(\fu),M) \neq 0\} \neq \emptyset$ is finite, there is $\fv \in \Gr_2(A_r)$ such that $\Hom_{K_r}(P_1^-(\fv),M) = 0$. Now we apply (1).
\end{enumerate}
\end{proof}

\bigskip

A main result of \cite{Ell17} is that for $n \in \NN$ there exists an almost-uniform vector bundle on $\PP^n$ of rank $2n-1$ with exactly one jumping line. The given examples \cite[(Proposition 16)]{Ell17} are precisely the Steiner bundles that one obtains from the representations $P_1^-(\fv)$, $\fv \in \Gr_2(A_r)$: 

\bigskip

\begin{proposition}\label{6.5.3}
Let $r \geq 3$ and $\fv \in \Gr_2(A_r)$. The Steiner bundle $\widetilde{\Theta}(P_1^-(\fv))$ satisfies
\begin{enumerate}
    \item $\rk(\widetilde{\Theta}(P_1^-(\fv))) = 2(r-1)-1$, and
    \item $\cJ_{\widetilde{\Theta}(P_1^-(\fv))} = \{ \fv\}$.
\end{enumerate}
\end{proposition}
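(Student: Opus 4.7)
The plan is to transport both statements to the Kronecker side via the equivalence $\widetilde{\Theta} \colon \rep_{\proj}(K_r,1) \lra \StVect(\PP(A_r))$ of \cref{1.3.3}, which applies because $P_1^-(\fv) \in \rep_{\proj}(K_r,1)$ by \cref{4.4.2}(3) (valid since $d=2 < r$). For (1) I would compute the dimension vector of $P_1^-(\fv)$ and invoke $\rk(\widetilde{\Theta}(M)) = \Delta_M$. For (2) I would use \cref{6.5.2} to reduce identifying the jumping locus to the Hom-vanishing statement
\[
J := \{\fu \in \Gr_2(A_r) \mid \Hom_{K_r}(P_1^-(\fu), P_1^-(\fv)) \neq 0\} = \{\fv\}.
\]

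For (1), the representation $P_1^-(\fv) = (\sigma_{K_r}^{-1} \circ \inf)(P_1(2))$ has dimension vector $\sigma_r^{-1}(\dimu P_1(2)) = \sigma_r^{-1}(1,2) = (2, 2r-1)$, since $\inf(P_1(2))$ is regular (hence lies in $\rep_1(K_r)$) and $\dimu \sigma_{K_r}^{-1}(N) = \sigma_r^{-1}(\dimu N)$ for $N \in \rep_1(K_r)$ by \cref{S:3.2}. Therefore $\Delta_{P_1^-(\fv)} = (2r-1) - 2 = 2(r-1)-1$, and \cref{1.3.3}(2) yields $\rk(\widetilde{\Theta}(P_1^-(\fv))) = 2(r-1)-1$.

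For (2), I would first pin down $J = \{\fv\}$. The inclusion $\fv \in J$ is immediate, since $P_1^-(\fv)$ is a brick by \cref{4.4.1}(1) and hence $\End_{K_r}(P_1^-(\fv)) = \KK \neq 0$. For $\fu \neq \fv$, since $P_1(2) \in \rep(K_2)$ is homogeneous (\cref{2.2.1}(4)) and not semisimple, \cref{4.2.6}(1) gives $P_1^-(\fu) \not\cong P_1^-(\fv)$. Both representations are elementary by \cref{4.4.2}(1) and share the common dimension vector $(2, 2r-1)$ computed above. The Kerner--Lukas result \cite[(1.4)]{KL96} --- already invoked in the proof of \cref{4.4.2}(2) --- asserts that a non-zero morphism between two elementary representations of equal dimension vector is an isomorphism, so $\Hom_{K_r}(P_1^-(\fu), P_1^-(\fv)) = 0$. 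Applying \cref{6.5.2}(1) with any witness $\fu \neq \fv$ (which exists since $\Gr_2(A_r)$ is infinite for $r \geq 3$) then yields $\cJ_{\widetilde{\Theta}(P_1^-(\fv))} = J = \{\fv\}$.

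The only non-routine step is the appeal to the Kerner--Lukas criterion; every other step is direct assembly of prior results in the paper, so I do not anticipate a serious obstacle.
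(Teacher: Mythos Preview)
Your proof is correct and follows essentially the same route as the paper's: compute $\dimu P_1^-(\fv) = (2,2r-1)$ to get the rank via \cref{1.3.3}(2), then identify the jumping locus via \cref{6.5.2} by showing $\Hom_{K_r}(P_1^-(\fu),P_1^-(\fv)) \neq 0 \Leftrightarrow \fu = \fv$ using elementarity, equal dimension vectors, and \cite[(1.4)]{KL96}. One cosmetic point: strictly speaking $P_1^-(\fv) = g.(\sigma_{K_r}^{-1}\circ\inf)(P_1(2))$ for a suitable $g \in \GL(A_r)$, not $(\sigma_{K_r}^{-1}\circ\inf)(P_1(2))$ itself, but since the twist by $g$ preserves dimension vectors your computation goes through unchanged.
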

\begin{proof}
Let $\fu \in \Gr_2(A_r)$. Recall from \cref{4.4.2} and  \cref{S:5} that $P_1^-(\fu)$ is an elementary representation in $\rep_{\proj}(K_r,1)$ and $\dimu P_1^-(\fu) = (2,2(r-1)+1)$. By \cref{1.3.3}, the Steiner bundle $\widetilde{\Theta}(P_1^-(\fv)) \in \StVect(\PP(A_r))$ has rank $2(r-1)-1$.
By \cref{4.2.6}, we have $P_1^-(\fu) \cong P_1^-(\fw)$ if and only if $\fu = \fw$.
Since $\dimu P_1^-(\fu) = \dimu P_1^-(\fw)$, we conclude with \cite[(1.4)]{KL96} that $\Hom_{K_r}(P_1^-(\fu),P_1^-(\fv)) \neq 0$ if and only if $\fu = \fv$. Now we apply \cref{6.5.2}.
\end{proof}

\bigskip

Taking direct sums of the above Steiner bundles shows that every finite subset $\emptyset \neq X \subseteq \Gr_2(A_r)$ can be realized as a set of jumping lines. However, this construction does not give any insight into the \qq{size} of the category consisting of Steiner bundles with $X$ as the set of jumping lines.

In the following we make use of a construction of Bongartz (see \cite{Bon81},\cite[(VI.2.4)]{ASS06} and \cite{Luk24}) and the process of simplification (see \cite{KL91}) due to Ringel, to show that the category of Steiner bundles with set of jumping lines $X$ corresponds to a wild subcategory in $\rep_{\proj}(K_r,1)$.

Throughout this section we assume that $r \geq 3$ and fix representations $X_1,\ldots,X_n,Y \in \rep(K_r)$ such that $s_i \coloneqq \dim_{\KK} \Ext^1_{K_r}(Y,X_i) \neq 0$ for  $i \in \{1,\ldots,n\}$. We stick to the notation introduced in \cite[\S III.3]{SY11}. 
We fix short exact sequences \[ (\mathbb{E}_{X_i,j} \colon 0 \lra X_i \stackrel{f_{X_i,j}}{\lra} {E_{X_i,j}} \stackrel{g_{X_i,j}}{\lra} Y\lra 0)_{j=1}^{s_{i}} \]
such that $([\mathbb{E}_{X_{i},j}])_{j=1}^{s_i}$ is a $\KK$-basis of the space of extensions $\ext^1_{K_r}(Y,X_{i}) \cong \Ext^1_{K_r}(Y,X_{i})$ for all $i \in \{1,\ldots,n\}$. 
We define 
\[f \coloneqq \bigoplus_{i=1}^n \bigoplus_{j=1}^{s_i} f_{X_i,j} \colon \bigoplus^n_{i=1} X_i^{s_i} \lra \bigoplus^n_{i=1}\bigoplus^{s_{i}}_{j=1}{E_{X_i,j}}  \ \text{and} \ g \coloneqq \bigoplus^n_{i=1} \bigoplus^{s_{i}}_{j=1}{E_{X_i,j}}  \lra \bigoplus_{i=1}^n Y^{s_i}\]
and obtain a short exact sequence
\[  0 \lra \bigoplus^n_{i=1} X_i^{s_i} \stackrel{f}{\lra} \bigoplus^n_{i=1}\bigoplus^{s_{i}}_{j=1}{E_{X_i,j}}  \stackrel{g}{\lra} \bigoplus_{i=1}^n Y^{s_i} \lra 0.\]
We consider the diagonal embedding $\nabla \colon Y \lra \bigoplus_{i=1}^n Y^{s_i}$ and obtain a pullback diagram
\[
\xymatrix{
   0 \ar[r]& \bigoplus^n_{i=1} X_i^{s_i} \ar@{=}[d]   \ar^{u}[r] & E    \ar^{v}[r] \ar^{w}[d]& Y \ar[r] \ar[d]^{\nabla} & 0 \\
    0 \ar[r] & \bigoplus^n_{i=1} X_i^{s_i} \ar^{f}[r] & \bigoplus^n_{i=1} \bigoplus^{s_{i}}_{j=1}{E_{X_i,j}}   \ar^{g}[r]  & \bigoplus_{i=1}^n Y^{s_i} \ar[r] & 0.
}
\]
The upper exact row
\[ \mathbb{E} \colon 0 \lra \bigoplus^n_{i=1} X_i^{s_i} \stackrel{u}{\lra} E \stackrel{v}{\lra} Y \lra 0\]
is called a $\{X_1,\ldots,X_n\}$-\textit{universal short exact sequence ending in $Y$.}

\bigskip

\begin{proposition}[Bongartz' construction]\label{6.5.4}
    Let $1 \leq \ell \leq n$. The connecting homomorphism
    \[\delta \colon \Hom_{K_r}(\bigoplus^n_{i=1} X_{i}^{s_i},X_\ell) \lra \Ext^1_{K_r}(Y,X_\ell)\]
    obtained by applying $\Hom_{K_r}(-,X_{\ell})$ to $\mathbb{E}$ is surjective.
\end{proposition}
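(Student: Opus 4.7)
The plan is to exhibit explicit preimages under $\delta$ of the basis $([\mathbb{E}_{X_\ell,j}])_{j=1}^{s_\ell}$ of $\Ext^1_{K_r}(Y,X_\ell)$, using the pushout interpretation of the connecting homomorphism.

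First, I will identify $\delta$ as the pushout map. The standard Yoneda construction of the connecting morphism associated to $\mathbb{E}$ yields
\[ \delta(\phi) = [\phi_\ast \mathbb{E}] \]
for every $\phi \in \Hom_{K_r}(\bigoplus_i X_i^{s_i}, X_\ell)$, where $\phi_\ast \mathbb{E}$ denotes the pushout of $\mathbb{E}$ along $\phi$.

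Next, I will compute the coordinates of $[\mathbb{E}]$ under the canonical direct-sum decomposition
\[ \Ext^1_{K_r}\bigl(Y,\bigoplus\nolimits_i X_i^{s_i}\bigr) \cong \bigoplus_{i=1}^n \Ext^1_{K_r}(Y,X_i)^{s_i}. \]
The direct-sum extension $\mathbb{F} := \bigoplus_{i,j} \mathbb{E}_{X_i,j}$ is block-diagonal in the corresponding decomposition of $\Ext^1_{K_r}(\bigoplus_i Y^{s_i}, \bigoplus_i X_i^{s_i})$: its $(i,j)$-diagonal entry is $[\mathbb{E}_{X_i,j}]$ and all other entries vanish. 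Because the pullback diagram in the construction of $\mathbb{E}$ realizes $[\mathbb{E}] = \nabla^\ast[\mathbb{F}]$, and pullback along the diagonal $\nabla \colon Y \to \bigoplus_i Y^{s_i}$ acts as summation over the outer direct-sum index, the $(i,j)$-th coordinate of $[\mathbb{E}]$ in the above decomposition is precisely $[\mathbb{E}_{X_i,j}]$.

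Finally, given any $[\xi] \in \Ext^1_{K_r}(Y,X_\ell)$, I will write $[\xi] = \sum_{j=1}^{s_\ell} \lambda_j [\mathbb{E}_{X_\ell,j}]$ using the assumed basis, and define
\[ \phi \colon \bigoplus_{i=1}^n X_i^{s_i} \longrightarrow X_\ell \]
to act as $\lambda_j \cdot \id_{X_\ell}$ on the $j$-th copy of $X_\ell$ in $X_\ell^{s_\ell}$ and as $0$ on every summand $X_i^{s_i}$ with $i \neq \ell$. Applying $\phi_\ast$ coordinatewise to the formula for $[\mathbb{E}]$ then gives
\[ \delta(\phi) = \phi_\ast[\mathbb{E}] = \sum_{j=1}^{s_\ell} \lambda_j (\id_{X_\ell})_\ast[\mathbb{E}_{X_\ell,j}] = \sum_{j=1}^{s_\ell} \lambda_j [\mathbb{E}_{X_\ell,j}] = [\xi], \]
establishing surjectivity. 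Conceptually no obstacle arises; the only nontrivial step is the bookkeeping around the direct-sum decomposition of $\Ext^1$ and verifying that $\nabla^\ast$ acts as the sum map, which is a routine application of the bifunctoriality of $\Ext^1$.
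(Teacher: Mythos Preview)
Your proof is correct and is essentially the standard Bongartz argument: the connecting homomorphism is the pushout map $\phi \mapsto \phi_\ast[\mathbb{E}]$, and the universal extension $\mathbb{E}$ is built precisely so that its coordinates under the direct-sum decomposition of $\Ext^1$ are the chosen basis vectors, allowing you to recover any extension class by pushing out along an appropriate diagonal map. The paper does not spell this out but simply refers to \cite[(5.2)]{Luk24} and \cite[(VIII.2.4)]{SY17}, where this same computation (or its dual) is carried out; your write-up is exactly what one would extract from those references.
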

\begin{proof}
See \cite[(5.2)]{Luk24} for proof in case of the dual construction or adapt the arguments given in \cite[(VIII.2.4)]{SY17} appropriately.
\end{proof}

\bigskip

\begin{corollary}\label{6.5.5}
In addition, assume that $X_1,\ldots,X_n,Y$ are pairwise $\Hom$-orthogonal.
\begin{enumerate}
    \item If $X_\ell$ is a brick, then $\Hom_{K_r}(E,X_\ell) = 0$.
    \item If $X_1,\ldots,X_n,Y$ are bricks, then $E$ is a brick that satisfies $\Hom_{K_r}(X_i,E) \neq 0 = \Hom_{K_r}(Y,E)$ and $\Hom_{K_r}(Z,E) = 0$ for every representation $Z$ that is $\Hom$-orthogonal to all $X_1,\ldots,X_n,Y$.
\end{enumerate}
\end{corollary}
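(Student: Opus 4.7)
The plan is to prove both statements by applying $\Hom_{K_r}$-functors to the universal short exact sequence $\mathbb{E}\colon 0 \to \bigoplus_{i=1}^n X_i^{s_i} \xrightarrow{u} E \xrightarrow{v} Y \to 0$ and exploiting the $\Hom$-orthogonality together with Bongartz's surjectivity statement (\cref{6.5.4}).

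For part (1), I would apply $\Hom_{K_r}(-,X_\ell)$ to $\mathbb{E}$, giving the beginning of a long exact sequence
\[
0 \to \Hom_{K_r}(Y,X_\ell) \to \Hom_{K_r}(E,X_\ell) \to \Hom_{K_r}\!\Big(\bigoplus_{i=1}^n X_i^{s_i},X_\ell\Big) \xrightarrow{\delta} \Ext^1_{K_r}(Y,X_\ell).
\]
The left term vanishes by the assumed $\Hom$-orthogonality. The pairwise $\Hom$-orthogonality of the $X_i$ together with $X_\ell$ being a brick reduces the third term to $\End_{K_r}(X_\ell)^{s_\ell} = \KK^{s_\ell}$, which has dimension $s_\ell = \dim_{\KK} \Ext^1_{K_r}(Y,X_\ell)$. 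Since $\delta$ is surjective by \cref{6.5.4}, a dimension count forces $\delta$ to be an isomorphism, and hence $\Hom_{K_r}(E,X_\ell) = 0$.

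For part (2), the brick property is the main point. Applying $\Hom_{K_r}(E,-)$ to $\mathbb{E}$ and using part (1) (which gives $\Hom_{K_r}(E,\bigoplus X_i^{s_i}) = 0$), I would obtain an embedding $\End_{K_r}(E) \hookrightarrow \Hom_{K_r}(E,Y)$. A separate application of $\Hom_{K_r}(-,Y)$ together with $\Hom_{K_r}(X_i,Y) = 0$ and $Y$ being a brick gives $\Hom_{K_r}(E,Y) = \KK\!\cdot v$. Then for any $f \in \End_{K_r}(E)$ there is $\lambda \in \KK$ with $v \circ f = \lambda v$, so $f - \lambda\,\id_E$ factors through $\ker v = u(\bigoplus X_i^{s_i})$, i.e.\ through a morphism $E \to \bigoplus X_i^{s_i}$, which is zero by (1); hence $f = \lambda\,\id_E$.

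The remaining claims are short consequences of the same long exact sequences. Applying $\Hom_{K_r}(X_\ell,-)$ yields $\Hom_{K_r}(X_\ell, E) \supseteq \Hom_{K_r}(X_\ell,\bigoplus X_i^{s_i}) \supseteq \End_{K_r}(X_\ell)^{s_\ell} \neq 0$. Applying $\Hom_{K_r}(Z,-)$ for $Z$ $\Hom$-orthogonal to all $X_i$ and $Y$ immediately gives $\Hom_{K_r}(Z,E) = 0$, and the same argument with $Z = Y$ reduces $\Hom_{K_r}(Y,E) = 0$ to checking $[\mathbb{E}] \neq 0$ in $\Ext^1_{K_r}(Y, \bigoplus X_i^{s_i})$; this is where part (1) is reused, since $[\mathbb{E}] = 0$ would force $E \cong Y \oplus \bigoplus X_i^{s_i}$, contradicting $\Hom_{K_r}(E,X_\ell) = 0$. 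The only delicate step in the whole argument is the brick property, and even that reduces to a clean dimension count once part (1) is in place.
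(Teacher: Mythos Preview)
Your proof is correct and follows essentially the same approach as the paper: both parts hinge on applying $\Hom_{K_r}(-,X_\ell)$, $\Hom_{K_r}(-,Y)$, and $\Hom_{K_r}(E,-)$ to $\mathbb{E}$ and using the dimension count from \cref{6.5.4}. The only minor variation is in the verification of $\Hom_{K_r}(Y,E)=0$: the paper uses the already-established brick property of $E$ (any $h\colon Y\to E$ satisfies $h\circ v\in\KK\,\id_E$, hence $h\circ v=0$ since $\ker v\neq 0$, so $h=0$), whereas you argue that $[\mathbb{E}]\neq 0$ via the splitting contradiction with part~(1); both are equally short and valid.
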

\begin{proof}
\begin{enumerate}
\item We apply $\Hom_{K_r}(-,X_\ell)$ to $\mathbb{E}$ and obtain the exact sequence
\[ 0 \lra \Hom_{K_r}(Y,X_\ell) \stackrel{\Hom_{K_r}(v,X_{\ell})}{\lra} \Hom_{K_r}(E,X_{\ell}) \lra \Hom_{K_r}(\bigoplus^n_{i=1} X_i^{s_i},X_{\ell}) \stackrel{\delta}{\lra} \Ext^1_{K_r}(Y,X_\ell) \lra 0.\]
Note that $\Hom_{K_r}(\bigoplus^n_{i=1} X_i^{s_i},X_\ell) \cong \Hom_{K_r}(X_\ell^{s_{\ell}},X_\ell)$ is an $s_\ell$-dimensional space and therefore has the same dimension as $\Ext^1_{K_r}(Y,X_\ell)$. Since $\delta$ is surjective, $\delta$ is also injective and $\Hom_{K_r}(v,X_{\ell})$ is an isomorphism. The statement follows since $\Hom_{K_r}(Y,X_\ell) =0$.
\item We have a short exact sequence
\[ 0 \lra \Hom_{K_r}(Y,Y) \lra \Hom_{K_r}(E,Y) \lra \Hom_{K_r}(\bigoplus^n_{i=1} X_{i}^{s_i},Y) = 0\]
and conclude $\dim_{\KK}\Hom_{K_r}(E,Y) = 1$. The short exact sequence
\[ 0 \lra \Hom_{K_r}(E,\bigoplus^n_{i=1} X_{i}^{s_i}) \lra \Hom_{K_r}(E,E) \lra \Hom_{K_r}(E,Y)\]
in conjunction with (1) implies that $E$ is a brick. 

Clearly, we have $\Hom_{K_r}(X_i,E) \neq 0$ for all $i \in \{1,\ldots,n\}$. Let $Z$ be $\Hom$-orthogonal to every element in $\{X_1,\ldots,X_n,Y\}$, then the exact sequence
\[ 0 \lra \Hom_{K_r}(Z,\bigoplus^n_{i=1} X_i^{s_i}) \lra \Hom_{K_r}(Z,E) \lra \Hom_{K_r}(Z,Y) \]
gives us $\Hom_{K_r}(Z,E) = 0$. Finally, let $Z = Y$ and consider a morphism $h \in \Hom_{K_r}(Y,E)$. Then $h \circ v \in \End_{K_r}(E) = \KK \id_{E}$. Since $\ker v \cong \bigoplus^n_{i=1} X_i^{s_i} \neq \{0\}$, we conclude $0 = h \circ v$ and the surjectivity of $v$ gives $h = 0$.
\end{enumerate}
\end{proof}

\bigskip
\noindent We are now equipped with all the necessary tools to prove the main result of this section.

\bigskip

\begin{Theorem}\label{6.5.6}
Let $r \geq 3$, $1  < d < r$ and $\emptyset \neq X \subseteq \Gr_d(A_r)$ be a finite set. There exists a regular brick $E_X \in \rep(K_r)$ such that
\begin{enumerate}
    \item $\cV(K_r,d)_{E_X} = X$,
    \item $\cV(K_r,d-1)_{E_X} = \emptyset$, and
    \item $\dimu E_X = (|X| \cdot (d(r-d) -1)+1)\cdot (d,rd-1)$.
\end{enumerate}
\end{Theorem}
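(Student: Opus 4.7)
My plan is to apply Bongartz's construction from the subsection on jumping lines to a judiciously chosen family of the test representations $P_1^-(\fv)$. First I fix an auxiliary $\fw \in \Gr_d(A_r) \setminus X$ (possible because $X$ is finite while $\Gr_d(A_r)$ is infinite), enumerate $X = \{\fv_1,\ldots,\fv_n\}$ with $n = |X|$, and set $Y \coloneqq P_1^-(\fw)$ together with $X_i \coloneqq P_1^-(\fv_i)$ for $i \in \{1,\ldots,n\}$. By \cref{4.4.1} and \cref{4.4.2}, each of these is a regular \emph{elementary} brick of dimension vector $(d,rd-1)$; by \cref{4.2.6}(1) they are pairwise non-isomorphic, and two non-isomorphic elementary bricks of the same dimension vector are $\Hom$-orthogonal (the argument used inside the proof of \cref{6.5.3}, via \cite[(1.4)]{KL96}). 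An easy Euler–Ringel computation gives
\[
\langle (d,rd-1),(d,rd-1)\rangle_r \;=\; d^{2}-(rd-1) \;=\; -(d(r-d)-1),
\]
so $s \coloneqq \dim_{\KK}\Ext^{1}_{K_r}(Y,X_i) = d(r-d)-1$, independently of $i$, and $s \geq 1$ because $1 < d < r$.

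Now I invoke Bongartz's construction (the setup preceding \cref{6.5.4}) to produce an $\{X_1,\ldots,X_n\}$-universal short exact sequence
\[
0 \;\longrightarrow\; \bigoplus_{i=1}^{n} X_i^{\,s} \;\longrightarrow\; E_X \;\longrightarrow\; Y \;\longrightarrow\; 0,
\]
and I define $E_X$ to be the middle term. Reading off the dimension vector yields
\[
\dimu E_X \;=\; \dimu Y + n s \cdot (d,rd-1) \;=\; \bigl(|X|(d(r-d)-1)+1\bigr)\,(d,rd-1),
\]
which is assertion (3). Because $X_1,\ldots,X_n,Y$ are pairwise $\Hom$-orthogonal bricks, \cref{6.5.5}(2) tells us that $E_X$ itself is a brick satisfying $\Hom_{K_r}(X_i,E_X)\neq 0$ for every $i$, $\Hom_{K_r}(Y,E_X)=0$, and $\Hom_{K_r}(Z,E_X)=0$ for any $Z$ that is $\Hom$-orthogonal to $Y$ and all $X_i$.

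For (1), \cref{4.4.3} identifies $\cV(K_r,d)_{E_X}$ with the set of $\fu \in \Gr_d(A_r)$ for which $\Hom_{K_r}(P_1^-(\fu),E_X)\neq 0$. By the conclusions just recorded, this is non-zero exactly on $\{\fv_1,\ldots,\fv_n\} = X$: it is non-zero at each $\fv_i$, it is zero at $\fw$, and for any other $\fu$ the representation $P_1^-(\fu)$ is $\Hom$-orthogonal to all of $Y,X_1,\ldots,X_n$ by the elementarity argument of the first paragraph, so \cref{6.5.5}(2) forces $\Hom_{K_r}(P_1^-(\fu),E_X)=0$. For (2), \cref{4.4.2}(3) gives $P_1^-(\fu) \in \rep_{\proj}(K_r,d-1)$ for every $\fu$; since $\rep_{\proj}(K_r,d-1)$ is a torsion-free class by \cref{2.3.6}(1), and torsion-free classes are closed under extensions, the Bongartz sequence shows $E_X \in \rep_{\proj}(K_r,d-1)$, i.e.\ $\cV(K_r,d-1)_{E_X}=\emptyset$. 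Finally, regularity: $E_X$ is indecomposable (a brick), it is not preinjective by \cref{2.3.6}(2), and a direct computation shows
\[
q_r(\dimu E_X) \;=\; -(1+ns)^{2}(d(r-d)-1) \;<\; 0,
\]
so by \cref{2.1.1} it cannot be preprojective either and must be regular.

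The only non-bookkeeping step is the $\Hom$-orthogonality of distinct $P_1^-(\fv)$'s, but this has already been harnessed for \cref{6.5.3} and follows from elementarity plus \cite[(1.4)]{KL96}; the rest of the argument is a clean assembly of \cref{4.4.2}, \cref{4.4.3}, Bongartz's construction, and \cref{6.5.5}(2).
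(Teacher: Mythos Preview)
Your proof is correct and follows essentially the same route as the paper: pick an auxiliary point outside $X$, use the elementary bricks $P_1^-(\fv)$ as the building blocks for Bongartz's construction, and read everything off from \cref{6.5.5}(2) and \cref{4.4.3}. The only minor addition is that you make the regularity of $E_X$ explicit via the Tits form and \cref{2.1.1}, whereas the paper leaves this implicit (it follows at once since $X_1,\ldots,X_n,Y$ are regular and regular $K_r$-modules are extension-closed).
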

\begin{proof}
Let $\fu \neq \fw \in \Gr_d(A_r)$. By \cref{4.2.6} and \cref{4.4.2}, the representations $P_1^-(\fu)$ and $P_1^-(\fw)$ are non-isomorphic elementary representations that have the same dimension vector by construction. We apply \cite[(1.4)]{KL96} and conclude that $P_1^-(\fu)$ and $P_1^-(\fw)$ are $\Hom$-orthogonal bricks. We write $X = \{\fv_1,\ldots,\fv_n\}$ and fix $\fu \in \Gr_d(A_r) \setminus X$. We let $X_i \coloneqq P_1^-(\fv_i)$ for all $i \in \{1,\ldots,n\}$ and $Y \coloneqq P_1^-(\fu)$. Since $X_i$ and $Y$ are regular, we conclude with the Euler-Ringel form and Kac's Theorem
\[ - \dim_{\KK} \Ext^1_{K_r}(Y,X_i) = q_r(\dimu Y) < 0,\]
hence $s_i \coloneqq \dim_{\KK} \Ext^1_{K_r}(Y,X_i) \neq 0$. The previous considerations give us a short exact sequence
\[ \mathbb{E} \colon  0 \lra \bigoplus^n_{i=1} X_i^{s_i} \stackrel{f}{\lra} E_X \stackrel{g}{\lra} Y \lra 0\]
such that $E_X$ enjoys the properties of the middle term considered in \cref{6.5.5}. In particular, $E_X$ is a brick. Let $\fw \in \Gr_d(A_r) \setminus (X \cup \{\fu\})$, then $P^-_1(\fw)$ is $\Hom$-orthogonal to $X_1,\ldots,X_n,Y$ and we conclude with \cref{6.5.5}(2) that
\[ \Gr_d(A_r) \setminus (X \cup \{\fu\}) \subseteq  \{ \fw \in \Gr_d(A_r) \mid \Hom_{K_r}(P_1^-(\fw),E_X) = 0\}.\]
Moreover, $\Hom_{K_r}(X_i,E_X) \neq 0 = \Hom_{K_r}(Y,E_X)$ for $i \in \{1,\ldots,n\}$ implies 
\[ \Gr_d(A_r) \setminus X =  \{ \fw \in \Gr_d(A_r) \mid \Hom_{K_r}(P_1^-(\fw),E_X) = 0\}.\]
We conclude with \cref{4.4.3} that 
\[ \cV(K_r,d)_{E_X} = \{ \fm \in \Gr_d(A_r) \mid \Hom_{K_r}(P_1^-(\fm),E_X) \neq  0\} = X.\]
Recall from \cref{4.4.2} that $X_1,\ldots,X_n,Y$ are in $\rep_{\proj}(K_r,d-1)$. Since $\rep_{\proj}(K_r,d-1)$ is closed under extensions, we conclude $E_X \in \rep_{\proj}(K_r,d-1)$ and therefore $\cV(K_r,d-1)_{E_X} = \emptyset$.

Finally, we compute the dimension vector of $E_X$. Let $i \in \{1,\ldots,n\}$. Since $X_i$ and $Y$ are $\Hom$-orthogonal and $\dimu X_i = \dimu Y = \dimu P_1^{-}(\fv) = (d,rd-1)$, we obtain
\begin{align*}
    s_i &= \dim_{\KK} \Ext^1_{K_r}(Y,X_i) = -q_r(\dimu Y) = -q_r(\sigma_{r}(d,rd-1)) \\
    &= -q_r(1,d) = d(r-d)-1.
\end{align*}
Hence, 
\begin{align*}
    \dimu E_X &= |X| \cdot (d(r-d) -1) \cdot \dimu P_1^{-}(\fv) + \dimu P_1^{-}(\fv)  = (|X| \cdot (d(r-d) -1)+1)\cdot \dimu P_1^{-}(\fv).
\end{align*} 
\end{proof}

\bigskip

The case $d=2$, when reformulated in the setting of Steiner bundles on $\PP(A_r)$, gives rise to the following statement.

\bigskip

\begin{Theorem}\label{6.5.7}
Let $r \geq 3$ and $\emptyset \neq X \subseteq \Gr_2(A_r)$ be a finite subset. There exists an abelian and wild subcategory $\cD$ of $\rep_{\proj}(K_r,1)$ such that for every $M \in \cD$ the corresponding Steiner bundle $\TilTheta(M)$ satisfies $\cJ_{\TilTheta(M)} = X$. In particular, ${\TilTheta(M)}$ is almost-uniform.
\end{Theorem}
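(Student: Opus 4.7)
The plan is to first apply \cref{6.5.6} to produce a regular brick $E_X \in \rep_{\proj}(K_r,1)$ whose jumping line set is precisely $X$, then apply Ringel's simplification to $E_X$ in order to obtain an exact abelian subcategory $\cD$ of iterated self-extensions of $E_X$ that is wild, and finally transfer the jumping line set from $E_X$ to every non-zero $M \in \cD$ via left-exactness of $\Hom_{K_r}(P_1^-(\fv),-)$ together with \cref{4.4.3} and \cref{6.5.2}.

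First I would invoke \cref{6.5.6} with $d = 2$ to produce a regular brick $E_X \in \rep_{\proj}(K_r,1)$ satisfying $\cV(K_r,2)_{E_X} = X$, with dimension vector $\dimu E_X = a(2,2r-1)$ where $a \coloneqq |X|(2(r-2)-1)+1 \geq 2(r-2) \geq 2$. A direct computation with the Euler--Ringel form gives
\[
q_r(\dimu E_X) = a^2 \bigl[ 4 + (2r-1)^2 - 2r(2r-1) \bigr] = a^2(5 - 2r),
\]
so setting $n \coloneqq \dim_\KK \Ext^1_{K_r}(E_X,E_X)$ and using that $E_X$ is a brick,
\[
n = \dim_\KK \End_{K_r}(E_X) - q_r(\dimu E_X) = 1 + (2r-5)a^2 \geq 5.
\]

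Next, since $E_X$ is a brick with $n \geq 5$ self-extensions in the hereditary category $\rep(K_r)$, Ringel's simplification (cf.~\cite{KL91}) produces an exact abelian subcategory $\cD \subseteq \rep(K_r)$ whose objects are the iterated self-extensions of $E_X$ and in which $E_X$ is the unique simple object; moreover $\cD$ is equivalent to the category of finite-dimensional nilpotent representations of the $n$-loop quiver, equivalently, finite-dimensional nilpotent modules over the free algebra $\KK\langle x_1,\ldots,x_n\rangle$. The latter is wild for $n \geq 2$, so $\cD$ is a wild abelian subcategory of $\rep(K_r)$. By \cref{2.3.6}(1), $\rep_{\proj}(K_r,1)$ is a torsion-free class and hence closed under extensions and subrepresentations; combined with $E_X \in \rep_{\proj}(K_r,1)$ this yields $\cD \subseteq \rep_{\proj}(K_r,1)$.

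To finish, I would verify $\cJ_{\widetilde{\Theta}(M)} = X$ for every non-zero $M \in \cD$. By \cref{4.4.3} and the construction of $E_X$, one has $\Hom_{K_r}(P_1^-(\fv),E_X) = 0$ precisely when $\fv \notin X$. For $\fv \notin X$, induction on the filtration length of $M$ inside $\cD$ combined with left-exactness of $\Hom_{K_r}(P_1^-(\fv),-)$ yields $\Hom_{K_r}(P_1^-(\fv),M) = 0$. For $\fv \in X$, the socle of $M$ inside $\cD$ furnishes a non-zero subrepresentation isomorphic to $E_X^s$ for some $s \geq 1$, whence left-exactness together with $\Hom_{K_r}(P_1^-(\fv),E_X) \neq 0$ forces $\Hom_{K_r}(P_1^-(\fv),M) \neq 0$. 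Applying \cref{6.5.2} then identifies $\cJ_{\widetilde{\Theta}(M)}$ with $X$, and almost-uniformity of $\widetilde{\Theta}(M)$ is immediate. The principal obstacle I anticipate is invoking the correct formulation of Ringel's simplification so that $\cD$ is a \emph{bona fide} exact abelian subcategory of $\rep(K_r)$ and not merely an extension closure; once this is established, the remaining verifications reduce to functoriality of $\Hom$ and the torsion-freeness of $\rep_{\proj}(K_r,1)$.
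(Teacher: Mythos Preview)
Your proposal is correct and follows essentially the same approach as the paper: apply \cref{6.5.6} with $d=2$ to obtain the regular brick $E_X \in \rep_{\proj}(K_r,1)$, take $\cD = \cE(\{E_X\})$ via Ringel's simplification \cite{KL91}, and then propagate the jumping-line condition along filtrations. The only cosmetic differences are that the paper bounds $\Ext^1_{K_r}(E_X,E_X)$ by invoking regularity and Kac's Theorem (so $q_r(\dimu E_X)<0$) rather than computing $q_r$ explicitly, and that the paper cites \cite[(2.1.2)]{BF24} for the passage from $\cV(K_r,2)_{E_X}=X$ to $\cV(K_r,2)_M=X$ along the filtration, whereas you carry this out by hand via left-exactness of $\Hom_{K_r}(P_1^-(\fv),-)$.
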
 
\begin{proof}
We fix the regular brick $E_X$ constructed in \cref{6.5.6} and consider the simplification category $\cE(\{E_X\})$ (cf. \cite[Section 1]{KL91}). Since $E_X \in \rep_{\proj}(K_r,1)$ and $\rep_{\proj}(K_r,1)$ is closed under extensions, we conclude $\cE(\{E_X\}) \subseteq \rep_{\proj}(K_r,1)$. Since $E_X$ is regular, Kac's Theorem implies that $q_r(\dimu E_X) < 0$ and therefore 
\[ \dim_{\KK} \Ext^1_{K_r}(E_X,E_X) = - q_r(\dimu E_X) + 1 \geq 2.\]
Now the Remark following \cite[(1.4)]{KL91} implies that $\cD \coloneqq \cE(\{E_X\})$ is a wild subcategory of $\rep_{\proj}(K_r,1)$. Let $0 \neq M \in \cE(\{E_X\}) \subseteq \rep_{\proj}(K_r,1)$. By definition, we find $n\in \NN$ and a filtration
\[ 0 = M_0 \subsetneq M_1 \subsetneq M_2 \subsetneq \cdots \subsetneq M_{n-1} \subsetneq M_n = M\]
such that $M_i/M_{i-1} \cong E_X$ for all $i \in \{1,\ldots,n\}$. By \cref{6.5.6}, we have $\cV({K_r},2)_{E_X} = X$ and conclude with \cite[(2.1.2)]{BF24} that $\cV({K_r},2)_{M} = X$. Now \cref{4.4.3} implies that $\{\fu \in \Gr_2(A_r) \mid \Hom_{K_r}(P_1^-(\fu),M) \neq 0 \} = \cV(K_r,2)_{M}= X$ is a non-empty finite set and it follows 
\[ \cJ_{\TilTheta(M)} = \cJ_{M} \stackrel{\ref{6.5.2}}{=} \{\fu \in \Gr_2(A_r) \mid \Hom_{K_r}(P_1^-(\fu),M) \neq 0 \} = X.\]
\end{proof}

\bigskip

\begin{Remark}\label{6.5.8}
According to \cite[(1.4)]{KL91} and \cite[(9.2),(9.4)]{Ker94}, every indecomposable representation $M \in \cE(\{E_X\})$ is quasi-simple in a regular component $\cC_M$ and $\cC_M \neq \cC_N$ for all $M \not\cong N \in \cE(\{E_X\})$.
\end{Remark}

\section*{Acknowledgement}
Most of the results presented in this article are from my habilitation thesis \cite{Bis25b}, which I wrote at the University of Kiel.

I would to thank Frank Lukas for introducing us to the Bongartz' construction in the general context of hereditary algebras and explaining his work to us.

\printbibliography
\end{document}